\documentclass[a4paper,10pt]{article}

\usepackage[ngerman, english]{babel}
\usepackage[utf8]{inputenc}
\usepackage{amscd,amsfonts,amsmath,amssymb,amsthm,bbm,latexsym,mathrsfs,stmaryrd}
\usepackage[left=2.54cm,right=2.54cm,top=3cm,bottom=3.5cm]{geometry}
\usepackage[T1]{fontenc}
\usepackage{verbatim}
\usepackage{color}
\usepackage[scaled]{uarial}
\usepackage{lmodern} 
\usepackage{mathpazo}

\usepackage[english]{babel}
\selectlanguage{english}

\usepackage[hidelinks]{hyperref}


\marginparwidth = 57pt


\makeatletter \@addtoreset{equation}{section}
\makeatother

\makeatletter \@addtoreset{enunciato}{section}
\makeatother

\newcounter{enunciato}[section]

\newtheorem{ittheorem}{Theorem}
\newtheorem{itlemma}{Lemma}
\newtheorem{itproposition}{Proposition}
\newtheorem{itdefinition}{Definition}
\newtheorem{itremark}{Remark}
\newtheorem{itclaim}{Claim}
\newtheorem{itfact}{Fact}
\newtheorem{itconjecture}{Conjecture}
\newtheorem{itcorollary}{Corollary}

\newenvironment{theorem}{\addtocounter{enunciato}{1}
	\begin{ittheorem}}{\end{ittheorem}}

\newenvironment{lemma}{\addtocounter{enunciato}{1}
	\begin{itlemma}}{\end{itlemma}}

\newenvironment{proposition}{\addtocounter{enunciato}{1}
	\begin{itproposition}}{\end{itproposition}}

\newenvironment{definition}{\addtocounter{enunciato}{1}
	\begin{itdefinition}}{\end{itdefinition}}

\newenvironment{remark}{\addtocounter{enunciato}{1}
	\begin{itremark}}{\end{itremark}}

\newenvironment{conjecture}{\addtocounter{enunciato}{1}
	\begin{itconjecture}}{\end{itconjecture}}

\newenvironment{corollary}{\addtocounter{enunciato}{1}
	\begin{itcorollary}}{\end{itcorollary}}

\newcommand{\be}[1]{\begin{equation}\label{#1}}
\newcommand{\ee}{\end{equation}}

\newcommand{\bl}[1]{\begin{lemma}\label{#1}}
	\newcommand{\el}{\end{lemma}}

\newcommand{\br}[1]{\begin{remark}\label{#1}}
	\newcommand{\er}{\end{remark}}

\newcommand{\bt}[1]{\begin{theorem}\label{#1}}
	\newcommand{\et}{\end{theorem}}

\newcommand{\bd}[1]{\begin{definition}\label{#1}}
	\newcommand{\ed}{\end{definition}}

\newcommand{\bp}[1]{\begin{proposition}\label{#1}}
	\newcommand{\ep}{\end{proposition}}

\newcommand{\bc}[1]{\begin{corollary}\label{#1}}
	\newcommand{\ec}{\end{corollary}}

\newcommand{\bcj}[1]{\begin{conjecture}\label{#1}}
	\newcommand{\ecj}{\end{conjecture}}

\newcommand{\bpr}{\begin{proof}}
	\newcommand{\epr}{\end{proof}}

\newcommand{\overbar}[1]{\mkern 1.5mu\overline{\mkern-1.5mu#1\mkern-1.5mu}\mkern 1.5mu}


\DeclareMathOperator\dist{dist}
\DeclareMathOperator\diam{diam}
\DeclareMathOperator{\mph}{mp}

\def\N{\mathbb{N}}
\def\R{\mathbb{R}}

\def\P{\mathbb{P}}
\def\E{\mathbb{E}}

\def \cA {{\mathcal A}}

\def \cC {{\mathcal C}}

\def \cE {{\mathcal E}}

\def \cH {{\mathcal H}}

\def \cP {{\mathcal P}}

\def \cY {{\mathcal Y}}

\def \cc {{\rm c}}
\def \dd {{\rm d}}
\def \ee {{\rm e}}

\newcommand{\1}[1]{{\mathbbm{1}}_{#1}}

\newcommand{\eps}{\varepsilon}
\newcommand{\Prob}{\mathbf{P}}  

\newcommand{\scrY}{\mathscr{Y}}
\newcommand{\scrYf}{\mathscr{Y}_{\textnormal{f}}}
\newcommand{\scrK}{\mathscr{K}}
\newcommand{\frK}{\mathfrak{K}}

\allowdisplaybreaks[1]

\usepackage{array}
\newcolumntype{e}{>{\displaystyle}r @{\,} >{\displaystyle}c @{\,} >{\displaystyle}l}

\newtheorem{The}{Theorem}[section]
\newtheorem{Lem}[The]{Lemma}
\newtheorem{Pro}[The]{Proposition}
\newtheorem{Cor}[The]{Corollary}
\theoremstyle{remark}
\newtheorem{Rem}[The]{Remark}
\theoremstyle{definition}
\newtheorem{Def}[The]{Definition}


\begin{document}

\title{Brownian motion in attenuated or renormalized inverse-square Poisson potential}
\author{\renewcommand{\thefootnote}{\arabic{footnote}}
Peter Nelson~\footnotemark[1] 
\;\;\; and \, 
\renewcommand{\thefootnote}{\arabic{footnote}}
Renato Soares dos Santos~\footnotemark[2]
}

\footnotetext[1]{
Institut für Mathematik, 
Johannes Gutenberg-Universität,
Staudingerweg 9, D-55099 Mainz
}

\footnotetext[2]{
Department of Mathematics, Federal University of Minas Gerais (UFMG), Av.\ Pres.\ 
Ant\^onio Carlos 6627, Belo Horizonte
}
\maketitle

\vspace{10pt}
\noindent
{\bf Summary. }
We consider the parabolic Anderson problem with random potentials
having inverse-square singularities around the points of a standard Poisson point process in $\R^d$, $d \ge 3$.
The potentials we consider are obtained via superposition of translations over the points of the Poisson point process 
of a kernel $\mathfrak{K}$ behaving as $\mathfrak{K}(x) \approx \theta |x|^{-2}$ near the origin, where $\theta \in (0,(d-2)^2/16]$.
In order to make sense of the corresponding path integrals, 
we require the potential to be either \emph{attenuated} (meaning that $\mathfrak{K}$ is integrable at infinity)
or, when $d=3$, \emph{renormalized}, as introduced by Chen and Kulik in \cite{CK12}.
Our main results include existence and large-time asymptotics of non-negative solutions via Feynman-Kac representation.
In particular, we settle for the renormalized potential in $d=3$ the existence problem with critical parameter $\theta = 1/16$, 
left open by Chen and Rosinski in \cite{CR11}.

\vspace{10pt}

\noindent
{\it MSC 2010:} 60J65, 60G55, 60K37, 35J10, 35P15.\\
{\it Keywords:} Brownian motion in Poisson potential, parabolic Anderson model, 
inverse square potential, multipolar Hardy inequality.

\section{Introduction and main results}

Fix $d \in \N$ and let $W=(W_t)_{t\geq 0}$ be a standard Brownian motion in $\R^d$.
We denote by $\P_x$ its law when started at $x$, and by $\E_x$ the corresponding expectation. 
Let $V:\R^d \to \R$ be a random potential function, which we take independent of $W$. 
The integral $\int_0^tV(W_s)\dd s$ represents the total potential energy along the Brownian path up to time $t$,
and is used to define the \emph{quenched Gibbs measure}
\begin{equation}\label{e:defquelaw}
Q_{t,x}(\cdot):=\frac{1}{Z_{t,x}} \E_x \left[\exp \left\{\int_0^tV(W_s)\dd s \right\} \mathbbm{1} \{W \in \cdot \} \right], \quad \text{ where } \quad Z_{t,x}:=\E_x\left[\exp \int_0^tV(W_s)\dd s \right],
\end{equation}
describing the behaviour of $W$ under the influence of the random potential.

A main feature in the study of Brownian motion in random potential is 
the connection to the (continuous) \emph{parabolic Anderson model}, 
i.e., the initial value problem
\begin{equation}\label{e:PAM}
\begin{aligned}
\partial_t u(t,x)&=\tfrac12 \Delta u(t,x) + V(x)u(t,x),& &(t,x)\in (0,\infty)\times \R^d, \\
u(0,x)&= u_0(x),& &x\in \R^d,
\end{aligned}
\end{equation}
where $\Delta=\sum_{i=1}^{d}\frac{\partial^2}{\partial^2x_i}$ denotes the (weak) Laplacian in $L^2(\R^d)$,
and $u_0 \in L^2_{\textnormal{loc}}(\R^d)$ is some initial data.
When $V$ is e.g.\ in the Kato class (cf.\ \cite[page~8, equation~(2.4)]{Szn98}),
the unique mild solution to \eqref{e:PAM} 
(in the sense of \cite[Definition~6.1.1]{Paz83})
is given by the classical \emph{Feynman-Kac formula}
\begin{align}\label{e:FKrepintro}
u(t,x)=\E_{x}\left[u_0(W_t)\exp\left\{\int_0^t V(W_s)\dd s\right\}\right].
\end{align}
In particular, $u(t,x) = Z_{t,x}$ in \eqref{e:defquelaw} solves \eqref{e:PAM} with $u_0 \equiv 1$.

In this paper, we are interested in \emph{Poisson (or shot noise) potentials}, 
obtained by superposing translations of a fixed function over the points of a Poisson cloud. 
To describe them, let $\omega$ be a standard Poisson point process in $\R^d$, i.e., 
having the Lebesgue measure as its intensity measure.
Denote by $\Prob$ the law of $\omega$, and by $\cP=\{y\in\R^d: \omega(\{y\}) > 0\}$ its support,
which is almost surely discrete.
For a Borel-measurable \emph{shape function} (or \emph{kernel}) $\frK:\R^d\rightarrow\R$, 
we define the Poisson potential
\[
V(x)=V(x,\omega)=\sum_{y\in\cP} \frK (x-y)=\int_{\R^d} \frK(x-y)\omega(\dd y), \quad x\in\R^d.
\]
Since the Gibbs measure in \eqref{e:defquelaw} favours paths with larger energy functional $\int_0^t V(W_s)\dd s$,
the points of $\omega$ will under it either attract the Brownian particle if $\frK$ is positive,
or repel it if $\frK$ is negative.

The study of the model of Brownian motion in a random Poisson potential is motivated by various applications from physics and other fields. 
Think, e.g., of an electron moving in a crystal with impurities, cf.\ \cite{BG90, HB87, Mol91}. 
For an overview on the mathematical treatment of the subject and further references, 
we refer the reader to the monographs~\cite{K16, Szn98}.
In \cite{Szn98}, essentially two types of potentials are considered: the \emph{soft} obstacle potential, 
where $\frK$ is assumed to be negative, bounded and compactly supported, and the \emph{hard} obstacle potential, 
where formally $\frK= - \infty\mathbbm{1}_C$ for some compact, nonpolar set $C\subset\R^d$, 
i.e., the Brownian particle is immediately killed when entering the $C$-neighbourhood of the Poisson cloud and moves freely up to the entrance time. 
The case of $\frK$ positive, bounded and continuous (and satisfying a decay property)
has been considered in \cite{CM95, GKM00}.
The works mentioned identify almost-sure large-time asymptotics for $Z_{t,x}$ in \eqref{e:defquelaw}.

It is of natural concern to study shape functions that are neither bounded nor have compact support. 
Kernels of the form $\frK(x)=|x|^{-p}$ are physically motivated,
e.g.\ $p=d-2$ corresponds to Newton's law of gravitation.
The inverse-square case $p=2$ is of special interest both in mathematics and physics (c.f.\ e.g.\ \cite{BG84a, BG84b, CH02, FMT07, GTV10, Sho31}),
and is related to the inverse-cube central force;
in this case, $\frK$ is \emph{not} in the Kato class (cf.\ \cite[Example~2.3, page~9]{Szn98}).
It turns out however that, when $p \le d$, 
the corresponding Poisson potential almost surely explodes, 
i.e.,
\begin{equation}\label{e:intV}
\int_{\R^d}|x-y|^{-p}\omega(\dd y)=\infty \qquad \Prob\text{-a.s.} \quad \text{for each } x \in \R^d,
\end{equation} 
cf.\ \cite[Proposition~2.1]{CK12}. 
Indeed,  when $p\leq d$, the integrability in \eqref{e:intV} is obstructed by the slow decay of the function $|x|^{-p}$ at infinity. 
To solve this problem, Chen and Kulik have constructed a \emph{renormalized} version $\overbar{V}$ of the Poisson potential $V$, 
formally written as
\begin{equation}\label{e:defoverbarV}
\overbar{V}(x)=\int_{\R^d} |x-y|^{-p}[\omega(\dd y) - \dd y], \quad x\in \R^d \; \text{ (where } |\cdot| \text{ is the Euclidean norm).}
\end{equation}
The mathematical definition of $\overbar{V}$ is as limit in probability of 
the same expression with integrable approximating kernels,
for which both integrals against $\dd y$ and $\omega(\dd y)$ are well defined;
for details, we refer the reader to \cite[Section~2]{CK12}.
This procedure is natural since, at each step of the approximation,
both $V$ and $\overbar{V}$ give rise to the same quenched Gibbs measure.
In \cite[Corollary~1.3]{CK12}, it is shown that \eqref{e:defoverbarV} is well-defined 
whenever $d/2<p<d$, in particular when $p=2,d=3$.

Even if \eqref{e:defoverbarV} is well defined, 
the exponential moment $Z_{t,x}$ in \eqref{e:defquelaw} (with $\overbar{V}$ in place of $V$) may still be infinite.
Indeed, Theorem 1.5 in \cite{CK12} states that, for $d/2<p<d$ and any $\theta,t> 0$,
\begin{equation}\label{e:FinNonCrit}
\E_0\left[\exp\left(\theta\int_0^t\overbar{V}(W_s)\dd s\right)\right]\nonumber\\
\begin{cases}
&<\infty \quad \Prob \mbox{-a.s.\ \ if \ } p<2, \\
&=\infty \quad \Prob \mbox{-a.s.\ \ if \ } p>2.
\end{cases} \quad  
\end{equation}
In the critical case $p=2$ (and necessarily $d=3$), 
the integrability depends on the value of the parameter $\theta$:
according to \cite[Theorem~2.1]{CR11}, for any $t>0$,
\begin{equation}\label{e:theta116}
\E_0\left[\exp\left(\theta\int_0^t\overbar{V}(W_s)\dd s\right)\right]\\
\begin{cases}
&<\infty \quad \Prob \mbox{-a.s.\ \ if \ } \theta<\frac{1}{16}, \\
&=\infty \quad \Prob \mbox{-a.s.\ \ if \ } \theta>\frac{1}{16}.
\end{cases} \quad  
\end{equation}
The boundary case $\theta=\frac{1}{16}$ is not considered in \cite{CR11}, 
and is included in our Theorem~\ref{t:finiteness} below.
The fact that $\theta=\frac{1}{16}$ is critical is related to the celebrated Hardy inequality
(in $d=3$)
\begin{equation}\label{e:Hardy1}
\frac{(d-2)^2}{8} \int_{\R^d}\frac{g^2(x)}{|x|^2}\dd x \leq \frac{1}{2} \| \nabla g \|_{L^2(\R^d)}^2, \qquad g\in H^1(\R^d),
\end{equation}
where $H^1(\R^d)$ is the Sobolev space of $L^2(\R^d)$ functions whose (weak) partial derivatives are also in $L^2(\R^d)$,
and the constant $(d-2)^2/8$ is sharp.

Once finiteness of exponential moments is settled,
our interest turns to large-time asymptotics.
In the non-critical regime $d/2<p<\min(2,d)$, $\theta>0$, 
it is shown in \cite[Theorem~2.2]{Che12} that
\begin{equation}\label{e:QueAsyChe}
\lim_{t\rightarrow\infty}\frac{1}{t}\left(\frac{\log\log t}{\log t}\right)^{\frac{2}{2-p}}\log\E_0\left[\ee^{\theta\int_0^t\overbar{V}(W_s)\dd s}\right]= c(d,p,\theta) \quad \Prob \mbox{-a.s.,}
\end{equation}
where $c(d,p,\theta)$ is an explicit deterministic constant depending only on $d,p,\theta$.
The case $p=2$, $d=3$, already considered in \cite{CR11}, turns out to be rather different: 
after suitable rescaling, the log of the exponential moment does \emph{not} converge to a constant,
but fluctuates randomly, cf.\ Theorem~\ref{t:tightness} below. 
Here we again extend the investigation to the boundary case $\theta = 1/16$.

Finally, we do not restrict our analysis to the renormalized potential $\overbar{V}$,
but also consider integrable versions of the inverse-square kernel. 
For this class of \emph{attenuated potentials}, cf.\ Definition~\ref{def:scrK} below, 
we show similar results as outlined above in all dimensions $d\ge3$;
in fact, our asymptotic results for $\overbar{V}$ in $d=3$ are obtained 
via comparison to attenuated potentials, cf.\ Theorem~\ref{t:limitfrac} below.

\subsection{Main results}
\label{ss:results}

Let $d \ge 3$.
We define next the class $\scrK$ of potential kernels we are after,
whose elements have an inverse-square singularity at the origin and are integrable at infinity.

\begin{Def}\label{def:scrK}
We say that a measurable $\frK:\R^d \to [-\infty, \infty]$ belongs to the class $\scrK$ if and only if
\begin{equation}\label{e:condscrK1}
y \mapsto \sup_{ |x| \leq 1 } |\frK(x-y)| \wedge 1 \quad \text{belongs to} \;\; L^1(\R^d)
\end{equation}
and
\begin{equation}\label{e:condscrK2}
\limsup_{a \downarrow 0} \max \left\{ a^2 \sup_{|x| > a} |\frK(x)|, \; \sup_{|x|\le a } \left| \frK(x) -\frac{1}{|x|^{2}} \right| \right\} < \infty.
\end{equation}
\end{Def}
\noindent
We call $\scrK$ the class of \emph{attenuated inverse-square potential kernels}.

Given $\frK \in \scrK$, we denote the Poisson potential with kernel $\frK$ by
\begin{equation}\label{e:defVK}
V^{(\frK)}(x) := \int_{\R^d} \frK(x-y) \omega( \dd y), \qquad x \in \R^d \setminus \cP.
\end{equation}
By \cite[Proposition~2.1]{CK12}, $V^{(\frK)}$ is a.s.\ well-defined and finite.
Important examples are the truncated kernels $\frK_a(x) := |x|^{-2} \mathbbm{1}_{\{|x| \le a\}}$, $a>0$,
in which case we abbreviate $V^{(a)}:=V^{(\frK_a)}$.

To state our results for $V^{(\frK)}$, denote by
\begin{equation}\label{e:defhd}
h_d := \frac{(d-2)^2}{8}
\end{equation}
as in the form \eqref{e:Hardy1} of Hardy's inequality, and set, for $\theta \in (0,h_d/2]$,
\begin{equation}\label{e:defkatheta}
k_\theta := \left\lfloor \frac{h_d}{\theta} \right\rfloor \ge 2.
\end{equation}
Our first two results show existence of solutions to \eqref{e:PAM} via Feynman-Kac representation.
\begin{The}\label{t:finitenessva}
For all $d\geq 3$, $\frK \in \scrK$ and $\theta\in (0,\frac{h_d}{2}]$,
it holds $\Prob$-almost surely that
\begin{equation}\label{e:defvk}
v^{(\frK)}_\theta(t,x) :=	\E_x\left[\exp\left(\theta\int_0^t |V^{(\frK)}|(W_s)\dd s\right)\right] < \infty
\qquad \forall x\in\R^d\setminus \cP, \, t \geq 0.
\end{equation}
Moreover, $v^{(\frK)}_\theta(t, \cdot) \in L^1_\textnormal{loc}(\R^d)$ for all $t \geq 0$.
As a consequence, for all $u_0 \in L^\infty(\R^d)$ the function
\begin{equation}\label{e:finitenessva}
u^{(\frK, u_0)}_\theta(t,x) :=	\E_x\left[ u_0(W_t) \exp\left(\theta\int_0^t V^{(\frK)}(W_s)\dd s\right)\right]
\; \text{ is well-defined for all } x\in\R^d\setminus \cP, \, t \geq 0,
\end{equation}
and $u^{(\frK, u_0)}_\theta(t, \cdot) \in L^1_{\textnormal{loc}}(\R^d)$ for all $t\geq 0$.
The same is true with $|V^{(\frK)}|$ in place of $V^{(\frK)}$.
\end{The}

\begin{The}\label{t:solva}
The function $u^{(\frK, u_0)}_\theta$ defined in \eqref{e:finitenessva} is a mild solution to \eqref{e:PAM} with $V = \theta V^{(\frK)}$.
The analogous holds for $|V^{(\frK)}|$ in place of $V^{(\frK)}$.
\end{The}

When $\theta>h_d/2$, \eqref{e:defvk} is infinite even with $V^{(\frK)}$ in place of $|V^{(\frK)}|$;
a proof can be obtained as in \cite[Theorem~2.1]{CR11}.
In the following, when $u_0 \equiv 1$ we write $u_\theta^{(\frK)}$ instead of $u_\theta^{(\frK, u_0)}$. 

Our next three results concern large time asymptotics of $u^{(\frK)}_\theta(t,0)$,
starting with tightness.
\begin{The}\label{t:tightnessva} 
Let $d\geq3$, $\frK \in \scrK$ and $\theta\in(0,\frac{h_d}{2}]$.
For any $t \mapsto g(t)>0$ with $g(t)\overset{t\rightarrow\infty}{\longrightarrow} \infty$,
\begin{equation}\label{e:tightnesslbva}
g(t)t^{-\frac{k_\theta+1}{k_\theta-1}}\log\E_0\left[\exp\left(\theta\int_0^tV^{(\frK)}(W_s)\dd s\right)\right]\overset{t\rightarrow\infty}{\longrightarrow} \infty \quad \mbox{in probability}
\end{equation}
and
\begin{equation}\label{e:tightnessubva}
g(t)^{-1}t^{-\frac{k_\theta+1}{k_\theta-1}}\log\E_0\left[\exp\left(\theta\int_0^tV^{(\frK)}(W_s)\dd s\right)\right]\overset{t\rightarrow\infty}{\longrightarrow} 0 \quad \mbox{in probability}.
\end{equation}
In other words, the process $t^{-\frac{k_\theta+1}{k_\theta-1}} \log u^{(\frK)}_\theta(t,0)$, $t>0$ is tight on the open interval $(0,\infty)$.
\end{The}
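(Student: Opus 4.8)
The plan is to establish matching upper and lower bounds on $\log u^{(\frK)}_\theta(t,0)$, each of order $t^{(k_\theta+1)/(k_\theta-1)}$ up to a lower-order random factor, using a Borel--Cantelli / subsequence argument to pass between $\Prob$-a.s.\ statements and convergence in probability. Recall $k_\theta = \lfloor h_d/\theta \rfloor \ge 2$, so $\theta k_\theta \le h_d$ while $\theta (k_\theta+1) > h_d$; the exponent $(k_\theta+1)/(k_\theta-1)$ reflects the cost, under the Poisson law, of the Brownian path spending time near a cluster of roughly $k_\theta$ Poisson points that it can exploit without violating Hardy's inequality \eqref{e:Hardy1}.

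\medskip

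\emph{Lower bound} \eqref{e:tightnesslbva}. The strategy is to exhibit a favourable environment event with not-too-small $\Prob$-probability and, on it, a cheap Brownian strategy. First, by \eqref{e:condscrK2} the kernel $\frK$ dominates $\tfrac12 |x|^{-2}$ on a fixed ball $B_{a_0}\setminus\{0\}$. By standard Poisson estimates, for $R = R(t) \to \infty$ growing slowly, with $\Prob$-probability bounded below by something like $\exp(-c R^d \log(R^d/(k_\theta)))$ (hence tending to $1$ along a suitable slow growth, or handled by a second-moment/Borel--Cantelli scheme) there is a point $y_* \in B_R$ near which $k_\theta$ Poisson points lie within distance $\rho = \rho(t)$ of each other. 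The Brownian particle travels to $y_*$ (cost $O(R^2)$ in the exponent) and then is confined to a ball of radius $r = r(t)$ around the cluster for the remaining time $t$. Inside that ball, $V^{(\frK)}(x) \gtrsim \tfrac12\sum_{i=1}^{k_\theta}|x - y_i|^{-2} \gtrsim \tfrac{k_\theta}{2} r^{-2}$ (choosing $\rho \ll r$), while the confinement cost is the principal Dirichlet eigenvalue of $-\tfrac12\Delta$ on the ball, of order $r^{-2}$. Optimizing the net exponent $t\big(\tfrac{k_\theta}{2} - \tfrac{\lambda_d}{2}\big) r^{-2} - R(t)^2 + \log\P_0(\text{confinement})$ over $r$ — balanced against the $R^d$-type entropic cost of producing the cluster — yields the rate $t^{(k_\theta+1)/(k_\theta-1)}$. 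Here one must use that the Hardy constant makes $\tfrac{k_\theta}{2}$ subcritical relative to the eigenvalue only for $k_\theta \le h_d/\theta$, which is exactly our hypothesis, and slightly more care (a sum of inverse-square singularities versus a multipolar Hardy inequality) is needed when the $k_\theta$ points are genuinely close; this is where the multipolar Hardy inequality advertised in the keywords enters.

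\medskip

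\emph{Upper bound} \eqref{e:tightnessubva}. Here the plan is a partition/peeling argument over the Poisson cloud. Split $V^{(\frK)} = V^{(\frK)}_{\le} + V^{(\frK)}_{>}$ into contributions from Poisson points within distance $a_0$ of the current position and the (bounded, by \eqref{e:condscrK2}) far part; the far part is controlled by the $L^1$-integrability \eqref{e:condscrK1} and contributes at most $O(t \cdot \#(\cP \cap B_{R(t)}))$, a lower-order term on the event that the path stays in $B_{R(t)}$ — and exiting $B_{R(t)}$ by time $t$ is exponentially costly. For the singular part, one uses that, by a density/clustering estimate for Poisson processes, with overwhelming $\Prob$-probability every ball of radius $a_0$ inside $B_{R(t)}$ contains at most $N(t) = O(\log t)$ Poisson points, and moreover no more than $k_\theta$ of them are within distance $\delta(t)$ of each other for appropriate slowly vanishing $\delta(t)$. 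Combining this with the multipolar Hardy inequality gives, for a suitable self-improving scale decomposition, a bound of the form $\int_0^t V^{(\frK)}(W_s)\,\d s \le \tfrac{1}{\theta}\big( \tfrac12 \int_0^t |\dot W_s|^2\,\d s \big) + (\text{error})$ in the sense of a comparison of quadratic forms / a Feynman--Kac bound via the Golden--Thompson or Khas'minskii inequality, so that $u^{(\frK)}_\theta(t,0)$ is dominated by $\exp(o(t^{(k_\theta+1)/(k_\theta-1)}))$ times a heat-kernel factor. The quantitative input is that a single inverse-square well of strength $\theta \le h_d/2$ contributes nothing super-polynomially, $k_\theta$-fold clusters contribute exactly the claimed rate, and $(k_\theta+1)$-fold clusters — which would be fatal — are forbidden at scale $\delta(t)$ with probability $\to 1$.

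\medskip

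\emph{Main obstacle.} The hardest part is the upper bound, specifically controlling the interaction of several inverse-square singularities that the Brownian path may visit: a naive union bound over Poisson points loses the precise constant, so one needs a genuinely multipolar Hardy inequality with explicit dependence on the number of poles and their mutual distances, together with a matching control on the Poisson geometry (how close can $k_\theta+1$ points get within $B_{R(t)}$). Getting these two ingredients to fit — the analytic inequality and the probabilistic clustering bound — at a common scale $\delta(t)$, and checking that the resulting error terms are $o(t^{(k_\theta+1)/(k_\theta-1)})$ uniformly, is the technical heart of the argument. The passage from $\Prob$-a.s.\ geometric events to the "in probability" statements in \eqref{e:tightnesslbva}--\eqref{e:tightnessubva} is then routine via Borel--Cantelli along a polynomial subsequence in $t$ plus monotonicity in $t$ of $\int_0^t V^{(\frK)}(W_s)\,\d s$ when $V^{(\frK)} \ge 0$ near $\cP$ (which it is, by \eqref{e:condscrK2}, after possibly subtracting a bounded part).
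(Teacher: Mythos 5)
Your lower bound rests on a cluster of $k_\theta$ Poisson points, and this is exactly one point too few. With $M=k_\theta$ poles and $\theta\le h_d/k_\theta$ (which holds since $k_\theta=\lfloor h_d/\theta\rfloor$), Hardy's inequality \eqref{e:Hardy1} in the form \eqref{e:MPHI_for_smalltheta} gives $\lambda_{\max}(\R^d,\theta V_\cY)=0$, so confinement near such a cluster produces no exponential reward at all and cannot beat the travel cost $R^2/t_0$; the strategy yields nothing beyond $\ee^{O(t)}$. The mechanism only works with $M=k_\theta+1$ points, for which $\theta M>h_d$ strictly: then the local potential dominates $(h_d+\eps)\widetilde V$ after rescaling, and the \emph{sharpness} of Hardy's inequality (trial functions $|x|^{-(d-2)/2}$ spread over many scales, as in the paper's Lemma~\ref{l:EVlbVK}, combined with Lemmas~\ref{l:LBpotential} and \ref{l:keyLB}) produces a strictly positive eigenvalue of order $(\text{cluster diameter})^{-2}$. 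Your crude comparison ``constant potential $\tfrac{k_\theta}{2}r^{-2}$ versus Dirichlet eigenvalue $\lambda_d r^{-2}/2$'' cannot be salvaged even with the right cluster size, since $\theta k_\theta/2\le h_d/2\ll\lambda_d/2$; one genuinely needs the singular structure. A further quantitative slip: replacing $\frK$ by $\tfrac12|x|^{-2}$ near the origin halves the effective coupling and changes $k_\theta$ to roughly $2k_\theta$, hence the exponent; \eqref{e:condscrK2} gives $\frK(x)\ge|x|^{-2}-O(1)$ with constant $1$, and the paper exploits exactly this (reducing to the truncated kernel $\frK_a$ via Lemma~\ref{l:errortruncation} and Theorem~\ref{t:limitfrac}).

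The upper bound has the dual problem: you propose to forbid $(k_\theta+1)$-fold clusters at scale $\delta(t)$, but these clusters are precisely the dominant contribution — on $B_{R}$ with $R\approx t^{k_\theta/(k_\theta-1)}$ they occur unavoidably at scale $R^{-1/k_\theta}$, and their principal eigenvalues, of order $R^{2/k_\theta}$, generate the rate $t^{(k_\theta+1)/(k_\theta-1)}$. What must be ruled out are $(k_\theta+2)$-clusters and $(k_\theta+1)$-clusters at scales $\ll R^{-1/k_\theta}$; this is the content of \eqref{e:evin} together with Proposition~\ref{p:MPHI} (which bounds the eigenvalue of a $(k_\theta+1)$-cluster by its connectivity radius) and Lemma~\ref{l:probupp}. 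Your claim that ``$k_\theta$-fold clusters contribute exactly the claimed rate'' is false for the same Hardy-inequality reason as above. Finally, the analytic step converting eigenvalue control into a pointwise bound on $\E_0[\exp\int_0^t\theta V^{(\frK)}(W_s)\dd s]$ is missing: the pathwise inequality $\int_0^t V(W_s)\dd s\le\theta^{-1}\tfrac12\int_0^t|\dot W_s|^2\dd s+\text{error}$ is meaningless for Brownian paths, and Khas'minskii-type arguments fail because the inverse-square potential is not in the Kato class. The paper resolves this with the excursion decomposition of Section~\ref{s:expansions} (Theorem~\ref{t:upperbound}, built on the resolvent bounds of Lemmas~\ref{l:L1res}--\ref{l:FK}), which is the technical heart of the upper bound and has no counterpart in your sketch.
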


The following two theorems provide almost-sure $\limsup$ and $\liminf$ asymptotics.
\begin{The}\label{t:limsupva}
Let $d\geq 3$, $\frK \in \scrK$ and $\theta\in (0,\frac{h_d}{2}]$. 
For any slowly varying $\ell\colon(0,\infty)\to(1,\infty)$,
\begin{align}\label{e:limsupva}
\limsup_{t\rightarrow\infty}t^{-\frac{k_\theta+1}{k_\theta-1}} \ell(t)^{-\frac{2}{d(k_\theta-1)}}\log u^{(\frK)}_\theta(t,0) =
\left\{
\begin{array}{ll}
0 & \Prob \mbox{-a.s.} \ \mbox{ if } \ \int_{1}^{\infty}\frac{\dd r}{r \ell(r)}<\infty, \\
\infty & \Prob \mbox{-a.s.} \ \mbox{ if } \ \int_{1}^{\infty}\frac{\dd r}{r \ell(r)}=\infty.
\end{array}\right.
\end{align}
\end{The}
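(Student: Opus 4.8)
The plan is to reduce everything to a Borel--Cantelli argument combined with the tightness estimates underlying Theorem~\ref{t:tightnessva}, exploiting the fact that the potential $V^{(\frK)}$ is large precisely when the Brownian path wanders into a region containing an atypically dense clump of Poisson points. Write $\gamma = \frac{k_\theta+1}{k_\theta-1}$ for the ``typical'' exponent and $\beta = \frac{2}{d(k_\theta-1)}$ for the slowly varying correction. The key heuristic is that $\log u^{(\frK)}_\theta(t,0)$ is of order $t^\gamma$ \emph{times} a random prefactor governed by the largest value of $V^{(\frK)}$ attained in a ball of radius $\sim t$ around the origin; more precisely, if within distance $R$ of the origin there is a ball of radius $\varrho$ carrying $\sim \lambda$ Poisson points, then by restricting the Brownian path to travel to that ball (cost $\sim R^2/t$ in the exponent over time $t$ by Brownian scaling, which is negligible when $R = o(t)$) and then sit near the clump, one gains $\exp(c\, t\, \lambda \varrho^{-2})$ up to logarithmic corrections; optimizing $\varrho$ against the Poisson large-deviation cost of the clump, which is $\sim \lambda \log(\lambda / \varrho^d)$, reproduces the exponent $t^\gamma$ and identifies $\ell(t)^\beta$ with the correction coming from the radius $R\sim t$ of space explored by time $t$. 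This is exactly the mechanism behind \eqref{e:tightnesslbva}--\eqref{e:tightnessubva}, so the lower-bound constructions and upper-bound controls developed for Theorem~\ref{t:tightnessva} will be reused here almost verbatim, only tracking the polylog factors carefully.

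First I would prove the \emph{convergence} case, i.e.\ that the $\limsup$ is $0$ when $\int_1^\infty \frac{\dd r}{r\ell(r)}<\infty$. Evaluate $\log u^{(\frK)}_\theta(t,0)$ along the geometric sequence $t_n = 2^n$. On the event that the $\limsup$ at time $t_n$ exceeds $\varepsilon$, the path must have reached a Poisson clump in the ball $B(0, C t_n)$ whose profile beats the baseline by a factor $\ell(t_n)^\beta(1+\varepsilon)$; a union bound over such clumps, using the Poisson large-deviation cost, produces a probability bounded by $C \exp(-c\,\varepsilon\, \log\ell(t_n))$ up to a volume factor $t_n^d$ absorbed into the slowly varying function --- more carefully, the probability is summable in $n$ exactly when $\sum_n \frac{1}{\ell(t_n)^{c}}<\infty$, which by the integral test and monotonicity-type properties of slowly varying functions (Potter bounds, adjusting constants by passing to $\ell^c$) is equivalent to $\int_1^\infty\frac{\dd r}{r\ell(r)}<\infty$. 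Borel--Cantelli then gives the $\limsup$ at the times $t_n$; monotonicity of $t\mapsto \int_0^t V^{(\frK)}(W_s)\dd s$ along paths and a routine interpolation using the upper bound from Theorem~\ref{t:tightnessva} upgrades this to all $t\to\infty$. For the \emph{divergence} case one runs a second-moment / independence argument: partition $\R^d$ into well-separated blocks $Q_n$ at scales $\sim t_n$, observe that the Poisson configurations in distinct blocks are independent, and show that for each $n$ the probability that $Q_n$ contains a clump good enough to force $\log u^{(\frK)}_\theta(t_n,0) \ge t_n^\gamma \ell(t_n)^\beta \cdot M$ (any fixed $M$) is $\sim \ell(t_n)^{-c}$ up to volume factors, which is \emph{not} summable precisely when the integral diverges; the second Borel--Cantelli lemma then forces infinitely many successes, hence the $\limsup$ is $+\infty$.

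The technical backbone common to both halves is a two-sided estimate, uniform in $t$ and in the target clump, relating $\E_0[\exp(\theta\int_0^t V^{(\frK)}(W_s)\dd s)]$ to the best ``clump profile'' $\sup_{B(0,Ct)} (\text{local Poisson excess})$. The lower bound is the easy direction: restrict the expectation to paths that hit a specified small ball near a clump and then localize, using the Hardy/Bessel spectral analysis (the threshold $\theta\le h_d/2$ and the integer $k_\theta$ enter here through the sharp constant in the multipolar Hardy inequality governing how much of the clump's singularity the path can profitably absorb) to evaluate the principal Dirichlet eigenvalue of $-\tfrac12\Delta - \theta V^{(\frK)}$ on that ball. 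The upper bound requires a spectral/entropy argument controlling the contribution of \emph{all} clumps simultaneously, essentially the content of the upper tightness bound \eqref{e:tightnessubva}, refined to expose the $\ell(t)^\beta$ dependence --- this is where the exponent $\beta = \frac{2}{d(k_\theta-1)}$ is pinned down, via the trade-off between the $d$-dimensional volume searched (radius $\sim t$, so $\log$-volume $\sim \log t$, with slowly varying refinement $\log\ell(t)$) and the $(k_\theta-1)$-type scaling of the optimal clump.

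The main obstacle I expect is \textbf{exactly this uniform spectral upper bound with the correct slowly varying prefactor}: one must show that no \emph{combination} of sub-optimal clumps at various scales can conspire to beat a single near-optimal clump by more than a slowly varying factor, and that the relevant factor is genuinely $\ell(t)^{2/(d(k_\theta-1))}$ and not some other power. Controlling the entropy of the family of admissible clump configurations in a ball of radius $\sim t$, and matching it precisely against the Poisson cost, is delicate; the natural route is to discretize space on a dyadic grid of clump-radii, apply a union bound at each scale, and sum, checking that the dominant scale is the one predicted by the optimization above and that the error from summing over scales is itself slowly varying. If instead one tries to compare directly to the renormalized potential results of \cite{Chen, ChenRosinski}, the critical case $\theta = h_d/2$ (equivalently $\theta = 1/16$ when $d=3$), where $k_\theta = 2$ and $\gamma = 3$, is the one left uncovered there, so that comparison cannot be invoked as a black box at the boundary and the spectral estimate must be carried out intrinsically --- which, pleasantly, is also what makes the present approach settle the open case from \cite{ChenRosinski}.
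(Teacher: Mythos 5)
Your overall architecture --- a Borel--Cantelli dichotomy along dyadic times, a clump-hitting construction for the divergence half, and a multi-scale union bound for the convergence half --- is the same as the paper's. But there is a genuine gap at the single point where the theorem's dichotomy is actually decided. You claim that the union-bound probability is summable ``exactly when $\sum_n \ell(t_n)^{-c}<\infty$,'' and that this is equivalent to $\int_1^\infty \frac{\dd r}{r\ell(r)}<\infty$ ``by Potter bounds, adjusting constants by passing to $\ell^c$.'' That equivalence is false for $c\neq 1$: with $\ell(r)=(\log r)^{1/2}$ one has $\int_1^\infty\frac{\dd r}{r\ell(r)}=\infty$ while $\sum_n \ell(2^n)^{-3}<\infty$, so a union bound producing any power other than $\ell^{-1}$ would break the dichotomy. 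The entire content of the exponent $\frac{2}{d(k_\theta-1)}$ is that it calibrates the Poisson cost to be \emph{exactly} $\ell(t)^{-1}$: the relevant event is that some ball of radius $r(t)\asymp t^{-1/(k_\theta-1)}\ell(t)^{-1/(d(k_\theta-1))}$ inside $B_{R(t)}$, $R(t)\asymp t^{k_\theta/(k_\theta-1)}\ell(t)^{1/(d(k_\theta-1))}$, contains $k_\theta+1$ Poisson points (not a variable number $\lambda$ --- the eigenvalue is zero for $\le k_\theta$ points by the multipolar Hardy inequality and jumps to order $r^{-2}$ at $k_\theta+1$), and its probability is $\asymp (R(t)\,r(t)^{k_\theta})^d\asymp \ell(t)^{-1}$, with matching upper and lower bounds (the paper's Lemmas~\ref{l:probupp} and \ref{l:problbppD}). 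This computation must be done; it cannot be absorbed into an unspecified constant $c$.

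The second gap is that the convergence half is not actually proved: you correctly identify the uniform spectral upper bound as the main obstacle, but the sketch (dyadic grid of clump radii plus union bound) omits the mechanism that makes it work at the critical parameter. The paper first reduces general $\frK\in\scrK$ to the truncated kernel via Theorem~\ref{t:limitfrac} (note that your appeal to ``monotonicity of $t\mapsto\int_0^t V^{(\frK)}(W_s)\dd s$'' fails, since kernels in $\scrK$ need not be nonnegative), then decomposes the path into excursions between neighbourhoods of the Poisson points (Theorem~\ref{t:upperbound}); this replaces the H\"older decomposition of \cite{ChenRosinski}, which is exactly what breaks down at $\theta=h_d/2$. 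The Feynman--Kac expectation over paths confined to $B_{R_n(t)}$ is then bounded by $\ee^{t\gamma_n(t)}$ with $\gamma_n(t)$ slightly above the maximal component eigenvalue $\Lambda_{R_n(t)}$, and the almost sure bound $\Lambda_{R_1(t)}=o\bigl(t^{2/(k_\theta-1)}\ell(t)^{2/(d(k_\theta-1))}\bigr)$ is precisely where the hypothesis $\sum_n\ell(2^n)^{-1}<\infty$ enters, again through the same $\ell^{-1}$ clump probability. Without carrying out this step (or an equivalent one), the convergence half remains a statement of the difficulty rather than a proof.
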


\begin{The}\label{t:liminfva} 
For any $d\geq 3$ and $\theta\in(0,\frac{h_d}{2}]$, there exist $0<C_{\inf}<C^{\inf}<\infty$ such that, for all $\frK \in \scrK$,
\begin{align}\label{e:liminfva}
\liminf_{t\rightarrow\infty}t^{-\frac{k_\theta+1}{k_\theta-1}} ( \log \log t )^{\frac{2}{d(k_\theta-1)}} \log u^{(\frK)}_\theta(t,0) 
\in [C_{\inf},C^{\inf}] \quad \Prob \mbox{-a.s.}
\end{align}
\end{The}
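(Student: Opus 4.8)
The plan is to prove the two inclusions $\liminf(\cdots)\ge C_{\inf}$ and $\liminf(\cdots)\le C^{\inf}$ separately, both organised around the same picture of the optimal path strategy: the Brownian motion runs to a tight cluster of \emph{exactly} $k_\theta+1$ Poisson points contained in a ball of small radius $r$, and then remains in a bounded dilate of that ball for almost all of $[0,t]$. By \eqref{e:defkatheta} the combined Hardy weight of such a cluster, $\theta(k_\theta+1)$, is strictly supercritical ($>h_d$), so by a multipolar Hardy inequality together with a rescaling/zoom-out argument --- and \eqref{e:condscrK2}, which permits replacing $\frK$ by $|x|^{-2}$ near its singularities once $r$ is small --- the operator $\tfrac12(-\Delta)-\theta V^{(\frK)}$ restricted to the ball of radius $Rr$ about the cluster, with $R=R(d,\theta)$ fixed and large, has principal Dirichlet eigenvalue of order $-c(d,\theta)\,r^{-2}<0$. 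The energy harvested over $[0,t]$ is then of order $r^{-2}t$, the cost of travelling distance $D$ to reach the cluster is of order $D^2/t$, and the probability that such a cluster is present within $B_D$ is of order $D^d r^{dk_\theta}$. Optimising $r^{-2}t-D^2/t$ subject to $D^d r^{dk_\theta}$ being at the threshold $\asymp\log\log t$ forced by the Borel--Cantelli step yields $r\asymp t^{-1/(k_\theta-1)}(\log\log t)^{1/(d(k_\theta-1))}$, $D\asymp t^{k_\theta/(k_\theta-1)}(\log\log t)^{-1/(d(k_\theta-1))}$ and $\log u^{(\frK)}_\theta(t,0)\asymp t^{(k_\theta+1)/(k_\theta-1)}(\log\log t)^{-2/(d(k_\theta-1))}$, exactly the normalisation in \eqref{e:liminfva}.

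For the lower bound I would fix a geometric sequence $t_n=\erom^{n}$, set $r_n$ and $D_n$ as above with implicit constants tuned so that $D_n^d r_n^{dk_\theta}$ exceeds a suitable multiple of $\log n=\log\log t_n$, and show by a first-moment estimate for the Poisson process that $\sum_n\P(\text{no }(k_\theta+1)\text{-cluster in a radius-}r_n\text{ ball inside }B_{D_n})<\infty$; Borel--Cantelli then gives such a cluster for all large $n$, $\P$-a.s. On that event one bounds $u^{(\frK)}_\theta(t_n,0)$ from below by restricting the Feynman--Kac expectation to paths that reach a $2r_n$-ball about the cluster within a small fixed fraction of $t_n$ (probability $\exp(-O(D_n^2/t_n))$) and then stay in the $Rr_n$-ball; the standard eigenvalue lower bound for such survival probabilities, together with the spectral input above and a suitable choice of the implicit constants, gives $\log u^{(\frK)}_\theta(t_n,0)\ge C_{\inf}\,t_n^{(k_\theta+1)/(k_\theta-1)}(\log\log t_n)^{-2/(d(k_\theta-1))}$ with $C_{\inf}>0$. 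An interpolation between consecutive $t_n$ (spending the surplus time in the same cluster) upgrades this to all large $t$. The kernel $\frK$ enters only through $O(t)$ corrections --- coming from $|\frK-|x|^{-2}|$ bounded near $0$ and from $\frK$ bounded and integrable away from $0$, cf.\ \eqref{e:condscrK1}--\eqref{e:condscrK2} and $\int_{|z|\le1}|z|^{-2}\dd z<\infty$ for $d\ge3$ --- which are negligible at the relevant order, so $C_{\inf}$ is uniform in $\frK$.

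For the upper bound I would decompose $u^{(\frK)}_\theta(t,0)$ according to the spatial range of the path into dyadic shells (reusing the scheme behind Theorem~\ref{t:tightnessva}), bounding the contribution of $\{W_{[0,t]}\subset B_{2^{k+1}D_0}\}$ by $\exp(-O((2^kD_0)^2/t))$ times $\exp(t\,|\lambda_1^-(B_{2^{k+1}D_0})|)$ up to subexponential factors, where $\lambda_1^-$ is the negative part of the principal Dirichlet eigenvalue of $\tfrac12(-\Delta)-\theta V^{(\frK)}$. The key estimate is $|\lambda_1^-(B_R)|\lesssim\rho^{-2}+O(\log R)$, valid whenever no ball of radius $\rho$ in $B_{R+1}$ contains more than $k_\theta$ Poisson points: one writes $\frK\le|x|^{-2}\mathbbm{1}_{\{|x|\le a_0\}}+\psi$ with $\psi\in L^1(\R^d)$, the $\psi$-part and the far-away points producing the lower-order shot-noise term, while for the $|x|^{-2}$-part a multipolar Hardy inequality adapted to the ``at most $k_\theta$ poles per scale $\rho$'' regime gives $\theta\sum_y\int\frac{g^2}{|x-y|^2}\le\tfrac12\|\nabla g\|_2^2+C(d,\theta)\rho^{-2}\|g\|_2^2$. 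A Poisson estimate shows that the probability that $B_{D_n}$ contains no $(k_\theta+1)$-cluster of radius $\le\rho_n\asymp t_n^{-1/(k_\theta-1)}(\log\log t_n)^{1/(d(k_\theta-1))}$ equals $(\log t_n)^{-\alpha}$ up to constants, for a parameter $\alpha\in(0,1)$ at our disposal (smaller $\alpha$ costs a larger $C^{\inf}$); beyond $B_{D_n}$ the escape cost already dominates any achievable gain, so the dyadic sum converges and, evaluated at the balancing scale $2^kD_0\asymp D_n$, gives $\log u^{(\frK)}_\theta(t_n,0)\le C^{\inf}(d,\theta)\,t_n^{(k_\theta+1)/(k_\theta-1)}(\log\log t_n)^{-2/(d(k_\theta-1))}$ on that event. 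Since $\sum_n(\log t_n)^{-\alpha}=\infty$ and, along $t_n=\erom^n$, the relevant events decouple asymptotically (on the event at scale $n-1$ the constraint that scale $n$ imposes inside $B_{D_{n-1}}$ is automatically met, the complementary annulus being independent), a Kochen--Stone-type second-moment argument gives that this event occurs infinitely often with positive probability; finally, since bounded modifications of $\omega$ change $\log u^{(\frK)}_\theta(t,0)$ by only $O(t)=o\big(t^{(k_\theta+1)/(k_\theta-1)}(\log\log t)^{-2/(d(k_\theta-1))}\big)$, the normalised $\liminf$ is measurable with respect to the tail $\sigma$-field of $\omega$, hence $\P$-a.s.\ a constant, which is therefore $\le C^{\inf}$.

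The main obstacle is the upper bound, and within it the multipolar Hardy inequality: one needs a version on balls with the right quantitative dependence on the clustering scale $\rho$ and uniform over all pole configurations having at most $k_\theta$ poles in any $\rho$-ball (the near-critical case $\theta k_\theta=h_d$ being the most delicate), and one must then splice the resulting eigenvalue bounds across all dyadic shells while keeping the $\frK$-dependent errors of genuinely lower order and $C^{\inf}$ depending on $d,\theta$ only; the ``bad event infinitely often'' step likewise requires care, since the natural events are nested rather than independent, which is why the proof passes to a geometric time sequence, localises to annuli, and invokes a correlation-controlled second Borel--Cantelli together with the tail triviality above. By that same tail triviality the $\liminf$ in \eqref{e:liminfva} is in fact an almost-sure constant; the statement only asserts membership in $[C_{\inf},C^{\inf}]$ because this constant is not identified explicitly --- the optimal multipolar Hardy constant for a general configuration of $k_\theta+1$ poles being unknown, and the lower-bound construction being forced through a sparse time sequence.
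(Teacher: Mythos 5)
Your overall architecture coincides with the paper's: clusters of exactly $k_\theta+1$ Poisson points at radius $r(t)\asymp t^{-1/(k_\theta-1)}(\log\log t)^{1/(d(k_\theta-1))}$ inside $B_{R(t)}$ with $R(t)\asymp t^{k_\theta/(k_\theta-1)}(\log\log t)^{-1/(d(k_\theta-1))}$, a multipolar‐Hardy eigenvalue bound of order $r^{-2}$ per cluster, the travel‐cost optimisation, a first Borel--Cantelli for the lower bound and a second Borel--Cantelli along a sparse time sequence for the upper bound; this is precisely the content of Lemmas~\ref{l:lemlb}, \ref{l:keyLB}, \ref{l:PPPliminf1}, Proposition~\ref{p:MPHI}, and Lemmas~\ref{l:keyUB}, \ref{l:ppliminf} and \ref{l:liminfev}. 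The one step where you genuinely diverge is the upgrade from ``the bad configuration occurs'' to ``it occurs infinitely often, $\Prob$-a.s.'', and there your argument has a gap.

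The decisive problem is the tail-triviality claim: a bounded modification of $\omega$ does \emph{not} change $\log u^{(\frK)}_\theta(t,0)$ by $O(t)$. Adding even one point introduces a new inverse-square singularity, and isolating its contribution by H\"older multiplies $\theta$ by some $q>1$; at the critical value $\theta=h_d/2$ the resulting exponential moment is a.s.\ infinite (converse of Theorem~\ref{t:finitenessva}), so the normalised liminf is not shown to be tail-measurable and Kochen--Stone leaves you with only positive probability. A secondary issue is that along $t_n=\ee^{n}$ the scales do not decouple: $R(t_{n-1})\,r(t_n)^{k_\theta}\asymp(\log\log t_n)^{1/d}$ is of the \emph{same} order as $R(t_n)\,r(t_n)^{k_\theta}$, so the inner ball carries a non-negligible share of the time-$t_n$ constraint; your nesting observation covers the innermost cluster radius but not the joint event over all dyadic shells with shell-dependent radii $b_m r(t_n)$ that the path expansion forces (the paper's condition \eqref{e:condck}), whose probability must still be $\ge(\log t_n)^{-\alpha}$, $\alpha<1$. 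The paper resolves both points at once by taking the much sparser sequence $t_j=t_{j-1}\exp\{\rho\log\log t_{j-1}\}$ in Lemma~\ref{l:ppliminf}: then $R(t_{j-1})r(t_j)^{k_\theta}\to0$ summably, so the inner ball is dispatched by a \emph{first} Borel--Cantelli, the remaining events live on disjoint annuli and are genuinely independent, and the second Borel--Cantelli gives the a.s.\ statement with no zero--one law. Substituting this construction for your $t_n=\ee^{n}$/Kochen--Stone/tail-triviality step, the rest of your plan matches the paper's proof.
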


Corresponding results also hold for the renormalized potential $\overbar{V}$ when $d=3$.
We start with the analogues of Theorems~\ref{t:finitenessva}--\ref{t:solva}.
\begin{The}\label{t:finiteness}
Let $d=3$.
For each $\theta\in (0,1/16]$, it holds $\Prob$-almost surely that
\begin{equation}\label{e:defvbar}
\overbar{v}_\theta(t,x) :=	\E_x\left[ \exp\left(\theta\int_0^t|\overbar{V}|(W_s)\dd s\right)\right] < \infty
\qquad \forall x\in\R^3\setminus \cP, \, t\geq 0.
\end{equation}
Moreover, $\overbar{v}_\theta(t, \cdot)$ belongs to $L^1_{\textnormal{loc}}(\R^3)$ for all $t \geq 0$.
As a consequence, for all $u_0 \in L^\infty(\R^3)$ the function
\begin{equation}\label{e:finiteness}
\overbar{u}^{(u_0)}_\theta(t,x) :=	\E_x\left[ u_0(W_t) \exp\left(\theta\int_0^t\overbar{V}(W_s)\dd s\right)\right]
 \; \text{ is well-defined for all } x\in\R^3\setminus \cP, \, t\geq 0,
\end{equation}
and $\overbar{u}^{(u_0)}_\theta(t,\cdot) \in L^1_{\textnormal{loc}}(\R^3)$ for all $t \geq 0$.
The analogous holds for $|\overbar{V}|$ in place of $\overbar{V}$.
\end{The}

\begin{The}\label{t:solvbar}
The function $\overbar{u}^{(u_0)}_\theta$ defined in \eqref{e:finiteness} is a mild solution to \eqref{e:PAM} with $d=3$, and $V = \theta \overbar{V}$.
The analogous holds for $|\overbar{V}|$ in place of $\overbar{V}$.
\end{The}

When $u_0 \equiv 1$, we will write $\overbar{u}_\theta$ instead of $\overbar{u}^{(u_0)}_\theta$. 
Our next theorem provides a convenient comparison between potential kernels, 
allowing us to concentrate on the truncated case $\frK_a(x)=|x|^{-2}\mathbbm{1}_{\{|x|\leq a\}}$.
\begin{The}\label{t:limitfrac}
For any $\theta \in (0,h_d/2]$, any $a \in (0,\infty)$ and any $\frK \in \scrK$,
\begin{equation}\label{e:limitfrac}
\lim_{t \to \infty} \frac{\log u^{(\frK)}_\theta(t,0)}{\log u^{(\frK_a)}_\theta(t,0)} = 1 \qquad \Prob \mbox{-almost surely.}
\end{equation}
The same is true with $v^{(\frK)}_\theta$ in place of $u^{(\frK)}_\theta$.
When $d=3$, \eqref{e:limitfrac} also holds with either $\overbar{v}_\theta$ or $\overbar{u}_\theta$ in place of $u^{(\frK)}_\theta$.
\end{The}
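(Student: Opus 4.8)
The plan is to prove \eqref{e:limitfrac} by comparing $\frK$ directly with the truncated kernel $\frK_a$, establishing both $\log u^{(\frK)}_\theta(t,0)\le(1+o(1))\log u^{(a)}_\theta(t,0)$ and the reverse inequality; the renormalized case $d=3$ is then handled by the same scheme (last paragraph). The heuristic is that the bulk of the Feynman--Kac average comes from Brownian paths confined to a ball of \emph{polynomial} radius, on which $V^{(\frK)}$ and $V^{(a)}$ differ only by an amount negligible on the relevant time scale. We use throughout that, by Theorem~\ref{t:liminfva} applied to $\frK_a$, $\Prob$-a.s.\ $\log u^{(a)}_\theta(t,0)\ge c_\omega\,t^{(k_\theta+1)/(k_\theta-1)}(\log\log t)^{-2/(d(k_\theta-1))}$ for all large $t$, with $(k_\theta+1)/(k_\theta-1)=1+2/(k_\theta-1)>1$; in particular $u^{(a)}_\theta(t,0)\to\infty$ and $t\log t=o(\log u^{(a)}_\theta(t,0))$. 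Fix $R=R(t):=t^{3}$ and $\tau_R:=\inf\{s\ge0:|W_s|\ge R\}$, and split each relevant Feynman--Kac average into its confined part $\E_0[\,\cdot\,;\tau_R>t]$ and excursion part $\E_0[\,\cdot\,;\tau_R\le t]$.

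The key structural point is that the difference kernel $\mathfrak{G}:=\frK-\frK_a$ lies in $L^1(\R^d)\cap L^\infty(\R^d)$: by \eqref{e:condscrK2} both kernels equal $|x|^{-2}$ plus a locally bounded remainder near the origin and are uniformly bounded on $\{|x|>a\}$, so the singularities cancel and $\mathfrak{G}$ is bounded, while integrability at infinity is forced by \eqref{e:condscrK1}. Hence $V^{(\mathfrak{G}^{+})},V^{(\mathfrak{G}^{-})}$ are Poisson shot-noise fields with bounded integrable kernels, and a union bound over a unit-lattice net of $B_R$ of cardinality $O(R^d)$ --- using that such shot noise has finite exponential moments of every order, and that \eqref{e:condscrK1} makes the $B_1$-supremum envelope of $\mathfrak{G}^{\pm}$ again integrable so that the net-to-supremum step stays in the same class --- gives, $\Prob$-a.s.,
\[
\Phi_\omega(R):=\sup_{x\in B_R}\big(V^{(\mathfrak{G}^{+})}(x)\vee V^{(\mathfrak{G}^{-})}(x)\big)=O(\log R),\qquad R\to\infty .
\]
On $\{\tau_R>t\}$ the path stays in $B_R$, so $\big|\int_0^t(V^{(\frK)}-V^{(a)})(W_s)\,\dd s\big|\le t\,\Phi_\omega(R)$, whence
\[
e^{-\theta t\Phi_\omega(R)}\,\E_0\big[e^{\theta\int_0^tV^{(a)}(W_s)\dd s};\tau_R>t\big]\ \le\ \E_0\big[e^{\theta\int_0^tV^{(\frK)}(W_s)\dd s};\tau_R>t\big]\ \le\ e^{\theta t\Phi_\omega(R)}\,u^{(a)}_\theta(t,0).
\]
Since $\theta t\,\Phi_\omega(R(t))=O(t\log t)=o(\log u^{(a)}_\theta(t,0))$, since $u^{(a)}_\theta(t,0)\ge1$ (as $V^{(a)}\ge0$) and $u^{(a)}_\theta(t,0)\to\infty$, and provided (next paragraph) the excursion parts are dominated by the respective confined parts, these two displays give $\big|\log u^{(\frK)}_\theta(t,0)-\log u^{(a)}_\theta(t,0)\big|\le\theta t\,\Phi_\omega(R)+\log2=o(\log u^{(a)}_\theta(t,0))$, which is exactly \eqref{e:limitfrac}.

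It remains to control the excursion parts $\E_0[e^{\theta\int_0^tV(W_s)\dd s};\tau_R\le t]$ for $V=V^{(\frK)}$ and $V=V^{(a)}$. When $\theta<h_d/2$ this is routine: choosing $p>1$ with $p\theta\le h_d/2$, Hölder bounds such a term by $u^{(\cdot)}_{p\theta}(t,0)^{1/p}\,\P_0(\tau_R\le t)^{1-1/p}$; the reflection principle gives $\P_0(\tau_R\le t)\le Ce^{-cR^2/t}=Ce^{-ct^{5}}$, while $\log u^{(\cdot)}_{p\theta}(t,0)\le C_\omega t^{4}$ for large $t$ (from Theorem~\ref{t:limsupva} with parameter $p\theta$ --- for $\frK$, or, to avoid any circularity, by one further Hölder split $V^{(\frK)}=V^{(a)}+V^{(\mathfrak{G})}$ using that $\mathfrak{G}$ is in the Kato class, so $V^{(\mathfrak{G})}$ contributes only a factor $e^{C_\omega t}$), and since $R^2/t=t^{5}\gg t^{4}$ these terms tend to $0$. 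The endpoint $\theta=h_d/2$ (where $k_\theta=2$) is genuinely harder and is, I expect, the main obstacle: no Hölder exponent above $\theta$ is admissible, one sits exactly at the integrability edge, and in fact the naive strong-Markov decomposition at $\tau_R$ fails because $\E_0[e^{\theta\int_0^{\tau_R}V^{(a)}(W_s)\dd s}]$ is already infinite $\Prob$-a.s.\ for large $R$ (a tight cluster of $k_\theta+1$ points in $B_R$ makes the Dirichlet quadratic form $\tfrac12\|\nabla\cdot\|^2-\theta\int V^{(a)}|\cdot|^2$ indefinite by the multipolar Hardy inequality). One must instead use that exiting $B_R=B_{t^3}$ carries the Gaussian cost $e^{-cR^2/t}=e^{-ct^{5}}$, which dwarfs the optimal confined gain $t^{(k_\theta+1)/(k_\theta-1)}=t^{3}$ --- attained at a cluster only $\sim t^{k_\theta/(k_\theta-1)}\le t^{2}$ from the origin --- together with a ground-state transformation near the singularities (the analysis already underlying the critical finiteness Theorem~\ref{t:finitenessva}) and a bound on $u^{(\frK)}_\theta(\cdot,x)$ that is sub-exponential in the starting point $|x|$, to show directly that the excursion contribution is $o(u^{(a)}_\theta(t,0))$. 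Making this step precise is where the bulk of the work lies.

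Finally, for $d=3$ and $\overbar{u}_\theta$ one writes $\overbar{V}=V^{(\frK_1)}+\widetilde V$, where $\widetilde V(x)=\int_{\{|y|>1\}}|x-y|^{-2}\,[\omega(\dd y)-\dd y]$ is the $\Prob$-a.s.\ finite renormalized tail --- a centered, finite-variance, infinitely divisible field with exponentially decaying increment tails, so that $\sup_{x\in B_R}|\widetilde V(x)|=O(\log R)$ $\Prob$-a.s. Since also $\frK_1-\frK_a\in L^1(\R^3)\cap L^\infty(\R^3)$, the confinement argument of the second paragraph applies verbatim with the correction fields $V^{(\mathfrak{G}^{\pm})}$ replaced by $V^{((\frK_1-\frK_a)^{\pm})}+\widetilde V^{\pm}$ (still $O(\log R)$ on $B_R$), and the excursion estimate of the third paragraph uses Theorem~\ref{t:finiteness} in place of Theorem~\ref{t:finitenessva}, with the same Hardy-type input at $\theta=1/16$. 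This yields \eqref{e:limitfrac} with $\overbar{u}_\theta$ in place of $u^{(\frK)}_\theta$ and completes the proof.
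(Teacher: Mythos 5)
Your overall skeleton is the same as the paper's: restrict to paths confined to a ball of polynomial radius $R(t)$, compare $V^{(\frK)}$ and $V^{(a)}$ there via the a.s.\ truncation error bound (the paper's Lemma~\ref{l:errortruncation}, which rests on Lemma~\ref{l:decayPoissonInt} -- your shot-noise $O(\log R)$ estimate is exactly this), absorb the resulting $O(t\log t)$ error using the a.s.\ lower bound for the truncated potential (Lemma~\ref{l:LBliminfVa}, which is indeed available non-circularly), and then argue that the excursion contribution $\E_0[e^{\theta\int_0^t V(W_s)\,\dd s};\tau_{B_R^\cc}\le t]$ is negligible. The renormalized case is also handled the same way, via the decomposition into a truncated part plus an a.s.\ locally bounded remainder as in Lemma~\ref{l:errortruncation}.

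The genuine gap is the excursion estimate at the critical parameter $\theta=h_d/2$ (and $\theta=1/16$ for $\overbar V$), which you yourself flag as ``where the bulk of the work lies'' without supplying it. Your H\"older argument only covers $\theta<h_d/2$, since it needs $p\theta\le h_d/2$ for some $p>1$; at the endpoint no admissible exponent exists, and, as you correctly note, a single strong-Markov split at $\tau_{B_R^\cc}$ fails because the stopped functional $\E_0[e^{\theta\int_0^{\tau_{B_R^\cc}}V^{(a)}}]$ is a.s.\ infinite for large $R$. What is missing is precisely the content of the paper's Lemma~\ref{l:macrobox}, which is proved \emph{before} Theorem~\ref{t:limitfrac} and is valid up to and including the endpoint: it rests on the path-expansion machinery of Section~\ref{s:expansions} (Theorem~\ref{t:upperbound}) combined with the dyadic radii decomposition of Lemma~\ref{l:keyUB}, i.e.\ one splits the path according to the dyadic annulus $B_{R_n(t)}\setminus B_{R_{n-1}(t)}$ it reaches, and within each annulus decomposes it into excursions between $3a$- and $r$-neighbourhoods of the Poisson clusters, controlling each excursion by the resolvent bounds of Section~\ref{ss:UB} and the eigenvalue bounds from the multipolar Hardy inequality. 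Your single-radius split $R(t)=t^3$ cannot substitute for this: the excursion event still contains paths that barely exit $B_{R(t)}$ and then profit from a cluster at distance slightly larger than $R(t)$, so one must compare cost and gain at \emph{every} intermediate scale, which is exactly what the dyadic sum in \eqref{e:keyUB} accomplishes. Until that lemma (or an equivalent) is proved, the argument is incomplete at the endpoint, which is the case of principal interest in the paper.
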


Finally, using Theorem~\ref{t:limitfrac}, we can transfer our results for $V^{(\frK)}$ to $|V^{(\frK)}|$, $\overbar{V}$ and $|\overbar{V}|$:
\begin{The}\label{t:tightness}
The statements of Theorems~\ref{t:tightnessva}, \ref{t:limsupva} and \ref{t:liminfva} are true for $v^{(\frK)}_\theta$ in place of $u^{(\frK)}_\theta$.
When $d=3$, these statements also hold with either $\overbar{v}_\theta$ or $\overbar{u}_\theta$ in place of $u^{(\frK)}_\theta$.
\end{The}

We discuss next our results and provide some heuristics for the scale $t^{(k_\theta +1)/(k_\theta - 1)}$.

\subsection{Discussion and heuristics}
\label{ss:discussion}

\noindent
{\bf 1)}
Theorems~\ref{t:tightnessva}--\ref{t:liminfva}
imply that there is no rescaling under which $\log u^{(\frK)}_\theta(t,0)$
converges almost surely as $t\to\infty$ to a non-trivial deterministic constant (and analogously for $\log \overbar{u}_\theta(t,0)$ by Theorem~\ref{t:tightness}).
We conjecture that, after rescaling by $t^{(k_\theta+1)/(k_\theta-1)}$, 
it converges in distribution to a non-degenerate random variable.
We also conjecture that the $\liminf$ in Theorem~\ref{t:liminfva} is deterministic.

\vspace{10pt}
\noindent
{\bf 2)}
As already mentioned, our main contribution in Theorems~\ref{t:finiteness}, \ref{t:solvbar} and \ref{t:tightness}
is the boundary case $\theta=1/16$, left open in \cite{CR11}.
The proof given in \cite{CR11} that $\overbar{u}_\theta$ is well-defined for $0<\theta<\frac{1}{16}$ 
cannot be extended to the case $\theta=\frac{1}{16}$, as it is based on the following strategy.
Decompose the Brownian path according to 
which of the cubes $Q_n=(-R_n,R_n)^3$ has been exited until time $t$, 
where $(R_n)_{n\in\N}$ is some properly chosen increasing sequence; i.e.,
setting $\tau_0=0$, $\tau_n=\inf\{s\geq 0\colon W_s\notin Q_n\}$, write
\begin{align*}
\E_0\left[\exp\left(\theta\int_0^t\overbar{V}(W_s)\dd s\right)\right]
&	=\sum_{n=1}^{\infty}\E_0\left[\exp\left(\theta\int_0^t\overbar{V}(W_s)\dd s\right)\mathbbm{1}_{\{\tau_{n-1}\leq t< \tau_n\}}\right]\\
&\leq \sum_{n=1}^{\infty}\Prob \left[\tau_{n-1}\leq t\right]^{1/p}\E_0\left[\exp\left(q\theta\int_0^t\overbar{V}(W_s)\dd s\right)\mathbbm{1}_{\{t< \tau_n\}}\right]^{1/q}
\end{align*}
by Hölder's inequality, where $p,q>0$, $p^{-1}+q^{-1}=1$.
The last expectation cannot be controlled if $q \theta > 1/16$, 
and thus $\theta < 1/16$ is required to use this argument.
In order to overcome this, we develop for our proof a more careful decomposition of Brownian paths
according to their excursions to and from certain \emph{islands} whose principal eigenvalues are large,
cf.\ Section~\ref{s:expansions} below.

\vspace{10pt}
\noindent
{\bf 3)} Even though we only prove Theorems~\ref{t:finitenessva}--\ref{t:solva} and \ref{t:finiteness}--\ref{t:solvbar}
for initial data $u_0 \in L^\infty(\R^d)$, a close inspection of our proofs will show that
they may be generalized to $u_0 \in L^\infty_\textnormal{loc}(\R^d)$ 
satisfying e.g.\ 
\[
\limsup_{R \to \infty} \frac{\log \log \| u_0 \|_{L^\infty(B_R)}}{\log R} < \frac{1}{k}.
\]
The key step is to adapt the statement of Lemma~\ref{l:keyUB} taking the above condition into account.

\vspace{10pt}
\noindent
{\bf 4)}
Let $\E^t_{x,y}$ denote expectation under the law of a Brownian bridge between $x,y \in \R^d$ in time $t$,
and write $p_t(x-y) = (2 \pi t)^{-d/2}\ee^{-|x-y|^2/(2t)}$
for the Brownian transition kernel. 
Since the law of the Brownian bridge is a regular conditional probability for the 
law of Brownian motion given $W_t = y$ (cf.\ e.g.\ \cite[Appendix to Part~I]{Szn98}), 
Theorem~\ref{t:finitenessva} implies that, $\Prob$-a.s., for all $R>0$,
\[
\int_{B_R \times \R^d} p_t(x-y) \E^t_{x,y} \left[\exp \left( \theta \int_0^t |V^{(\frK)}|(W_s) \dd s \right) \right] \dd x \, \dd y
= \int_{B_R} v^{(\frK)}_\theta(t, x) \dd x < \infty
\]
and thus, for all $t\geq0$ and almost every $x,y \in \R^d$, the function
\begin{equation}\label{e:deffundsolBB}
(t,x) \mapsto p_t(x-y) \E^t_{x,y} \left[\exp \left( \theta \int_0^t V^{(\frK)}(W_s) \dd s \right) \right]
\end{equation}
is well-defined and, by Theorem~\ref{t:solva}, solves \eqref{e:PAM} with initial condition $u_0 = \delta_y$. 
In $d=3$, the analogous is true for $\overbar V$ by Theorems~\ref{t:finiteness}--\ref{t:solvbar}.
Using the techniques of Section~\ref{s:expansions} and Section~\ref{ss:UB}, we could extend the definition of \eqref{e:deffundsolBB}
to all $x,y \in \R^d \setminus \cP$; in the interest of brevity, we will not pursue this here.

\vspace{10pt}
\noindent
{\bf 5)}
We provide next some heuristics for the scale $t^{(k_\theta+1)/(k_\theta-1)}$ appearing in Theorem~\ref{t:tightnessva}.
The main point is that the logarithmic order of $u^{(\frK)}_\theta(t,0)$ 
is the same when restricting the expectation to Brownian paths that reach by time $s<t$
a region $D\subset \R^d$ containing precisely $k_\theta+1$ Poisson points,
and afterwards stay there until time $t$.
Spectral methods show that the reward for staying in $D$ for time $t-s$
is approximately $\ee^{(t-s) \lambda_{\max}}$,
where $\lambda_{\max}$ is the principal Dirichlet eigenvalue of $\tfrac12 \Delta + V^{(\frK)}$
in $D$.
Asymptotics for this eigenvalue may be estimated with the
help of \emph{multipolar Hardy inequalities} as in \cite{BDE08} (see also Section~\ref{ss:MPHI} below),
yielding that its order roughly equals $\diam(D)^{-2}$.
Now, if $R$ is the distance of $D$ to the origin, Poisson statistics dictate that it may be chosen with $\diam(D) \approx R^{-1/k_\theta}$,
but not much smaller.
On the other hand, the probabilistic cost for Brownian motion to reach $D$ by time $s$ is roughly $\ee^{-R^2/s}$.
The total contribution is thus about $\exp \{(t-s)R^{2/k_\theta} - R^2/s\}$;
optimizing the exponent over $s$ and $R$, we obtain $R=t^{k_\theta/(k_\theta+1)}$
and $\log u^{(\frK)}_{\theta}(t,0) \approx t^{(k_\theta+1)/(k_\theta-1)}$.

\subsection{Outline and notation}
\label{ss:outline}
The rest of the paper is organized as follows.
After introducing some notation,
we develop in Section~\ref{s:DSB} 
upper and lower spectral bounds on the Feynman-Kac functional \eqref{e:FKrepintro} in the setting of deterministic point clouds. 
The upper bounds are extended in Section~\ref{s:expansions} using a path decomposition technique.
Section~\ref{s:SDPC} presents some elementary geometric properties of the standard Poisson point process. 
The proofs of the main theorems are completed in Section~\ref{s:PMT}.

\vspace{10pt}
\noindent
{\bf Notation and terminology.}
We write $B_r(x)=\{y\in \R^d\colon |x-y|<r\}$ for the open ball with radius $r\in(0,\infty)$ 
around $x\in\R^d$ with respect to the Euclidean norm $|\cdot|$; 
when $x=0$ we abbreviate $B_r:=B_r(0)$.
For $D\subset \R^d$, we write $B_r(D)=\{x\in \R^d\colon \exists y\in D, |x-y|<r\}$ for the $r$-neighbourhood of $D$.
We denote by $|D|$ the volume of a Borel measurable subset $D\subset \R^d$,
and by $\tau_D :=\inf\{t\geq 0\colon  W_t\in D\}$ the entrance time of Brownian motion in $D$.
A subset $D \subset \R^d$ is called a \emph{domain} if it is open and connected.
For a real-valued function $f$, a positive function $g$ and $a \neq 0$, we write $f(x) \sim a g(x)$ 
as $x \to \infty$ to denote that $\lim_{x \to \infty} f(x)/g(x) = a$;
when $a = 0$, we write $f = o(g)$ instead, or equivalently $|f| \ll g$ or $g \gg |f|$.
We write $f = \mathcal{O}(g)$ as $x\to\infty$ if there exists a constant $C\in (0,\infty)$ such that $f(x) \le C g(x)$ for all large enough $x$.
We write $\log^+ x := \log (x \vee \ee)$, $x \in \R$.

\section{Deterministic spectral bounds}
\label{s:DSB}

In this section, we consider Brownian motion in $\R^d$, $d \ge 3$, moving among a deterministic point cloud.
Our goal is to obtain lower and upper spectral bounds in $L^1$ and $L^\infty$ for relevant Feynman-Kac formulae.
First we collect some basic tools from the theory of Schrödinger operators (Section \ref{ss:SO}),
which are then applied to derive upper bounds on both time-dependent and stopped Feynman-Kac functionals (Section~\ref{ss:UB}). 
After that, we obtain a lower bound for the time-dependent functional (Section \ref{ss:LB}), 
and conclude the section with a multipolar Hardy inequality (Section \ref{ss:MPHI}).

Define the family of non-empty, locally finite subsets of $\R^d$
\begin{equation}\label{e:locfin}
\scrY=\{ \cY\subset \R^d \colon\, \cY \neq \emptyset, \, \# K \cap \cY < \infty \; \forall \text{ compact } K \subset \R^d\},
\end{equation} 
as well as the family of non-empty, finite subsets
\begin{equation}\label{e:fin}
\scrYf=\{ \cY \in \scrY \colon\, \#\cY<\infty\}.
\end{equation} 
Note that the support $\cP=\{x\in\R^d\colon\omega(\{x\})=1\}$ of the Poisson point process $\omega$ 
belongs almost surely to $\scrY$.
For $\cY\in\scrY$ and $a\in(0,\infty]$
satisfying either $\cY \in \scrYf$ or $a < \infty$, let
\begin{equation}\label{e:deffinpot}
V_\cY^{(a)}(x)=\sum_{y\in\cY}\frac{\mathbbm{1}_{\{|x-y| \leq a\}}}{|x-y|^2}, \quad  x\in\R^d\setminus \cY.
\end{equation}
When $a<\infty$, $V_\cP^{(a)} = V^{(\frK_a)}$ as in \eqref{e:defVK} with $\frK_a(x) = |x|^{-2} \mathbbm{1}_{\{|x|\le a\}}$. 
For $\cY\in\scrYf$, we write $V_\cY=V_\cY^{(\infty)}$.
%
\subsection{Preliminaries on Schrödinger operators and the Feynman-Kac formula}
%
\label{ss:SO}
The content of this section is classical and has been treated by many authors. 
Our major references here are the books \cite{EN91} by Engel and Nagel and \cite{CZ95} by Chung and Zhao.

Let $D\subset\R^d$ be an open subset. 
By $H^1_0(D)$ we denote the Sobolev space on $D$ with zero-boundary condition, i.e. the closure of the space $C_c^\infty(D)$ of smooth, compactly supported functions on $D$ with respect to the Sobolev norm $\|f\|_{H^{1}(D)}=\sum_{1\leq i \le d}\|\partial_i f\|_{L^2(D)}$, where ``$\partial_i$'' denotes differentiation with respect to the $i$-th coordinate.
For a potential $q \in L^1_{\textnormal{loc}}(D)$, we define
\begin{align}\label{e:deflambdamax}
\lambda_{\max}(D,q) := \sup_{g \in H_0^1(D),\, \|g\|_{L^2(D)} =1} \left\{ \int_{D} q(x) g(x)^2 \dd x - \frac12\int_{D} |\nabla g(x)|^2 \dd x \right\} \; \in \R \cup \{+\infty\}.
\end{align}

Note that $\lambda_{\max}(D, q) \ge 0$ if $q \ge 0$; more generally, the following monotonicity property holds.

\begin{Rem}\label{r:evmonoton}
Let $D_1\subset D_2\subset \R^d$ be open and $q_1\in L_{\textnormal{loc}}^1(D_1), q_2\in L^1_{\textnormal{loc}}(D_2)$ with $q_1\leq q_2$ on $D_1$. Then
\begin{equation}\label{e:evmonoton}
\lambda_{\max}(D_1,q_1)\leq \lambda_{\max}(D_2,q_2).
\end{equation}
\end{Rem}

When $q$ has some regularity (e.g.\ when it is in the Kato class), $\lambda_{\max}(D,q)$ is the supremum of the spectrum of
the \emph{Schr\"odinger operator} $\cH_q = \Delta + q$ in $L^2(D)$ with zero Dirichlet boundary conditions,
where $\Delta$ is the weak Laplacian whose domain is dense in $H_0^1(D)$.
This holds in particular when
\begin{equation}\label{e:qbounded}
q \in L^\infty(D),
\end{equation}
in which case $\lambda_{\max}(D,q) < \infty$ and $\cH_q$ is a closed self-adjoint operator
generating a strongly continuous semigroup $(T_t)_{t\geq 0}=(\ee^{t \cH_q})_{t\geq 0}$ on $L^2(D)$
(see e.g.\ \cite[Proposition~3.29]{CZ95}).
We will assume \eqref{e:qbounded} in the remainder of this subsection.

An important fact about $\lambda_{\max}$ is that it controls the growth of $T_t$ via the inequality
\begin{equation}\label{e:FKL2B}
\|T_t f\|_{L^2(D)}\leq \|f\|_{L^2(D)} \exp\{t\lambda_{\max}(D, q)\} \qquad \forall\, t\geq 0,
\end{equation}
cf.~e.g.~\cite[Equation~(30), Section~8.3]{CZ95}. 
From this we get the basic but crucial bound for the resolvent 
of the operator $\cH_q$ (cf.\ e.g.\ \cite[Theorem~II.1.10]{EN91}): for $\gamma>\lambda_{\max}(D,q)$,
\begin{equation}\label{e:resbound}
\|(\cH_q-\gamma)^{-1}\|_{L^2(D)\rightarrow L^2(D)}\leq \frac{1}{\gamma-\lambda_{\max}(D,q)}.
\end{equation}

The semigroup $(T_t)_{t\geq 0}$ can be used to solve the initial boundary value problem
\begin{align}\label{e:PAMD}
\partial_t u(t,x)&=\frac{\Delta}{2}u(t,x) + q(x)u(t,x),& &(t,x)\in [0,\infty)\times D\\
\label{e:PAMBC}u(t,x)&=0,& &(t,x)\in [0,\infty)\times \partial D\\
\label{e:PAMIC}u(0,x)&=u_0(x),& &x\in D
\end{align}
with initial data $u_0\in L^2(D)$, as follows. 
We want to consider solutions to \eqref{e:PAMD}-\eqref{e:PAMIC}
in the \emph{mild} sense (cf.\ \cite[Definition~6.1.1]{Paz83}), 
i.e., we demand that
\begin{equation}\label{e:defmild1}
\int_0^t \int_{D}p_{t-s}(x-y)|q(y)u(s,y)|\dd y\dd s<\infty \qquad \forall \, x\in D, t>0
\end{equation}
and 
\begin{equation}\label{e:defmild2}
u(t,x)= u_0(x) + \int_0^t \int_{D}p_{t-s}(x-y)q(y)u(s,y)\dd y\dd s \qquad \forall\, x\in D, t>0,
\end{equation}
where $p_t(x)$ is the Gaussian density
\begin{equation}\label{e:Gaussiandensity}
p_t (x) := (2\pi t)^{-d/2}\exp\{-|x|^2/(2t)\},
\end{equation}
i.e., the transition density of Brownian motion at time $t$ started from $0$.

The next proposition characterizes the mild solutions to \eqref{e:PAMD}--\eqref{e:PAMIC},
connecting Schrödinger semigroups and Brownian motion via the celebrated Feynman-Kac representation:
\begin{Pro}[Feynman-Kac formula] 
\label{p:FKrep}
Under \eqref{e:qbounded},
the unique mild solution to \eqref{e:PAMD}-\eqref{e:PAMIC} is given by
\begin{align}u(t,x)= T_t u_0 (x) = \E_{x}\left[u_0(W_t)\exp\left( \int_0^t q(W_s)\dd s\right)\mathbbm{1}_{\{\tau_{D^\cc} > t\}}\right].\label{e:FKrep}
\end{align}
\end{Pro}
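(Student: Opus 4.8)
The plan is to recognize the statement as the classical Feynman--Kac theorem for a Dirichlet Schrödinger semigroup with a bounded potential, and to assemble it from the references \cite{Chung, Engel2006, Pazy} in three steps: (i) identify the right-hand side of \eqref{e:FKrep} with $T_t u_0$; (ii) verify that $u := T_\cdot u_0$ satisfies \eqref{e:defmild1}--\eqref{e:defmild2}; (iii) prove uniqueness. Throughout I write $M := \| q \|_{L^\infty(D)} < \infty$, and I let $(S^D_t)_{t \ge 0}$, $S^D_t f(x) := \E_x[f(W_t) \mathbbm{1}_{\{\tau_{D^\cc} > t\}}]$, be the semigroup of Brownian motion killed on leaving $D$, i.e.\ the one generated by $\tfrac12 \Delta$ with zero Dirichlet conditions on $\partial D$. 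In the domains that actually occur in this paper ($D = \R^d$, or $D = \R^d \setminus \cY$ with $\cY \in \scrY$), the complement $D^\cc$ is polar for Brownian motion in $d \ge 3$, so $\tau_{D^\cc} = \infty$ $\P_x$-a.s., the indicator in \eqref{e:FKrep} is identically $1$, and the kernel of $S^D_t$ is the Gaussian $p_t$ from \eqref{e:Gaussiandensity}; this is what reconciles the free heat kernel appearing in \eqref{e:defmild1}--\eqref{e:defmild2} with the nominally Dirichlet problem \eqref{e:PAMD}--\eqref{e:PAMIC}.

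For step (i): since $q$ is bounded it lies in the Kato class, so the Feynman--Kac representation of the Dirichlet Schrödinger semigroup $T_t = \ee^{t \cH_q}$ applies, namely $T_t u_0(x) = \E_x[u_0(W_t) \exp(\int_0^t q(W_s)\,\dd s)\mathbbm{1}_{\{\tau_{D^\cc} > t\}}]$; I would quote this from \cite{Chung}, or derive it from the case $q \equiv 0$ by the Trotter product formula, using that $q$ acts as a bounded perturbation of $\tfrac12 \Delta$ (cf.\ \cite{Engel2006}). One should also record that the right-hand side is finite for \emph{every} $x \in \R^d$, hence defines a genuine function rather than an $L^2$-equivalence class: bounding $\exp(\int_0^t q(W_s)\,\dd s) \le \ee^{Mt}$ and applying Cauchy--Schwarz with $p_t \in L^2(\R^d)$ gives $|T_t u_0(x)| \le \ee^{Mt} \| p_t \|_{L^2} \| u_0 \|_{L^2} < \infty$.

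For step (ii): the identity to prove is the Duhamel (variation-of-constants) formula for the bounded perturbation $q$ of $\tfrac12\Delta$, namely $T_t u_0 = S^D_t u_0 + \int_0^t S^D_{t-s}(q\, T_s u_0)\,\dd s$; inserting the kernel $p_{t-s}$ of $S^D_{t-s}$ turns this into \eqref{e:defmild2}. Probabilistically I would obtain it from the Markov property: start from the elementary identity $\exp(\int_0^t q(W_r)\,\dd r) = 1 + \int_0^t q(W_s)\exp(\int_s^t q(W_r)\,\dd r)\,\dd s$, multiply by $u_0(W_t)\mathbbm{1}_{\{\tau_{D^\cc} > t\}}$, take $\E_x$ and use Fubini (legitimate since the integrand is dominated by $\ee^{Mt}|u_0(W_t)|$, which is $\E_x$-integrable), and then on the event $\{\tau_{D^\cc} > s\} \supseteq \{\tau_{D^\cc} > t\}$ condition on $\cF_s$ and invoke the Markov property at time $s$ to replace the inner expectation by $(T_{t-s}u_0)(W_s)$, followed by the substitution $s \leftrightarrow t-s$. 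The integrability condition \eqref{e:defmild1} then follows from $M < \infty$, $\int_D p_{t-s}(x-y)\,\dd y \le 1$ and $|T_s u_0(y)| \le \ee^{Ms}\E_y[|u_0(W_s)|]$, since $\int_0^t\!\int_D p_{t-s}(x-y)\,\ee^{Ms}\E_y[|u_0(W_s)|]\,\dd y\,\dd s \le t\,\ee^{Mt}\,\E_x[|u_0(W_t)|] < \infty$ by Chapman--Kolmogorov.

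For step (iii): if $u_1, u_2$ are two mild solutions, their difference $w := u_1 - u_2$ satisfies $w(t,x) = \int_0^t\!\int_D p_{t-s}(x-y)q(y)w(s,y)\,\dd y\,\dd s$ together with \eqref{e:defmild1}; iterating this identity $n$ times, using $\int_D p\,\dd y \le 1$ and $|q| \le M$, bounds $|w(t,x)|$ by $(Mt)^n/n!$ times $\sup_{s \le t}\| w(s)\|$ in the relevant norm, which vanishes as $n \to \infty$, so $w \equiv 0$. Equivalently, \eqref{e:defmild1}--\eqref{e:defmild2} is just the concrete form of the mild-solution identity of \cite[Definition~6.1.1]{Pazy} for the generator $\tfrac12\Delta$ and the bounded linear (hence globally Lipschitz) nonlinearity $u \mapsto qu$, for which existence and uniqueness are the standard contraction-mapping result. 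I expect the only fussy point --- bookkeeping rather than a genuine difficulty --- to be matching the paper's pointwise formulation \eqref{e:defmild1}--\eqref{e:defmild2} against the operator-theoretic formulation of \cite{Chung, Engel2006, Pazy}, i.e.\ checking that the two notions of mild solution coincide on the relevant class of functions (using polarity of $D^\cc$ and the a priori bounds above to pass between $L^2$ and pointwise statements) and then transporting the classical existence--uniqueness statement; everything else is routine given the boundedness of $q$.
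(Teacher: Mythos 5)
Your proposal is correct and takes essentially the same route as the paper, whose entire proof is a citation of the bounded-perturbation/variation-of-constants result \cite[Proposition~II.6.4]{Engel2006} and the Feynman--Kac representation of the Dirichlet Schrödinger semigroup \cite[Theorems~3.17 and 3.27]{Chung} --- precisely the two ingredients you unpack in steps (i)--(iii), with uniqueness via the standard Picard iteration. Your remark that the free Gaussian kernel in \eqref{e:defmild1}--\eqref{e:defmild2} only coincides with the Dirichlet semigroup when $D^\cc$ is negligible for the Brownian path (as in the paper's actual application of the mild-solution property, where $D=\R^d$) is a fair reading of a point the paper glosses over.
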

\begin{proof}
Follows from e.g.\ \cite[Proposition~II.6.4]{EN91}) and \cite[Theorems~3.17 and 3.27]{CZ95}.
\end{proof}

Additionally to the time-dependent Feynman-Kac formula \eqref{e:FKrep},
we will use a \emph{stopped} Feynman-Kac formula as follows.  
Consider the time-independent Schrödinger equation
\begin{equation}\label{e:FK2}
\begin{aligned}
\frac{\Delta}{2}u(x)+ q(x) u(x) &= \gamma u(x), \;\, \quad x \in D,\\
u(x)&=f(x), \quad \quad  x \in \partial D,
\end{aligned}
\end{equation}
with $f\colon \partial D\rightarrow \R$ continuous
and $\gamma  \in \R$.
A function $u\in L^1_{\textnormal{loc}}(D)$ is called a \emph{weak solution} to \eqref{e:FK2} if 
\begin{equation}
\int_D u(x)\Delta\phi(x)\dd x = -2\int_D (q(x)-\gamma) u(x)\phi(x) \dd x 
\end{equation}
for all $\phi\in C_c^\infty(D)$, and $u$ is continuous on $\overbar{D}$ with $u=f$ on $\partial D$.
Recall that $D$ is called \emph{regular} if $\P_x(\tau_{D^\cc}=0)=1$ for all $x\in\partial D$. 
The next result follows from \cite[Theorems~4.7~and~4.19]{CZ95}.
\begin{Pro}
\label{p:FKsol}
Assume \eqref{e:qbounded}. If $D$ is a bounded regular domain and $\gamma > \lambda_{\max}(D,q)$, then
\begin{align}
\label{e:FKsol}
u(x):=\E_{x}\left[\exp\left(\int_0^{\tau_{D^\cc}}(q(W_s) - \gamma) \dd s\right)f(W_{\tau_{D^\cc}})\right]
\end{align}
is the unique weak solution to the boundary value problem \eqref{e:FK2}.
\end{Pro}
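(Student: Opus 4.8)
The plan is to establish existence and uniqueness separately. In both parts the hypothesis $\gamma>\lambda_{\max}(D,q)$ plays a double role: it bounds the \emph{gauge} of the potential $q-\gamma$ on $D$, and it makes $\mathrm{e}^{-\gamma T}T_T$ a strict contraction on $L^2(D)$ for $T$ large. Write $e_t:=\exp\big(\int_0^t(q(W_s)-\gamma)\,\dd s\big)$ and let $g(x):=\E_x[e_{\tau_{D^\cc}}]$, so that the candidate $u$ from \eqref{e:FKsol} obeys $|u|\le\|f\|_{L^\infty(\partial D)}\,g$ on $\overbar D$; the first task is to show $g$ is bounded. To this end I would introduce $v:=(\gamma-\cH_q)^{-1}\1{D}=\int_0^\infty\mathrm{e}^{-\gamma t}T_t\1{D}\,\dd t=\E_\cdot\big[\int_0^{\tau_{D^\cc}}e_t\,\dd t\big]$: by \eqref{e:resbound} it lies in $L^2(D)$, and combining the $L^2$-decay \eqref{e:FKL2B} with ultracontractivity of the Dirichlet heat semigroup shows that this integral in fact converges in $L^\infty(D)$, so $v$ is bounded; moreover $v$ weakly solves $\tfrac12\Delta v=(\gamma-q)v-1$ in $D$ with zero boundary data, so interior elliptic regularity (valid since $q\in L^\infty(D)$) together with the regularity of $D$ gives $v\in C(\overbar D)$ with $v|_{\partial D}=0$. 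Differentiating $e_t$ and taking expectations (after a routine truncation and Fatou's lemma) yields $g=1+\E_\cdot\big[\int_0^{\tau_{D^\cc}}(q-\gamma)(W_t)\,e_t\,\dd t\big]\le 1+\|(q-\gamma)^+\|_{L^\infty(D)}\,v$, so $g$, hence $u$, is bounded on $\overbar D$.

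Next I would check that $u$ solves the boundary value problem \eqref{e:FK2}. For each ball $B$ with $\overbar B\subset D$, the strong Markov property at $\tau_{B^\cc}$ and the multiplicativity $e_{\tau_{D^\cc}}=e_{\tau_{B^\cc}}\big(e_{\tau_{D^\cc}}\circ\theta_{\tau_{B^\cc}}\big)$ give the mean-value identity $u(x)=\E_x[e_{\tau_{B^\cc}}u(W_{\tau_{B^\cc}})]$ for $x\in B$ (and $\gamma>\lambda_{\max}(D,q)\ge\lambda_{\max}(B,q)$ by Remark~\ref{r:evmonoton}). Since $u$ is bounded and $q\in L^\infty(D)$, the standard probabilistic--elliptic bootstrap (cf.\ \cite[Theorem~4.7]{Chung}) upgrades this first to continuity of $u$ in $D$ and then to $u\in W^{2,p}_{\mathrm{loc}}(D)$ for all $p<\infty$ solving $\tfrac12\Delta u+(q-\gamma)u=0$; in particular $u$ is a weak solution of \eqref{e:FK2} in $D$. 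For the boundary condition I would use that, $D$ being regular, $\tau_{D^\cc}\to 0$ and $W_{\tau_{D^\cc}}\to x_0$ in $\P_x$-probability as $x\to x_0\in\partial D$; combined with continuity of $f$, boundedness of $g$, and the estimate $\E_x[e_{\tau_{D^\cc}}\1{\{\tau_{D^\cc}>A\}}]\le\|g\|_{L^\infty(D)}\,\mathrm{e}^{A\|q\|_{L^\infty(D)}}\,\P_x(\tau_{D^\cc}>A)$ (which $\to 0$ as $x\to x_0$ for each fixed $A$), a routine splitting of the expectation defining $u(x)$ gives $u(x)\to f(x_0)$. This proves existence.

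For uniqueness, suppose $u_1,u_2$ are both weak solutions and set $w:=u_1-u_2$; then $w$ is a bounded weak solution of $\tfrac12\Delta w+(q-\gamma)w=0$ in $D$, continuous on $\overbar D$ with $w=0$ on $\partial D$, and by interior elliptic regularity $w\in W^{2,p}_{\mathrm{loc}}(D)$ for all $p<\infty$. I would fix an exhaustion $D_n\uparrow D$ with $\overbar{D_n}\subset D$ and put $\sigma_n:=\tau_{D_n^\cc}$; by the Itô--Krylov formula $t\mapsto e_{t\wedge\sigma_n}w(W_{t\wedge\sigma_n})$ is a local martingale whose drift $e_t\big[\tfrac12\Delta w+(q-\gamma)w\big](W_t)$ vanishes, and it is bounded on $[0,T]$. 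Optional stopping together with $n\to\infty$ and dominated convergence (using $w\in C(\overbar D)$, $w|_{\partial D}=0$, and $\tau_{D^\cc}<\infty$ a.s.\ because $D$ is bounded) then gives, for every $T>0$,
\[
w(x)=\E_x\!\left[\, e_T\, w(W_T)\, \1{\{\tau_{D^\cc}>T\}}\,\right]=\mathrm{e}^{-\gamma T}\,(T_T w)(x),
\]
the last equality being Proposition~\ref{p:FKrep} with $w$ extended by $0$ off $D$. Taking $L^2(D)$-norms and invoking \eqref{e:FKL2B} yields $\|w\|_{L^2(D)}\le\mathrm{e}^{T(\lambda_{\max}(D,q)-\gamma)}\|w\|_{L^2(D)}$; picking $T$ so large that the prefactor is $<1$ forces $w=0$ a.e.\ in $D$, hence $w\equiv 0$ on $\overbar D$ by continuity.

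The main obstacle will be the first step --- the boundedness of the gauge $g$, equivalently that $v=(\gamma-\cH_q)^{-1}\1{D}$ is bounded up to $\partial D$ --- since this is precisely where the spectral gap $\gamma>\lambda_{\max}(D,q)$ is indispensable, and it rests on the elliptic and heat-kernel regularity theory for merely bounded potentials on regular domains (the substance of \cite[Theorems~4.7 and~4.19]{Chung}). Once this, and the interior regularity of $u$ and $w$, are in hand, the verification that $u$ solves \eqref{e:FK2} and the uniqueness argument are essentially bookkeeping.
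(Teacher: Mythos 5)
The paper does not actually prove Proposition~\ref{p:FKsol}: its ``proof'' is a one-line citation of \cite[Theorems~4.7 and~4.19]{Chung}. What you have written is, in substance, a self-contained reconstruction of the content of those theorems (together with the gauge theorem linking the spectral condition $\gamma>\lambda_{\max}(D,q)$ to finiteness of the gauge), and the argument is sound. Your three pillars are exactly the right ones: (i) boundedness of $g=\E_\cdot[e_{\tau_{D^\cc}}]$, obtained from $v=(\gamma-\cH_q)^{-1}\mathbbm{1}_D=\E_\cdot\bigl[\int_0^{\tau_{D^\cc}}e_t\,\dd t\bigr]$, the $L^2\to L^\infty$ smoothing of the semigroup, and Fatou applied to $e_{\tau\wedge n}=1+\int_0^{\tau\wedge n}(q-\gamma)(W_t)e_t\,\dd t$; (ii) the interior equation via the strong Markov mean-value identity plus boundary continuity at regular points; (iii) uniqueness via $w=\ee^{-\gamma T}T_Tw$ and the strict $L^2$ contraction from \eqref{e:FKL2B}. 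This is the same probabilistic route as in the cited reference, the only difference being that you supply the proof the paper outsources; your uniqueness step is arguably cleaner than Chung--Zhao's, since once the martingale identity is in place the factor $\ee^{T(\lambda_{\max}(D,q)-\gamma)}<1$ (valid for every $T>0$) kills $w$ at once. Two minor points if this were written out in full: the bound $e_A\le \ee^{A\|q\|_{L^\infty(D)}}$ in the boundary-continuity step should carry an extra factor $\ee^{A|\gamma|}$, since $\gamma$ need not be nonnegative; and the claim $v\in C(\overbar{D})$ with $v|_{\partial D}=0$ is both more than interior elliptic regularity alone delivers and unnecessary --- boundedness of $v$, which you already extract from the $L^\infty$-convergence of $\int_0^\infty\ee^{-\gamma t}T_t\mathbbm{1}_D\,\dd t$, is all the gauge estimate needs.
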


\subsection{Upper bounds}
\label{ss:UB}
Let $D\subset \R^d$ be a bounded regular domain. Recall $h_d = (d-2)^2/8$. Fix $\theta \in (0,h_d]$,
$\cY \in \scrYf$ with $\cY\subset D$ and put $M:= \#\cY$.
We give next $L^1$ upper bounds for both stopped and time-dependent Feynman-Kac functionals
with potentials of the form \eqref{e:deffinpot}.
We note that, by \cite[Theorem~1]{BDE08}, $\lambda_{\max}(\R^d, \theta V_\cY) < \infty$;
by Remark~\ref{r:evmonoton}, also $\lambda_{\max}(D, \theta V_\cY^{(a)}) < \infty$ for any $a > 0$.

\begin{Lem}
\label{l:L1res} 
For any measurable $D' \subset \R^d$, 
any $a\in(0,\infty]$ and any $\gamma>\lambda_{\max}(D,\theta V^{(a)}_{\cY})$,
\begin{equation}\label{e:L1timedep}
\int_{D'} \E_x\left[\ee^{\int_0^t (\theta V_{\cY}^{(a)}(W_s)-\gamma) \dd s} \1{\{\tau_{D^\cc} > t\}}\right] \dd x \le 
\sqrt{|D' \cap D| |D|}.
\end{equation}
Moreover, there exists a constant $c=c(d) \in (0,\infty)$ independent of $D$, $\theta$, $\cY$, $\gamma$, $a$, $D'$ 
such that
\begin{equation}\label{e:L1res}
\int_{D'} \E_x\left[\ee^{\int_0^{\tau_{D^{\cc}}} (\theta V_{\cY}^{(a)}(W_s)-\gamma) \dd s} \right] \dd x \le 
|D'| + c \sqrt{|D| |D' \cap D|} \frac{\gamma+(M^2+\theta) \, {\dist(D^{\cc}, \cY)}^{-2}}{\gamma-\lambda_{\max}(D,\theta V^{(a)}_{\cY})}.
\end{equation}
\end{Lem}
\begin{proof}
Fix $D'\subset \R^d$ measurable, $a>0$ and $\gamma>\lambda_{\max}(D,\theta V^{(a)}_{\cY})$. 
Note that, when $x \in D^\cc$, the integrands in \eqref{e:L1timedep} and \eqref{e:L1res} are respectively equal to $0$ and $1$,
and thus we may assume that $D'\subset D$.

We start with \eqref{e:L1timedep}.
For $m \in \N$, let $F_m = \min(V^{(a)}_{\cY},m)$ 
and write $(T_t^{(m)})_{m\in\N}$ for the Schrödinger semigroup associated with the potential $q = \theta F_m$ as in \eqref{e:FKrep}. 
Note that, for all $m\in \N$,
\begin{equation}\begin{split}\label{e:prL1timedep1}
&\int_{D'} \E_x \left[\ee^{\int_0^t (\theta F_m(W_s) -\gamma) \dd s} \mathbbm{1}_{\{\tau_{D^{\cc}}> t\}} \right] \dd x 
= \ee^{-t\gamma}\langle \mathbbm{1}_{D'}, T^{(m)}_t \mathbbm{1}_{D} \rangle_{L^2(D)} \\
& \qquad\qquad\qquad\qquad\le \ee^{-t\gamma}\| \mathbbm{1}_{D'}\|_{L^2(D)} \|T_t^{(m)}\|_{{L^2(D)} \to {L^2(D)}} \|\mathbbm{1}_D \|_{L^2(D)}\\&\qquad\qquad\qquad\qquad\le \ee^{-t\gamma}\sqrt{|D'||D|} \, \ee^{t \lambda_{\max}(D,\theta V^{(a)}_{\cY}\wedge m)}\leq \sqrt{|D'||D|},
\end{split}
\end{equation}
where we used the Cauchy-Schwarz inequality and 
$\lambda_{\max}(D,\theta F_m) \leq \lambda_{\max}(D,\theta V^{(a)}_{\cY}) < \gamma$
by Remark~\ref{r:evmonoton}.
Letting $m\rightarrow\infty$, \eqref{e:L1timedep} follows by monotone convergence.

Consider now \eqref{e:L1res}.
By Proposition~\ref{p:FKsol}, the function
$u_m(x)=\E_x\left[\exp\int_0^{\tau_{D^{\cc}}} (\theta F_m(W_s)-\gamma) \dd s\right]$ 
is the unique weak solution to the boundary value problem
\begin{equation}\label{e:proofFK1}
\begin{split}
\left(\frac{\Delta}{2}+\theta F_m-\gamma\right)u(x)&=0, \quad x \in D\\
u(x)&=1, \quad x \in \partial D.
\end{split}
\end{equation}
Abbreviate $\delta := \dist(D^{\cc}, \cY)$, take $g\colon\R \to [0,1]$ smooth 
with $g(r) = 0$ for $r \le 1/2$ and $g(r)=1$ for $r \ge 1$, and put $\phi(x) := \prod_{y \in \cY} g(|x-y|/\delta)$. 
We may check that $\phi\in C^2(\R^d)$, $0\le\phi\le 1$ on $D$, $\phi\equiv 1$ on $D^{\cc}$, 
and there exists a constant $c=c(d)\in(1,\infty)$, not depending on $D$, $\theta$ or $\cY$,
such that $|\Delta \phi|\le 2cM^2\delta^{-2}$ and $\phi V_{\cY}\le c\delta^{-2}$ uniformly on $\R^d$.
Moreover, $v_m:=u_m-\phi$ solves
\begin{equation}\label{e:proofFK2}
\begin{split}
\left(\frac{\Delta}{2}+\theta F_m-\gamma \right)v_m(x)&=- \left(\frac{\Delta}{2}+\theta F_m-\gamma\right)\phi(x), \quad x \in D,\\
v_m(x)&=0, \qquad \qquad \qquad \qquad \qquad \;\;\,\, x \in \partial D,
\end{split}
\end{equation}
i.e., $v_m=-\mathcal{R}_{\gamma}^{(m)}\left(\frac{\Delta}{2}+\theta F_m-\gamma\right)\phi$ 
where $\mathcal{R}_{\gamma}^{(m)}$ is the resolvent of $\tfrac{1}{2}\Delta+\theta F_m$ at $\gamma$.
Hence
\begin{align}
\|v_m\|_{L^1(D')} & = \left\langle \left| -\mathcal{R}_{\gamma}^{(m)}\left(\frac{\Delta}{2}+\theta F_m-\gamma\right) \phi \right| , \mathbbm{1}_{D'}\right\rangle_{L^2(D)} \nonumber\\
&\le \sqrt{|D'|} \left\|\mathcal{R}_{\gamma}^{(m)} \right\|_{L^2(D)\rightarrow L^2(D)} \left\|\left(\frac{\Delta}{2}+\theta F_m-\gamma\right)\phi \right\|_{L^2(D)} \nonumber\\
&\le \sqrt{|D'|} \frac{\gamma+c(M^2+\theta) \delta^{-2}}{\gamma-\lambda_{\max}(D,\theta F_m)}\sqrt{|D|}\label{e:proofl1res1}
\end{align}
by the bound \eqref{e:resbound} on the resolvent
and the pointwise bounds on $\phi,\Delta\phi$ and $V_{\cY}\phi$.
Noting now that, since $F_m\le V^{(a)}_{\cY}$, $\lambda_{\max}(D,\theta F_m)\le \lambda_{\max}(D,\theta V^{(a)}_{\cY})$ by Remark~\ref{r:evmonoton},
we obtain
\begin{equation}\label{e:proofl1res2}
\|u_m\|_{L^1(D')} \le \|v_m\|_{L^1(D')} + \|\phi\|_{L^1(D')} \le c \sqrt{|D'| |D|} \frac{\gamma+(M^2+\theta) \delta^{-2}}{\gamma-\lambda_{\max}(D,\theta V^{(a)}_{\cY})} + |D'|.
\end{equation}
Now \eqref{e:L1res} follows by monotone convergence since $F_m \uparrow V_\cY^{(a)}$ as $m \to \infty$.
\end{proof}

From the $L^1$-bound above we derive two pointwise estimates that will be useful in Section~\ref{s:expansions}.
\begin{Lem}\label{l:FK}
Fix $x\in D\backslash \cY$ and set $\eps_x = \frac12 \dist(x, \cY)$.
Assume that $0<a<\eps_x$ and $\gamma > \lambda_{\max}(D,\theta V^{(a)}_{\cY})$,
and let $c=c(d)$ be the constant from Lemma \ref{l:L1res}. Then
\begin{equation}\label{e:FKtau}
\E_x\left[\exp \int_0^{\tau_{D^{\cc}}}(\theta V^{(a)}_{\cY}(W_s)-\gamma) \dd s \right] \le 2+ c\sqrt{\frac{|D|}{|B_{\eps_x}|}} \frac{\gamma+ (M^2+\theta) \dist(D^{\cc}, \cY)^{-2}}{\gamma-\lambda_{\max}(D,\theta V^{(a)}_{\cY})}.
\end{equation}
Moreover, for all $t\in(0,\infty)$,
\begin{equation}\label{e:FKt}
\E_x\left[\mathbbm{1}_{\{\tau_{D^{\cc}} > t\}} \exp \int_0^t (\theta V^{(a)}_{\cY}(W_s)-\gamma) \dd s \right] 
\le 2 + \sqrt{\frac{|D|}{|B_{\eps_x}|}} \left(1 + c \frac{\gamma + (M^2+\theta) \dist(D^{\cc}, \cY)^{-2}}{\gamma - \lambda_{\max}(D,\theta V^{(a)}_{\cY})} \right).
\end{equation}
\end{Lem}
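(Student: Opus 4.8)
The plan is to deduce both pointwise bounds from the $L^1$-estimate of Lemma~\ref{l:L1res} by a standard localization argument: split the Brownian path according to whether it has left the small ball $B_{\eps_x}(x)$ by time $t$ (or by time $\tau_{D^\cc}$), use the strong Markov property at the exit time of $B_{\eps_x}(x)$, and bound the resulting average of the Feynman--Kac functional over the exit distribution by an $L^1$-norm over $D' = B_{\eps_x}(x)$. The key point is that, since $a < \eps_x$ and $\dist(x,\cY) = 2\eps_x$, the potential $V^{(a)}_\cY$ vanishes identically on $B_{\eps_x}(x)$: every singularity of $V^{(a)}_\cY$ sits within distance $a < \eps_x$ of some $y \in \cY$, and all such $y$ are at distance $\ge 2\eps_x$ from $x$, so $B_{\eps_x}(x)$ is disjoint from $\supp V^{(a)}_\cY$. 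Hence on the event that $W$ stays in $B_{\eps_x}(x)$ up to the relevant time, $\int_0^\cdot \theta V^{(a)}_\cY(W_s)\,\dd s = 0$ and the exponential is (sub)bounded by $1$ after dropping the $-\gamma$ term (here I use $\gamma > \lambda_{\max}(D,\theta V^{(a)}_\cY) \ge \lambda_{\max}(\R^d, 0) = 0$, so $\gamma > 0$ and $\ee^{-\gamma s} \le 1$).

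For \eqref{e:FKtau}: let $\sigma := \tau_{B_{\eps_x}(x)^\cc}$ be the exit time from the small ball. On $\{\sigma > \tau_{D^\cc}\}$ the functional is $\le 1$ as above; this is where the ``$2$'' (really a ``$1$'', absorbed) comes from. On $\{\sigma \le \tau_{D^\cc}\}$, apply the strong Markov property at $\sigma$: the remaining expectation is $\E_{W_\sigma}[\exp\int_0^{\tau_{D^\cc}}(\theta V^{(a)}_\cY - \gamma)]$, and $W_\sigma$ is uniformly distributed on $\partial B_{\eps_x}(x)$. To convert the surface average into a volume integral, note that for $y$ on that sphere the function $z \mapsto \E_z[\exp\int_0^{\tau_{D^\cc}}(\theta V^{(a)}_\cY - \gamma)]$ is, by Proposition~\ref{p:FKsol} (after a truncation $F_m \uparrow V^{(a)}_\cY$ as in the proof of Lemma~\ref{l:L1res}), a weak solution of the Schrödinger equation in a neighbourhood of the sphere where the potential vanishes, hence satisfies a mean-value-type inequality: its value at a point of $\partial B_{\eps_x}(x)$ is controlled by its average over a ball of comparable radius, up to a dimensional constant — e.g.\ average over $B_{\eps_x}(x)$ itself using that $\tfrac12\Delta u = (\gamma)u \ge 0$ there, or simply re-run the Markov step once more to land inside the ball. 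The cleanest route is: from any $y \in \partial B_{\eps_x}(x)$, run to the exit of $B_{2\eps_x}(y) \supset x$, observe the potential still vanishes on this slightly larger ball, and land the final expectation as an $L^1$-average over a ball of radius $\sim \eps_x$ contained in $D$; then apply Lemma~\ref{l:L1res} with $D' =$ that ball, giving $|D'|/|D'| = 1$ for the first term and $c\sqrt{|D|/|B_{\eps_x}|}\cdot(\gamma + (M^2+\theta)\dist(D^\cc,\cY)^{-2})/(\gamma - \lambda_{\max})$ for the second, after dividing by $|D'| \asymp |B_{\eps_x}|$. The constant $2$ accommodates both the $\{\sigma > \tau_{D^\cc}\}$ contribution and the normalization slack.

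For \eqref{e:FKt}: the argument is the same but with the fixed time horizon $t$. Split on $\{\sigma > t\}$ versus $\{\sigma \le t\}$. On $\{\sigma > t\}$, $W$ stays in $B_{\eps_x}(x)$ up to time $t$, the potential integral vanishes, and the term is $\le 1 \cdot \P_x(\tau_{D^\cc} > t) \le 1$. On $\{\sigma \le t \le \tau_{D^\cc}\}$, apply the strong Markov property at $\sigma$; the residual expectation is $\E_{W_\sigma}[\mathbbm{1}_{\{\tau_{D^\cc} > t - \sigma\}}\exp\int_0^{t-\sigma}(\theta V^{(a)}_\cY - \gamma)]$, which is dominated by $\E_{W_\sigma}[\exp\int_0^{\tau_{D^\cc}}(\theta V^{(a)}_\cY - \gamma)^+ \ldots]$ — more precisely, bound $\mathbbm{1}_{\{\tau_{D^\cc}>t-\sigma\}}\ee^{-\gamma(t-\sigma)}$ crudely by $1$ when $\gamma>0$ is not helping, or keep it and note the time-dependent functional is $\le 1 + (\text{stopped functional})$ by integrating the Feynman--Kac representation; the ``$1 +$'' inside the parenthesis of \eqref{e:FKt} is precisely this extra additive term coming from the initial condition in the stopped problem versus the time-dependent one. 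Then average over $W_\sigma \in \partial B_{\eps_x}(x)$ and convert to an $L^1$-average over a radius-$\sim\eps_x$ ball in $D$ exactly as before, invoking Lemma~\ref{l:L1res}.

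I expect the main obstacle to be the \emph{surface-to-volume conversion}: Lemma~\ref{l:L1res} gives an $L^1$-bound over a solid region, but the Markov property naturally produces an average over a sphere. Handling this rigorously requires either an explicit Poisson-kernel / mean-value inequality for the (weak, possibly unbounded-potential) solution near a region where the potential vanishes, or — the route I would actually take — an extra application of the strong Markov property that pushes the observation point from the sphere into the interior of a ball, where the potential is still zero and Lemma~\ref{l:L1res} applies directly with $|D' \cap D| = |D'| \asymp |B_{\eps_x}|$. One must be slightly careful that this auxiliary ball still lies in $D$ (it does, since $x \in D$ and $\eps_x = \tfrac12\dist(x,\cY)$ can be taken small enough, or one intersects with $D$ and notes the functional is $1$ outside $D$), and that the truncation $F_m \uparrow V^{(a)}_\cY$ plus monotone convergence, as in Lemma~\ref{l:L1res}, legitimizes using Proposition~\ref{p:FKsol} for the a-priori-unbounded potential. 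The rest is bookkeeping of dimensional constants, all of which can be absorbed into the single $c = c(d)$ inherited from Lemma~\ref{l:L1res}.
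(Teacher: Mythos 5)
Your overall strategy is the same as the paper's: localize in the ball $B_{\eps_x}(x)$ where $V^{(a)}_\cY$ vanishes (since $a<\eps_x=\tfrac12\dist(x,\cY)$), apply the strong Markov property at the exit of a small ball, convert the resulting sphere average into a volume average, and then invoke the $L^1$-bound of Lemma~\ref{l:L1res} with $D'=B_{\eps_x}(x)$. You also correctly identify the surface-to-volume conversion as the crux. However, the concrete mechanism you say you would actually use for that step does not work. Exiting $B_{2\eps_x}(y)$ from $y\in\partial B_{\eps_x}(x)$ lands $W$ on the sphere $\partial B_{2\eps_x}(y)$, i.e.\ on another surface, so no volume average is produced; and, more seriously, the potential does \emph{not} vanish on $B_{2\eps_x}(y)$: if $y_0\in\cY$ realizes $\dist(x,\cY)=2\eps_x$ and $y$ is the point of $\partial B_{\eps_x}(x)$ nearest to $y_0$, then $|y-y_0|=\eps_x<2\eps_x$, so $B_{2\eps_x}(y)$ contains the singularity at $y_0$. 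The paper's resolution is the elementary polar-coordinates trick: the sphere inequality
\begin{equation*}
\E_x\bigl[I_0^{\tau_{D^\cc}}\bigr] \le 1 + \frac{1}{\sigma_d r^{d-1}}\int_{\partial B_r(x)}\E_z\bigl[I_0^{\tau_{D^\cc}}\bigr]\,\sigma(\dd z)
\end{equation*}
holds for \emph{every} $r\in(0,\eps_x)$, so multiplying by $\sigma_d r^{d-1}$ and integrating $r$ from $0$ to $\eps_x$ yields $|B_{\eps_x}|(\E_x[I_0^{\tau_{D^\cc}}]-1)\le \int_{B_{\eps_x}(x)}\E_z[I_0^{\tau_{D^\cc}}]\,\dd z$, after which Lemma~\ref{l:L1res} applies directly. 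This is the probabilistic form of the sub-mean-value property you alluded to in your first (correct) option, so that branch of your plan is salvageable; the "extra Markov step" branch is not.

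Two further remarks on \eqref{e:FKt}. First, your attribution of the "$1+$" inside the parenthesis to "the initial condition in the stopped problem versus the time-dependent one" is not right, and the inequality "time-dependent functional $\le 1+$ stopped functional" that you propose is not available (on $\{\tau_{D^\cc}>t\}$ one has $I_0^{\tau_{D^\cc}}\ge I_0^t\,\ee^{-\gamma(\tau_{D^\cc}-t)}$, which gives a bound in the wrong direction). What the paper does after the Markov step is bound $I_0^{t-s}\mathbbm{1}_{\{\tau_{D^\cc}>t-s\}}\le \ee^{s\gamma}\{I_0^t\mathbbm{1}_{\{\tau_{D^\cc}>t\}}+I_0^{\tau_{D^\cc}}\}$ by distinguishing whether $\tau_{D^\cc}>t$; the stopped term is handled by Lemma~\ref{l:L1res} as before, while the time-$t$ term, integrated over $B_{\eps_x}(x)$, is controlled by Cauchy--Schwarz and the semigroup bound \eqref{e:FKL2B}, $\langle\mathbbm{1}_{B_{\eps_x}(x)},T_t\mathbbm{1}_D\rangle\le \ee^{t\lambda_{\max}}\sqrt{|B_{\eps_x}||D|}\le\ee^{t\gamma}\sqrt{|B_{\eps_x}||D|}$ — that is where the extra additive $\sqrt{|D|/|B_{\eps_x}|}$ comes from. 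Second, your point about truncating $F_m=\min(V^{(a)}_\cY,m)$ so as to legitimately apply Proposition~\ref{p:FKsol} and the semigroup theory, followed by monotone convergence, is exactly what the paper does and is needed here as well.
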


\begin{proof}
Fix $0 < r < \eps_x$ and abbreviate $I_s^t := \exp \int_s^t(\theta V^{(a)}_{\cY}(W_u) -\gamma )\dd u$. 
We begin with the proof of \eqref{e:FKtau}. 
Since $V^{(a)}_{\cY}\equiv 0$ on $B_{\eps_x}(x)$, using the strong Markov property we may write
\begin{equation}
\E_x\left[I_0^{\tau_{D^\cc}}\right] \le 1 + \E_x\left[\mathbbm{1}_{\{\tau_{B_r(x)^\cc} < \tau_{D^\cc}\}} I_{\tau_{B_r(x)^\cc}}^{\tau_{D^\cc}}\right]
\le 1 + \E_x\left[\E_{W_{\tau_{\partial B_r(x)}}}\left[I_0^{\tau_{D^\cc}}\right] \right].
\end{equation}
Since $W_{\tau_{\partial B_r(x)}}$ is uniformly distributed on the sphere $\partial B_r(x)$,
\begin{equation}\label{e:surface}
\E_x\left[I_0^{\tau_{D^{\cc}}}\right] \le  1 + \frac{1}{ \sigma_dr^{d-1}}\int_{\partial B_r(x)}\E_z\left[I_0^{\tau_{D^{\cc}}}\right]\sigma(\dd z),
\end{equation}
where $\sigma$ denotes surface measure on $\partial B_r(x)$ and $\sigma_d$ is the area of the $d$-dimensional unit sphere. Multiplying both sides of \eqref{e:surface} by $\sigma_dr^{d-1}$ and integrating over $r$ between $0$ and $\eps_x$ leads to
\begin{equation}\label{e:surface2}
|B_{\eps_x}| \left( \E_x\left[I_0^{\tau_{D^{\cc}}}\right] -1 \right) \le  \int_{B_{\eps_x}(x)}\E_z\left[I_0^{\tau_{D^{\cc}}}\right]\dd z.
\end{equation}
Now apply the $L^1$-bound from Lemma \ref{l:L1res} to the right-hand side with $D'=B_{\eps_x}(x)$, which gives
\begin{equation}\label{e:prlFK6}
\int_{B_{\eps_x}(x)} \E_z \left[I_0^{\tau_{D^{\cc}}}\right] \dd z
\le |B_{\eps_x}| \left\{ 1 + c\sqrt{\frac{|D|}{|B_{\eps_x}|}} 
\left(\frac{\gamma+ (M^2+\theta) \dist(D^{\cc}, \cY)^{-2}}{\gamma - \lambda_{\max}(D,\theta V^{(a)}_{\cY})} \right) \right\}.
\end{equation}
This yields \eqref{e:FKtau}, and we continue with the proof of \eqref{e:FKt}. 
Again, by the strong Markov property and since $V^{(a)}_{\cY}\equiv 0$ on $B_{\eps_x}(x)$,
\begin{align}\label{e:prlFK1}
\E_x \left[I_0^t \mathbbm{1}_{\{\tau_{D^\cc}> t\}} \right]
\le 1 +  \E_x \left[\ee^{-\gamma \tau_{\partial B_r(x)}} \mathbbm{1}_{\{\tau_{\partial B_r(x)} < t\}} \E_{W_{\tau_{\partial B_r(x)}}} \left[I_0^{t-s} \mathbbm{1}_{\{\tau_{D^\cc} > t-s \}} \right]_{s = \tau_{\partial B_r(x)}} \right].
\end{align}
Split the event $\{\tau_{D^{\cc}}>t-s\}$ according to whether $\tau_{D^{\cc}} > t$ or not to write, using $\gamma \ge 0$, $V^{(a)}_\cY \ge 0$,
\begin{align}\label{prlFK2}
I_0^{t-s} \mathbbm{1}_{\{\tau_{D^c}>t-s\}} 
= \ee^{s \gamma} \ee^{\int_0^{t-s} \theta V^{(a)}_{\cY}(W_s) \dd s - t \gamma} \mathbbm{1}_{\{\tau_{D^{\cc}} > t-s \}} \le \ee^{s \gamma} \left\{ I_0^t \mathbbm{1}_{\{\tau_{D^{\cc}} > t\}} + I_0^{\tau_{D^{\cc}}}\right\}.
\end{align}
Substituting this back into \eqref{e:prlFK1}, we obtain
\begin{align}\label{e:prlFK3}
\E_x \left[I_0^t \mathbbm{1}_{\{\tau_{D^{\cc}}> t\}} \right]
& \le 1 + \frac{1}{\sigma_dr^{d-1}} \int_{\partial B_r(x)} \E_z \left[I_0^t \mathbbm{1}_{\{\tau_{D^c}>t\}} +  I_0^{\tau_{D^{\cc}}} \right]\sigma(\dd z),
\end{align}
and the same calculation as between \eqref{e:surface}--\eqref{e:surface2} gives
\begin{align}\label{e:prlFK4}
|B_{\eps_x}| \left( \E_x \left[I_0^t \mathbbm{1}_{\{\tau_{D^{\cc}}> t\}} \right] - 1 \right) \le \int_{B_{\eps_x}(x)} \E_z \left[I_0^t \mathbbm{1}_{\{\tau_{D^c}>t\}}\right] \dd z + \int_{B_{\eps_x}(x)} \E_z \left[I_0^{\tau_{D^{\cc}}}\right] \dd z.
\end{align}
To conclude, apply \eqref{e:L1timedep} with $D' = B_{\eps_x}(x)$ to the first integral above,
and \eqref{e:prlFK6} to the second.
\end{proof}

\subsection{Lower bound}
\label{ss:LB}
%
We derive here an $L^1$ lower bound (cf.\ Lemma~\ref{l:keyLB} below) 
on the Feynman-Kac functional in \eqref{e:FKrep}
with $q = \theta V_\cY$, $\cY \in \scrYf$.
Recall $h_d = (d-2)^2/8$.
Define the truncated potential
\begin{equation}
\widetilde{V}(x):=\begin{cases} 1,  \mbox{\ if \ } |x|\le 1\\
|x|^{-2}, \mbox{\ else.}\end{cases}
\end{equation}

\begin{Lem}\label{l:EVlbVK}
For any $\eps > 0$, there exists $K_\eps \in [1,\infty)$ such that, for all $K \ge K_\eps$,
\begin{equation}\label{e:EVlbVK}
\sup_{g \in H^1_0(B_K), \|g\|_{L^2(B_K)} = 1} \left( h_d + \eps \right) \int_{B_K} g^2(x) \widetilde{V}(x) \dd x - \frac12\| \nabla g\|_{L^2(B_K)}^2 > 0.
\end{equation}
\end{Lem}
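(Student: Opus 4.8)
The plan is to exploit the non-attainment of the sharp constant in Hardy's inequality \eqref{e:Hardy1} via the \emph{ground-state substitution} based on the formal minimizer $u(x):=|x|^{-(d-2)/2}$, using test functions supported in a long (on a logarithmic scale) annulus bounded away from the origin. On such an annulus $\widetilde V$ coincides with $|x|^{-2}$, so the truncation of the kernel near $0$ plays no role.

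First I would record that $u$ solves $(\tfrac12\Delta + h_d|x|^{-2})u = 0$ on $\R^d\setminus\{0\}$: this follows from $\Delta(|x|^{-\beta}) = \beta(\beta+2-d)|x|^{-\beta-2}$ with $\beta=(d-2)/2$, for which $\beta+2-d=-\beta$. Then, for any radial $w\in C_c^\infty(\R^d\setminus\{0\})$ and $g:=uw$, an integration by parts (legitimate since $u$ is smooth and bounded on $\supp w$, with no boundary terms) yields the exact identity
\begin{equation*}
\tfrac12\|\nabla g\|_{L^2}^2 = h_d\int\frac{g(x)^2}{|x|^2}\,\dd x + \tfrac12\int u(x)^2\,|\nabla w(x)|^2\,\dd x .
\end{equation*}
Consequently, if moreover $\supp g\subset\{|x|>1\}$, so that $\widetilde V\equiv|x|^{-2}$ on $\supp g$, the quantity inside the supremum in \eqref{e:EVlbVK} evaluated at $g$ equals $\eps\int |x|^{-2}g(x)^2\,\dd x - \tfrac12\int u(x)^2|\nabla w(x)|^2\,\dd x$; since this expression is homogeneous of degree $2$ in $g$, it suffices to produce one such $g\not\equiv 0$ making it positive and then normalize in $L^2(B_K)$.

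Next I would pass to logarithmic radial coordinates: write $w(x)=W(\log|x|)$ with $W\in C_c^\infty(0,\infty)$. A direct computation of the Jacobians — $r^{d-1}$ from polar coordinates, weighted by $|x|^{-(d-2)}|x|^{-2}=|x|^{-d}$ in the first integral and by $|x|^{-(d-2)}|x|^{-2}=|x|^{-d}$ (the extra $|x|^{-2}$ coming from $|\nabla w|^2$ for radial $w$) in the second — shows that $\int u^2|x|^{-2}w^2\,\dd x = \sigma_d\int_\R W(t)^2\,\dd t$ and $\int u^2|\nabla w|^2\,\dd x = \sigma_d\int_\R W'(t)^2\,\dd t$. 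So the whole problem reduces to finding $W\in C_c^\infty(0,\infty)$ with $\int_\R W'(t)^2\,\dd t < 2\eps\int_\R W(t)^2\,\dd t$. This is immediate by scaling a fixed bump: with $\supp W_1\subset(0,1)$ and $W(t):=W_1(t/L)$ one has $\int W'(t)^2\,\dd t\big/\int W(t)^2\,\dd t = L^{-2}\int W_1'(s)^2\,\dd s\big/\int W_1(s)^2\,\dd s \to 0$ as $L\to\infty$. For $L$ large enough the resulting $W$ is supported in $(0,L)$, hence $w$ is supported in the annulus $\{1<|x|<e^{L}\}$; taking $K_\eps:=e^{L}$ gives $g=uw\in C_c^\infty(B_K)\subset H^1_0(B_K)$ for every $K\ge K_\eps$, which completes the argument.

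I do not expect an essential difficulty. The two points deserving a little care are the legitimacy of the integration by parts in the factorization identity — guaranteed precisely by keeping the test function away from the singularity of $u$ at the origin — and the verification that replacing $|x|^{-2}$ by the truncated $\widetilde V$ costs nothing, which holds because the chosen $g$ is supported in $\{|x|>1\}$.
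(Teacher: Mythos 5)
Your proof is correct and follows essentially the same route as the paper: both exploit the Hardy quasi-minimizer $|x|^{-(d-2)/2}$ spread over a region of diverging logarithmic length, the paper doing so with the explicit cut-off test function $\tilde g_n$ (equal to $1$ on $B_1$, to $|x|^{-(d-2)/2}$ on $1<|x|\le n$, and linearly interpolated to $0$ on $n<|x|\le 2n$), while you package the same computation via the ground-state substitution $g=uw$ and logarithmic radial coordinates. All your identities check out, and the final scaling step $\int W'^2 / \int W^2 \to 0$ plays exactly the role of the paper's $1-c/\log n$ factor.
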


\begin{proof}
Taking, for $n \in \N$,
\begin{equation}\label{e:defgtilde}
\tilde{g}_n(x) := \left\{ 
\begin{array}{lcl}
1 & \text{ when } & |x| \le 1,\\
|x|^{-(d-2)/2} & \text{ when } & 1 < |x| \le n,\\
n^{-d/2} (2n - |x|) & \text{ when } & n < |x| \le 2n,\\
0 & \text{ when } & |x| > 2n,\\
\end{array}\right.
\end{equation}
it follows that, for all $K>2n$, $\tilde{g}_n \in H^1_0(B_K)$ and 
\begin{equation}
\left(h_d + \varepsilon \right) \frac{\int_{B_K} \tilde{g}_n^2(x) \widetilde{V}(x) \dd x}{\frac{1}{2} \int_{B_K}|\nabla \tilde{g}_n(x)|^2\dd x} \ge \left(1 + \frac{8 \varepsilon}{(d-2)^2}\right) \left(1-\frac{c}{\log n}\right)
\end{equation}
for some constant $c \in (0,\infty)$.
Letting $g_n := \tilde{g}_n / \|\tilde{g}_n \|_{L^2(B_K)}$, we obtain
\begin{align}\label{e:prpropEVlb3}
\left(h_d+\varepsilon\right) \int_{B_K} g^2_n(x) \widetilde{V}(x) \dd x - \frac12\| {\nabla} g_n \|_{L^2(B_K)}^2
& \ge \frac{2 \varepsilon}{(d-2)^2} \|{\nabla} g_n \|_{L^2(B_K)}^2 >0
\end{align}
for $n$ large and $K>2n$.
\end{proof}

Let $\cY \in \scrYf$ with $M = \# \cY\geq 2$ and fix $\theta\in (\frac{(d-2)^2}{8M},\frac{(d-2)^2}{8}]$.
We define

\begin{equation}\label{e:deltaM}
\delta_\star = \delta_\star(d, M,\theta) := \frac14 \left(1 - \frac{h_d}{\theta M} \right).
\end{equation}

\begin{Lem}\label{l:LBpotential}
If $|y| \le \delta_\star$ for all $y \in \mathcal{Y}$, then
\begin{equation}\label{e:LBpotential}
\theta V_\cY(x) \ge \left( h_d + 2 \theta M \delta_\star \right) \widetilde{V}(x) \quad \forall \, x \in \R^d \setminus \cY.
\end{equation}
\end{Lem}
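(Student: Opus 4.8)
The plan is to reduce the pointwise vector inequality \eqref{e:LBpotential} to a single elementary one-variable inequality, by bounding each summand of $V_\cY$ from below via the triangle inequality. First I would observe that, under the hypothesis $|y|\le\delta_\star$ for all $y\in\cY$, one has $|x-y|\le|x|+|y|\le|x|+\delta_\star$ for every $y\in\cY$ and every $x\in\R^d\setminus\cY$, hence $|x-y|^{-2}\ge(|x|+\delta_\star)^{-2}$, and summing over the $M$ points of $\cY$,
\[
V_\cY(x)\;\ge\;\frac{M}{(|x|+\delta_\star)^2},\qquad x\in\R^d\setminus\cY.
\]
(One could instead invoke the power-mean bound $\sum_y|x-y|^{-2}\ge M^2\bigl(\sum_y|x-y|^2\bigr)^{-1}$ together with $\sum_y|x-y|^2\le M(|x|+\delta_\star)^2$, but this yields the same estimate, so the crude termwise bound already suffices.)

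Next I would set $r:=1\vee|x|\ge1$, note that $\widetilde V(x)=r^{-2}$ in both branches of its definition and that $|x|+\delta_\star\le r+\delta_\star$, so that it is enough to prove
\[
\theta\,\frac{M}{(r+\delta_\star)^2}\;\ge\;\frac{h_d+2\theta M\delta_\star}{r^2}\qquad\text{for all }r\ge1.
\]
Dividing by $\theta M r^{-2}$ and using the defining identity $h_d/(\theta M)=1-4\delta_\star$ coming from \eqref{e:deltaM}, the right-hand side becomes $1-2\delta_\star$, so the inequality is equivalent to $(1-2\delta_\star)\,(1+\delta_\star/r)^2\le1$. Since $\theta\in(h_d/M,h_d]$ forces $0<h_d/(\theta M)<1$, we have $\delta_\star\in(0,\tfrac14)$, hence $1-2\delta_\star>0$ and $(1+\delta_\star/r)^2\le(1+\delta_\star)^2$ for $r\ge1$; it therefore remains to check $(1-2\delta_\star)(1+\delta_\star)^2\le1$, which follows from the expansion $(1-2\delta_\star)(1+\delta_\star)^2=1-3\delta_\star^2-2\delta_\star^3\le1$. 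This closes the argument.

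I do not expect any genuine obstacle: the proof is entirely elementary. The only mild point of care is the bookkeeping with the definition of $\delta_\star$ and the constraint $\theta\in(h_d/M,h_d]$, which is exactly what guarantees $\delta_\star\in(0,\tfrac14)$ and hence $1-2\delta_\star>0$, so that multiplying the inequality $(1+\delta_\star/r)^2\le(1+\delta_\star)^2$ by $1-2\delta_\star$ preserves its direction. Everything then comes down to the trivial estimate $3\delta_\star^2+2\delta_\star^3\ge0$.
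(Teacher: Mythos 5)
Your proof is correct and follows exactly the route the paper indicates: the paper's proof is the one-line remark that the lemma ``follows from a simple computation using $|x-y|^2\le|x|^2+2|x||y|+|y|^2$'', which is precisely your triangle-inequality bound $|x-y|\le|x|+\delta_\star$ followed by the elementary verification $(1-2\delta_\star)(1+\delta_\star)^2\le 1$. You have simply written out the computation the authors omit, and the bookkeeping with $\delta_\star\in(0,\tfrac14)$ is handled correctly.
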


\begin{proof}
Follows from a simple computation using $|x-y|^2\le|x|^2+2|x||y|+|y|^2$.
\end{proof}

The following is the key lemma to obtain a lower bound on the total mass.
\begin{Lem}\label{l:keyLB}
There exist constants $K > 1$ and $c_1, c_2 > 0$ depending on $d, M, \theta$ such that,
for any $a \in (0, \infty)$ and
any $x \in \R^d \setminus \cY$ such that $\cY \subset B_a(x)$,
\begin{equation}\label{e:keyLB}
\int_{B_{Ka}(x)} \E_z \left[\ee^{\int_0^t \theta V_\cY(W_s) \dd s} \mathbbm{1} \{\tau_{B_{Ka}(x)^\cc}>t \} \right] \dd z \ge c_1 a^{d}  \ee^{c_2 t a^{-2}} \quad \forall \; t \ge 0.
\end{equation}

\begin{proof}
By translation invariance, we may suppose that $x = 0$ and $\cY \subset B_a$.
Set $b = \delta_\star / a$, $K = K_\star / \delta_\star$, where $K_\star$ is given by Lemma~\ref{l:EVlbVK} with $\eps := 2 \theta M \delta_\star$, 
and write
\begin{equation}\label{e:prkeyLB0}
\int_{B_{Ka}} \E_z \left[\ee^{\int_0^t \theta V_\cY(W_s) \dd s} \mathbbm{1} \{\tau_{B^\cc_{Ka}} >t \} \right] \dd z
= b^{-d} \int_{B_{K_\star}} \E_{{z/b}} \left[\ee^{\int_0^t \theta V_\cY(W_s) \dd s} \mathbbm{1} \{\tau_{B^\cc_{Ka}}>t\} \right] \dd z.
\end{equation}
By Brownian scaling, the integrand in the right-hand side of \eqref{e:prkeyLB0} equals
\begin{equation}\label{e:prkeyLB1}
\E_z \left[\ee^{\int_0^t \theta V_\cY(b^{-1} W_{b^2 s}) \dd s} \mathbbm{1} \{\tau_{B^\cc_{K_\star}} > b^2 t\} \right] = \E_z \left[\ee^{\int_0^{b^2 t} \theta V_{b \cY}(W_{s}) \dd s} \mathbbm{1} \{\tau_{B^\cc_{K_\star}} > b^2 t\} \right]
\end{equation}
where $b \cY := \{by \colon\, y\in\cY\}$. Since $|y| \le \delta_\star$ for all $y \in b \cY$,
\eqref{e:prkeyLB1} is at least
\begin{equation}\label{e:prkeyLB2}
\E_z \left[\exp \left\{ \int_0^{b^2 t} \left(h_d + \eps \right) \widetilde{V}(W_{s}) \dd s \right\} \mathbbm{1}\{\tau_{B^\cc_{K_\star}} > b^2 t \} \right]
\end{equation}
by Lemma~\ref{l:EVlbVK}.
Now using a Fourier expansion as in \cite[Equation~(2.33)]{GKM00},
we obtain
\begin{equation}\label{e:prkeyLB3}
\int_{B_{K_\star}} \E_z \left[ \exp \left\{ \int_0^{b^2 t} \left(h_d + \eps \right) \widetilde{V}(W_{s}) \dd s \right\} \mathbbm{1} \{\tau_{B^\cc_{K_\star}} > b^2 t \} \right] \dd z \ge \ee^{b^2 t \, \widetilde{\lambda}_{\max}} \| e_1\|_{L^1(B_{K_\star})}^2
\end{equation}
where $\widetilde{\lambda}_{\max} := \lambda_{\max}(B_{K_\star},(h_d + \eps) \widetilde{V})$ is the principal Dirichlet eigenvalue of $\frac12\Delta + (h_d + \eps) \widetilde{V}$ in $B_{K_\star}$, 
and $e_1$ is the corresponding eigenfunction normalized so that $\|e_1\|_{L^2(B_{K_\star})} = 1$.
Now \eqref{e:keyLB} follows with $c_1 = \delta_\star^{-d} \| e_1 \|_{L^1(B_{K_\star})}^2$ and $c_2 = \delta_\star^2 \widetilde{\lambda}_{\max}$, which is strictly positive by Lemma \ref{l:EVlbVK}.
\end{proof}
\end{Lem}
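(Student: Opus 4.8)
The plan is to collapse the entire $a$-dependent family of estimates \eqref{e:keyLB} to a single spectral lower bound at a fixed scale, using translation invariance and Brownian scaling, and then to feed in the positivity statement of Lemma~\ref{l:EVlbVK} through the pointwise comparison of Lemma~\ref{l:LBpotential}.

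First I would invoke translation invariance to assume $x=0$ and $\cY\subset B_a$. Then I rescale space by $b:=\delta_\star/a$, with $\delta_\star=\delta_\star(d,M,\theta)$ as in \eqref{e:deltaM}: the substitution $z\mapsto z/b$ in the outer integral turns the domain $B_{Ka}$ into $B_{Kab}$, and fixing $K:=K_\star/\delta_\star$ (with $K_\star$ to be chosen) makes this precisely $B_{K_\star}$, at the cost of a Jacobian $b^{-d}$. Writing $W_s=b^{-1}\widehat W_{b^2 s}$ for another Brownian motion $\widehat W$ and using the homogeneity $V_\cY(b^{-1}y)=b^2 V_{b\cY}(y)$ of the inverse-square potential, the exponent $\int_0^t\theta V_\cY(W_s)\dd s$ becomes $\int_0^{b^2 t}\theta V_{b\cY}(\widehat W_u)\dd u$ and the event $\{\tau_{B_{Ka}^\cc}>t\}$ becomes $\{\tau_{B_{K_\star}^\cc}>b^2 t\}$. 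After this reduction it remains to bound from below $\int_{B_{K_\star}}\E_z[\ee^{\int_0^{s}\theta V_{b\cY}(W_u)\dd u}\,\mathbbm{1}\{\tau_{B_{K_\star}^\cc}>s\}]\dd z$, where $s=b^2 t$ and, crucially, $b\cY\subset B_{\delta_\star}$.

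At this fixed scale, Lemma~\ref{l:LBpotential} applied to the $M$-point set $b\cY\subset B_{\delta_\star}$ yields $\theta V_{b\cY}\ge(h_d+\eps)\widetilde V$ on $\R^d\setminus b\cY$ with $\eps:=2\theta M\delta_\star>0$, so monotonicity of the Feynman--Kac functional in the potential lets me replace $\theta V_{b\cY}$ by $(h_d+\eps)\widetilde V$. Now $(h_d+\eps)\widetilde V$ is bounded on $B_{K_\star}$ (the sole singularity of $\widetilde V$ is its decaying behaviour at infinity), so the Schr\"odinger-semigroup framework of Section~\ref{ss:SO} applies directly; I pick $K_\star=K_\eps$ from Lemma~\ref{l:EVlbVK}, which forces the principal Dirichlet eigenvalue $\widetilde\lambda_{\max}:=\lambda_{\max}(B_{K_\star},(h_d+\eps)\widetilde V)$ — a genuine eigenvalue, as $B_{K_\star}$ is bounded — to be strictly positive, with nonnegative normalized principal eigenfunction $e_1$. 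Expanding $\mathbbm{1}_{B_{K_\star}}$ in the orthonormal eigenbasis of $\tfrac12\Delta+(h_d+\eps)\widetilde V$ on $B_{K_\star}$ (the argument of \cite[Equation~(2.33)]{Gaertner2000}), every term $\ee^{s\lambda_k}\langle\mathbbm{1}_{B_{K_\star}},e_k\rangle^2$ is nonnegative, so keeping only $k=1$ gives $\int_{B_{K_\star}}\E_z[\cdots]\dd z=\langle\mathbbm{1}_{B_{K_\star}},\ee^{s(\frac12\Delta+(h_d+\eps)\widetilde V)}\mathbbm{1}_{B_{K_\star}}\rangle\ge\ee^{s\widetilde\lambda_{\max}}\|e_1\|_{L^1(B_{K_\star})}^2$.

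Undoing the scaling ($s=b^2 t=\delta_\star^2 t/a^2$ and Jacobian $b^{-d}=\delta_\star^{-d}a^d$) then yields \eqref{e:keyLB} with $c_1=\delta_\star^{-d}\|e_1\|_{L^1(B_{K_\star})}^2$ and $c_2=\delta_\star^2\widetilde\lambda_{\max}$, both strictly positive and depending only on $d,M,\theta$. Within this lemma the only non-mechanical step is the spectral lower bound of the previous paragraph, which is safe precisely because $\widetilde V$ is bounded on the relevant ball; the real difficulty, already isolated in Lemma~\ref{l:EVlbVK}, is the strict positivity of $\widetilde\lambda_{\max}$ even though the Hardy constant $h_d$ sits exactly at criticality — there one needs test functions exploiting the slack $\eps$ to beat the logarithmic deficit in Hardy's inequality.
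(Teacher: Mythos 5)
Your proposal is correct and follows essentially the same route as the paper's proof: translation invariance, rescaling by $b=\delta_\star/a$, the pointwise comparison $\theta V_{b\cY}\ge(h_d+\eps)\widetilde V$ from Lemma~\ref{l:LBpotential}, and the eigenfunction expansion yielding $\ee^{b^2t\,\widetilde\lambda_{\max}}\|e_1\|_{L^1}^2$, with the same constants $c_1,c_2$. (You even correctly attribute the pointwise comparison to Lemma~\ref{l:LBpotential}, where the paper's text slightly misattributes it to Lemma~\ref{l:EVlbVK}.)
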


\subsection{Multipolar Hardy inequality}
\label{ss:MPHI}

We provide in this section upper bounds for $\lambda_{\max}(\R^d, q)$ in \eqref{e:deflambdamax} 
with $q=\theta V_{\cY}$, $\cY \in \scrYf$ and $\theta\in(0,h_d]$ (recall $h_d = (d-2)^2/8$), 
which will be useful to control \eqref{e:FKtau} and \eqref{e:FKt}.

When $\# \cY = 1$, Hardy's inequality \eqref{e:Hardy1} states that
\begin{equation}\label{e:Hardy}
\lambda_{\max}(\R^d, \theta V_\cY) = 0 \quad \text{ if } \;\; 0 \le \theta \le h_d,
\end{equation}
which clearly extends to $\#\cY \ge 2$ in the sense that, with $M = \# \cY$,
\begin{equation}\label{e:MPHI_for_smalltheta}
\lambda_{\max}(\R^d, \theta V_\cY) = 0 \quad \text{ if } \;\;  0 \le \theta \le \tfrac{h_d}{M}.
\end{equation}
More general bounds, known as \emph{multipolar Hardy inequalities}, are considered for example in \cite{BDE08}.
The next proposition is obtained by combining results and methods from \cite{BDE08},
and offers in some cases an improvement of Theorem~1 therein.
\begin{proposition}
\label{p:MPHI}
Fix $\cY \in \scrYf$. Assume that $M := \# \cY \ge 2$ and $\theta \in \left(\tfrac{h_d}{M}, \tfrac{h_d}{(M-1)} \right]$.
Let
\begin{equation}\label{e:defdforMPHI}
\Gamma := \inf \left\{r > 0 \colon\, B_r(\cY) \text{ is connected} \right\}.
\end{equation}
Then
\begin{equation}\label{e:MPHI}
\lambda_{\max}(\R^d, \theta V_\cY) \le \frac{M(\pi^2+ 3 \theta)}{2 \Gamma^2}.
\end{equation}
\end{proposition}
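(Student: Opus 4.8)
The plan is to recast \eqref{e:MPHI} as a multipolar Hardy inequality and prove it by the ground-state substitution method. Since
\[
\lambda_{\max}(\R^d,\theta V_\cY)=\sup_{g\in H^1_0(\R^d),\,\|g\|_{L^2(\R^d)}=1}\Big\{\theta\int_{\R^d}V_\cY\,g^2-\tfrac12\int_{\R^d}|\nabla g|^2\Big\},
\]
the claim is equivalent to
\[
\theta\int_{\R^d}V_\cY\,g^2\ \le\ \tfrac12\int_{\R^d}|\nabla g|^2+\Lambda\int_{\R^d}g^2,\qquad \Lambda:=\frac{M(\pi^2+3\theta)}{2\Gamma^2},\quad g\in H^1_0(\R^d).
\]
To establish this it suffices to exhibit a positive $w$, smooth on $\R^d\setminus\cY$, which is a \emph{supersolution}: $-\tfrac12\Delta w\ge(\theta V_\cY-\Lambda)\,w$ pointwise on $\R^d\setminus\cY$. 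Indeed, for $g\in C_c^\infty(\R^d\setminus\cY)$ the substitution $g=wf$ and one integration by parts yield
\[
\tfrac12\int_{\R^d}|\nabla g|^2-\theta\int_{\R^d}V_\cY g^2+\Lambda\int_{\R^d}g^2=\tfrac12\int_{\R^d}w^2|\nabla f|^2+\int_{\R^d}\big(-\tfrac12\Delta w-\theta V_\cY w+\Lambda w\big)\,w\,f^2,
\]
whose right-hand side is $\ge0$; the extension to all of $H^1_0(\R^d)$ is standard because a finite set has zero $H^1$-capacity in dimension $d\ge3$.

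The construction of $w$ is where the hypotheses $M\ge2$ and $\theta\le h_d/(M-1)$ enter. Because $\Lambda$ is of order $M\Gamma^{-2}$ while $V_\cY=\sum_{y\in\cY}|\cdot-y|^{-2}$, the supersolution inequality is automatic at every $x$ with $\theta V_\cY(x)\le\Lambda$, so it only needs checking within distance of order $\Gamma$ of the poles, where the singular behaviour of $V_\cY$ is driven by the poles clustered near $x$. I would take $w(x):=\sum_{y\in\cY}\psi(|x-y|)$ for a positive, non-increasing radial profile $\psi$ that coincides with $r^{-(d-2)/2}$ near $r=0$, decays like the harmonic function $r^{-(d-2)}$ for large $r$, and is suitably tuned at the scale $\Gamma$ where the two regimes meet (matched so that $\psi$ is continuous and its slope jumps downward, which only adds a nonnegative distributional term to $-\tfrac12\Delta w$). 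Off the poles $-\tfrac12\Delta w\ge0$; near a pole $y$ the dominant term $h_d\,|x-y|^{-2}|x-y|^{-(d-2)/2}$ of $-\tfrac12\Delta w$, combined with the arithmetic bound $\theta(M-1)\le h_d$, absorbs the diagonal contribution $\theta|x-y|^{-2}\psi(|x-y|)$ \emph{together with} the cross contributions $\theta|x-y|^{-2}\psi(|x-y'|)$ of the nearby poles $y'$, while the poles at distance $\gtrsim\Gamma$ only produce an $\mathcal O(M\Gamma^{-2})\,w$ remainder that is absorbed by $\Lambda w$. An elementary but careful ODE optimization of $\psi$ at scale $\Gamma$ is expected to produce the numerical constant $\pi^2+3\theta$ (the $\pi^2$ from a Dirichlet-type quantity of the profile at scale $\Gamma$, the $3\theta$ from the size of the potential at distance $\sim\Gamma$), yielding exactly $\Lambda=M(\pi^2+3\theta)/(2\Gamma^2)$. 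The connectivity built into the definition \eqref{e:defdforMPHI} of $\Gamma$ is precisely what guarantees that these near-pole regions overlap enough to cover the whole configuration, so no intermediate scale is left uncontrolled; the endpoint $\theta=h_d/(M-1)$ requires no special treatment since the critical Hardy inequality remains valid (or one passes to the limit $\theta\uparrow h_d/(M-1)$ via Remark~\ref{r:evmonoton}).

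The step I expect to be the main obstacle is making the near-pole accounting \emph{uniform in how tightly the poles may cluster}: when $k$ of the poles collapse to a common point, $V_\cY$ develops an effective $k\theta\,|x|^{-2}$ singularity on intermediate scales, which is supercritical once $k\theta>h_d$, and one has to verify that the supersolution inequality nonetheless persists, replacing the naive term-by-term domination by a grouping of the cross terms along the minimal spanning tree of $\cY$ so that each is charged exactly once to a single large positive contribution of $-\tfrac12\Delta w$. This is exactly the point where $\theta\le h_d/(M-1)$ — rather than merely $\theta\le h_d$ — is indispensable, and where I would import the computational machinery of \cite{BDE}, replacing their test functions by the truncated ground states above. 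A secondary, more routine difficulty is the bookkeeping at scale $\Gamma$ needed to extract the explicit constant $M(\pi^2+3\theta)/(2\Gamma^2)$ rather than an unspecified $\mathcal O(M\Gamma^{-2})$.
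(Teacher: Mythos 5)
Your overall strategy (exhibit a positive supersolution $w$ of $-\tfrac12\Delta w\ge(\theta V_\cY-\Lambda)w$ and conclude by the ground-state substitution) is a legitimate general route to Hardy-type bounds and is genuinely different from the paper's argument. But the step you yourself flag as ``the main obstacle'' is where the proof lives, and the construction you propose does not survive it. Concretely, take $d=3$, $M=3$, $\theta=h_d/2$, with $y_1,y_2$ at mutual distance $\delta$ and $y_3$ at distance of order $\Gamma\gg\delta$ (so $\Gamma$ is of order the large gap). At intermediate scales $\delta\ll s:=|x-y_1|\ll\Gamma$ one has $-\tfrac12\Delta w\approx 2h_d s^{-2}\psi(s)+\mathcal{O}(\Gamma^{-2}\psi(\Gamma))$ and $\theta V_\cY(x)w(x)\approx 2\theta s^{-2}\bigl(2\psi(s)+\psi(\Gamma)\bigr)$. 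The diagonal terms cancel exactly (since $4\theta=2h_d$), the cross term $2\theta s^{-2}\psi(\Gamma)$ remains, and $\Lambda w\approx 2\Lambda\psi(s)$ cannot absorb it: the ratio $\Lambda\psi(s)\big/\bigl(s^{-2}\psi(\Gamma)\bigr)\sim (s/\Gamma)^{2-(d-2)/2}\to0$ as $s/\Gamma\to0$ for $d\le 5$. So the pointwise supersolution inequality fails on an open set; and no regrouping of cross terms ``along the minimal spanning tree'' can help, because at that scale there is no positive surplus anywhere in $-\tfrac12\Delta w+\Lambda w-\theta V_\cY w$ to charge them to. A short computation shows the same failure persists for $\theta$ strictly below $h_d/(M-1)$ once $\delta/\Gamma$ is small enough, so the limiting argument in $\theta$ (which is otherwise correct) does not rescue you. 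Repairing this requires a different profile (sub-Hardy exponents or cluster-adapted weights), and there is no reason the explicit constant $M(\pi^2+3\theta)/(2\Gamma^2)$ would survive such a repair.

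For comparison, the paper uses $\Gamma$ in the opposite way: for $r<\Gamma$ the set $B_r(\cY)$ is \emph{dis}connected, so $\cY$ splits into a component $\widehat{\cY}$ with $N\le\lfloor M/2\rfloor$ points and its complement with $M-N\le M-1$ points, the two groups being $r$-separated. An IMS partition of unity $J_1^2+J_2^2=1$ adapted to this split (this is the actual ``machinery of \cite{BDE}'', which is a localization argument, not a supersolution construction) reduces everything to $\lambda_{\max}(\R^d,\theta V_{\cY'})=0$ for each group separately --- valid because $\theta\,\#\cY'\le\theta(M-1)\le h_d$ --- at the price of explicit localization errors $\theta(M+N)/r^2$ (far poles seen through each cutoff) and $\lfloor M/2\rfloor\pi^2/r^2$ (from $\sum_i|\nabla J_i|^2$), which sum to exactly $M(\pi^2+3\theta)/(2r^2)$; letting $r\uparrow\Gamma$ finishes. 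Note that the hypothesis $\theta\le h_d/(M-1)$ is thus exploited group-wise, never pole-wise, which is precisely what makes arbitrarily tight clustering harmless.
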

\begin{proof}
Fix $r \in (0,\Gamma)$ and choose $\widehat{\cY} \subset \cY$ such that $\widehat{\cY} \neq \emptyset$, $N := \#\widehat{\cY} \le \lfloor M/2 \rfloor$,
\begin{equation}\label{e:prMPHI0}
B_{r}(\widehat{\cY}) \text{ is connected and } B_{r}(\widehat{\cY}) \cap  B_{r}(\cY\setminus \widehat{\cY}) = \emptyset,
\end{equation}
i.e., $B_r(\widehat{\cY})$ is a connected component of $B_r(\cY)$ containing at most half of the points of $\cY$.
Define a partition of unity (cf.\ definition after Theorem~1 in \cite{BDE08}) with $2$ terms as follows.
Set
\begin{equation}\label{e:defJ}
J(t) := \left\{ \begin{array}{llll}
0, & t \in [0,1/2],\\
-\cos(\pi t), & t \in [1/2, 1],\\
1, & t \ge 1,
\end{array}\right.
\end{equation}
put $J_1(x) := \prod_{y \in \widehat{\cY}} J(|x-y|/r)$ and $J_2(x) := [1 - J_1(x)^2 ]^{1/2}$.
By Lemma~2 in \cite{BDE08},
\begin{equation}\label{e:prMPHI1}
Q[u] := \int_{\R^d} \left\{ \theta V_\cY(x) u(x)^2 - |\nabla u(x)|^2 \right\} \dd x 
= \sum_{i=1}^{2} Q[J_i u] + \int_{\R^d} u(x)^2 \sum_{i=1}^{2} |\nabla J_i(x)|^2 \dd x
\end{equation}
for all $u \in H^1(\R^d)$. Note that, by \eqref{e:prMPHI0} and the definition of $J_1$, $J_2$,
\begin{equation}
V_\cY(x) J_2(x)^2 \le V_{\widehat{\cY}}(x) J_2(x)^2 + \frac{M-N}{r^2} \quad \forall\; x \in \R^d \setminus \widehat{\cY}
\end{equation}
while, for all $x \notin \widetilde{\cY}:= \cY \setminus \widehat{\cY}$,
\begin{align}\label{e:prMPHI2.5}
V_\cY(x) J_1(x)^2
& = \left\{ V_{\widetilde{\cY}}(x) + \sum_{y \in \widehat{\cY}} \frac{\mathbbm{1}_{\{|x-y| \ge  r/2 \}}}{|x-y|^2} \right\} J_1 (x)^2 \nonumber\\
& \le V_{\widetilde{\cY}}(x) J_1(x)^2 + \frac{N}{r^2} \sup_{t \ge 1/2} \frac{J(t)^2}{t^2} 
\le V_{\widetilde{\cY}}(x) J_1(x)^2 + \frac{2 N}{r^2},
\end{align}
where for the last step we used $\sup_{t \ge 1/2} J(t)^2/t^2 = \sup_{t \in [1/2, 1]} \cos(\pi t)^2/t^2 < 2$ (see the proof of Lemma~3 in \cite{BDE08}).
Applying \eqref{e:Hardy}--\eqref{e:MPHI_for_smalltheta}, we obtain
\begin{equation}\label{e:prMPHI3}
\sum_{i=1}^{2} Q[J_i u] \le  \theta\frac{M+N}{r^2} \|u\|^2_{L^2(\R^d)} \;\;\; \forall \; u \in H^1(\R^d).
\end{equation}
Next we claim that
\begin{equation}\label{e:prMPHI4}
\sum_{i=1}^{2} |\nabla J_i(x)|^2 \le N \frac{\pi^2}{r^2} \;\;\; \forall \; x \in \R^d.
\end{equation}
Indeed, we may restrict to $x \in B_r(\widehat{Y})$, in which case we note that
\begin{equation}\label{e:prMPHI4.1}
\sum_{i=1}^{2} |\nabla J_i(x)|^2 = \frac{|\nabla J_1(x)|^2}{1 - J_1(x)^2} \le \frac{\pi^2}{r^2} \sup_{\eta \in [0,\pi/2)^N} F(\eta),
\end{equation}
where, for $\eta = (\eta_1, \ldots, \eta_N) \in [0,\pi/2)^N$,
\begin{equation}\label{e:prMPHI4.2}
F(\eta) := \left(1 - \prod_{i=1}^N \sin(\eta_i)^2 \right)^{-1} \left(\sum_{i=1}^N \cos(\eta_i) \prod_{ j\neq i} \sin(\eta_j) \right)^2.
\end{equation}
Let us show that $\sup_{\eta \in [0,\pi/2)^N}F(\eta) \le N$. 
First note that, if $\min_i \eta_i = 0$, then $F(\eta) \le 1 < N$,
and thus we may restrict to $\eta \in (0,\pi/2)^N$.
In the latter set, $F = f /g$ where
\begin{equation}\label{e:prMPHI4.3}
f(\eta) := \left(\sum_{i=1}^N \cot(\eta_i)\right)^2, \quad g(\eta) := \prod_{i=1}^N \csc(\eta_i)^2 - 1.
\end{equation}
Using $\csc(\eta_i)^2 = 1 + \cot(\eta_i)^2$ and expanding the product in the definition of $g$, 
we obtain $g(\eta) \ge \sum_{i=1}^N \cot(\eta_i)^2$.
On the other hand, by the Cauchy-Schwarz inequality,
$
f(\eta) \le N \sum_{i=1}^N \cot(\eta_i)^2 \le N g(\eta),
$
finishing the proof of \eqref{e:prMPHI4}. As a consequence,
\begin{equation}\label{e:prMPHI5}
\int_{\R^d} u(x)^2 \sum_{i=1}^{2} |\nabla J_i(x)|^2 \dd x \le \frac{\lfloor M/2 \rfloor \pi^2}{r^2} \|u\|^2_{L^2(\R^d)} \;\;\; \forall \; u \in H^1(\R^d).
\end{equation}
Collecting now \eqref{e:prMPHI1}, \eqref{e:prMPHI3}, \eqref{e:prMPHI5} and letting $r \uparrow \Gamma$, 
we conclude \eqref{e:MPHI}.
\end{proof}

\section{Path expansions}
\label{s:expansions}
%
In this section, we provide an upper bound for the contribution to the Feynman-Kac
formula of Brownian paths that leave a large ball. This is achieved
by means of a path expansion technique that splits the Brownian path in
excursions between neighbourhoods of the Poisson points, cf.\ Section~\ref{ss:proofthmupperbound} below.

Recall $h_d = (d-2)^2/8$ and fix $\cY \in \scrYf$ (cf.\ \eqref{e:fin}).
Given $r>0$, we denote by $\mathscr{C}^{(r)}_\cY$ the set of connected components of $B_r(\cY)$.
For $a \in (0,r)$, $\theta \in (0, h_d]$ and $\cC \in \mathscr{C}^{(r)}_\cY$, let
\begin{equation}\label{e:deflambdaC}
N_\cC := \# \cY \cap \cC, \qquad \lambda_\cC := \lambda_{\max}(\cC, \theta V^{(a)}_{\cY}) = \lambda_{\max}(\cC, \theta V^{(a)}_{\cY \cap \cC}) \ge 0,
\end{equation}
where $V^{(a)}_\cY$ is as in \eqref{e:deffinpot} and $\lambda_{\max}(D, V)$ as in \eqref{e:deflambdamax}.
Note that $\lambda_\cC < \infty$ by \cite[Theorem~1]{BDE08}.
Define
\begin{equation}\label{e:numberpointsC}
N^{(r)}_\cY := \max_{\cC \in \mathscr{C}^{(r)}_\cY} N_\cC, \qquad \Lambda^{(\theta, a, r)}_\cY := \max_{\cC \in \mathscr{C}^{(r)}_\cY} \lambda_\cC
\end{equation}
and, for measurable $D' \subset \R^d$,
\begin{equation}\label{e:deffrakND'}
\mathfrak{N}^{(r)}_{\cY}(D') := 1 \vee \sqrt{\# \{\cC \in \mathscr{C}^{(r)}_\cY \colon\, D' \cap \cC \neq \emptyset\}}.
\end{equation}
The following is the main result of this section.
\begin{The}\label{t:upperbound}
There exist constants $K \in [1,\infty)$ and $c, c_* \in (0, \infty)$ such that the following holds.
Let $\cY \in \scrYf$, $\theta \in (0, h_d]$, $a \in (0, 1]$ and $r > 4a$.
For $\gamma >  \Lambda^{(\theta, a, r)}_\cY$, let
\begin{align}\label{e:defLvarrho}
\begin{aligned}
L & = L(\cY, \theta, a, r, \gamma) := K \Big(N_{\cY}^{(r)}\Big)^{5/2}\left(\frac{r}{a} \right)^{\tfrac{d}2} \left(1 +\frac{\gamma+ (1+\theta) r^{-2}}{\gamma - \Lambda^{(\theta, a, r)}_{\cY}} \right),\\
\varrho & = \varrho(\cY,\theta,a,r,\gamma) := L \exp \left\{-a c_* \sqrt{\gamma} \right\}.
\end{aligned}
\end{align}
Assume that $\varrho \leq 1/2$.
Then
\begin{equation}\label{e:upperbound0}
\sup_{z \in B_r(\cY)^\cc} \sup_{t \ge 0} \E_z \left[ \exp \int_0^t \{ \theta V^{(a)}_\cY(W_s) - \gamma\} \dd s \right] \le \frac{1}{1-\varrho} \le 2.
\end{equation}
and, for all measurable $D' \subset \R^d$, 
\begin{equation}\label{e:upperbound0int}
\sup_{t \ge 0}  \int_{D'} \E_z \left[ \exp \int_0^t \{ \theta V^{(a)}_\cY(W_s) - \gamma\} \dd s \right] \dd z \le  \frac{4 L \mathfrak{N}^{(r)}_{\cY}(D')}{1-\varrho}  \left( |D'| \vee \sqrt{|D'|} \right).
\end{equation}
Moreover, for all $R \ge 8 r N^{(r)}_\cY$ and all $t > 0$,
\begin{equation}\label{e:upperbound}
\sup_{z \in B_r(\cY)^\cc} \E_z \left[ \mathbbm{1}_{\{ \tau_{B_R^\cc(z) \le t}\}} \exp \int_0^t \{ \theta V^{(a)}_\cY(W_s) - \gamma\} \dd s\right] 
\le 2 K L \left\{ \frac{R}{r}\ee^{-\frac{ c R^2}{t}} + \varrho^{\frac{R}{4 r N^{(r)}_\cY }} \right\}
\end{equation}
and
\begin{equation}\label{e:upperboundint}
\begin{aligned}
\int_{D'} \E_z \left[ \mathbbm{1}_{\{ \tau_{B_R^\cc(z) \le t}\}} \ee^{\int_0^t \{ \theta V^{(a)}_\cY(W_s) - \gamma\} \dd s}\right] \dd z \le 4 K L \mathfrak{N}^{(r)}_{\cY}(D') \left(|D'| \vee \sqrt{|D'|} \right) \left\{ \frac{R}{r}\ee^{-\frac{ c R^2}{t}} + \varrho^{\frac{R}{4 r N^{(r)}_\cY }} \right\}.
\end{aligned}
\end{equation}
\end{The}

\subsection{Proof of Theorem~\ref{t:upperbound}}
\label{ss:proofthmupperbound}

We start with auxiliary results that will be needed in the following,
and that will allow us to identify the constants in Theorem~\ref{t:upperbound}.
The first lemma concerns standard bounds for Brownian motion.
\begin{Lem}\label{l:tailBM}
There exist $K_* = K_*(d) \in [1,\infty)$ and $c_*=c_*(d) \in (0,\infty)$ such that
\begin{equation}\label{e:tailBM}
\P_0 \left(\sup_{0 \le s \le t} |W_s| > R \right) \le K_* \ee^{-\frac{c_* R^2}{t}} \quad \text{ for all } t, R>0,
\end{equation}
and
\begin{equation}\label{e:exitBM}
\E_0 \left[ \ee^{- u \tau_{B_a^\cc}} \right] \le K_* \, \ee^{-c_* a \sqrt{u}} \quad \text{ for all } a, u > 0.
\end{equation}
\end{Lem}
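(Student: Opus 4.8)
The plan is to deduce both inequalities from the one-dimensional reflection principle together with the elementary Gaussian tail bound $\P(\cN(0,1) > z) \le \tfrac12 \ee^{-z^2/2}$ (valid for all $z \ge 0$), and then to obtain \eqref{e:exitBM} from \eqref{e:tailBM} by optimizing over a splitting time. Both constants will come out depending only on $d$, as required.

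For \eqref{e:tailBM} I would argue as follows. Writing $W_s = (W^1_s, \dots, W^d_s)$ for the coordinate Brownian motions, the elementary inclusion $\{|W_s| > R\} \subseteq \bigcup_{i=1}^d \{|W^i_s| > R/\sqrt d\}$ (which holds because $|W_s|^2 = \sum_i (W^i_s)^2$) and a union bound give
\[
\P_0\Big(\sup_{0 \le s \le t}|W_s| > R\Big) \le \sum_{i=1}^d \P_0\Big(\sup_{0 \le s \le t}|W^i_s| > R/\sqrt d\Big) \le 2d\,\P_0\Big(\sup_{0 \le s \le t}W^1_s > R/\sqrt d\Big),
\]
using symmetry of each coordinate for the last step. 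By the reflection principle, $\P_0(\sup_{0 \le s \le t}W^1_s > y) = 2\,\P_0(W^1_t > y) = 2\,\P(\cN(0,1) > y/\sqrt t) \le \ee^{-y^2/(2t)}$ for $y>0$; taking $y = R/\sqrt d$ yields \eqref{e:tailBM} with $K_* = 2d$ and $c_* = 1/(2d)$.

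For \eqref{e:exitBM}, note that $\tau_{B_a^\cc} = \inf\{s \ge 0 \colon |W_s| \ge a\}$ and $W_0 = 0$, so $\{\tau_{B_a^\cc} \le t\} = \{\sup_{0 \le s \le t}|W_s| \ge a\}$; letting $R \uparrow a$ in \eqref{e:tailBM} (using continuity of $s \mapsto W_s$) gives $\P_0(\tau_{B_a^\cc} \le t) \le K_* \ee^{-c_* a^2/t}$ for every $t > 0$. Hence, for any $t > 0$,
\[
\E_0\big[\ee^{-u\tau_{B_a^\cc}}\big] \le \ee^{-ut} + \P_0(\tau_{B_a^\cc} \le t) \le \ee^{-ut} + K_* \ee^{-c_* a^2/t},
\]
and choosing $t = a/\sqrt u$ makes both exponents equal to a constant times $a\sqrt u$: the right-hand side is then $\ee^{-a\sqrt u} + K_*\ee^{-c_* a \sqrt u} \le (1 + K_*)\,\ee^{-(c_* \wedge 1)\,a\sqrt u}$. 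Replacing $K_*$ by $1 + 2d$ and keeping $c_* = 1/(2d) \le 1$ (which still satisfies \eqref{e:tailBM}) gives both estimates with a single pair of dimension-dependent constants.

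I do not expect a real obstacle here; the statement is classical. The only points needing a little care are the reduction from the Euclidean norm to the coordinates, the passage from the strict inequality in \eqref{e:tailBM} to the closed event $\{\tau_{B_a^\cc} \le t\}$, and the bookkeeping ensuring that the same $K_*, c_*$ serve both bounds and depend only on $d$. One could alternatively cite the explicit Bessel-function formula for the Laplace transform of the exit time of a ball, but the crude split above suffices for the applications in Section~\ref{ss:proofthmupperbound}.
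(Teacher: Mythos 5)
Your argument is correct and matches the paper's proof, which is likewise a union bound over coordinates combined with standard one-dimensional estimates (reflection principle and Gaussian tails, with the Laplace-transform bound \eqref{e:exitBM} obtained from the tail bound by optimizing over the splitting time). The details you supply — the reduction $\{|W_s|>R\}\subseteq\bigcup_i\{|W^i_s|>R/\sqrt d\}$, the passage to the closed event for $\tau_{B_a^\cc}$, and the choice $t=a/\sqrt u$ — are all sound, so nothing further is needed.
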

\begin{proof}
Follows from union bounds and standard estimates for one-dimensional Brownian motion,
e.g.\ Remark~2.22 and Exercise~2.18 in \cite{MP10}.
\end{proof}

The next lemma is a consequence of the bounds in Lemma~\ref{l:L1res} and Lemma\ref{l:FK}.
\begin{Lem}\label{l:resolvcomp}
There exists a constant $K_1 \in [1,\infty)$ such that,
for all $\cY \in \scrYf$,
$\theta \in (0,h_d]$,
$a \in (0, 1]$, $r>2a$, $\cC \in \mathscr{C}^{(r)}_\cY$, 
$\gamma > \lambda_{\cC}$,
and all measurable $D' \subset \R^d$,
\begin{equation}\label{e:intcomptime}
\sup_{t \geq 0} \int_{D'} \E_x \left[\ee^{\int_0^t \left( \theta V^{(a)}_\cY(W_s) - \gamma \right) \dd s} \1{\{ \tau_{\cC^\cc} > t \}} \right] \dd x
\leq  \sqrt{|D'|} K_1 N_\cC^{1/2} \left(\frac{r}{a}\right)^{\tfrac{d}{2}}  
\end{equation}
and
\begin{equation}\label{e:intcomp}
\int_{D'} \E_x \left[\ee^{\int_0^{\tau_{\cC^{\cc}}} \left( \theta V^{(a)}_\cY(W_s) - \gamma \right) \dd s} \right] \dd x
\leq \left( |D'| \vee \sqrt{|D'|} \right) K_1 N_\cC^{5/2} \left(\frac{r}{a} \right)^{\tfrac{d}{2}}\left(1 + \frac{\gamma + (1+\theta)r^{-2}}{ \gamma - \lambda_\cC} \right).
\end{equation}
Moreover, 
for all $x \in \cC \setminus B_{2a}(\cY)$,
\begin{equation}\label{e:resolvcomp}
\E_x \left[\exp{\int_0^{\tau_{\cC^{\cc}}} \left( \theta V^{(a)}_\cY(W_s) - \gamma \right) \dd s} \right] \le 
K_1 N_{\cC}^{5/2}\left(\frac{r}{a} \right)^{\tfrac{d}2} \left(1 + \frac{\gamma + (1+\theta) r^{-2}}{\gamma - \lambda_{\cC}} \right)
\end{equation}
and
\begin{equation}\label{e:semigroupbb}
\sup_{ t \ge 0}\E_x \left[ \ee^{\int_0^t \left(\theta V^{(a)}_\cY(W_s) - \gamma \right) \dd s} \mathbbm{1}_{\{\tau_{\cC^\cc} > t \}}\right] 
\le K_1 N_{\cC}^{5/2} \left(\frac{r}{a} \right)^{\tfrac{d}2} \left( 1 + \frac{\gamma + (1+\theta) r^{-2}}{\gamma - \lambda_{\cC}} \right).
\end{equation}
\end{Lem}
\begin{proof}
By \cite[Proposition~1.22]{CZ95}, each $\cC \in \mathscr{C}^{(r)}_\cY$ is a bounded regular domain.
Noting that $V^{(a)}_\cY(x) = V^{(a)}_{\cY \cap \cC}(x)$ for $x \in \cC \setminus \cY$,
apply Lemmas~\ref{l:L1res}--\ref{l:FK} with $D = \cC$ and use $|\cC| \le |B_{1}| N_\cC r^d$, $N_\cC \ge 1$.
\end{proof}

\begin{Cor}
\label{cor:basicestim}
For any $\cY \in \scrYf$, 
$\theta \in (0,h_d]$,
$a \in (0, \infty)$, $r > 4a$, $\cC \in \mathscr{C}^{(r)}_\cY$, $\gamma > \lambda_\cC$
and $x \in \cC \cap B_{r-a}(\cY) \setminus B_{3a}(\cY)$,
\begin{equation}\label{e:basicestim}
\E_x \left[\exp{\int_0^{\tau_{\cC^{\cc}}} \left( \theta V^{(a)}_\cY(W_s) - \gamma \right) \dd s} \right] \le 
K_* K_1 N_{\cC}^{5/2} \left(\frac{r}{a} \right)^{\tfrac{d}2} \ee^{- c_* a \sqrt{\gamma}} \left(1 +  \frac{\gamma + (1+ \theta) r^{-2}}{\gamma - \lambda_{\cC}}\right)
\end{equation}
where $K_*$, $c_*$ are as in Lemma~\ref{l:tailBM} and $K_1$ as in Lemma~\ref{l:resolvcomp}.
\end{Cor}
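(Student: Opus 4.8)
The plan is to split the path at the first time $\sigma := \tau_{B_{2a}(\cY)}$ that the Brownian motion reaches the $2a$-neighbourhood of $\cY$, and to gain the factor $\ee^{-c_* a\sqrt\gamma}$ from the time spent crossing the annulus between $B_a(x)$ and $B_{2a}(\cY)$, where the potential vanishes and the integrand is simply $\ee^{-\gamma s}$. First I would record three elementary geometric facts. Since $x \in \cC \cap B_{r-a}(\cY)$, the ball $B_a(x)$ is connected, contained in $B_r(\cY)$, and meets $\cC$; as $\cC$ is the connected component of $B_r(\cY)$ containing $x$, this forces $B_a(x)\subseteq\cC$, hence $\tau_{\cC^\cc}\ge\tau_{B_a(x)^\cc}$. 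Since $\dist(x,\cY)\ge 3a$, the triangle inequality gives $\dist(x,B_{2a}(\cY))\ge a$, so $B_a(x)\cap B_{2a}(\cY)=\emptyset$ and therefore $\sigma\ge\tau_{B_a(x)^\cc}$. Finally, $V^{(a)}_\cY\equiv 0$ on $B_{2a}(\cY)^\cc\supseteq B_a(\cY)^\cc$, so the running exponent stays equal to $-\gamma s$ while the path remains outside $B_{2a}(\cY)$.

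Next I would decompose the expectation according to the events $\{\tau_{\cC^\cc}\le\sigma\}$ and $\{\sigma<\tau_{\cC^\cc}\}$. On the first event the path stays in $B_{2a}(\cY)^\cc$ up to time $\tau_{\cC^\cc}$, so $\int_0^{\tau_{\cC^\cc}}(\theta V^{(a)}_\cY(W_s)-\gamma)\,\dd s=-\gamma\tau_{\cC^\cc}$ and the contribution is at most $\E_x[\ee^{-\gamma\tau_{\cC^\cc}}]\le\E_x[\ee^{-\gamma\tau_{B_a(x)^\cc}}]$, using $\gamma>\lambda_\cC\ge 0$ and $\tau_{\cC^\cc}\ge\tau_{B_a(x)^\cc}$. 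On the second event the exponent on $[0,\sigma)$ equals $-\gamma\sigma$; moreover, since $\sigma$ is the first entrance time into the open set $B_{2a}(\cY)$ and $W_0=x\notin B_{2a}(\cY)$, continuity of the path gives $W_\sigma\notin B_{2a}(\cY)$, while $\sigma<\tau_{\cC^\cc}$ gives $W_\sigma\in\cC$; hence $W_\sigma\in\cC\setminus B_{2a}(\cY)$. Writing $A := K_1 N_\cC^{5/2}(r/a)^{d/2}\bigl(1+\tfrac{\gamma+(1+\theta)r^{-2}}{\gamma-\lambda_\cC}\bigr)$ for the bound supplied by Lemma~\ref{l:resolvcomp} (applicable since $r>2a$ and $W_\sigma\in\cC\setminus B_{2a}(\cY)$), the strong Markov property at $\sigma$ gives
\[
\E_x\!\Big[\ee^{-\gamma\sigma}\,\1{\{\sigma<\tau_{\cC^\cc}\}}\,\E_{W_\sigma}\!\Big[\exp\!\int_0^{\tau_{\cC^\cc}}\!\bigl(\theta V^{(a)}_\cY(W_s)-\gamma\bigr)\dd s\Big]\Big]\;\le\; A\,\E_x\!\big[\ee^{-\gamma\tau_{B_a(x)^\cc}}\big],
\]
using $\sigma\ge\tau_{B_a(x)^\cc}$ in the last step.

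Combining the two contributions yields $\E_x[\,\cdot\,]\le\E_x[\ee^{-\gamma\tau_{B_a(x)^\cc}}](1+A)$. By translation invariance and \eqref{e:exitBM} of Lemma~\ref{l:tailBM} (with $u=\gamma>0$), $\E_x[\ee^{-\gamma\tau_{B_a(x)^\cc}}]=\E_0[\ee^{-\gamma\tau_{B_a^\cc}}]\le K_*\ee^{-c_* a\sqrt\gamma}$, so the left-hand side is at most $K_*\ee^{-c_* a\sqrt\gamma}(1+A)$. Since $K_1,N_\cC,r/a\ge 1$ and the last factor of $A$ is $\ge 1$, we have $A\ge 1$, hence $1+A\le 2A$, and the numerical factor is absorbed into the constant $K_1$ (which may be enlarged); this gives \eqref{e:basicestim}. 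The only step needing a little care will be the geometric bookkeeping ensuring that the post-$\sigma$ starting point $W_\sigma$ lies in $\cC\setminus B_{2a}(\cY)$, so that Lemma~\ref{l:resolvcomp} applies; the rest is a routine strong-Markov argument combined with the exit-time estimate \eqref{e:exitBM}.
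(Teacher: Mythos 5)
Your argument is correct in substance and uses the same two ingredients as the paper (Lemma~\ref{l:resolvcomp} and the exit-time estimate \eqref{e:exitBM}), but the decomposition is different: you split at the \emph{entrance} time $\sigma=\tau_{B_{2a}(\cY)}$ and must then handle the two cases $\tau_{\cC^\cc}\le\sigma$ and $\sigma<\tau_{\cC^\cc}$ separately, whereas the paper applies the strong Markov property at the \emph{exit} time $\tau_{B_a(x)^\cc}$ of the small ball around $x$. Since $\dist(x,\cY)\ge 3a$ and $x\in\cC\cap B_{r-a}(\cY)$, one has $\overline{B_a(x)}\subset\cC$, $V^{(a)}_\cY\equiv 0$ on $B_a(x)$, and $W_{\tau_{B_a(x)^\cc}}\in\cC\setminus B_{2a}(\cY)$; so the functional factors exactly as $\E_x[\ee^{-\gamma\tau_{B_a(x)^\cc}}]$ times the bound of Lemma~\ref{l:resolvcomp}, with no case analysis and no additive term $1$. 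The price of your route is the final bound $(1+A)K_*\ee^{-c_*a\sqrt{\gamma}}\le 2AK_*\ee^{-c_*a\sqrt{\gamma}}$, i.e.\ an extra factor $2$ relative to the stated \eqref{e:basicestim}; as you note this is harmless because $K_1$ is an unspecified constant that can be doubled at the level of Lemma~\ref{l:resolvcomp}, but the paper's choice of stopping time avoids it altogether. Two small points of bookkeeping: the inclusion should read $B_{2a}(\cY)^\cc\subseteq B_a(\cY)^\cc$, not $\supseteq$ (and $V^{(a)}_\cY$ vanishes only where $\dist(\cdot,\cY)>a$ strictly, which is why working outside $B_{2a}(\cY)$ is the right thing to do); and your verification that $W_\sigma\in\cC\setminus B_{2a}(\cY)$ on $\{\sigma<\tau_{\cC^\cc}\}$ is indeed the one place where care is needed, and it is correct.
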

\begin{proof}
Use the strong Markov property at the exit time of $B_a(x)$ and apply Lemma~\ref{l:resolvcomp} and \eqref{e:exitBM}.
\end{proof}

With these results in place, we may identify the constants $K, c$ in Theorem~\ref{t:upperbound} as
\begin{equation}\label{e:defK}
K := 2 (K_*)^2 K_1, \qquad c:= \frac{c_*}{16},
\end{equation}
where $K_*, c_*$ are as in Lemma~\ref{l:tailBM} and $K_1$ as in Lemma~\ref{l:resolvcomp}.

Fix now $\cY \in \scrYf$, $\theta \in (0, h_d]$, $a \in (0,1]$, $r > 4a$ and $\gamma > \Lambda^{(\theta, a, r)}_\cY$.
In the following, we fix $K,c$ as in \eqref{e:defK}
and let $L, \varrho$ be defined by \eqref{e:defLvarrho}.

The core of the proof of Theorem~\ref{t:upperbound} is a decomposition of the Brownian path
according to its excursions to and from neighbourhoods of $\cY$,
which are marked by the following stopping times.
Let $\check{\tau}_0 = \hat{\tau}_0 :=0$ and, recursively for $n \ge 0$,
\begin{align}\label{e:checktauhattauarestoptimes}
\begin{aligned}
\check{\tau}_{n+1} & : = \left\{ 
\begin{array}{ll}
\infty & \text{ if } \hat{\tau}_n = \infty,\\
\inf \left\{t > \hat{\tau}_n \colon\, W_t \in \overline{B_{3a}(\cY)} \right\} & \text{ otherwise,}
\end{array}
\right. \\
\hat{\tau}_{n+1} & = \left\{  
\begin{array}{ll}
\infty & \text{ if } \check{\tau}_{n+1} = \infty,\\
\inf \left\{t > \check{\tau}_{n+1} \colon\, W_t \notin B_r(\cY) \right\} & \text{ otherwise.} 
\end{array}
\right.
\end{aligned}
\end{align}
For $t  \ge 0$, define
\begin{equation}\label{e:defEt}
E_t := \inf \{ n \ge 0 \colon\, \check{\tau}_{n+1} > t\}.
\end{equation}
In the following we will abbreviate, for $0\le s_1 \le s_2 \le \infty$,
\begin{equation}\label{e:defI}
I_{s_1}^{s_2} := \exp \left\{\int_{s_1}^{s_2} \left( \theta V^{(a)}_\cY(W_s) - \gamma \right) \dd s \right\}.
\end{equation}

\begin{Lem}\label{l:firstinduction}
For all $n \in \N_0$,
\begin{align}\label{e:firstinduction}
\sup_{x \notin B_r(\cY)} \sup_{t \ge 0} \E_x \left[I_0^t \mathbbm{1}_{\{E_t = n\}} \right] \le  \varrho^{n}.
\end{align}
Moreover, for all measurable $D' \subset \R^d$,
\begin{align}\label{e:firstinductionint}
\sup_{t \ge 0} \int_{D'} \E_x \left[I_0^t \mathbbm{1}_{\{E_t = n\}} \right] \dd z \le  2 \mathfrak{N}^{(r)}_{\cY}(D') \left(|D'| \vee \sqrt{|D'|} \right) L \varrho^{(n-1)^+}.
\end{align}
\end{Lem}

\begin{proof}
Note first that, if $n=0$, both \eqref{e:firstinduction} and \eqref{e:firstinductionint} hold since then $V^{(a)}_\cY(W_s)=0$ for all $0 \le s \le t$.
Let us prove \eqref{e:firstinduction} by induction in $n$.
To treat the case $n=1$, fix $x \notin B_r(\cY)$ and $t > 0$.
There are two cases: either $\check{\tau}_1 \le t < \hat{\tau}_1$,
or $\hat{\tau}_1 \le t < \check{\tau}_2$.
Let $\check{\cC}_1 \in \mathscr{C}^{(r)}_\cY$ such that $W_{\check{\tau}_1} \in \check{\cC}_1$.
Using the Markov property, $\gamma > \Lambda^{(\theta, a, r)}_\cY$ and Lemma~\ref{l:resolvcomp},
we may bound, $\P_x$-a.s.\ on the event $\{\check{\tau}_1 \le t \}$,
\begin{align}\label{e:prfirstind1}
\E_x \left[ I_{\check{\tau}_1}^t \mathbbm{1}_{ \{\check{\tau}_1 \le t < \hat{\tau}_1 \} } \,\middle|\, \check{\tau}_1, (W_s)_{s\le \check{\tau}_1} \right] 
& = \E_{W_{\check{\tau}_1}} \left [I_0^{t-s} \mathbbm{1}_{\{\tau_{\cC^\cc}> t-s\}} \right]_{s = \check{\tau}_1, \cC = \check{\cC}_1}  \nonumber\\
& \le L/(2K_*^2) \le L/(2 K_*)
\end{align}
and, using that $V^{(a)}_\cY(W_s) = 0$ for all $s \in [\hat{\tau}_1,t]$ when $\hat{\tau}_1 \le t < \check{\tau}_2$ and Corollary~\ref{cor:basicestim},
\begin{align}\label{e:prfirstind2}
\E_x \left[ I_{\check{\tau}_1}^t \mathbbm{1}_{ \{\hat{\tau}_1 \le t < \check{\tau}_2 \} } \,\middle|\, \check{\tau}_1, (W_s)_{s\le \check{\tau}_1} \right]
& \le \E_x \left[ I_{\check{\tau}_1}^{\hat{\tau}_1} \,\middle|\, \check{\tau}_1, (W_s)_{s\le \check{\tau}_1} \right] \nonumber\\
& = \E_{W_{\check{\tau}_1}} \left[I_0^{\tau_{\cC^\cc}} \right]_{\cC = \check{\cC}_1} \nonumber\\
& \le \varrho/(2K_*) <  L/(2K_*).
\end{align}
Since $r>4a$ and $x \notin B_r(\cY)$, $\check{\tau}_1 \ge \tau_{B_a^\cc(x)}$ and thus
\begin{equation}\label{e:prfirstind3}
\E_x \left[ I_0^{\check{\tau}_1} \mathbbm{1}_{\{\check{\tau}_1 \le t\}} \right] \le \E_0 \left[ \ee^{- \gamma \tau_{B_a^\cc}}\right] \le K_* \ee^{-c_* a \sqrt{\gamma}}
\end{equation}
by Lemma~\ref{l:tailBM}.
This together with \eqref{e:prfirstind1}--\eqref{e:prfirstind2} gives
\begin{align}\label{e:prfirstind4}
\E_x \left[I_0^t \mathbbm{1}_{\{E_t = 1\}} \right]
& = \E_x \left[ I_0^{\check{\tau}_1} \mathbbm{1}_{\{\check{\tau}_1 \le t\}} \E_x \left[ I_{\check{\tau}_1}^t \mathbbm{1}_{\{E_t = 1\}} \,\middle|\, \check{\tau}_1, (W_s)_{s \le \check{\tau}_1} \right] \right] \nonumber\\
& \le L \ee^{-c_* a \sqrt{\gamma}} = \varrho
\end{align}
by \eqref{e:defLvarrho}, concluding the case $n=1$.
Suppose now by induction that \eqref{e:firstinduction} has been shown for some $n \ge 1$.
If $E_t = n+1$, then $\hat{\tau}_1 \le t$ and we can write
\begin{align}\label{e:prfirstind5}
\E_x \left[I_0^t \mathbbm{1}_{\{ E_t = n+1 \}} \right]
& = \E_x \left[I_0^{\hat{\tau}_1} \mathbbm{1}_{\{\hat{\tau}_1 \le t\}}
\E_{W_{\hat{\tau}_1}} \left[I_0^{t-s} \mathbbm{1}_{\{E_{t-s} = n\}} \right]_{s = \hat{\tau}_1} \right] 
\leq \varrho^n \E_x \left[I_0^{\hat{\tau}_1} \mathbbm{1}_{\{ \hat{\tau}_1 \le t\}} \right]
\nonumber\\
& \le \varrho^n \E_x \left[I_{\check{\tau}_1}^{\hat{\tau}_1} \mathbbm{1}_{\{ \hat{\tau}_1 \le t\}} \right] \le \varrho^{n+1}/(2 K_*)
\end{align}
by the induction hypothesis, \eqref{e:prfirstind2} and \eqref{e:defLvarrho}.
This concludes the proof of \eqref{e:firstinduction}.

We turn next to \eqref{e:firstinductionint}. Assume first that $n=1$. 
There are again two cases: either $\check{\tau}_1 \le t < \hat{\tau}_1$,
or $\hat{\tau}_1 \le t < \check{\tau}_2$.
In the first case,
\begin{align}\label{e:prfirstind6}
\int_{D' \cap B_r(\cY)} \E_x \left[I_0^t \1{\{E_t =1, \check{\tau}_1 \le t < \hat{\tau}_1 \}} \right] \dd x
& = \sum_{\cC \in \mathscr{C}^{(r)}_\cY} \int_{D' \cap \cC} \E_x \left[I_0^t \1{\{\tau_{\cC^\cc} > t\}}\right] \dd x \nonumber\\
& \leq \sqrt{\# \{\cC \in \mathscr{C}^{(r)}_\cY \colon D \cap \cC \neq \emptyset \}} \sqrt{|D' \cap B_r(\cY)|}L/(2 K_*^2)
\end{align}
by \eqref{e:intcomptime} and the Cauchy-Schwarz inequality. In the second case,
\begin{align}\label{e:prfirstind7}
& \int_{D' \cap B_r(\cY)} \E_x \left[I_0^t \1{\{E_t =1, \hat{\tau}_1 \le t < \check{\tau}_2 \}} \right] \dd x
 \leq  \int_{D' \cap B_r(\cY)} \E_x \left[I_0^{\hat{\tau}_1} \right] \dd x 
 = \sum_{\cC \in \mathscr{C}^{(r)}_\cY} \int_{D' \cap \cC } \E_x \left[I_0^{\tau_{\cC^\cc}} \right] \dd x \nonumber\\
& \leq \left(|D'\cap B_r(\cY)| + \sqrt{\#\{\cC \in \mathscr{C}^{(r)}_\cY \colon \cC \cap D'\neq \emptyset\}} \sqrt{|D'\cap B_r(\cY)|} \right) L/(2 K_*^2)
\end{align}
by \eqref{e:intcomp}. 
Combining \eqref{e:prfirstind6}, \eqref{e:prfirstind7} and \eqref{e:firstinduction} we get
\begin{align}\label{e:prfirstind8}
\int_{D'} \E_x \left[I_0^t \1{\{ E_t = 1 \}} \right] \dd x
& \leq |D'\setminus B_r(\cY)| \varrho + \left(|D'\cap B_r(\cY)| + 2 \mathfrak{N}^{(r)}_\cY(D') \sqrt{|D' \cap B_r(\cY)|} \right) L / 2 \nonumber\\
& \leq 2 \mathfrak{N}^{(r)}_\cY(D') \left(|D'| \vee \sqrt{|D'|} \right) L.
\end{align}
This concludes the case $n=1$.
To deal with $n+1$, $n\geq 1$,
note that the first line of \eqref{e:prfirstind5} is valid for any $x \in D'$.
Then we may write
\begin{align}\label{e:prfirstind10}
\int_{D'} \E_x \left[I_0^t \mathbbm{1}_{\{ E_t = n+1 \}} \right] \dd x
& \leq \varrho^{n+1} |D'\setminus B_r(\cY)| + \varrho^n \sum_{\cC \in \mathscr{C}^{(r)}_\cY} \int_{D'\cap \cC} \E_x \left[I_0^{\tau_{\cC^\cc}} \right] \dd x \nonumber\\
& \leq 2 \mathfrak{N}^{(r)}_\cY(D') \left( |D'| \vee \sqrt{|D'|} \right) L \varrho^n.
\end{align}
This finishes the proof \eqref{e:firstinductionint}.
\end{proof}

The next result is the key lemma for the proof of Theorem~\ref{t:upperbound}.
\begin{Lem}\label{l:secondinduction}
For all $R>0$, $n \in \N_0$, and $z \in \R^d$,
\begin{align}\label{e:secondinduction}
\E_x \left[I_0^t \mathbbm{1}_{\{E_t = n, \tau_{B_R^\cc(z)} \le t \}} \right]
\le  2 L \varrho^{(n-1)^+} \, \P_x\left( \sup_{0 \le s \le t} |W_s-z| > R - 2 N^{(r)}_\cY n r \right) \;\,\forall \, x \notin B_r(\cY), t \ge 0.
\end{align}
If moreover $R>2r N_\cY$, then, for any measurable $D'\subset \R^d$,
\begin{align}\label{e:secondinductionint}
& \sup_{t \geq 0} \int_{D'} \E_x \left[I_0^t \mathbbm{1}_{\{E_t = n, \tau_{B_R^\cc(x)} \le t \}} \right] \dd x \nonumber\\
& \le 4 L \mathfrak{N}^{(r)}_\cY(D') \left( |D'| \vee \sqrt{|D'|} \right)  \varrho^{(n-1)^+} \, \P_0 \left( \sup_{0 \le s \le t} |W_s| > R - 2 N^{(r)}_\cY n r \right).
\end{align}
\end{Lem}
\begin{proof}
Fix $z \in \R^d$ and $R>0$. 
Note that, when $n=0$, both inequalities hold since then $V^{(a)}_\cY(W_s) = 0$ for all $0 \le s \le t$.
Let us prove \eqref{e:secondinduction} by induction in $n$.
Define the events
\begin{equation}\label{e:prsecind0}
\cE^n_u(z) := \left\{\sup_{0 \le s \le u}|W_s -z| \ge R - 2 n r N^{(r)}_\cY \right\}, \;\;\; n \in \N_0, u \ge 0.
\end{equation}
For the case $n=1$, fix $x \notin B_r(\cY)$ and $t > 0$.
Consider first the case $\tau_{B_R^\cc(z)} \le \hat{\tau}_1$.
We claim that, on this event,
$\cE^1_{\check{\tau}_1}(z)$ occurs.
Indeed, if $\tau_{B_R^\cc(z)} \le \check{\tau}_1$ this is clear,
and if $\check{\tau}_1 < \tau_{B_R^\cc(z)} \le \hat{\tau}_1$ then
$|W_{\check{\tau}_1} - z | > R - 2 r N_\cY$ as the diameter of any component $\cC \in \mathscr{C}^{(r)}_\cY$
is bounded by $2 r N^{(r)}_\cY $.
Thus
\begin{align}\label{e:prsecind1}
\E_x \left[I_0^t \mathbbm{1}_{\{\tau_{B_R^\cc(z)} \le \hat{\tau}_1, E_t = 1\}} \right]
& \le \E_x \left[\mathbbm{1}_{\cE^1_{\check{\tau}_1}(z)\cap \{ \check{\tau}_1 \le t \}} 
\E_x \left[I_{\check{\tau}_1}^t \mathbbm{1}_{\{E_t = 1\}} \,\middle|\, \check{\tau}_1, (W_s)_{s \le \check{\tau}_1}\right] \right] \nonumber\\
& \le L  \P_x \left( \cE^1_t(z) \right)
\end{align}
by \eqref{e:prfirstind1}--\eqref{e:prfirstind2} above.
If $t \ge \tau_{B_R^\cc(z)} > \hat{\tau}_1$, then $\hat{\tau}_1 < t < \check{\tau}_2$,
and thus
\begin{equation}\label{e:prsecind2}
\E_x \left[I_0^t \mathbbm{1}_{\{\hat{\tau}_1 < \tau_{B_R^\cc(z)} \le t, E_t =1 \}} \right]
\le \E_x \left[ I_0^{\hat{\tau}_1} \mathbbm{1}_{\{\hat{\tau}_1 \le t\}} \P_{W_{\hat{\tau}_1}} \left( \cE^{0}_{t-s}(z) \right)_{s = \hat{\tau}_1} \right].
\end{equation}
Note now that, since $\check{\tau}_1 \le \hat{\tau}_1$ and $|W_{\hat{\tau}_1}-W_{\check{\tau}_1}| \le 2 r  N_\cY$,
\begin{equation}\label{e:prsecind3}
\P_{W_{\hat{\tau}_1}} \left( \cE^0_{t-s}(z) \right)_{s=\hat{\tau}_1}
\le \P_{W_{\check{\tau}_1}} \left( \cE^1_{t-s}(z) \right)_{s=\check{\tau}_1}
\end{equation}
and thus \eqref{e:prsecind2} is at most
\begin{align}\label{e:prsecind4}
& \E_x \left[ \mathbbm{1}_{\{\check{\tau}_1 \le t\}} \P_{W_{\check{\tau}_1}} \left( \cE^1_{t-s}(z) \right)_{s = \check{\tau}_1} \E_{W_{\check{\tau}_1}} \left[I_0^{\tau_{\cC^\cc}} \right]_{\cC = \check{\cC}_1} \right] \le \frac{\varrho}{2} \, \P_x \left( \cE^1_t(z) \right)
< L \, \P_x \left( \cE^1_t(z) \right)
\end{align}
by Corollary~\ref{cor:basicestim} and \eqref{e:defLvarrho}. Collecting \eqref{e:prsecind1}--\eqref{e:prsecind4}, we conclude the case $n=1$.

Assume now by induction that \eqref{e:secondinduction} holds for some $n \ge 1$.
There are two possible cases:
either $\tau_{B_R^\cc(z)} \le \hat{\tau}_{1}$ or not.
In the first case, we conclude as before that $\cE^1_{\check{\tau}_1}(z)$ occurs.
Then we may write
\begin{align}\label{e:prsecind5}
\E_x \left[ I_0^t \mathbbm{1}_{\{E_t = n+1, \tau_{B_R^\cc(z)} \le \hat{\tau}_1 \}}\right]
& \le \E_x \left[ I_0^{\hat{\tau}_1} \mathbbm{1}_{\cE^1_{\check{\tau}_1}(z) \cap \{\hat{\tau}_1 \le t \}} 
\E_{W_{\hat{\tau}_1}} \left[ I_0^{t-s} \mathbbm{1}_{\{E_{t-s} = n\}} \right]_{s = \hat{\tau}_1} \right] \nonumber\\
& \le \varrho^n \E_x \left[ \mathbbm{1}_{\cE^1_{\check{\tau}_1}(z) \cap \{\check{\tau}_1 \le t \}} \E_{W_{\check{\tau}_1}} \left[  I_{0}^{\tau_{\cC^\cc}} \right]_{\cC = \check{\cC}_1} \right] \nonumber\\
& \le \varrho^{n} \frac{\varrho}{2} \, \P_x \left( \cE^1_t(z) \right)
< L \varrho^n \, \P_x \left( \cE^1_t(z) \right)
\end{align}
by Lemma~\ref{l:firstinduction}, Corollary~\ref{cor:basicestim} and \eqref{e:defLvarrho}.
Consider now the case $\hat{\tau}_1 < \tau_{B_R^\cc(z)}$ and write
\begin{align}\label{e:prsecind6}
\E_x \left[ I_0^t \mathbbm{1}_{\{E_t = n+1, \hat{\tau}_1 < \tau_{B_R^\cc(z)} \le t \}}\right]
& = \E_x \left[I_0^{\hat{\tau}_1} \mathbbm{1}_{\{\hat{\tau}_1 \le t\}} \E_{W_{\hat{\tau}_1}} \left[I_0^{t-s} \mathbbm{1}_{\{E_{t-s} = n, \tau_{B_R^\cc(z)} \le t-s\}} \right]_{s = \hat{\tau}_1} \right] \nonumber\\
& \le 2 L \varrho^{n-1} \E_x \left[I_0^{\hat{\tau}_1} \mathbbm{1}_{\{\hat{\tau}_1 \le t\}} \P_{W_{\hat{\tau}_1}}
\left( \cE^n_{t-s}(z) \right)_{s = \hat{\tau}_1} \right]
\end{align}
by the induction hypothesis. 
Reasoning as for \eqref{e:prsecind3}, we see that 
\[
\P_{W_{\hat{\tau}_1}} \left(\cE^n_{t-s}(z) \right)_{s=\hat{\tau}_1} \le \P_{W_{\check{\tau}_1}} \left(\cE^{n+1}_{t-s}(z)\right)_{s=\check{\tau}_1},
\]
and hence \eqref{e:prsecind6} is at most
\begin{align}\label{e:prsecind8}
2 L \varrho^{n-1} \E_x \left[ \mathbbm{1}_{\{\check{\tau}_1 \le t\}} 
\P_{W_{\check{\tau}_1}} \left( \cE^{n+1}_{t-s}(z) \right)_{s = \check{\tau}_1} \E_{\check{\tau}_1} \left[ I_0^{\tau_{\cC^\cc}} \right]_{\cC = \check{\cC}_1} \right]
\le  2 L \varrho^{n-1} (\varrho/2) \P_x \left( \cE^{n+1}_t(z) \right)
\end{align}
by Corollary~\ref{cor:basicestim}.
Combining \eqref{e:prsecind5} and \eqref{e:prsecind8} we conclude the induction step 
and the proof of \eqref{e:secondinduction}. 

Fix now a measurable $D' \subset \R^d$ and consider \eqref{e:secondinductionint}.
Let $n=1$ and $x \in D' \cap B_r(\cY)$. Since $R> 2 r N_\cY $, 
we must have $\hat{\tau}_1 < \tau_{B^\cc_R(D')}$ $\P_x$-almost surely, and if $E_t = 1$ then $\hat{\tau_1} < t < \check{\tau}_2$ as well.
Note that \eqref{e:prsecind2} still holds in this case (with $z=x$).
Since $|W_{\hat{\tau}_1}- x| < 2 r N_\cY$ $\P_x$-a.s.,
\[
\P_{W_{\hat{\tau_1}}} \left( \cE^{0}_{t-s}(x) \right) \leq \P_x \left(\cE^{1}_t(x) \right) = \P_0 \left( \cE_t^1(0) \right)
\]
and thus
\begin{align}\label{e:prsecind9}
&  \int_{D' \cap B_r(\cY)} \E_x \left[ I_0^t \1{\{E_t = 1, \tau_{B_R^\cc(D')} \leq t \}} \right] \dd x
\leq \P_0 \left( \cE^1_t(0) \right) 
\sum_{\cC \in \mathscr{C}^{(r)}_\cY } \int_{D'\cap \cC} \E_x \left[I_0^{\tau_{\cC^\cc}}\right]\dd x \nonumber\\
& \leq  
\P_0 \left( \cE^1_t(0) \right) \left( |D'\cap B_r(\cY)| + \sqrt{\#\{\cC \in \mathscr{C}^{(r)}_\cY \colon D' \cap \cC \neq \emptyset\}} \sqrt{|D' \cap B_r(\cY)|}\right) L/(2K_*^2)
\end{align}
by \eqref{e:intcomp} and the Cauchy-Schwarz inequality.
Now \eqref{e:prsecind9} and \eqref{e:secondinduction} imply
\begin{align}\label{e:prsecind10}
& \int_{D'} \E_x \left[I_0^t \1{\{E_t = 1, \tau_{B_R^\cc(D')} \leq t \}} \right] \dd x \nonumber\\
& \leq 2 L \P_0 \left( \cE_t^1(0) \right) 
\left\{ |D' \setminus B_r(\cY)| + |D'\cap B_r(\cY)|/4 +  \mathfrak{N}^{(r)}_\cY(D') \sqrt{|D' \cap B_r(\cY)|}/4
\right\} \nonumber\\
& \leq 4 L \P_0 \left( \cE_t^1(0) \right)  \mathfrak{N}^{(r)}_\cY(D') \left( |D'| \vee \sqrt{|D'|}\right),
\end{align}
finishing the proof of the case $n=1$.
The general case is analogous, using \eqref{e:prsecind6} instead of \eqref{e:prsecind2}.
\end{proof}

We are now ready to finish the:
\begin{proof}[Proof of Theorem~\ref{t:upperbound}]
Items \eqref{e:upperbound0}--\eqref{e:upperbound0int} follow from Lemma~\ref{l:firstinduction}.
To show \eqref{e:upperbound}, fix $z \in B_r(\cY)^\cc$ and write
\begin{align}\label{e:prthmuppderbound1}
\E_z \left[ I_0^t \mathbbm{1}_{\{\tau_{B_R^\cc(z)} \le t \}}\right] 
& = \sum_{n = 0}^\infty \E_z \left[ I_0^t \mathbbm{1}_{\{ \tau_{B_R^\cc(z)} \le t, E_t = n \}}\right] \nonumber\\
& \le 2 L \sum_{n=0}^\infty \varrho^{(n-1)^+} \P_0 \left(\sup_{0 \le s \le t} |W_s| \ge R - 2 N^{(r)}_\cY n r \right)
\end{align}
by Lemma~\ref{l:secondinduction} and the translation invariance of Brownian motion.
Split the sum in \eqref{e:prthmuppderbound1} according to whether
$ 4 N^{(r)}_\cY (n -1 ) r > R  $ or not to obtain
\begin{align}\label{e:prthmuppderbound2}
\frac{1}{2L} \E_z \left[ I_0^t \mathbbm{1}_{\{\tau_{B_R^\cc(z)} \le t \}}\right] 
& \le 2 \varrho^{\frac{R}{4 r N^{(r)}_\cY}} + \left(\frac{R}{4  r N^{(r)}_\cY  }+2 \right) \P_0 \left( \sup_{0 \le s \le t} |W_s| \ge \tfrac14 R \right) \nonumber\\
& \le K \left\{ \varrho^{\frac{R}{4 r N_\cY}} + \frac{R}{r} \ee^{-\frac{c R^2}{t} }\right\}
\end{align}
using $\varrho \le 1/2$, $R \ge 8 r N^{(r)}_\cY$, Lemma~\ref{l:tailBM} and \eqref{e:defK}.
This concludes the proof of \eqref{e:upperbound}.

For \eqref{e:upperboundint}, we have instead
\[
\begin{aligned}
\int_{D'} \E_z \left[ I_0^t \mathbbm{1}_{\{\tau_{B_R^\cc(z)} \le t \}}\right] \dd z
& = \sum_{n = 0}^\infty \int_{D'} \E_z \left[ I_0^t \mathbbm{1}_{\{ \tau_{B_R^\cc(z)} \le t, E_t = n \}}\right] \dd z\nonumber\\
& \le 4 L \mathfrak{N}^{(r)}_\cY(D') \left( |D'| \vee \sqrt{|D'|} \right) \sum_{n=0}^\infty \varrho^{(n-1)^+} \P_0 \left(\sup_{0 \le s \le t} |W_s| \ge R - 2 N^{(r)}_\cY n r \right)
\end{aligned}
\]
so we may conclude as before.
\end{proof}

\section{Small distances in Poisson clouds}
\label{s:SDPC}

We collect some elementary facts
concerning the probability to find Poisson points close to each other.
With the help of Proposition \ref{p:MPHI}, this will allow us to control in Section~\ref{ss:taileigenvalue} the growth of the 
maximal principal eigenvalue $\Lambda^{(\theta, a, r)}_\cY$ appearing in Theorem \ref{t:upperbound}
with $\cY = \cP \cap B_R$.

\begin{Lem}\label{l:probupp}
For any measurable $D \subset \R^d$, any $r \in (0,\infty)$ and any $k \in \N_0$,
\begin{equation}\label{e:PPPgeom1}
\Prob \left( \exists \text{ distinct } y_0, \ldots, y_k \in \cP \colon\, y_0 \in D, \max_{1 \le i \le k} |y_i - y_{i-1} | \le r \right) \le |D|\frac{|B_r|^k}{(k+1)!}.
\end{equation}
Moreover,
\begin{equation}\label{e:PPPgeom2}
\Prob \left( \sup_{x \in D} \omega(B_r(x)) \ge k+1 \right) \leq |B_{r}(D)|\frac{|B_{2 r}|^k}{(k+1)!}.
\end{equation}
\end{Lem}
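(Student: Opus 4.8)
The plan is to prove both estimates by the first-moment method, computing the relevant expected numbers of Poisson point-configurations with the multivariate Mecke equation, and then to deduce \eqref{e:PPPgeom2} from \eqref{e:PPPgeom1} by an elementary geometric comparison.

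For \eqref{e:PPPgeom1} I would bound the probability on the left-hand side, via Markov's inequality, by the expected number of $(k+1)$-element subsets $\{y_0, \dots, y_k\}$ of $\cP$ that admit an enumeration with $y_0 \in D$ and $|y_i - y_{i-1}| \le r$ for all $1 \le i \le k$, since the latter quantity dominates the indicator of the event. I would then evaluate an upper bound for this expectation by the multivariate Mecke equation, using that the $(k+1)$-st factorial moment measure of the standard Poisson point process is $(k+1)$-fold Lebesgue measure on $(\R^d)^{k+1}$. This reduces the claim to controlling the deterministic integral
\[
\int_{\R^d} \mathbbm{1}\{y_0 \in D\} \int_{\R^d} \cdots \int_{\R^d} \ \prod_{i=1}^{k} \mathbbm{1}\{|y_i - y_{i-1}| \le r\} \ \dd y_k \cdots \dd y_1 \, \dd y_0,
\]
together with the combinatorial factor produced by passing from ordered tuples of distinct points to unordered subsets. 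The integral is computed by integrating out $y_k$, then $y_{k-1}, \dots, y_1$ in turn — each step contributing a factor $|B_r|$ by translation invariance of Lebesgue measure — and finally integrating $\mathbbm{1}\{y_0 \in D\}$ to obtain $|D|$; collecting the pieces gives the right-hand side of \eqref{e:PPPgeom1}.

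For \eqref{e:PPPgeom2} I would observe that its event is contained in an event to which \eqref{e:PPPgeom1} applies after enlarging the reference set and doubling the radius. Indeed, if $\omega(B_r(x)) \ge k+1$ for some $x \in D$, then one can choose $k+1$ distinct points $y_0, \dots, y_k$ of $\cP$ inside $B_r(x)$; these satisfy $y_0 \in B_r(x) \subseteq B_r(D)$ and, lying in a common ball of radius $r$, have all pairwise distances at most $2r$, in particular $|y_i - y_{i-1}| \le 2r$ for every $i$. Hence $y_0, \dots, y_k$ is a chain of the kind appearing in \eqref{e:PPPgeom1} with $D$ replaced by the (measurable) set $B_r(D)$ and $r$ replaced by $2r$, and applying that estimate with these substitutions yields the bound $|B_r(D)|\,|B_{2r}|^k/(k+1)!$, which is \eqref{e:PPPgeom2}.

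The step that I expect to require the most care is the first one in the proof of \eqref{e:PPPgeom1}: a plain union bound over \emph{ordered} tuples of distinct Poisson points would only give $|D|\,|B_r|^k$, so extracting the sharper constant carrying the $1/(k+1)!$ requires organizing the count over \emph{unordered} configurations and handling the distinctness constraint in the Mecke formula with some care. Once this is set up, the remaining integrations are routine and the geometric reduction to \eqref{e:PPPgeom2} is immediate.
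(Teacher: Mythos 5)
Your reduction of \eqref{e:PPPgeom2} to \eqref{e:PPPgeom1} is fine and matches the paper's, and your Mecke computation is essentially equivalent to the paper's route (which conditions on $\omega(B_{kr}(D))=N$, uses that the points are then i.i.d.\ uniform, and counts with $\binom{N}{k+1}$ and the Poisson factorial moments). The problem sits exactly at the step you yourself flag as delicate: passing from ordered tuples to unordered subsets does \emph{not} produce the factor $1/(k+1)!$ here, because the defining condition --- $y_0\in D$ together with the chain constraint $|y_i-y_{i-1}|\le r$ --- is not invariant under permutations of $\{y_0,\dots,y_k\}$. A $(k+1)$-subset of $\cP$ that admits a valid enumeration may admit only one (for $k=1$, think of a pair at distance $\le r$ with exactly one endpoint in $D$), so if $U$ denotes the number of admissible unordered subsets and $T$ the number of admissible ordered tuples, one only has $U\le T$, not $U\le T/(k+1)!$. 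Markov plus Mecke therefore yields $\Prob(\cdot)\le \EE[T]=|D|\,|B_r|^k$, without the factorial.

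This is not a gap that can be closed by a cleverer count: inequality \eqref{e:PPPgeom1} as stated is false. Take $k=1$ and $r$ with $|B_r|=1$. Conditioning on $\omega(D)=1$ and using that $\omega$ restricted to $D^{\cc}$ is an independent Poisson process, the left-hand side of \eqref{e:PPPgeom1} is at least $|D|\,\ee^{-|D|}\bigl(1-\ee^{-(1-|D|)}\bigr)$, which for small $|D|$ is about $0.63\,|D|$ and exceeds the claimed bound $|D|\,|B_r|/2=|D|/2$. (The paper's own union bound, $\binom{N}{k+1}\,\Prob(X_0\in D,\,\max_i|X_i-X_{i-1}|\le r)$, omits the same $(k+1)!$ orderings, so you have not missed a trick that is present there.) What is true, and is all that the paper uses downstream, is \eqref{e:PPPgeom2}; your Mecke argument proves it directly provided you apply it to the permutation-symmetric event rather than routing through \eqref{e:PPPgeom1}: the number of $(k+1)$-subsets $S\subset\cP$ with $S\subset B_r(x)$ for some $x\in D$ \emph{is} exactly $1/(k+1)!$ times the corresponding ordered count, since the condition is symmetric, and every such ordered tuple satisfies $y_0\in B_r(D)$ and $|y_i-y_0|<2r$ for all $i$, giving the bound $|B_r(D)|\,|B_{2r}|^k/(k+1)!$.
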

\begin{proof}
We start with \eqref{e:PPPgeom1}.
We may assume that $|D|<\infty$.
First note that, if $y_0 \in D$ and $|y_i-y_{i-1}| < r$ for $1 \le i \le k$,
then $\{y_0, \ldots, y_k\} \subset D_k := B_{kr}(D)$.
Let $(X_i)_{i \ge 0}$ be i.i.d.\ random vectors, each uniformly distributed in $D_k$.
Note that, for any fixed $N \in \N$, $D_k \cap \cP$ has under its conditional law given that $\omega(D_k) = N$ 
the same distribution as $\{X_1, \ldots, X_N\}$.
For $N \ge k+1,$ estimate with a union bound
\begin{align}\label{e:prgeom1}
& \Prob \left( \exists \text{ distinct } j_0, \ldots, j_k \in \{1, \ldots, N\}  \colon\, X_{j_0} \in D, \max_{1 \le i \le k}|X_{j_i} - X_{j_{i-1}}| \le r \right) \nonumber\\
& \le \binom{N}{k+1} \Prob \left( X_0 \in D, \max_{1 \le i \le k} |X_{i}-X_{i-1}| \le r\right) \nonumber\\
& = \binom{N}{k+1} \frac{1}{|D_k|^{k+1}} \int_D \dd x_0 \int_{B_r(x_0)} \dd x_1 \cdots \int_{B_r(x_{k-1})} \dd x_k  
= \binom{N}{k+1} \frac{|D| |B_r|^k}{|D_k|^{k+1}}.
\end{align}
Since $|\cP \cap D_k|$ has distribution Poisson($|D_k|$),
splitting the left-hand side of \eqref{e:PPPgeom1} according to whether $|\cP \cap D_k| = N \ge k+1$
and using \eqref{e:prgeom1}, we get the bound
\begin{equation}\label{e:prgeom2}
\sum_{N=k+1}^{\infty} \binom{N}{k+1}\frac{|D| |B_r|^k}{|D_k|^{k+1}} \frac{|D_k|^N}{N!} \ee^{-|D_k|} = |D| \frac{|B_r|^k}{(k+1)!}
\end{equation}
as advertised.
Now \eqref{e:PPPgeom2} follows from \eqref{e:PPPgeom1} with $D$, $r$ substituted by $B_{r}(D)$, $2r$.
\end{proof}

Next we provide a lower bound on the probability to have close Poisson points.

\begin{Lem}\label{l:problbppD}
For all measurable $D\subset \R^d$, all $k\in \N_0$ and all $r\in (0,\infty)$,
\begin{equation}\label{e:problbppD}
\Prob \left(\exists x\in D\colon \omega(B_r(x))=  k+1\right) \geq  1-\exp\left\{- |D| \frac{r^{kd} \ee^{-|B_r|}}{2^d (k+1)!} \right\}
\end{equation}
\end{Lem}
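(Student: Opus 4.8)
The plan is to combine a greedy packing (maximality) argument with the independence of the Poisson process over disjoint regions. First I would pick a \emph{maximal} collection $\{x_i : i \in I\} \subseteq D$ subject to the constraint that the open balls $B_r(x_i)$ be pairwise disjoint, equivalently $|x_i - x_j| \ge 2r$ for all $i \ne j$; such a collection exists by Zorn's lemma and, being $2r$-separated, is at most countable, so we may write $I = \{1, \ldots, N\}$ with $N \in \N \cup \{\infty\}$. Trivial cases are disposed of first: if $|D| = 0$ (in particular if $D = \emptyset$) the right-hand side of \eqref{e:problbppD} is $0$ and the bound is vacuous, while if $|D| = \infty$ then necessarily $N = \infty$ (otherwise the covering bound derived next would force $|D| < \infty$) and the final estimate will give that the left-hand side equals $1$, matching the right-hand side. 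The deterministic heart of the argument is the lower bound $N \ge |D|/|B_{2r}|$: by maximality, every $y \in D$ satisfies $|y - x_i| < 2r$ for some $i$ (else $y$ could be adjoined to the packing), so $D \subseteq \bigcup_i B_{2r}(x_i)$ and hence $|D| \le N\,|B_{2r}| = 2^d|B_r|\,N$.

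Next I would use that the balls $B_r(x_1), \ldots, B_r(x_N)$ are pairwise disjoint, so the restrictions $\omega|_{B_r(x_i)}$ are independent Poisson point processes; in particular the events $\{\omega(B_r(x_i)) = k+1\}$ are independent, each of probability $q := \tfrac{|B_r|^{k+1}}{(k+1)!}\,\ee^{-|B_r|}$ (the Poisson formula, since $|B_r(x_i)| = |B_r|$). As every $x_i$ lies in $D$, the occurrence of $\{\omega(B_r(x_i)) = k+1\}$ for even one index $i$ already realizes the event in \eqref{e:problbppD}, so
\begin{equation*}
\Prob\!\left(\text{no } x \in D \text{ satisfies } \omega(B_r(x)) = k+1\right) \le \Prob\!\left(\bigcap_{i=1}^N \{\omega(B_r(x_i)) \ne k+1\}\right) = (1-q)^N \le \ee^{-qN}.
\end{equation*}
Inserting $N \ge |D|/(2^d|B_r|)$ gives $qN \ge |D|\,\tfrac{|B_r|^k}{2^d(k+1)!}\,\ee^{-|B_r|}$, and using $|B_r|^k = |B_1|^k r^{kd} \ge r^{kd}$ (as $|B_1| \ge 1$) one arrives at exactly the asserted bound $1 - \exp\{-|D|\,r^{kd}\ee^{-|B_r|}/(2^d(k+1)!)\}$.

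I do not anticipate a genuine obstacle; the proof is elementary once the packing is correctly set up. The points requiring care are: (i) the covering step, where one must cover $D$ by the \emph{doubled} balls $B_{2r}(x_i)$ rather than by the $B_r(x_i)$ themselves — this is precisely what converts maximality of the packing into the lower bound on $N$; (ii) the constant bookkeeping, in particular the identity $|B_{2r}| = 2^d|B_r|$ and the elementary estimates $1 - q \le \ee^{-q}$ and $|B_r|^k \ge r^{kd}$. It is also cleanest to isolate the degenerate cases $|D| \in \{0, \infty\}$ and $N \in \{0, \infty\}$ at the outset, so that the chain of inequalities ending in $(1-q)^N \le \ee^{-qN}$ is meaningful throughout.
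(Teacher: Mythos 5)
Your proof is correct and follows essentially the same route as the paper's: produce a $2r$-separated finite (or countable) subset of $D$ of cardinality at least $|D|/|B_{2r}|$, use independence of the Poisson counts on the disjoint balls $B_r(x_i)$ together with $1-q\le \ee^{-q}$, and bound $|B_r|^k\ge r^{kd}$ (the paper obtains the separated set by induction on $\lceil |D|/|B_{2r}|\rceil$ rather than by a maximal packing, an immaterial difference). The only caveat, shared with the paper, is that the final simplification $|B_1|^k\ge 1$ is dimension-dependent; it holds in the regime of interest here.
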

\begin{proof}
Note that there exists a finite $F \subset D$ such that $B_r(x) \cap B_r(y) = \emptyset$ for all distinct $x,y \in F$
and $\#F \ge \lceil |D|/|B_{2 r}| \rceil$, which can be proved e.g.\ by induction on $\lceil |D|/|B_{2r}|\rceil$.
Then the family $\omega\left(B_r(x)\right)$, $x \in F$, is i.i.d., and we may estimate
\begin{align*}
& \Prob \left(\forall x\in D\colon \omega\left( B_r(x)\right)\neq k+1\right)
\leq \left(1-\Prob \left(\omega\left(B_r\right)= k+1\right)\right)^{\#F} 
\leq \exp \left\{ - |D|\frac{\Prob \left(\omega\left(B_r\right)= k+1\right)}{|B_{2r}|} \right\},
\end{align*}
where we also used $1-x\le \ee^{-x}$. 
Since $\omega(B_r)$ has distribution Poisson($|B_r|$), \eqref{e:problbppD} follows.
\end{proof}

We now apply the bounds in Lemmas~\ref{l:probupp}--\ref{l:problbppD} to derive several asymptotic results.
As a first consequence of Lemma~\ref{l:probupp}, we can show that,
for fixed $a>0$, the maximal number of Poisson points in $a$-neighbourhoods of points in $B_R$ 
grows at most logarithmically in $R$:

\begin{Cor}\label{cor:pppfixed}
For any $a \in (0,\infty)$,
\begin{equation}\label{e:pppfixed}
\lim_{R\rightarrow \infty} (\log R)^{-1}\sup_{x\in B_R}\omega(B_{a}(x))  = 0 \quad \Prob \mbox{-a.s.}
\end{equation}
\end{Cor}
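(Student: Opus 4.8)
The plan is to apply the upper bound \eqref{e:PPPgeom2} from Lemma~\ref{l:probupp} along a geometric sequence of radii and use Borel--Cantelli. First I would fix $a \in (0,\infty)$ and note that, since $\omega(B_a(x))$ is nondecreasing as a function of the ball, it suffices to control $\sup_{x \in B_R} \omega(B_a(x))$ along $R = R_n := 2^n$, $n \in \N$: if $R \in [R_n, R_{n+1}]$, then $(\log R)^{-1} \sup_{x \in B_R} \omega(B_a(x)) \le (\log R_n)^{-1} \sup_{x \in B_{R_{n+1}}} \omega(B_a(x))$, so monotonicity transfers an a.s.\ limit along the subsequence to the full limit. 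Thus it is enough to show that for any $\eps > 0$,
\begin{equation*}
\Prob \left( \sup_{x \in B_{R_{n+1}}} \omega(B_a(x)) \ge \eps \log R_n \right)
\end{equation*}
is summable in $n$.

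For the summability, I would set $k_n := \lceil \eps \log R_n \rceil - 1$ and apply \eqref{e:PPPgeom2} with $D = B_{R_{n+1}}$, $r = a$, which gives a bound of the form $|B_{a}(B_{R_{n+1}})| \, |B_{2a}|^{k_n} / (k_n+1)!$. Since $|B_{a}(B_{R_{n+1}})| = |B_{R_{n+1}+a}|$ grows only polynomially in $R_n$ (like $R_n^d$), while $|B_{2a}|^{k_n}/(k_n+1)! \le (e|B_{2a}|/(k_n+1))^{k_n}$ by the elementary bound $m! \ge (m/e)^m$, and $k_n + 1 \sim \eps \log R_n = \eps n \log 2 \to \infty$, this tail decays faster than any polynomial in $R_n$; concretely, for $n$ large enough the factor $(e|B_{2a}|/(k_n+1))^{k_n}$ is bounded by, say, $R_n^{-2d}$, so the whole probability is at most $R_n^{-d}$ for large $n$, which is summable. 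By Borel--Cantelli, $\Prob$-a.s.\ $\sup_{x \in B_{R_{n+1}}} \omega(B_a(x)) < \eps \log R_n$ for all large $n$, hence $\limsup_{n} (\log R_n)^{-1} \sup_{x \in B_{R_{n+1}}} \omega(B_a(x)) \le \eps \cdot \tfrac{\log R_{n+1}}{\log R_n} \cdot \tfrac{\log R_n}{\log R_{n+1}}$; more carefully, writing out constants, one gets $\limsup \le \eps$ along the subsequence, and by monotonicity $\limsup_{R \to \infty}(\log R)^{-1}\sup_{x\in B_R}\omega(B_a(x)) \le \eps$. Since $\eps > 0$ is arbitrary and the quantity is nonnegative, the limit is $0$.

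The main obstacle is purely bookkeeping: making sure the ratio $\log R_{n+1}/\log R_n = (n+1)/n \to 1$ is handled so that the threshold $\eps \log R_n$ on the subsequence really controls $\eps \log R$ on the full range, and choosing the geometric ratio (here $2$) and the integer $k_n$ so that the factorial in \eqref{e:PPPgeom2} beats the polynomial volume growth. There is no analytic difficulty — the superpolynomial decay of $|B_{2a}|^{k}/(k+1)!$ in $k$ together with $k_n \asymp \log R_n$ does all the work — so the proof is short once the subsequence-and-monotonicity reduction is set up.
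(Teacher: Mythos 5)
Your proposal is correct and follows essentially the same route as the paper: apply \eqref{e:PPPgeom2} along a geometric sequence of radii, use the superexponential decay of the factorial to get summability, invoke Borel--Cantelli, and interpolate by monotonicity. The only cosmetic difference is that the paper fixes the exceedance level at $n$ and lets the geometric ratio $K\to\infty$ at the end, whereas you fix the ratio at $2$ and let the threshold parameter $\eps\to0$ — the two are equivalent.
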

\begin{proof}
Fix $a\in (0,\infty)$ and $K>1$. By \eqref{e:PPPgeom2}, there exists a constant $c\in(0,\infty)$ such that
\[
\Prob \left(\sup_{x\in B_{K^n}}\omega(B_{a}(x)) \geq n \right) \le c \frac{(K a)^n}{n!}.
\]
Since this is summable in $n$, the Borel-Cantelli lemma yields 
$\sup_{x\in B_{K^n}} \omega(B_{a}(x)) \le n$ a.s.\ eventually.
For $R\in(1,\infty)$, take $n_R \in \N$ such that $K^{n_R-1} < R \le K^{n_R}$. 
Then $\lim_{R \to \infty} n_R = \infty$ and
\begin{align*}
\limsup_{R\to\infty} (\log R)^{-1}\sup_{x\in B_R}\omega(B_{a}(x)) 
\le\lim_{R \to\infty} \frac{n_R}{\log R} =  (\log K)^{-1} \quad \Prob \mbox{-a.s.,} 
\end{align*}
and we complete the proof letting $K \to \infty$.
\end{proof}

Next we show that the number of points in neighborhoods with radii decreasing sufficiently fast
to $0$ are bounded by a constant.
Recall the notation $\cP = \{x \in \R^d \colon\, \omega(\{x\}) = 1\}$ for the support of $\omega$.

\begin{Lem}\label{l:asppp}
Fix $k \in \N$ and a function $g:(0,\infty) \to (0,\infty)$.
Let $R(t), r(t) \in (0,\infty)$ satisfy 
\begin{equation}\label{e:condaspppRr}
R(t) \to \infty, \quad r(t) \to 0 \quad \text{ and } \quad R(t) r(t)^k \sim g(t)^{-\frac{1}{d}} \quad \text{ as } t \to \infty.
\end{equation}
Assume that $R(t)$ is eventually non-decreasing, $r(t)$ is eventually non-increasing and
$\sum_{n=1}^{\infty} g(2^n)^{-1} < \infty$.
Then, for $N^{(r)}_\cY$ as in \eqref{e:numberpointsC} and $\cP_R = \cP \cap B_R$,
\begin{equation}\label{e:asppp}
\limsup_{t \to \infty} \sup_{|x| \le R(t)} \omega(B_{r(t)}(x)) \leq k
\qquad \text{ and } \qquad
\limsup_{t \to \infty} N^{(r(t))}_{\cP_{R(t)}} \le k \qquad \Prob \mbox{-a.s.}
\end{equation}
\end{Lem}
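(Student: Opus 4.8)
The plan is to establish both assertions in \eqref{e:asppp} by a Borel--Cantelli argument along the dyadic times $t_n=2^n$: the monotonicity of $R$ and $r$ lets one dominate an entire block $[2^n,2^{n+1}]$ by a single event, whose probability is then estimated by Lemma~\ref{l:probupp}.

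First I would prove the assertion about $\sup_{|x|\le R(t)}\omega(B_{r(t)}(x))$. Fix $n_0$ so large that, on $[2^{n_0},\infty)$, $R$ is non-decreasing, $r$ is non-increasing, and $r\le 1\le R$. For $n\ge n_0$ and $t\in[2^n,2^{n+1}]$ monotonicity gives $R(t)\le R(2^{n+1})$ and $B_{r(t)}(x)\subset B_{r(2^n)}(x)$, hence
\[
\sup_{|x|\le R(t)}\omega\!\left(B_{r(t)}(x)\right)\ \le\ \sup_{|x|\le R(2^{n+1})}\omega\!\left(B_{r(2^n)}(x)\right)\ =:\ S_n .
\]
By \eqref{e:PPPgeom2} applied with $D=B_{R(2^{n+1})}$ and radius $r(2^n)$, and using $R(2^{n+1})\to\infty$, $r(2^n)\to 0$,
\[
\Prob\!\left(S_n\ge k+1\right)\ \le\ \left|B_{R(2^{n+1})+r(2^n)}\right|\frac{|B_{2r(2^n)}|^{k}}{(k+1)!}\ \le\ C_{d,k}\,R(2^{n+1})^{d}\,r(2^n)^{dk}
\]
for all large $n$, which by the asymptotic relation in \eqref{e:condaspppRr} is comparable to $g(2^{n+1})^{-1}$ and hence summable in $n$ by hypothesis. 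Borel--Cantelli then gives that $\Prob$-almost surely $S_n\le k$ for all large $n$; since for $t\in[2^n,2^{n+1}]$ the quantity $\sup_{|x|\le R(t)}\omega(B_{r(t)}(x))$ is at most $S_n$ and is integer-valued, it follows that $\limsup_{t\to\infty}\sup_{|x|\le R(t)}\omega(B_{r(t)}(x))\le k$.

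For the assertion about $N^{(r(t))}_{\cP_{R(t)}}$ I would reduce to what was just proved. It is elementary (e.g.\ by a breadth-first exploration of the graph on $\cP$ joining points at distance less than $2r$) that if a connected component of $B_{r}(\cP_R)$ contains at least $k+1$ points of $\cP$, then it contains $k+1$ distinct points $y_0,\dots,y_k$ with $y_0\in B_R$ and $y_1,\dots,y_k\in B_{C_k r}(y_0)$, where $C_k:=2k$. Consequently $N^{(r)}_{\cP_R}\ge k+1$ forces $\sup_{|x|\le R}\omega(B_{C_k r}(x))\ge k+1$, so that $N^{(r(t))}_{\cP_{R(t)}}\le \sup_{|x|\le R(t)}\omega\big(B_{\widetilde r(t)}(x)\big)$ with $\widetilde r(t):=C_k\,r(t)$. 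Since $\widetilde r$ still satisfies \eqref{e:condaspppRr} with $g$ replaced by $C_k^{-dk}g$ — whose reciprocal remains summable along $\{2^n\}$ — and is eventually non-increasing with $\widetilde r(t)\to 0$, the first part applied to $\widetilde r$ yields $\limsup_{t\to\infty} N^{(r(t))}_{\cP_{R(t)}}\le k$.

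The step I expect to be the main obstacle is exactly the reduction of the uncountable family of events ``bad at time $t$'', $t\to\infty$, to a summable countable family: this is where the monotonicity of $R$ and $r$ is indispensable, allowing a whole dyadic block to be absorbed into one event of probability $\mathcal{O}(g(2^{n+1})^{-1})$. Everything else — the probability bound (Lemma~\ref{l:probupp}), the Borel--Cantelli conclusion, and the geometric reduction of connected components to single balls — is routine.
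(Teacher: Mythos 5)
Your argument is correct and is essentially the paper's own proof: the same dyadic discretization $t\in[2^n,2^{n+1}]$ exploiting the monotonicity of $R$ and $r$, the same application of Lemma~\ref{l:probupp} followed by Borel--Cantelli, and the same reduction of the second claim to the first via the inclusion $\{\exists\,\cC\in\mathscr{C}^{(r)}_{\cP_R}\colon N_\cC\ge k+1\}\subset\{\exists\,x\in B_R\colon \omega(B_{2kr}(x))\ge k+1\}$. The only point to be aware of (present equally in the paper's one-line estimate) is that identifying the mixed term $\big(R(2^{n+1})\,r(2^n)^k\big)^d$ with a constant multiple of $g(2^{n+1})^{-1}$ tacitly uses boundedness of the dyadic ratios of $R$ and $r$, which does hold in all of the paper's applications, where these functions are regularly varying.
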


\begin{proof}
Applying \eqref{e:PPPgeom2} and our assumptions we get, for some constant $c>0$ and all $n$ large enough,
\begin{align}
\Prob \Bigg( \sup_{x\in B_{R(2^{n+1})}}\omega\big(B_{r(2^n)}(x)\big) \geq k+1\Bigg)
\le c g(2^n)^{-1}.
\end{align}
Now the Borel-Cantelli lemma implies that $\sup_{x\in B_{R(2^{n+1})}} \omega(B_{r(2^n)}(x))\le k$
almost surely for all large enough $n$, and the first inequality in \eqref{e:asppp} follows by interpolation and monotonicity.
To see that the second inequality follows from the first, note that, for any $R, r>0$,
\[
\{\exists \, \cC \in \mathscr{C}_{\cP_R}^{(r)} \colon\, N_\cC \ge k + 1 \} \subset \{\exists\, x \in B_R \colon\, \omega(B_{2k r}(x)) \ge k+1  \}.
\qedhere
\]
\end{proof}

The following corollary is immediate from \eqref{e:problbppD}.
\begin{Cor}\label{c:lbppp}
Fix $n\in\N$ and let $R(t)$, $r(t) \in (0,\infty)$ satisfy $r(t) \to 0$, 
$R(t) r(t)^k \to \infty$ as $t \to \infty$.
Then
\begin{equation}\label{e:lbppp}
\lim_{t \to \infty}\Prob \left(\exists\, x \in B_{R(t)} \colon \omega\left( B_{r(t)}(x)\right)= k+1\right)= 1.
\end{equation}
\end{Cor}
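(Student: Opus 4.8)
The plan is to invoke Lemma~\ref{l:problbppD} directly, with the choices $D = B_{R(t)}$ and $r = r(t)$, and then verify that the resulting lower bound converges to $1$ as $t \to \infty$. (I note in passing that the hypothesis of the corollary should read ``Fix $k\in\N$'' to be consistent with \eqref{e:lbppp}.)

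Writing $\kappa_d := |B_1|$, so that $|B_{R(t)}| = \kappa_d R(t)^d$ and $|B_{r(t)}| = \kappa_d r(t)^d$, the estimate \eqref{e:problbppD} becomes
\[
\Prob \left(\exists\, x \in B_{R(t)} \colon \omega(B_{r(t)}(x)) = k+1\right) \ge 1 - \exp\left\{ - \frac{\kappa_d}{2^d (k+1)!}\, \big(R(t)\, r(t)^k\big)^d\, \ee^{-\kappa_d r(t)^d} \right\},
\]
where I have used the algebraic identity $R(t)^d\, r(t)^{kd} = \big(R(t)\, r(t)^k\big)^d$.

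Next I would pass to the limit $t \to \infty$. By hypothesis $R(t)\, r(t)^k \to \infty$, hence $\big(R(t)\, r(t)^k\big)^d \to \infty$, while $r(t) \to 0$ forces $\ee^{-\kappa_d r(t)^d} \to 1$; so the argument of the exponential tends to $-\infty$, the exponential term vanishes, and \eqref{e:lbppp} follows since probabilities are at most $1$.

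There is essentially no obstacle here: the only points to check are the identity $|B_{R(t)}|\, r(t)^{kd} = \kappa_d \big(R(t)\, r(t)^k\big)^d$ and the (immediate) fact that the factor $\ee^{-|B_{r(t)}|}$ stays bounded below — indeed converges to $1$ — because $r(t)\to 0$. Both are routine, which is why the statement is recorded as a corollary rather than a lemma.
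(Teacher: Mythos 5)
Your proof is correct and is exactly the argument the paper intends: the corollary is stated as ``immediate from \eqref{e:problbppD}'', and applying that lemma with $D=B_{R(t)}$, $r=r(t)$ and noting that $|B_{R(t)}|\,r(t)^{kd}=|B_1|\big(R(t)r(t)^k\big)^d\to\infty$ while $\ee^{-|B_{r(t)}|}\to1$ is precisely what is meant. Your remark that the hypothesis should read ``Fix $k\in\N$'' is also a correct catch of a typo.
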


The next lemma is needed for the results on the $\limsup$-asymptotic.
\begin{Lem}\label{l:pplimsup}
Fix $k \in \N$. Let $R(t), r(t), g(t) \in (0,\infty)$ satisfy \eqref{e:condaspppRr} and $\sum_{n\ge1} g(2^n)^{-1} = \infty$.
Then
\begin{equation}\label{e:pplimsup}
\limsup_{t \to \infty} \sup_{|x| \le R(t)} \omega(B_{r(t)}(x)) \ge k+1 \qquad \Prob \mbox{-a.s.}
\end{equation}
\end{Lem}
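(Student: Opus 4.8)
The plan is to obtain \eqref{e:pplimsup} from the second Borel-Cantelli lemma, applied along a sequence of dyadic times $t_j=2^{n_j}\to\infty$. First note that for any such sequence, $\limsup_{t\to\infty}\sup_{|x|\le R(t)}\omega(B_{r(t)}(x))\ge\limsup_{j\to\infty}\sup_{|x|\le R(2^{n_j})}\omega(B_{r(2^{n_j})}(x))$, so it is enough to produce indices $n_1<n_2<\cdots$ for which, $\Prob$-a.s., there are infinitely many $j$ admitting some $x\in B_{R(2^{n_j})}$ with $\omega(B_{r(2^{n_j})}(x))=k+1$; in particular no monotonicity of $R$ or $r$ is needed for this direction.

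Choosing the indices is where the work lies. Using $R(2^n)\to\infty$ and $r(2^n)\to0$, I would select $n_1<n_2<\cdots$ recursively so that $R(2^{n_j})\ge 4R(2^{n_{j-1}})$ and $n_j$ is large enough that $r(2^{n_j})\le1$ and the equivalence $R(2^n)r(2^n)^k\sim g(2^n)^{-1/d}$ of \eqref{e:condaspppRr} has set in. Put $A_1:=B_{R(2^{n_1})}$ and, for $j\ge2$, $A_j:=\{x\in\R^d:2R(2^{n_{j-1}})<|x|\le R(2^{n_j})\}$, so that $|A_j|\asymp R(2^{n_j})^d$ and the enlarged sets $B_{r(2^{n_j})}(A_j)$ are pairwise disjoint. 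Define $E_j:=\{\exists\,x\in A_j:\omega(B_{r(2^{n_j})}(x))=k+1\}$; since $E_j$ is measurable with respect to $\omega$ restricted to $B_{r(2^{n_j})}(A_j)$, the events $(E_j)_j$ are independent. Lemma~\ref{l:problbppD} applied with $D=A_j$ and $r=r(2^{n_j})$, together with $|A_j|\asymp R(2^{n_j})^d$, $r(2^{n_j})\le1$, $R(2^{n_j})r(2^{n_j})^k\sim g(2^{n_j})^{-1/d}$ and the elementary bound $1-e^{-x}\ge c\min\{x,1\}$, gives a constant $c>0$ with
\[
\Prob(E_j)\ \ge\ 1-\exp\bigl\{-c\,\bigl(R(2^{n_j})\,r(2^{n_j})^k\bigr)^d\bigr\}\ \ge\ c\,\min\{g(2^{n_j})^{-1},\,1\}
\]
for all large $j$. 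Provided $\sum_j g(2^{n_j})^{-1}=\infty$, the second Borel-Cantelli lemma then yields that a.s.\ infinitely many $E_j$ occur, and on $E_j$ there is $x\in A_j\subset B_{R(2^{n_j})}$ with $\omega(B_{r(2^{n_j})}(x))=k+1$; the reduction of the first paragraph concludes.

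The step I expect to be the main obstacle is reconciling the two requirements imposed on the subsequence: the separation $R(2^{n_j})\ge 4R(2^{n_{j-1}})$, needed for the independence of the $E_j$, and $\sum_j g(2^{n_j})^{-1}=\infty$, needed to apply the second Borel-Cantelli lemma — starting only from the hypothesis $\sum_{n\ge1}g(2^n)^{-1}=\infty$. I would treat this by partitioning the dyadic scales into consecutive blocks on which $R(2^n)$ keeps a fixed dyadic order of magnitude and retaining, from every other such block, a scale that maximizes $g(2^n)^{-1}$; because $R$ grows at least like a fixed positive power of $t$ in the applications of this lemma, these blocks have bounded length and $g$ is comparable along each of them (by $R(2^n)r(2^n)^k\sim g(2^n)^{-1/d}$ and the slow variation of the relevant scales), so the retained subsequence still carries a divergent series while consecutive members stay far enough apart. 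What remains — the explicit constants in Lemma~\ref{l:problbppD} and the bookkeeping with the annuli $A_j$ — is routine.
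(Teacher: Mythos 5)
Your overall strategy is the same as the paper's: apply Lemma~\ref{l:problbppD} on disjoint annuli at dyadic scales, note that the resulting events are independent because they depend on $\omega$ restricted to disjoint regions, and invoke the second Borel--Cantelli lemma. The difference is that you thin the dyadic scales to a subsequence $(n_j)$ with $R(2^{n_j})\ge 4R(2^{n_{j-1}})$, and this thinning creates the obstacle you yourself flag: you must keep $\sum_j g(2^{n_j})^{-1}=\infty$ after discarding scales, and your resolution of this rests on properties that are \emph{not} hypotheses of the lemma ($R$ growing like a fixed positive power of $t$, comparability of $g$ along blocks). As a proof of the lemma as stated, that step is a genuine gap: from \eqref{e:condaspppRr} and $\sum_n g(2^n)^{-1}=\infty$ alone you cannot justify the block decomposition you describe.

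The paper's proof shows the thinning is unnecessary. It takes, for \emph{every} $n$, the annulus $A_n:=B_{R(2^{n})-r(2^n)}\setminus B_{R(2^{n-1})+r(2^n)}$; then $B_{r(2^n)}(A_n)\subset B_{R(2^n)}\setminus B_{R(2^{n-1})}$, so the enlarged annuli are pairwise disjoint and the events $\sup_{x\in A_n}\omega(B_{r(2^n)}(x))\ge k+1$ are independent with no separation condition imposed. Lemma~\ref{l:problbppD} and $1-\ee^{-x}\sim x$ then give $\Prob(\cdot)\ge c\,g(2^n)^{-1}$ for large $n$, and the \emph{full} divergent series is fed into Borel--Cantelli. (Your factor-$4$ separation buys only the volume estimate $|A_j|\asymp R(2^{n_j})^d$; the consecutive annuli deliver the analogous estimate in the regime where the lemma is used, since there $R$ is regularly varying of positive index so $R(2^n)/R(2^{n-1})$ is bounded away from $1$.) If you replace your thinned subsequence by these consecutive annuli, your argument closes without any appeal to the applications.
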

\begin{proof}
Let $A_n:=B_{R(2^{n})-r(2^n)}\setminus B_{R(2^{n-1})+r(2^n)}$, $n\in\N$.
Using \eqref{e:problbppD}, our assumptions on $R(t), r(t)$ and $1 - \ee^{-x} \sim x$ as $x \to 0$, we find a constant $c>0$ such that
\begin{align*}
\sum_{n \in \N} \Prob \left(\sup_{x\in A_n}\omega(B_{r(2^n)}(x))\geq k+1\right) \geq c \sum_{n = 1}^\infty g(2^n)^{-1} = \infty.
\end{align*}
Noting that $\sup_{x\in A_n}\omega(B_{r(\beta^n)}(x))$, $n\in\N$, are independent random variables, 
the second Borel-Cantelli lemma yields the result.
\end{proof}

In the remaining lemmata we investigate the $\liminf$ behaviour.
\begin{Lem}\label{l:PPPliminf1}
Fix $k\in\N$. Let $R(t)$, $r(t) \in (0,\infty)$ such that $R(t)\to\infty$ and  $r(t) \to 0$ eventually monotonically as $t\to \infty$.
Assume that
\begin{equation}\label{e:assumpPPPliminf1}
c := \liminf_{t \to \infty} \frac{(R(t) r(t)^k)^d}{\log \log t} > \tfrac{2^d(k+1)!}{|B_1|},
\end{equation}
and furthermore that, for any $\eps>0$, there exists $\beta \in(1,\infty)$ such that $\inf_{n\in\N}\tfrac{R(\beta^n)}{R(\beta^{n+1})}\geq 1-\eps$.
Then
\begin{equation}\label{e:PPPliminf1}
\liminf_{t\rightarrow\infty} \sup_{x\in B_{R(t)}}\omega\Big(B_{r(t)}(x)\Big)
\geq k+1 \quad \Prob \mbox{-a.s.}
\end{equation}
\end{Lem}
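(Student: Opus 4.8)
The plan is to use a Borel--Cantelli argument along a geometric sequence of times, exploiting the independence structure provided by Lemma~\ref{l:problbppD}. First I would fix $\eps > 0$ small enough that $c - \eps > 2^d(k+1)!/|B_1|$, and choose (by the definition of $\liminf$) a time $t_0$ such that $(R(t)r(t)^k)^d \ge (c-\eps)\log\log t$ for all $t \ge t_0$. The natural subsequence is $t_n := 2^{2^n}$, so that $\log\log t_n = n\log 2 + \log\log 2 \sim n\log 2$. Along this subsequence, I would look at disjoint annuli: set $A_n := B_{R(t_n)} \setminus B_{R(t_{n-1})}$ (or, to be safe about the neighbourhood radius, $A_n := B_{R(t_n) - r(t_n)} \setminus B_{R(t_{n-1}) + r(t_n)}$, as in the proof of Lemma~\ref{l:pplimsup}), which have volume comparable to $|B_1| R(t_n)^d$ for large $n$ since $R(t)\to\infty$ forces $R(t_{n-1}) = o(R(t_n))$ (this uses $t_{n-1} = \sqrt{t_n}$ together with, e.g., the eventual monotonicity that one may assume or derive, or simply that $R(t)r(t)^k$ is the controlled quantity and $r(t)\to 0$). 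The events $\{\exists x \in A_n : \omega(B_{r(t_n)}(x)) = k+1\}$ (or $\ge k+1$) are then \emph{independent} across $n$ because they depend on $\omega$ restricted to disjoint regions.

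Next I would apply Lemma~\ref{l:problbppD} with $D = A_n$ and $r = r(t_n)$ to get
\begin{equation}
\Prob\left(\exists x \in A_n : \omega(B_{r(t_n)}(x)) = k+1\right) \ge 1 - \exp\left\{-|A_n|\frac{r(t_n)^{kd}\ee^{-|B_{r(t_n)}|}}{2^d(k+1)!}\right\}.
\end{equation}
Since $r(t_n)\to 0$ we have $\ee^{-|B_{r(t_n)}|} \to 1$, and using $|A_n| \ge \tfrac12 |B_1| R(t_n)^d$ for large $n$, the exponent is bounded below (in absolute value) by
\begin{equation}
\frac{|B_1|}{2^{d+1}(k+1)!}\,(R(t_n) r(t_n)^k)^d (1+o(1)) \ge \frac{|B_1|(c-\eps)}{2^{d+1}(k+1)!}\,\log\log t_n\,(1+o(1)).
\end{equation}
Writing $\beta := |B_1|(c-\eps)/(2^{d+1}(k+1)!)$, the probability that the event \emph{fails} at step $n$ is at most $\exp\{-\beta(1+o(1))\log\log t_n\} = (\log t_n)^{-\beta(1+o(1))} \asymp n^{-\beta(1+o(1))}$. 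By the choice of $\eps$ we have $c - \eps > 2^d(k+1)!/|B_1|$, hence $\beta > \tfrac12$; but this only gives $\beta > 1/2$, which is not quite summable on its own — so I would be more careful and note that the factor $\tfrac12$ I lost by bounding $|A_n| \ge \tfrac12|B_1|R(t_n)^d$ is wasteful: in fact $|A_n|/(|B_1|R(t_n)^d) \to 1$, so one actually gets $\beta$ arbitrarily close to $|B_1|c/(2^d(k+1)!) > 1$, and then $\sum_n n^{-\beta} < \infty$.

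Then the first Borel--Cantelli lemma gives that, almost surely, for all large $n$ there exists $x \in A_n \subset B_{R(t_n)}$ with $\omega(B_{r(t_n)}(x)) = k+1 \ge k+1$. To conclude the $\liminf$ statement over \emph{all} $t$ rather than the subsequence, I would interpolate: for general large $t$, pick $n$ with $t_{n-1} \le t \le t_n$; then $R(t) \ge R(t_{n-1})$ if $R$ is eventually non-decreasing (which, as in Lemma~\ref{l:asppp}, one may assume, or argue around), so $B_{R(t)} \supset$ the relevant ball for index $n-1$, giving $\sup_{x \in B_{R(t)}} \omega(B_{r(t)}(x)) \ge k+1$ for all large $t$ provided $r(t) \ge r(t_n)$... — here is the subtlety: the radius $r(t)$ might be smaller than $r(t_n)$, shrinking the balls. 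The \textbf{main obstacle} is precisely this interpolation step: making sure the geometric-subsequence conclusion transfers to all $t$ despite $r(t)$ being non-monotone or smaller than $r(t_n)$. I expect this is handled exactly as in the proof of Lemma~\ref{l:pplimsup} and Lemma~\ref{l:asppp} — one inserts the radius $r(t_n)$ (the smallest relevant radius) when running Borel--Cantelli, so that the event witnessed uses balls of radius $r(t_n) \le r(t)$ for all $t \in [t_{n-1}, t_n]$, and one uses eventual monotonicity of $R$ and $r$ (assumed as in the neighbouring lemmata, or observed to hold without loss of generality) to make $|A_n|$, $R(t)$, and $r(t)$ comparable to their values at the subsequence endpoints. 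Once the radii and annuli are lined up correctly, the independence and Borel--Cantelli steps are routine, and \eqref{e:PPPliminf1} follows.
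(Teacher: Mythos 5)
Your skeleton --- apply Lemma~\ref{l:problbppD} along a deterministic subsequence and conclude with the \emph{first} Borel--Cantelli lemma --- is the same as the paper's. Two preliminary remarks. First, the annuli and the independence they buy are superfluous: independence is only needed for \emph{second} Borel--Cantelli arguments such as Lemma~\ref{l:pplimsup}; here you should simply take $D=B_{R(t_n)}$ in \eqref{e:problbppD}, which also removes the volume loss you then have to argue away. Second, your asymptotics $(\log t_n)^{-\beta}\asymp n^{-\beta}$ is wrong for $t_n=2^{2^n}$: there $\log t_n=2^n\log 2$, so the failure probability is $\asymp 2^{-\beta n}$, summable for \emph{every} $\beta>0$. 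The fact that the sharp constant in \eqref{e:assumpPPPliminf1} then plays no role in your summability discussion is a warning that the subsequence is too sparse.

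The genuine gap is the interpolation step, which you correctly single out as the main obstacle but do not overcome. To cover all $t\in[t_{n-1},t_n]$ one must run Borel--Cantelli on the event $\{\exists\, x\in B_{R(t_{n-1})}\colon \omega(B_{r(t_n)}(x))\ge k+1\}$ (smallest region and smallest radius on the block), whose failure probability is governed by $(R(t_{n-1})\,r(t_n)^k)^d$. With $t_{n-1}=\sqrt{t_n}$ and $r$ decaying like a power of $t$ (as in the only application, $r(t)\approx t^{-1/(k-1)}$), one has $r(t_n)/r(t_{n-1})\approx t_n^{-1/(2(k-1))}\to 0$, hence $(R(t_{n-1})\,r(t_n)^k)^d\to 0$: the success probability of the interpolated event tends to $0$ and the whole argument collapses. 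The paper instead takes $t_n=\ee^{n}$, so that $\log\log t_n=\log n$ and the failure probability is $n^{-\beta}$ with $\beta=|B_1|(c-\eps)/(2^d(k+1)!)>1$ --- this is precisely where the constant in \eqref{e:assumpPPPliminf1} is spent --- and consecutive scales differ only by a fixed multiplicative factor, so that for the regularly varying, eventually monotone $R,r$ of the application the interpolation costs only a bounded (indeed, by spreading the subsequence slightly less, a $1+o(1)$) factor. In short, the subsequence must be just dense enough that the hypothesis barely yields summability; your choice optimizes the wrong constraint and leaves unproved exactly the step you identified as critical.
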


\begin{proof}
For $\eps>0$ satisfying $\delta := c(1-\eps)^d |B_1| / (2^d(k+1)!) - 1 > 0$, 
choose $\beta\in(1,\infty)$ as in the statement.
By \eqref{e:problbppD} and our assumptions on $R$, $r$, 
for all $n$ large enough,
\begin{align*}
\begin{aligned}
\P \left(\sup_{x\in B_{R(\beta^n})}\omega\Big(B_{r(\beta^{n+1})}(x) \Big) \leq k \right)
& \leq \exp \left\{-\frac{|B_1| (1-\eps)^d}{2^d(k+1)!} R(\beta^{n+1})^d r(\beta^{n+1})^{kd} \ee^{-|B_r(\beta^{n+1})|} \right\} \leq n^{1+\delta/2}.
\end{aligned}
\end{align*}
Now \eqref{e:PPPliminf1} follows by the Borel-Cantelli lemma,
interpolation and monotonicity.
\end{proof}

We state next an improvement of \eqref{e:PPPgeom2}.
For $D \subset \R^d$ and $r>0$, we denote by
\begin{equation}\label{e:defvarthetaD}
\vartheta_r(D) := \min \left\{n \in \N \colon\, \exists\, z_1, \ldots, z_n \in \R^d, D \subset \cup_{i=1}^n (z_i + [0,r]^d)\right\}
\end{equation}
the minimum number of boxes of side-length $r$ needed to cover $D$.

\begin{Lem}\label{l:genPPPgeom}
For any $k, m \in \N$, any measurable $D_1, \ldots, D_m \subset \R^d$,
and any $r_1, \ldots, r_m \in (0,\infty)$,
\begin{equation}\label{e:genPPPgeom}
\Prob \left(\sup_{1 \le i \le m} \sup_{x \in D_i} \omega(B_{r_i}(x)) \le k \right) 
\ge \prod_{i=1}^m \left(1- \frac{(2r_i)^d |B_{2r_i}|^k}{(k+1)!} \right)^{\vartheta_{r_i}(D)}.
\end{equation}
\end{Lem}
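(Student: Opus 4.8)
The plan is to reduce \eqref{e:genPPPgeom} to a first-moment estimate on a single box and then recover the product by a positive-correlation argument. For each $i\le m$, cover $D_i$ by $n_i:=\vartheta_{r_i}(D_i)$ boxes $Q_{i,1},\dots,Q_{i,n_i}$ of side-length $r_i$ (cf.\ \eqref{e:defvarthetaD}); writing $A_{i,j}:=\{\sup_{x\in Q_{i,j}}\omega(B_{r_i}(x))\le k\}$ and using $D_i\subset\bigcup_j Q_{i,j}$, one has
\[
\Big\{\sup_{1\le i\le m}\ \sup_{x\in D_i}\omega(B_{r_i}(x))\le k\Big\}\ \supseteq\ \bigcap_{i=1}^m\bigcap_{j=1}^{n_i}A_{i,j}.
\]
Each $A_{i,j}$ is a \emph{decreasing} event for $\omega$: adding points can only increase $x\mapsto\omega(B_{r_i}(x))$, hence only destroy $A_{i,j}$. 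By the Harris--FKG inequality for Poisson point processes, a finite family of decreasing events is positively associated, so $\Prob\big(\bigcap_{i,j}A_{i,j}\big)\ge\prod_{i,j}\Prob(A_{i,j})$. It therefore suffices to prove the per-box bound $\Prob(A_{i,j}^{\cc})\le (2r_i)^d|B_{2r_i}|^k/(k+1)!$, since this gives $\Prob(A_{i,j})\ge 1-(2r_i)^d|B_{2r_i}|^k/(k+1)!$, and multiplying the $n_i$ identical factors attached to each $i$ reproduces the right-hand side of \eqref{e:genPPPgeom} (the inequality being vacuous when this quantity is non-positive).

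For the per-box estimate I would rerun the argument behind \eqref{e:PPPgeom1}--\eqref{e:PPPgeom2}. If $x\in Q_{i,j}$ satisfies $\omega(B_{r_i}(x))\ge k+1$, then $\cP$ contains $k+1$ distinct points inside $B_{r_i}(x)$; all their pairwise distances are $\le 2r_i$, and all of them lie in $B_{r_i}(Q_{i,j})$, a set of volume comparable to $r_i^d$ which one arranges to be contained in a box $\widehat Q_{i,j}$ of side-length $2r_i$ determined by $Q_{i,j}$. Thus $A_{i,j}^{\cc}$ is contained in the event that there exist distinct $y_0,\dots,y_k\in\cP$ with $y_0\in\widehat Q_{i,j}$ and $\max_{1\le a\le k}|y_a-y_{a-1}|\le 2r_i$. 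Conditioning on $\omega$ restricted to $\widehat Q_{i,j}$ together with its $2r_i$-neighbourhood, using the exchangeability of the Poisson points given their number, a union bound over $(k+1)$-subsets, and the factorial-moment identity $\E[N(N-1)\cdots(N-k)]=\mu^{k+1}$ for $N\sim\Poisson(\mu)$ — exactly as in the proof of Lemma~\ref{l:probupp} — yields $\Prob(A_{i,j}^{\cc})\le|\widehat Q_{i,j}|\,|B_{2r_i}|^k/(k+1)! = (2r_i)^d|B_{2r_i}|^k/(k+1)!$.

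The step I expect to be the main obstacle is exactly this last piece of geometric bookkeeping: squeezing $B_{r_i}(Q_{i,j})$, which naively only fits into a box of side $3r_i$, into a box $\widehat Q_{i,j}$ of side precisely $2r_i$, so that the constant $(2r_i)^d$ (rather than $(3r_i)^d$, or a larger dimensional multiple of $r_i^d$) appears. A clean device is to perform both the covering and the anchoring on one fixed mesh of size $2r_i$, forcing the designated point $y_0$ of the $(k+1)$-tuple into a prescribed mesh cell and absorbing the resulting overcounting into the $(k+1)!$; checking that this is consistent with the mesh-$r_i$ definition \eqref{e:defvarthetaD} of $\vartheta_{r_i}$ is the fiddly point. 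A secondary, more routine task is to pin down the precise form of the Harris--FKG inequality used — most easily by approximating $\omega$ on a fine mesh by an independent Bernoulli field, for which Harris's inequality is classical — since correlation inequalities have not appeared earlier in the paper.
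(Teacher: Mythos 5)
Your proposal follows the paper's proof essentially step for step: cover each $D_i$ by $\vartheta_{r_i}(D_i)$ boxes of side $r_i$, factor the probability over all boxes by positive association of decreasing events, and feed each box into the one-box estimate coming from Lemma~\ref{l:probupp}. The only cosmetic difference in the factorization step is that the paper invokes the known association of the random variables $\sup_{x\in D_i}\omega(B_{r_i}(x))$ for Poisson processes (\cite{Resnick88}, \cite{EPW}) rather than proving Harris--FKG by Bernoulli discretization; both yield \eqref{e:prgenPPP1}.

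Concerning the point you single out as the main obstacle --- extracting the constant $(2r_i)^d$ rather than $|B_{r_i}(Q_{i,j})|$ --- you have correctly located the one non-routine spot, but you need not resolve it the way you suggest: the paper itself simply applies \eqref{e:PPPgeom2} to the box $z_i+[0,r]^d$, for which the honest constant is $|B_{r}([0,r]^d)|$, strictly larger than $(2r)^d$ (e.g.\ about $20.6\,r^3$ versus $8r^3$ when $d=3$). So the mesh device you describe is not what the paper does, and the exact numerical constant is immaterial downstream: it only rescales the threshold in \eqref{e:condck} and hence the admissible $C$ in Lemma~\ref{l:liminfev}, none of which affects the theorems. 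The safe course is to carry the larger constant $|B_{r_i}([0,r_i]^d)|\,|B_{2r_i}|^k/(k+1)!$ through; apart from this constant, your argument is complete and coincides with the paper's.
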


\begin{proof}
We first note that $\sup_{x \in D_i} \omega(B_{r_i}(x))$, $1 \le i \le m$, is a family of associated random variables
(cf.\ \cite[Proposition~4]{Res88}, see also \cite[Theorem~5.1]{EPW67}), i.e.,
\begin{equation}\label{e:prgenPPP1}
\Prob \left(\sup_{1 \le i \le m} \sup_{x \in D_i} \omega(B_{r_i}(x)) \le k \right) \ge \prod_{i=1}^m \Prob \left(\sup_{x \in D_i} \omega(B_{r_i}(x)) \le k \right).
\end{equation}
Consider the case $m=1$, and write $D=D_1$, $r= r_1$.
Then, with $z_1, \ldots, z_{\widehat{m}} \in \R^d$
as in \eqref{e:defvarthetaD}, 
\begin{equation}\label{e:prgenPPP2}
\Prob \left(\sup_{x \in D} \omega(B_{r}(x)) \le k \right) 
\ge \Prob \left( \sup_{i=1}^{\vartheta_r(D)} \sup_{x \in z_i + [0,r]^d} \omega(B_r(x)) \le k \right)
\ge \left( 1 - (2r)^d\frac{|B_{2r}|^k}{(k+1)!}\right)^{\vartheta_r(D)}
\end{equation}
by \eqref{e:prgenPPP1} and \eqref{e:PPPgeom2}.
Now \eqref{e:genPPPgeom} follows from \eqref{e:prgenPPP1}--\eqref{e:prgenPPP2}.
\end{proof}

The following lemma uses ideas from \cite[Lemma~5.2]{CR11}.
\begin{Lem}\label{l:ppliminf}
Let $k \geq 2$ and $R(t), r(t)>0$ satisfy
\begin{equation}\label{e:defrkn}
R(t) \sim t^\frac{k}{k-1} (\log \log t)^{-\frac{1}{d(k-1)}}, \qquad r(t) \sim t^{-\frac{1}{k-1}} (\log \log t)^\frac{1}{d(k-1)} \qquad \text{ as } t \to \infty.
\end{equation}
Let $b_n > 0$, $n \in \N$, such that
\begin{equation}\label{e:condck}
\sum_{n=1}^{\infty}(2^{n-1} b_n^k)^d < \frac{(k+1)!}{(2^d |B_1|)^{k+1}}.
\end{equation}
Let $\rho > 0$ and $z(t):= \lfloor \rho \log \log t \rfloor$. Then
\begin{equation}\label{e:ppliminf1}
\liminf_{t\rightarrow\infty} \sup_{n=1}^{z(t)} \sup_{x\in B_{2^{n-1} R(t)}}\omega\Big(B_{b_n r(t)}(x)\Big)\leq k 
\quad \Prob \mbox{-a.s.}
\end{equation}
\end{Lem}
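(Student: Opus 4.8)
The plan is to prove \eqref{e:ppliminf1} by showing that the favourable event
$$E_t := \Big\{ \sup_{1 \le n \le z(t)}\ \sup_{x \in B_{2^{n-1}R(t)}} \omega\big(B_{b_n r(t)}(x)\big) \le k \Big\}$$
holds for arbitrarily large $t$, $\Prob$-almost surely; since the quantity inside the $\liminf$ is $\N_0$-valued, this is equivalent to \eqref{e:ppliminf1}. Note first that \eqref{e:defrkn} gives $(R(t)\,r(t)^k)^d \sim \log\log t$, while \eqref{e:condck} forces $(2^{n-1}b_n^k)^d \to 0$, so in particular $b^\ast := \sup_{n} b_n < \infty$.

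\emph{Step 1 (a lower bound for $\Prob(E_t)$).} Apply Lemma~\ref{l:genPPPgeom} with $m = z(t)$, $D_n = B_{2^{n-1}R(t)}$, $r_n = b_n r(t)$, and bound the covering numbers by $\vartheta_{b_n r(t)}\big(B_{2^{n-1}R(t)}\big) \le (1+o(1))\,|B_1|\big(2^{n-1}R(t)/(b_n r(t))\big)^d$ (valid since the argument tends to infinity). A short computation yields
$$-\log \Prob(E_t) \;\le\; (1+o(1))\,\frac{(2^d|B_1|)^{k+1}}{(k+1)!}\,\Big(\sum_{n\ge 1}(2^{n-1}b_n^k)^d\Big)\,\big(R(t)\,r(t)^k\big)^d \;=\; (\lambda + o(1))\,\log\log t,$$
where $\lambda := \tfrac{(2^d|B_1|)^{k+1}}{(k+1)!}\sum_{n}(2^{n-1}b_n^k)^d$, which is $<1$ \emph{precisely} by \eqref{e:condck}. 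Hence, fixing $\lambda' \in (\lambda,1)$, we get $\Prob(E_t) \ge (\log t)^{-\lambda'}$ for all large $t$.

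\emph{Step 2 (subsequence and correlation control).} Choose a deterministic $t_j \uparrow \infty$ growing fast enough that $t_{j+1}/t_j \ge (\log t_j)^{\beta}$ for a suitable $\beta = \beta(d,k,\rho) > 0$, but slowly enough that $\log t_j \asymp j\log j$. Since $\lambda' < 1$, Step~1 gives $\sum_j \Prob(E_{t_j}) \ge \sum_j (\log t_j)^{-\lambda'} = \infty$. Put $\rho_i := 2^{z(t_i)-1}R(t_i)$ and let $G_{ij}$ be the event that no ball of radius $b^\ast r(t_j)$ with centre in $B_{\rho_i}$ carries $k+1$ points of $\cP$; the growth condition forces $\rho_i\, r(t_j)^k$ to be bounded uniformly over $i<j$, so \eqref{e:PPPgeom1} gives $\Prob(G_{ij}) \ge p_0 > 0$, with $p_0$ independent of $i,j$. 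Since $R(t_j)$ is vastly larger than $\rho_i$, every ball $B_{2^{n-1}R(t_j)}$ contains $B_{\rho_i}$, and the part of $E_{t_j}$ depending on $\omega|_{B_{\rho_i}}$ is exactly $G_{ij}$. A guard-layer argument — the guard being sparse with probability $1-o(1)$ by a first-moment bound, since $\rho_i^{\,d-1}\, r(t_j)^{kd+1} \to 0$ — then shows $E_{t_j} \subseteq G_{ij} \cap H_{ij}$ with $H_{ij}$ independent of $\sigma(\omega|_{B_{\rho_i}})$ and $\Prob(H_{ij}) \le (1+o(1))\,\Prob(E_{t_j})/p_0$; since $E_{t_i}$ depends only on $\omega|_{B_{\rho_i}}$ (up to the same guard), we conclude
$$\Prob(E_{t_i} \cap E_{t_j}) \;\le\; \Prob(E_{t_i} \cap G_{ij})\,\Prob(H_{ij}) \;\le\; (1+o(1))\,\frac{\Prob(E_{t_i})\,\Prob(E_{t_j})}{p_0}, \qquad i<j.$$
Together with $\sum_j \Prob(E_{t_j}) = \infty$, the Kochen--Stone lemma yields $\Prob(E_{t_j}\ \text{infinitely often}) \ge p_0 > 0$, and a standard zero--one law for the Poisson point process — the event $\{E_{t_j}\ \text{i.o.}\}$ being translation invariant up to null sets since $R(t) \to \infty$ — upgrades this to probability one, proving \eqref{e:ppliminf1}.

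The hardest part is the correlation bound in Step~2: the balls $B_{\rho_j}$ are nested, so the $E_{t_j}$ are genuinely dependent and one must show the dependence is harmless. This is exactly where the $z(t)$-shell structure pays off — it reduces the ``inner'' part of $E_{t_j}$ to the sparseness of a single fixed (large but finite) ball, tested only at the coarse scale $b^\ast r(t_j) \to 0$, which the fast-growing sequence $(t_j)$ keeps favourable with probability bounded away from $0$ — and it is also where the \emph{exact} constant in \eqref{e:condck} is indispensable, since that is what makes $\lambda < 1$ and hence $\sum_j \Prob(E_{t_j}) = \infty$.
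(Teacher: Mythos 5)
Your route is genuinely different from the paper's. The paper avoids second-moment machinery altogether: it builds a sparse sequence $t_j$ with $\log t_j\approx\rho\, j\log j$, splits $X_j=\max(\check X_j,\hat X_j)$ into the supremum over the \emph{inner ball} $B_{R(t_{j-1})}$ (shown to be $\le k$ eventually a.s.\ by the first Borel--Cantelli lemma, since $R(t_{j-1})r(t_j)^k$ is tiny) and the supremum over the \emph{annuli} $B_{2^{n-1}R(t_j)}\setminus B_{R(t_{j-1})}$; the annular parts along alternate indices are genuinely independent, so the second Borel--Cantelli lemma applies directly once the condition \eqref{e:condck} makes $\sum_j\Prob(\hat X_j\le k)=\infty$ (your Step~1 computation, which the paper does identically via Lemma~\ref{l:genPPPgeom}). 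You instead keep the full nested events $E_{t_j}$, bound pairwise correlations by a spatial inside/outside decomposition, and invoke Kochen--Stone plus ergodicity. This can be made to work, but it is heavier, and two steps as written do not go through.

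First, the correlation bound. From ``$E_{t_j}\subseteq G_{ij}\cap H_{ij}$'' with $G_{ij},H_{ij}$ independent you only get $\Prob(E_{t_j})\le\Prob(G_{ij})\Prob(H_{ij})$, i.e.\ a \emph{lower} bound $\Prob(H_{ij})\ge\Prob(E_{t_j})/\Prob(G_{ij})$ --- the opposite of what you assert. To get $\Prob(H_{ij})\le(1+o(1))\Prob(E_{t_j})/p_0$ you need the reverse near-inclusion $G_{ij}\cap H_{ij}\subseteq E_{t_j}\cup S_{ij}^{\,\cc}$, where $S_{ij}$ is the event that no bad ball is centred in the boundary shell, together with $\Prob(S_{ij}^{\,\cc})=o\big(\Prob(E_{t_j})\big)$. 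The last point is where care is needed: $\Prob(E_{t_j})$ itself decays like $(\log t_j)^{-\lambda'}$, so ``the guard is sparse with probability $1-o(1)$'' is not enough; you must check that $\rho_i^{\,d-1}r(t_j)^{kd+1}$ is $o\big((\log t_j)^{-\lambda'}\big)$ uniformly in $i<j$ (it is, comfortably, with your scales, but this is a computation, not a remark). Alternatively, association of decreasing events (the same Lemma~\ref{l:genPPPgeom} ingredient) gives $\Prob(E_{t_j})=\Prob(G_{ij}\cap H_{ij}\cap S_{ij})\ge\Prob(G_{ij})\Prob(H_{ij}\cap S_{ij})$ and removes the need for the guard layer entirely. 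Second, the zero--one law: the claim that $\{E_{t_j}\text{ i.o.}\}$ is translation invariant up to null sets again requires a Borel--Cantelli estimate on the boundary shells and is only asserted. You can sidestep it: since $\Prob(G_{ij})\to1$ uniformly in $j>i$ as $i\to\infty$, restricting Kochen--Stone to indices $\ge i_0$ gives $\Prob(E_{t_j}\text{ i.o.})\ge p_0(i_0)\to1$, so no ergodicity argument is needed. With these repairs your proof is correct; the paper's decomposition buys a shorter argument precisely because it manufactures exact independence instead of estimating correlations.
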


\begin{proof}
We may assume that $\rho > 1$. Abbreviate $\ell(t) := \log \log t$.
Take $t_0\in(1,\infty)$ large enough such that $\ell(t_0)>1$,
and define a growing sequence $(t_j)_{j\in\N_0}$ recursively by
\begin{equation}\label{e:defRn}
t_{j}=t_{j-1} \exp\{\rho \ell(t_{j-1})\}, \quad j\in\N.
\end{equation}
For $j\in\N$ and $n\in\N$, set
\begin{align*}
A_{j,n}&:=B_{2^{n-1} R(t_{j})}\setminus B_{R(t_{j-1})},& 
X_j&:=\sup_{n=1}^{z(t_j)} \sup_{x\in B_{2^{n-1} R(t_{j})}}\omega(B_{b_n r(t_{j})}(x)),\\
\hat{X}_j&:=\sup_{n=1}^{z(t_j)} \sup_{x\in A_{j,n}}\omega(B_{b_n r(t_{j})}(x)),&
\check{X}_j&:=\sup_{x\in B_{R(t_{j-1})}} \omega(B_{b_1 r(t_{j})}(x)).
\end{align*}
Note that $X_j=\max(\check{X}_j,\hat{X}_j).$ Thus it will be sufficient to show that $\Prob$-a.s.\ both 
\begin{equation}\label{e:liminfsup}
\limsup_{j\rightarrow\infty}\check{X}_j\leq k \quad\mbox{and}\quad \liminf_{j\rightarrow\infty}\hat{X}_j\leq k.
\end{equation}
To obtain the first inequality, note that by \eqref{e:PPPgeom2} there exists a constant $c\in(0,\infty)$ such that
\begin{align}\label{e:proofliminf1}
\Prob \left(\check{X}_j\geq k+1 \right)
\leq c \left(R(t_{j-1})r(t_j)^k\right)^d
& \leq 2 c \ee^{-\tfrac{k}{k-1}\left\{ d \rho \ell(t_{j-1}) - \log \ell(t_{j-1}) \right\}} 
\leq 2c \ee^{-\frac{d k \rho (1-\eps_j)}{k-1}\ell(t_{j-1})}
\end{align}
for all large enough $j$,
where we used $\ell(t_j) \le 2 \ell(t_{j-1})$, and $\eps_j \to 0$ as $j \to \infty$.
To conclude with the Borel-Cantelli lemma,
note that \eqref{e:proofliminf1} is summable in $j$ since, for any $\alpha>1$,
\begin{align*}
\infty >\int_{t_0}^\infty\frac{1}{t}\ee^{-\alpha \ell(t)} \dd t=\sum_{j=0}^\infty\int_{t_j}^{t_{j+1}}\frac{1}{t}\ee^{-\alpha \ell(t)}\dd t
 \geq \sum_{j=0}^\infty \log ( t_{j+1}/t_j) \ee^{-\alpha \ell(t_{j+1})} > \sum_{j=0}^\infty\ee^{-\alpha \ell(t_{j+1})}.
\end{align*}

Consider now the second inequality in \eqref{e:liminfsup}.
By \eqref{e:genPPPgeom}, for all $j \in \N$,
\begin{align*}
\log\Prob \left(\hat{X}_j\leq k \right) \geq \log \Prob \left(X_j\leq k \right) 
\geq \sum_{n=1}^{z(t_j)} \vartheta_{b_n r(t_j)}\left(B_{2^{n-1} R(t_j)}\right) \log \left( 1 - \frac{(2 b_n r(t_j))^d |B_{2 b_n r(t_j)}|^k}{(k+1)!} \right).
\end{align*}
Using \eqref{e:defrkn}, $\log(1-x) \sim -x$ as $x \to 0$ and $\vartheta_r(B_R) \sim |B_R|/r^d$ as $r\downarrow 0, R\uparrow\infty$, 
we obtain
\begin{align*}
\log\Prob \left(\hat{X}_j\leq k \right) \geq - (1+ \eps_j) \frac{(2^d |B_1|)^{k+1}}{(k+1)!} \ell(t_j) \sum_{n=1}^{\infty} (2^{n-1} b_n^k)^d
 > -(1-\delta) \ell(t_j)
\end{align*}
for large $j$ by \eqref{e:condck}, where $\eps_j \to 0$ as $j \to \infty$ and $\delta \in (0,1)$.
Since, for some $c \in (0,\infty)$,
\[
\infty =\int_{t_0}^\infty\frac{1}{t}\ee^{-\ell(t)} \dd t 
\leq \sum_{j=0}^\infty \rho \ell(t_j) \ee^{-\ell(t_{j})} \leq c \sum_{j=0}^\infty\ee^{-(1-\delta) \ell(t_j)},
\]
we deduce $\sum_{j=0}^\infty \Prob (\hat{X}_j\leq k)=\infty$.
Note now that, since $R(t_{j+1}) \gg 2^{z(t_j)} R(t_j)$ as $j \to \infty$,
there exists a $j_0 \in \N$ such that both $(\hat{X}_{2j})_{j \ge j_0}$ and $(\hat{X}_{2j+1})_{j \ge j_0}$ are families of independent random variables,
allowing us to conclude the proof with an application of the second Borel-Cantelli lemma.
\end{proof}

\section{Proof of the main theorems}
\label{s:PMT}
%
Throughout this section, we fix $d \ge 3$ arbitrary in general,
but $d=3$ whenever we treat the renormalized potential $\overbar{V}$.
We also fix $\theta \in (0,\frac{h_d}{2}]$ and set $k=k_\theta=\lfloor \frac{h_d}{\theta}\rfloor$, where $h_d = (d-2)^2/8$ .

The section is organized as follows. In Sections~\ref{ss:taileigenvalue}--\ref{ss:truncation} below,
we provide some preparatory results concerning respectively bounds for principal eigenvalues 
and estimates of the error introduced when substituting either $V^{(\frK)}$ or $\overbar{V}$ by a truncated potential $V^{(a)}$.
Section~\ref{ss:proofsupperbounds} contains the proofs of Theorems~\ref{t:finiteness} and \ref{t:finitenessva}
as well as of the upper bounds for Theorems~\ref{t:tightnessva}, \ref{t:limsupva} and \ref{t:liminfva}.
Corresponding lower bounds are proved first in the special case of truncated potentials in Section~\ref{ss:proofslowerbounds}.
The proofs of Theorem~\ref{t:limitfrac} is given in Section~Section~\ref{ss:prooflimitfrac}, 
as well as the completion of the proofs of Theorems~\ref{t:tightnessva}, \ref{t:limsupva}, \ref{t:liminfva} and \ref{t:tightness}.
Finally, Theorems~\ref{t:solvbar} and \ref{t:solva} are proved in Section~\ref{ss:proofssol}.

\subsection{Bounds for principal eigenvalues}
\label{ss:taileigenvalue}
%
In order to make use of the upper bound given in Theorem~\ref{t:upperbound}, 
we study the almost-sure asymptotics as $R\to\infty$ of $\Lambda^{(\theta, a, r)}_\cY$ defined in \eqref{e:numberpointsC}
with $\cY=\cP_R = \cP \cap B_R$. 
To this end, we will combine the multipolar Hardy inequality from Section~\ref{ss:MPHI} and the Poissonian asymptotics stated in Section~\ref{s:SDPC}.

Fix $0<a<r<\infty$ and recall \eqref{e:deflambdaC}--\eqref{e:numberpointsC}. For $s>0$, write
\begin{align}\label{e:proofev}
\{\Lambda^{(\theta, a, r)}_{\cP_R}>s\} \subset \{\Lambda^{(\theta, a, r)}_{\cP_R}>s, N_{\cP_R}^{(r)} \le k+1\} \cup \{N_{\cP_R}^{(r)} \geq k+2\}.
\end{align}
The second event in \eqref{e:proofev} can be controlled by
\[
\{N_{\cP_R}^{(r)} \geq k+2\}\subset \big\{\exists x\in B_R\colon\omega\big(B_{(k+1)r}(x)\big)\geq k+2\big\}.
\]
To control the first event in \eqref{e:proofev}, write, for $\cC\in\mathscr{C}^{(r)}_{\cP_R}$,
\[\Gamma(\cC):=\inf \left\{s > 0 \colon\, B_{s}(\cP_R\cap \cC) \text{ is connected} \right\}\] 
and set 
\begin{equation}\label{e:defcmp}
c_{\mph} := (k+1)\frac{\pi^2 + 3 \theta}{2}.
\end{equation}
Note that $\lambda_\cC=0$ for each $\cC\in\mathscr{C}_{\cP_R}^{(r)}$ with $N_\cC\leq k$ due to the Hardy inequality 
(cf.\ \eqref{e:MPHI_for_smalltheta}) and Remark~\ref{r:evmonoton}.
Then, by the multipolar Hardy inequality \eqref{e:MPHI},
\begin{align*}
&\{\Lambda^{(\theta, a, r)}_{\cP_R}>s, N_{\cP_R}^{(r)} \le k+1\} \subset\{\exists \cC\in\mathscr{C}^{(r)}_{\cP_R}\colon\lambda_\cC>s, \,  N_\cC=k+1\} \\
& \subset\Big\{\exists \cC\in\mathscr{C}^{(r)}_{\cP_R}\colon \Gamma(\cC)^2 < c_{\textnormal{mp}}/s, \, N_\cC=k+1 \Big\} \\
&\subset\Big\{\exists \ \mbox{distinct} \ y_1,\ldots, y_{k+1}\in \cP_R \colon\, \cup_{i=1}^{k+1}B_{ (c_{\textnormal{mp}}/s)^{1/2}}(y_i) \text{ is connected }\Big\}\\
&\subset\big\{\exists x \in B_R\colon \omega\big(B_{2 k (c_{\mph}/s)^{1/2}}(x)\big)\geq k+1\big\}.
\end{align*}
Combining these results, we get
\begin{align}\label{e:evin}
\{\Lambda^{(\theta, a, r)}_{\cP_R}>s\} \subset 
\big\{\sup_{x \in B_R} \omega\big(B_{2 k(c_{\mph}/s)^{1/2}}(x)\big)\geq k+1\big\}
\cup\big\{\sup_{x\in B_R} \omega\big(B_{(k+1)r}(x)\big)\geq k+2\big\}.
\end{align}
With this inclusion at hand, we derive next several consequences of the results from Section~\ref{s:SDPC}.

\begin{Lem}\label{l:eva}
Let $0<a<r<R<\infty$ and $\theta\in(0,\frac{h_d}{2}]$. 
There exists a constant $c\in(0,\infty)$ depending only on $\theta$ and $d$ such that, for all $s > c R^{-2}$,
\begin{equation}\label{e:evbp}
\Prob \left( \Lambda^{(\theta, a, r)}_{\cP_R}>s \right) \le cR^d\left(s^{-\frac{d}2k}+r^{d(k+1)}\right).
\end{equation} 
\end{Lem}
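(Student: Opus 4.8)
The plan is to bound the probability of each of the two events on the right-hand side of the inclusion \eqref{e:evin} separately, using the upper bounds from Lemma~\ref{l:probupp}. First I would apply \eqref{e:PPPgeom2} directly to the second event $\{\sup_{x\in B_R}\omega(B_{(k+1)r}(x))\ge k+2\}$: with $D = B_R$, radius $(k+1)r$, and $k+2$ points (so ``$k$'' in \eqref{e:PPPgeom2} is replaced by $k+1$), this gives a bound of the form $|B_{(k+1)r}(B_R)|\,|B_{2(k+1)r}|^{k+1}/(k+2)!$. Since $r < R$, the set $B_{(k+1)r}(B_R) = B_{R+(k+1)r}$ has volume $\mathcal{O}(R^d)$, and $|B_{2(k+1)r}|^{k+1}$ is a constant (depending on $k$, hence on $\theta,d$) times $r^{d(k+1)}$. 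This yields the term $c R^d r^{d(k+1)}$.

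For the first event $\{\sup_{x\in B_R}\omega(B_{2k(c_{\mph}/s)^{1/2}}(x))\ge k+1\}$, I would again invoke \eqref{e:PPPgeom2}, this time with $D = B_R$, radius $\rho := 2k(c_{\mph}/s)^{1/2}$, and $k+1$ points (so the exponent is $k$ as written in \eqref{e:PPPgeom2}). The bound is $|B_{R+\rho}|\,|B_{2\rho}|^k/(k+1)!$. Here one uses the hypothesis $s > cR^{-2}$ (for a suitable $c$ depending on $k$, i.e.\ on $d,\theta$) precisely to guarantee $\rho = 2k(c_{\mph}/s)^{1/2} \le R$, so that $|B_{R+\rho}| \le |B_{2R}| = 2^d|B_1| R^d = \mathcal{O}(R^d)$; and $|B_{2\rho}|^k$ is a constant times $\rho^{dk}$, which equals a constant (depending on $k, c_{\mph}$, hence $d,\theta$) times $s^{-dk/2}$. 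This produces the term $c R^d s^{-\frac{d}{2}k}$.

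Adding the two contributions via a union bound over \eqref{e:evin}, and absorbing all $(d,\theta)$-dependent constants (note $k = k_\theta = \lfloor h_d/\theta\rfloor$ and $c_{\mph}$ depend only on $d$ and $\theta$) into a single constant $c$, gives \eqref{e:evbp}. The whole argument is essentially bookkeeping; the only genuine point requiring care is the threshold condition $s > cR^{-2}$, which must be chosen (in terms of $k$ and $c_{\mph}$) so that the shrinking ball radius $\rho = 2k(c_{\mph}/s)^{1/2}$ stays below $R$, keeping the neighbourhood volume $|B_{R+\rho}|$ comparable to $R^d$ rather than blowing up. There is no serious obstacle; it is a direct application of the geometric estimates of Section~\ref{s:SDPC} to the eigenvalue inclusion \eqref{e:evin}.
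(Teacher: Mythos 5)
Your proposal is correct and follows essentially the same route as the paper: the paper likewise applies the union bound to the inclusion \eqref{e:evin}, invokes \eqref{e:PPPgeom2} for each of the two events with exactly the parameters you describe, and uses the assumption $s>cR^{-2}$ (with $c\ge 4k^2c_{\mph}$) to guarantee $2k(c_{\mph}/s)^{1/2}<R$ so that the enlarged ball has volume of order $R^d$. Nothing further is needed.
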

\begin{proof}
We can assume $c \ge 4 k^2 c_{\mph}$.
Using \eqref{e:evin}, \eqref{e:PPPgeom2} and $2 k(c_{\mph}/s)^{1/2} < R$, we get
\begin{align*}
\Prob \left( \Lambda^{(\theta, a, r)}_{\cP_R}>s \right) 
& \leq \Prob \left(\sup_{x \in B_R} \omega\Big(B_{2k (c_{\mph}/s)^{1/2}}(x)\Big)\geq k+1\right)
+\Prob \left(\sup_{x \in B_R}\omega\Big(B_{(k+1)r}(x)\Big)\geq k+2\right)\\
& \leq |B_1|(2R)^d \frac{\big(|B_1| (4 k (c_{\mph}/s)^{1/2})^d \big)^k }{(k+1)!} + |B_1| (2(k+1)R)^d \frac{\big(|B_1|(2(k+1)r)^d\big)^{k+1}}{(k+2)!}.
\end{align*}
This shows \eqref{e:evbp}.
\end{proof}

\begin{Lem}\label{l:evp}
Fix $\alpha>(k+1)^{-1}$ and let $R(t)\to \infty, g(t)\to\infty$ as $t\to\infty$.
For any $c_1, c_2 \in (0,\infty)$,
\begin{equation}\label{e:evp}
\lim_{t \to \infty}  \frac{\Lambda_{R(t)}}{g(t) R(t)^{2/k}} = 0 \quad \mbox{ in probability, } \;\;\; \text{ where } \quad \Lambda_R := \Lambda_{\cP_{R}}^{(\theta, c_1 R^{-\alpha}, c_2 R^{-\alpha})}.
\end{equation}
If moreover $\sum_{n=1}^{\infty} g(2^n)^{-dk/2} < \infty$, 
$R$ is regularly varying with positive index,
and $g$ is either eventually non-decreasing or slowly varying,
then \eqref{e:evp} holds almost surely.
\end{Lem}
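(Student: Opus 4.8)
The plan is to deduce both parts from the Poissonian tail estimate of Lemma~\ref{l:eva} together with a simple monotonicity property of $\Lambda^{(\theta,a,r)}_\cY$; throughout we assume $c_1<c_2$, as otherwise $\Lambda_R$ is not even defined. For the convergence in probability, fix $\eps>0$ and write $R=R(t)$, $g=g(t)$. For $t$ large the value $s:=\eps gR^{2/k}$ tends to infinity, hence exceeds $cR^{-2}$, so Lemma~\ref{l:eva} with $a=c_1R^{-\alpha}$ and $r=c_2R^{-\alpha}$ yields
\[
\Prob\big(\Lambda_{R(t)}>\eps gR^{2/k}\big)\le cR^d\Big((\eps gR^{2/k})^{-\frac{dk}{2}}+(c_2R^{-\alpha})^{d(k+1)}\Big)=c\,\eps^{-\frac{dk}{2}}g^{-\frac{dk}{2}}+c\,c_2^{d(k+1)}R^{d(1-\alpha(k+1))}.
\]
The first summand vanishes as $t\to\infty$ because $g\to\infty$, the second because $1-\alpha(k+1)<0$ (by $\alpha>(k+1)^{-1}$) and $R\to\infty$; this proves \eqref{e:evp} in probability.

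For the almost-sure statement I would run a first Borel--Cantelli argument along the dyadic times $t_n=2^n$ and then interpolate. The key tool is the monotonicity $\Lambda^{(\theta,a,r)}_\cY\le\Lambda^{(\theta,a',r')}_{\cY'}$, valid whenever $\cY\subset\cY'$ in $\scrYf$ and $0<a\le a'<r\le r'$: indeed, every connected component of $B_r(\cY)$ sits inside a component of $B_{r'}(\cY')$, while $V^{(a)}_\cY\le V^{(a')}_{\cY'}$ pointwise, so the claim follows from Remark~\ref{r:evmonoton}. Since $R$ is regularly varying with some positive index $\beta$ and $g$ is eventually non-decreasing or slowly varying, the uniform convergence theorem for regularly varying functions furnishes constants $0<\kappa_1<\kappa_2<\infty$ with $\kappa_1R(2^n)\le R(t)\le\kappa_2R(2^n)$ and $g(t)\ge\kappa_1g(2^n)$ for all large $n$ and all $t\in[2^n,2^{n+1}]$. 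Hence $\cP_{R(t)}\subset\cP_{\kappa_2R(2^n)}$ and $c_iR(t)^{-\alpha}\le c_i\kappa_1^{-\alpha}R(2^n)^{-\alpha}$, so by the monotonicity property $\sup_{t\in[2^n,2^{n+1}]}\Lambda_{R(t)}/(g(t)R(t)^{2/k})$ is bounded by a fixed constant times
\[
Y_n:=\frac{\Lambda^{(\theta,\,c_1\kappa_1^{-\alpha}R(2^n)^{-\alpha},\,c_2\kappa_1^{-\alpha}R(2^n)^{-\alpha})}_{\cP_{\kappa_2R(2^n)}}}{g(2^n)R(2^n)^{2/k}}.
\]

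It then suffices to show $\sum_n\Prob(Y_n>\eps)<\infty$ for each $\eps>0$. Applying Lemma~\ref{l:eva} to the numerator of $Y_n$, with $R$ replaced by $\kappa_2R(2^n)$, $r$ by $c_2\kappa_1^{-\alpha}R(2^n)^{-\alpha}$ and an appropriate threshold $s\propto g(2^n)R(2^n)^{2/k}$ (admissible for large $n$), one obtains, exactly as in the first paragraph, a bound of the form $C_1(\eps)g(2^n)^{-\frac{dk}{2}}+C_2R(2^n)^{d(1-\alpha(k+1))}$. The first term is summable in $n$ by the assumption $\sum_ng(2^n)^{-dk/2}<\infty$; the second is summable because regular variation with positive index gives $R(2^n)\ge2^{n\beta/2}$ eventually, so it decays geometrically. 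Borel--Cantelli then yields $\limsup_{t\to\infty}\Lambda_{R(t)}/(g(t)R(t)^{2/k})\le\eps$ almost surely, and letting $\eps\downarrow0$ along a sequence completes the proof.

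I expect the main obstacle to be the interpolation step: $\Lambda_{R(t)}$ depends on $t$ both through the Poisson cloud $\cP_{R(t)}$ and through the shrinking radii $c_iR(t)^{-\alpha}$, so one must isolate the right joint monotonicity (in the point set and in both radii simultaneously) and dovetail it with the regular-variation control of $R$ and $g$, so that after passing to dyadic times the dominant contribution is still $g(2^n)^{-dk/2}$ and Lemma~\ref{l:eva} is only ever invoked with admissible parameters $0<a<r<R$ and $s>cR^{-2}$.
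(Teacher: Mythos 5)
Your proposal is correct. The convergence-in-probability part is exactly the paper's argument (a direct application of the tail bound \eqref{e:evbp} from Lemma~\ref{l:eva}, with the two error terms vanishing because $g(t)\to\infty$ and $\alpha>(k+1)^{-1}$). For the almost-sure part you take a mildly but genuinely different route: the paper does not run Borel--Cantelli on the eigenvalue tail bound at all; instead it uses the event inclusion \eqref{e:evin} to reduce the statement to the purely Poissonian a.s.\ asymptotics of Lemma~\ref{l:asppp} (applied with $r(t)\propto g(t)^{-1/2}R(t)^{-1/k}$, so that $R(t)r(t)^k\sim \mathrm{const}\cdot g(t)^{-k/2}$ and the hypothesis $\sum_n g(2^n)^{-dk/2}<\infty$ matches the summability condition there), where the dyadic Borel--Cantelli and the interpolation are carried out at the level of the point counts $\sup_x\omega(B_r(x))$, which are trivially monotone in $R$ and $r$. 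You instead interpolate at the level of the eigenvalues themselves, which forces you to establish the joint monotonicity $\Lambda^{(\theta,a,r)}_{\cY}\le\Lambda^{(\theta,a',r')}_{\cY'}$ for $\cY\subset\cY'$, $a\le a'$, $r\le r'$; your justification via component inclusion, pointwise domination of the potentials and Remark~\ref{r:evmonoton} is correct (and the extra requirement $a'<r$ in your statement is not actually needed for the argument, only $a<r$ and $a'<r'$ so that both quantities are defined). Both routes rest on the same dyadic-blocking idea and the same summability input; the paper's version is shorter because it reuses Lemma~\ref{l:asppp}, while yours is self-contained from the quantitative bound of Lemma~\ref{l:eva} at the price of the extra monotonicity lemma and the uniform-convergence bookkeeping for $R$ and $g$ on dyadic blocks.
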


\begin{proof}
\eqref{e:evp} follows directly from \eqref{e:evbp}. 
For the second statement, note that, for $n\in\N$, \eqref{e:evin} yields
\begin{align}\nonumber
\{\Lambda_{R(t)} > n^{-1} g(t) R(t)^{2/k} \} \subset
\{\sup_{|x| \leq R(t)}\omega(B_{r(t)}(x))\geq k+1\} \cup\{\sup_{|x| \leq R(t)} \omega(B_{c_2 (k+1)R(t)^{-\alpha}}(x))\geq k+2\},
\end{align}
where $r(t) = 2 k \sqrt{c_{\mph} n} g(t)^{-1/2} R(t)^{-1/k}$.
By \cite[Theorem~1.5.3]{BGT02}, we may assume that $R(t)$ and $r(t)$ are eventually monotone.
By \eqref{e:asppp}, $\limsup_{t\to\infty} \Lambda_{R(t)}/(g(t) R(t)^{2/k}) \le 1/n$ almost surely,
and to conclude we let $n\uparrow\infty$.
\end{proof}

The following lemma will be used in the proof of Theorem~\ref{t:liminfva}.
\begin{Lem}\label{l:liminfev}
Let $R(t)$ as in \eqref{e:defrkn} and $\alpha >(k+1)^{-1}$. 
For $n \ge 1$, let $a_n(t) := (2^{n-1} R(t))^{-\alpha}$
and, for $A>0$,
\[
\Lambda_{t,n}:=\Lambda^{(\theta, a_{n}(t), 5a_{n}(t))}_{\cP_{2^{n-1} R(t)+1}}, \qquad  \Theta_{t,n}(A) := \Lambda_{t,n}- A \1{\{n \ge 2\}} 4^{n-1} t^{\frac{2}{k-1}}(\log \log t)^{-\frac{2}{d(k-1)}}.
\]
Let $\rho > 0$ and $z(t):= \lfloor \rho \log \log t \rfloor$. For any $A>0$, there exists a $C=C(A,k,d) \in (0,\infty)$ such that
\begin{equation}\label{e:liminfev}
\liminf_{t\rightarrow\infty}t^{-\frac{2}{k-1}}(\log \log t)^{\frac{2}{d(k-1)}}\max_{n=1}^{z(t)}\Theta_{t,n}(A) \leq C \quad \Prob \mbox{-almost surely.}
\end{equation}
\end{Lem}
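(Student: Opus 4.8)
The plan is to combine the deterministic inclusion \eqref{e:evin} for the maximal principal eigenvalue with the Poissonian $\liminf$-estimate of Lemma~\ref{l:ppliminf}, choosing the weights there so that the eigenvalue contribution of the (at most) $(k+1)$-pole clusters is dominated, uniformly in $n\le z(t)$, by the penalty $A\1{\{n\ge2\}}4^{n-1}t^{2/(k-1)}(\log\log t)^{-2/(d(k-1))}$ built into $\Theta_{t,n}(A)$. Throughout set $\psi(t):=r(t)^2\,t^{2/(k-1)}(\log\log t)^{-2/(d(k-1))}$ with $r(t)$ as in \eqref{e:defrkn}, so $\psi(t)\to1$; recall $c_{\mph}=(k+1)(\pi^2+3\theta)/2$ from \eqref{e:defcmp} and $k=k_\theta\ge2$.

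First I would fix the scales. For weights $b_n>0$ to be chosen, set $s_n(t):=4k^2c_{\mph}\,(b_n r(t))^{-2}$, so that $2k\,(c_{\mph}/s_n(t))^{1/2}=b_n r(t)$. Applying \eqref{e:evin} with $\cP_R=\cP_{2^{n-1}R(t)+1}$, $a=a_n(t)$, $r=5a_n(t)$ and $s=s_n(t)$ (admissible once $t$ is large, since $a_n(t)\downarrow0$ and $R(t)\uparrow\infty$), I obtain for every $n\le z(t)$
\begin{multline*}
\{\Lambda_{t,n}>s_n(t)\}\ \subset\ \Bigl\{\textstyle\sup_{|x|\le 2^{n-1}(R(t)+1)}\omega(B_{b_n r(t)}(x))\ge k+1\Bigr\}\\
\cup\ \Bigl\{\textstyle\sup_{|x|\le 2^{n-1}(R(t)+1)}\omega(B_{5(k+1)a_n(t)}(x))\ge k+2\Bigr\}.
\end{multline*}
I then choose $b_n:=\varepsilon_0\,2^{-3(n-1)/(2k)}$ with $\varepsilon_0=\varepsilon_0(k,d)>0$ small enough that $\sum_{n\ge1}(2^{n-1}b_n^k)^d=\varepsilon_0^{kd}\sum_{n\ge1}2^{-(n-1)d/2}$ stays below the threshold $(k+1)!/(2^d|B_1|)^{k+1}$ of Lemma~\ref{l:ppliminf}.

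Next I would produce the good subsequence. On the one hand, Lemma~\ref{l:ppliminf} (applied with $R(t)+1$ in place of $R(t)$, which still obeys \eqref{e:defrkn}, the same $r(t)$, the above $b_n$ and $z(t)$) gives, $\Prob$-a.s., a random subsequence of the deterministic sequence $\tau_j=\tau_{j-1}\exp\{\rho\log\log\tau_{j-1}\}$ from its proof along which $\sup_{|x|\le 2^{n-1}(R+1)}\omega(B_{b_n r}(x))\le k$ for all $n\le z$. On the other hand, \eqref{e:evin} and \eqref{e:PPPgeom2} (with $k+1$ in place of $k$), using $a_n(t)=(2^{n-1}R(t))^{-\alpha}$ and $\alpha>(k+1)^{-1}$, yield constants $c,c'$ (depending on $k,d,\alpha$) with
\begin{multline*}
\Prob\Bigl(\exists\,n\le z(\tau_j):\ \sup_{|x|\le 2^{n-1}(R(\tau_j)+1)}\omega(B_{5(k+1)a_n(\tau_j)}(x))\ge k+2\Bigr)\\
\le\ c\sum_{n=1}^{z(\tau_j)}\bigl(2^{n-1}R(\tau_j)\bigr)^{-d(\alpha(k+1)-1)}\ \le\ c'\,R(\tau_j)^{-d(\alpha(k+1)-1)},
\end{multline*}
which is summable in $j$ since $\tau_j\uparrow\infty$ super-geometrically while $R(\tau_j)\sim\tau_j^{k/(k-1)}(\log\log\tau_j)^{-1/(d(k-1))}$. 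By Borel--Cantelli, $\Prob$-a.s.\ the second event above is empty for all $j$ large and all $n\le z(\tau_j)$. Intersecting the two a.s.\ events, I get, $\Prob$-a.s., a random sequence $t_m\uparrow\infty$ along which both sets in the inclusion are empty for every $n\le z(t_m)$, hence $\Lambda_{t_m,n}\le s_n(t_m)$ for all such $n$.

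Finally I would conclude. Along $(t_m)$, for $n\le z(t_m)$ and $m$ large enough that $\psi(t_m)\ge\tfrac12$,
\begin{multline*}
r(t_m)^2\,\Theta_{t_m,n}(A)\ \le\ \frac{4k^2c_{\mph}}{b_n^2}-A\1{\{n\ge2\}}4^{n-1}\psi(t_m)\\
\le\ \frac{4k^2c_{\mph}}{\varepsilon_0^2}\,2^{3(n-1)/k}-\frac{A}{2}\1{\{n\ge2\}}4^{n-1}.
\end{multline*}
Since $k\ge2$ forces $3/k<2$, the right-hand side tends to $-\infty$ as $n\to\infty$, so its supremum $C'$ over $n\ge1$ is finite; as $c_{\mph}\le(k+1)(\pi^2+3(d-2)^2/16)/2$ and $\varepsilon_0=\varepsilon_0(k,d)$, this $C'$ may be taken depending only on $A,k,d$. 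Because $\max_{n\le z(t_m)}\Theta_{t_m,n}(A)\ge\Theta_{t_m,1}(A)=\Lambda_{t_m,1}\ge0$, we get
\begin{multline*}
t_m^{-2/(k-1)}(\log\log t_m)^{2/(d(k-1))}\max_{n=1}^{z(t_m)}\Theta_{t_m,n}(A)\\
=\psi(t_m)^{-1}\,r(t_m)^2\max_{n=1}^{z(t_m)}\Theta_{t_m,n}(A)\ \le\ \psi(t_m)^{-1}C'\ \longrightarrow\ C',
\end{multline*}
so \eqref{e:liminfev} holds with $C:=C'$. The main obstacle is the balancing of the weights $b_n$: they must decay fast enough to keep $\sum_n(2^{n-1}b_n^k)^d$ under the Lemma~\ref{l:ppliminf} threshold, yet slowly enough that $4k^2c_{\mph}/b_n^2=o(4^{n-1})$, so the geometrically growing penalty absorbs the $(k+1)$-pole eigenvalue bound uniformly over $n\le z(t)$; a secondary point is synchronising the random subsequence from Lemma~\ref{l:ppliminf} with the deterministic Borel--Cantelli sequence that rules out the rare $(k+2)$-point clusters at scale $a_n(t)$.
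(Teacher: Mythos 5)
Your proof is correct and follows essentially the same route as the paper: both rest on the inclusion \eqref{e:evin}, apply Lemma~\ref{l:ppliminf} with weights $b_n$ tuned so that the eigenvalue bound $4k^2c_{\mph}(b_nr(t))^{-2}$ is absorbed by the penalty $A\,4^{n-1}$ (the paper matches the threshold exactly via $b_n\propto(1+(A/C)4^{n-1})^{-1/2}$ and extracts an explicit $C$, while your geometric choice $b_n=\eps_0 2^{-3(n-1)/(2k)}$ exploits $3/k<2$ and hides the constant in a finite supremum), and separately rule out $(k+2)$-point clusters at scale $a_n(t)$ using $\alpha>(k+1)^{-1}$. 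The only stylistic difference is that the paper dispenses with the $(k+2)$-cluster events for \emph{all} large $t$ via Lemma~\ref{l:asppp}, which avoids your need to synchronise the Borel--Cantelli subsequence with the internal sequence from the proof of Lemma~\ref{l:ppliminf}; your synchronisation is nonetheless legitimate, since that proof does produce its good events along the deterministic sequence $(\tau_j)$.
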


\begin{proof}
Fix $A, \rho>0$. Let $\chi_k:=(k+1)!/(|B_2|^{k+1})$ as in \eqref{e:condck} and $c_{\mph}$ as in \eqref{e:defcmp},
and pick
\begin{equation}\label{e:previnf1}
C > (4 A) \vee \left(\frac{(4 k \sqrt{c_{\mph}})^{k}}{ \sqrt{A} \chi_k^{1/d}} \right)^{2/(k-1)}. 
\end{equation}
Define $b_n > 0$, $n \in \N$ by setting
\[
b_1= 2k (c_{\mph}/C)^{1/2} \quad  \mbox{and} \quad  b_n= 2k (c_{\mph}/C)^{1/2} \left(1+ (A/C) 4^{n-1}\right)^{-\tfrac12},\quad n\geq 2.
\]
Let us verify that $b_n$ satisfies \eqref{e:condck}.
Indeed, setting $n_0 := \lfloor \log_4(C/A) \rfloor \ge 1$, we may write
\begin{align*}
& (2 k (c_{\mph}/C)^{1/2})^{-kd} \sum_{n=1}^{\infty}\big(2^{n-1} b_n^k\big)^d  \le \sum_{n=1}^{n_0+1} 2^{(n-1)d} + \left(C/A \right)^{kd/2} \sum_{n=n_0+2}^{\infty} 2^{-(k-1)d (n-1)} \\
& \le 2^{n_0 d+1} + 2 \left(C/A\right)^{kd/2} 2^{-(k-1) d n_0 } 
 \le  2^{kd} (C/A)^{d/2} \\
 & = (2 k (c_{\mph}/C)^{1/2})^{-kd} \left( (4 k \sqrt{c_{\mph}})^{k} A^{-1/2} C^{-(k-1)/2} \right)^d
<  (2 k (c_{\mph}/C)^{1/2})^{-kd} \chi_k
\end{align*}
by our choice of $C$. This shows \eqref{e:condck}.
Let now $r(t):= t^{-\frac{1}{k-1}} (\log \log t)^{\frac{1}{d(k-1)}}$ and use \eqref{e:evin} to write
\begin{align*}
&\Big\{\max_{n=1}^{z(t)}\Theta_{t,n}(A) \leq C t^{\frac{2}{k-1}} (\log \log t)^{-\frac{2}{d(k-1)}}\Big\} 
=\bigcap_{n=1}^{z(t)}\Big\{\Lambda_{t,n}\leq r(t)^{-2} (2k \sqrt{ c_{\mph}})^{2} b_n^{-2}\Big\} 
\\ &\supset\bigcap_{n=1}^{z(t)} \{\sup_{ |x| \leq 2^{n-1}R(t)+1 }  \omega(B_{b_n r(t)}(x)) \leq k \}\cap\{\sup_{|x| \leq 2^{n-1}R(t)+1}  \omega(B_{5(1+k)a_n(t)}(x))\leq k+1\}\big)\\
&\supset\Big\{\max_{n=1}^{z(t)}\sup_{|x| \leq 2^{n-1}(R(t)+1)} \omega(B_{b_n r(t)}(x))\leq k\Big\}\cap \Big\{\sup_{|x| \leq 2^{z(t)}(R(t)+1)} \omega(B_{5(1+k)a_0(t)}(x))\leq k+1\Big\}.
\end{align*}
The first event on the right-hand side above occurs a.s.\ infinitely often by \eqref{e:ppliminf1}, 
and the second event occurs eventually by \eqref{e:asppp}. 
This yields \eqref{e:liminfev}.
\end{proof}

\subsection{Truncation of Poisson potentials}
\label{ss:truncation}

In this section, we control the error that occurs when replacing either an attenuated potential $V^{(\frK)}$ as in  \eqref{e:defVK}
or the renormalized potential $\overbar{V}$ by a truncated potential $V^{(a)} = V^{(\frK_a)}$, 
where $\frK_a(x) = |x|^{-2}\mathbbm{1}_{\{|x| \le a\}}$.
We first state an auxiliary result.

\begin{Lem}\label{l:decayPoissonInt}
Let $R \mapsto \frK(R) \in L^1(\R^d) \cap L^\infty(\R^d)$ satisfy
\begin{equation}\label{e:conddecayPI}
C := \limsup_{R\to\infty} \|\frK(R)\|_{L^\infty(\R^d)} < \infty 
\qquad \text{ and } \qquad
\limsup_{R \to \infty} \int_{\R^d} \sup_{|x|\le 1} |\frK(R)(x-y)| \dd y < \infty.
\end{equation}
Then
\begin{equation}\label{e:decayPoissonInt}
\limsup_{R \to \infty} \frac{\log \log R}{\log R}  \max_{|x| \le R} |V^{(\frK(R))}(x)| \le d C \qquad \Prob \mbox{-a.s.}
\end{equation}
\end{Lem}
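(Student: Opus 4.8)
The plan is to reduce the supremum over $x\in B_R$ to a union bound over a fixed lattice, and to control each resulting quantity — a Poisson integral of a non-negative kernel — by an exponential Chebyshev inequality. Fix $\delta>0$; by \eqref{e:conddecayPI} there are $R_1,A,B\in(0,\infty)$, with $A$ as close to $C$ as we like, such that $\|\frK(R)\|_{L^\infty(\R^d)}\le A$ and $\int_{\R^d}\sup_{|x|\le1}|\frK(R)(x-y)|\,\dd y\le B$ for all $R\ge R_1$. Cover $B_R$ by at most $c_d R^d$ closed unit balls centred at points $z$ of a fixed lattice. For $x\in B_1(z)$ one has $|V^{(\frK(R))}(x)|\le\int_{\R^d}g_R(y-z)\,\omega(\dd y)$ with $g_R(w):=\sup_{|u|\le1}|\frK(R)(u-w)|$, so by stationarity of $\omega$ each $\int_{\R^d}g_R(y-z)\,\omega(\dd y)$ has the law of $S_R:=\int_{\R^d}g_R(y)\,\omega(\dd y)$, and $\|g_R\|_{L^\infty}\le A$, $\|g_R\|_{L^1}\le B$ for $R\ge R_1$. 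It thus suffices to prove that $\Prob\big(S_R\ge(dC+\delta)\tfrac{\log R}{\log\log R}\big)\le R^{-d-\eps}$ for some $\eps=\eps(\delta)>0$ and all large $R$.

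This tail bound is the heart of the argument. For $\lambda>0$ the exponential formula for Poisson integrals gives $\EE[e^{\lambda S_R}]=\exp\big(\int_{\R^d}(e^{\lambda g_R(y)}-1)\,\dd y\big)$, and since $e^{\lambda s}-1\le\tfrac{e^{\lambda A}-1}{A}\,s$ for $0\le s\le A$ (convexity), this is at most $\exp\big(\tfrac{(e^{\lambda A}-1)B}{A}\big)$; Markov's inequality then yields $\Prob(S_R\ge t)\le\exp\big(-\lambda t+\tfrac{(e^{\lambda A}-1)B}{A}\big)$. Choosing $\lambda$ so that $e^{\lambda A}=m:=\lfloor\eta\log R\rfloor$ with $\eta>0$ small, one has $\lambda=\tfrac{\log m}{A}\sim\tfrac{\log\log R}{A}$ while $\tfrac{(m-1)B}{A}\le\tfrac{\eta B}{A}\log R$ is only of order $\log R$; with $t=(dC+\delta)\tfrac{\log R}{\log\log R}$ this gives $\lambda t=(1+o(1))\tfrac{dC+\delta}{A}\log R$, hence for large $R$,
\[
\Prob(S_R\ge t)\le\exp\Big(-\tfrac1A\big((dC+\delta)(1-\eta)-\eta B\big)\log R\Big).
\]
Since $dC+\delta>dC$, first letting $A\downarrow C$ and then $\eta\downarrow0$ makes the coefficient exceed $d$, so fixing $\eps\in(0,\delta/C)$ and then $A,\eta$ accordingly yields the claim. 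Observe that $\|g_R\|_{L^1}$ enters only through the lower-order term $\tfrac{(m-1)B}{A}$, which is precisely why it does not affect the constant $dC$.

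Combining, the union bound gives $\Prob\big(\max_{|x|\le R}|V^{(\frK(R))}(x)|\ge(dC+\delta)\tfrac{\log R}{\log\log R}\big)\le c_dR^{-\eps}$ for large $R$; along $R=2^n$ this is summable, so by Borel--Cantelli, $\Prob$-a.s.\ $\max_{|x|\le2^n}|V^{(\frK(2^n))}(x)|\le(dC+\delta)\tfrac{n\log2}{\log(n\log2)}$ for all large $n$. For $2^n\le R<2^{n+1}$, interpolation — trivial when $R\mapsto\max_{|x|\le R}|V^{(\frK(R))}(x)|$ is monotone, as in the intended applications of this lemma — gives $\tfrac{\log\log R}{\log R}\max_{|x|\le R}|V^{(\frK(R))}(x)|\le(dC+\delta)\tfrac{n+1}{n}$, and letting $\delta\downarrow0$ along a sequence concludes.

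The main obstacle is pinning the constant at exactly $dC$: one must balance the roughly $R^d$ lattice points (which produce the factor $d$ and force the scale $\tfrac{\log R}{\log\log R}$) against the decay of $\Prob(S_R\ge t)$, tune $A\downarrow C$ for the factor $C$, and — the delicate point — choose the tilting parameter $\lambda$, equivalently $m\ll\log R$, small enough that the $\|g_R\|_{L^1}$-contribution remains lower order. The measurability of the suprema $g_R$ (as already used for \eqref{e:condscrK1}) and the passage from the dyadic sequence to arbitrary $R$ are routine.
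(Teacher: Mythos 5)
Your argument is correct and is essentially the proof the paper intends: the paper's proof of this lemma is a one-line reference to \cite[Proposition~2.7]{ChenKulik} and \cite[Lemma~2.6]{Gaertner2000}, i.e.\ precisely the unit-lattice covering, the exponential Chebyshev bound via the Laplace functional of the Poisson process with the tilting parameter chosen so that $\ee^{\lambda A}\approx\eta\log R$, and Borel--Cantelli along a geometric sequence, all of which you spell out with the constants correctly tuned to get $dC$. The dyadic-to-continuum interpolation caveat you flag for genuinely $R$-dependent, non-monotone kernel families is a real limitation, but it is inherited from the lemma's statement rather than from your proof (the cited lemma in \cite{Gaertner2000} concerns a single fixed kernel, for which monotonicity in $R$ is automatic), and it is harmless in the paper's applications, where the family $\frK(R)$ is dominated by one kernel satisfying \eqref{e:conddecayPI} and only the order $\log R/\log\log R$, not the exact constant, is used.
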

\begin{proof}
Using \eqref{e:conddecayPI} and \cite[Proposition~2.7]{CK12},
one can follow the proof of \cite[Lemma~2.6]{GKM00}.
\end{proof}

Our comparison lemma reads as follows.

\begin{Lem}\label{l:errortruncation}
Let $d \ge 3$, $\frK \in \scrK$ and $a \in (0,\infty)$. 
Then, $\Prob$-almost surely for all bounded $D \subset \R^d$,
\begin{equation}\label{e:truncationfinite}
\sup_{x\in D \setminus \cP}|V^{(\frK)}(x) - V^{(a)}(x)|<\infty.
\end{equation}
Moreover, for any $R \mapsto a_R > 0$ such that $\limsup_{R\to\infty} a_R < \infty$,
\begin{equation}\label{e:errortruncation}
\lim_{R\rightarrow\infty} \frac{a_R^2}{\log R}\sup_{|x|\le R \colon x \notin \cP} \left|V^{(\frK)}(x)-V^{(a_R)}(x) \right|= 0 \quad \Prob \mbox{-a.s.}
\end{equation}
When $d=3$, \eqref{e:truncationfinite}--\eqref{e:errortruncation} hold with either $\overbar{V}$ or $|\overbar{V}|$ in place of $V^{(\frK)}$.
\end{Lem}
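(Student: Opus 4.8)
The plan is to reduce both assertions, first for $V^{(\frK)}$ and then for $\overbar{V}$, to two facts already available: Lemma~\ref{l:decayPoissonInt}, which gives logarithmic growth of $\sup_{|x|\le R}|V^{(\frg)}(x)|$ for any $\frg\in L^1(\R^d)\cap L^\infty(\R^d)$ satisfying \eqref{e:conddecayPI}, and Corollary~\ref{cor:pppfixed}, which gives $\sup_{|x|\le R}\omega(B_a(x))=o(\log R)$ for each fixed $a>0$. The basic device is the kernel decomposition: for $a>0$ put $\widetilde{\frK}_a:=\frK-\frK_a$, so that $V^{(\frK)}-V^{(a)}=V^{(\widetilde{\frK}_a)}$, and write $\widetilde{\frK}_a=(\frK-|\cdot|^{-2})\mathbbm{1}_{B_a}+\frK\mathbbm{1}_{B_a^{\cc}}$. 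Fixing $a_0>0$ and $C_0<\infty$ as in \eqref{e:condscrK2} (so that $\sup_{|x|\le a}|\frK(x)-|x|^{-2}|\le C_0$ and $a^2\sup_{|x|>a}|\frK(x)|\le C_0$ for all $0<a\le a_0$), one checks that $(\frK-|\cdot|^{-2})\mathbbm{1}_{B_a}$ is bounded with compact support and $\frK\mathbbm{1}_{B_a^{\cc}}$ is bounded and, by \eqref{e:condscrK1} together with the uniform boundedness of $\frK$ off any ball around the origin, lies in $L^1(\R^d)$. Hence $\widetilde{\frK}_a\in L^1(\R^d)\cap L^\infty(\R^d)$, and for every fixed $A>0$ the constant family $\frg\equiv\frK\mathbbm{1}_{B_A^{\cc}}$ satisfies the hypotheses \eqref{e:conddecayPI} of Lemma~\ref{l:decayPoissonInt}.

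For \eqref{e:truncationfinite} it suffices to treat $D=B_n$, $n\in\N$, and intersect over $n$. Splitting $V^{(\widetilde{\frK}_a)}$ into its contributions from $\cP\cap B_{2n}$ and from $\cP\setminus B_{2n}$, the former is bounded on $B_n$ by $\|\widetilde{\frK}_a\|_\infty\,\omega(B_{2n+a})<\infty$ by local finiteness of $\cP$; the latter, for $x\in B_n$, involves $\frK$ evaluated only at distances $>n$ from the origin, so it is dominated by $\sum_{y\in\cP,\,|y|>n+2}\sup_{|x'|\le n}|\frK(x'-y)|$ up to finitely many bounded terms, and this sum has finite expectation — obtained by covering $B_n$ with $\mathcal{O}(n^d)$ unit balls and invoking \eqref{e:condscrK1} (the truncation $\wedge 1$ being harmless in this range). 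Hence it is a.s.\ finite.

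For \eqref{e:errortruncation}, set $A:=1\vee\limsup_{R\to\infty}a_R<\infty$, so $a_R\le A$ for large $R$. Splitting $\widetilde{\frK}_{a_R}$ over $\{|z|\le a_R\}$, $\{a_R<|z|\le A\}$ and $\{|z|>A\}$ and using \eqref{e:condscrK2} on each region (on the first $|\widetilde{\frK}_{a_R}|$ is bounded by a constant depending only on $C_0,a_0$; on the second by $C_0a_R^{-2}$; on the third $\widetilde{\frK}_{a_R}=\frK\mathbbm{1}_{B_A^{\cc}}$), one obtains for large $R$
\[
\sup_{|x|\le R}\bigl|V^{(\frK)}(x)-V^{(a_R)}(x)\bigr|\;\le\;\Bigl(c+\tfrac{c}{a_R^{2}}\Bigr)\sup_{|x|\le R}\omega(B_A(x))\;+\;\sup_{|x|\le R}\bigl|V^{(\frK\mathbbm{1}_{B_A^{\cc}})}(x)\bigr|
\]
with $c=c(C_0,a_0)$. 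Multiplying by $a_R^{2}/\log R$: the first summand vanishes because $a_R$ is bounded and $\sup_{|x|\le R}\omega(B_A(x))=o(\log R)$ by Corollary~\ref{cor:pppfixed}; the second is $\le (A^2/\log R)\sup_{|x|\le R}|V^{(\frK\mathbbm{1}_{B_A^{\cc}})}(x)|$, which tends to $0$ since $\frK\mathbbm{1}_{B_A^{\cc}}\in L^1\cap L^\infty$ satisfies \eqref{e:conddecayPI}, so Lemma~\ref{l:decayPoissonInt} gives $\sup_{|x|\le R}|V^{(\frK\mathbbm{1}_{B_A^{\cc}})}(x)|=o(\log R)$.

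Finally, the renormalized case $d=3$. Since $\frK_a\in L^1(\R^3)$ and $\overbar{V}$ exists by \cite[Corollary~1.3]{ChenKulik}, linearity of the renormalization map gives that $\overbar{V}-V^{(a)}$ equals, up to an additive constant, the renormalization $\overbar{V}^{(\psi_a)}$ of $V^{(\psi_a)}$ in the sense of \cite[Section~2]{ChenKulik}, where $\psi_a(z):=|z|^{-2}\mathbbm{1}_{\{|z|>a\}}$ is bounded with the critical inverse-square decay ($p=2\in(3/2,3)$). Splitting $\psi_a=\psi_a\mathbbm{1}_{B_M}+\psi_a\mathbbm{1}_{B_M^{\cc}}$, the first piece lies in $L^1\cap L^\infty$, so $\overbar{V}^{(\psi_a\mathbbm{1}_{B_M})}=V^{(\psi_a\mathbbm{1}_{B_M})}$ up to a constant, with $|V^{(\psi_a\mathbbm{1}_{B_M})}(x)|\le a^{-2}\omega(B_M(x))$ — locally bounded and $o(\log R)$ over $B_R$ by Corollary~\ref{cor:pppfixed}. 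For the second piece one needs the renormalized analogue of Lemma~\ref{l:decayPoissonInt}: that $\overbar{V}^{(\psi)}$ is $\Prob$-a.s.\ locally bounded and $\sup_{|x|\le R}|\overbar{V}^{(\psi)}(x)|=o(\log R)$ for any bounded kernel $\psi$ of inverse-square decay. Granting this, \eqref{e:truncationfinite}--\eqref{e:errortruncation} for $\overbar{V}$ follow exactly as in the previous two paragraphs (for \eqref{e:errortruncation} write again $\overbar{V}-V^{(a_R)}=(\overbar{V}-V^{(A)})+(V^{(A)}-V^{(a_R)})$ and dispatch the $R$-dependent summand via Corollary~\ref{cor:pppfixed} against the $a_R^2$ prefactor). \emph{The main obstacle is precisely this last renormalized maximal estimate.} I expect it to follow by the covering/union-bound scheme used for Lemma~\ref{l:decayPoissonInt} (as in \cite[Lemma~2.6]{Gaertner2000}), now feeding in the moment/exponential-tail bounds for renormalized Poisson integrals from \cite[Proposition~2.7]{ChenKulik} (cf.\ also the estimates in \cite{ChenRosinski}) in place of the classical Poissonian tail bound; everything else, including the kernel bookkeeping from Definition~\ref{def:scrK}, is routine.
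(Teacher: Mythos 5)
Your treatment of the attenuated case is correct and follows essentially the same route as the paper: decompose $\frK-\frK_{a_R}$ into a near-origin part controlled by \eqref{e:condscrK2} and counted via $\omega(B_\cdot(x))$ and Corollary~\ref{cor:pppfixed}, and a far part whose Poisson integral is $o(\log R)$ by Lemma~\ref{l:decayPoissonInt}. (The paper applies Lemma~\ref{l:decayPoissonInt} to the $R$-dependent family $a_R^2\,\frK\,\mathbbm{1}_{\{|\cdot|>a_R\}}$ rather than freezing the cutoff at a fixed $A$, but this is only bookkeeping.)

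The renormalized case is where your proposal has a genuine gap, and you have correctly identified it yourself: everything hinges on a maximal estimate of the form $\sup_{|x|\le R}|\overbar{V}^{(\psi)}(x)|=o(\log R)$ for a renormalized Poisson integral with a bounded kernel of inverse-square decay, and you only assert that you ``expect'' this to follow from the covering scheme of \cite[Lemma~2.6]{Gaertner2000}. This cannot be waved through: the renormalized integral is not monotone in the kernel, the classical Poissonian tail bounds underlying that scheme do not apply, and establishing the required exponential-moment and maximal bounds is precisely the technical content of \cite[Lemma~3.3]{ChenRosinski}. The paper avoids proving any such general statement. It takes a \emph{smooth} cutoff $\alpha$ and writes $\overbar{V}=\overbar{V}_1+\overbar{V}_2$, where $\overbar{V}_1$ has kernel $(1-\alpha(a^{-1}|\cdot|))\,|\cdot|^{-2}$ and is \emph{exactly} the object $\overbar{V}_{a,\eps}$ studied in \cite{ChenRosinski}, so that local boundedness and the $o(\log R)$ growth can be quoted verbatim from Eq.~(3.6) and Lemma~3.3 there; the remaining piece $\overbar{V}_2$ has an $L^1$ kernel, so by \cite[Proposition~2.5]{ChenKulik} its renormalization separates into an honest Poisson integral plus a deterministic constant, and the comparison with $V^{(a_R)}$ reduces to a term bounded by $a_R^{-2}\,\omega(B_{3a}(x))$, which is handled by Corollary~\ref{cor:pppfixed} against the prefactor $a_R^2/\log R$. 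Note also that your sharp cutoff $\psi_a=|\cdot|^{-2}\mathbbm{1}_{\{|\cdot|>a\}}$ does not match the smoothed kernel of \cite{ChenRosinski}, so even the citation route requires the smooth decomposition. To complete your argument you should either adopt that decomposition or supply a full proof of the renormalized maximal estimate.
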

\begin{proof}
Note that, for all $x \in \R^d \setminus \cP$ and all $a>0$,
\[
\left|V^{(\frK)}(x) - V^{(a)}(x) \right| \le \omega(B_a(x)) \sup_{|z|\le a}\left|\frK(z) - \frac{1}{|z|^2}\right| + a^{-2} \int a^2 |\frK(x-y)|\mathbbm{1}_{\{|x-y|>a\}} \omega(\dd y),   
\]
proving \eqref{e:truncationfinite}.
With $a = a_R$ as in the statement, \eqref{e:errortruncation} follows
by \eqref{e:condscrK2}, Corollary~\ref{cor:pppfixed} and Lemma~\ref{l:decayPoissonInt}.

Consider now $d=3$. Fix $a>0$ and let $\alpha:\R^+\rightarrow [0,1]$ be a smooth truncation function with $\alpha(\lambda)=1$ on $[0,1]$,  
$\alpha(\lambda)=0$ for $\lambda\geq 3$ and $-1\le \alpha'(\lambda)\le 0$.
Decompose $\overbar{V} = \overbar{V}_1 + \overbar{V}_2$ by setting
\begin{equation}\overbar{V}_1(x):=\int_{\R^3}\frac{1-\alpha(a^{-1}|x-y|)}{|x-y|^2}[\omega(\dd y)-\dd y],
\qquad \overbar{V}_2(x):=\int_{\R^3}\frac{\alpha(a^{-1}|x-y|)}{|x-y|^2}[\omega(\dd y)-\dd y].
\end{equation}
Note that $\overbar{V}_1$ exactly matches $\overbar{V}_{a,\eps}$ in \cite[Eq.~(3.5)]{CR11} with $\eps=1$. Thus, by \cite[Eq.~(3.6)]{CR11},
\begin{equation} \label{e:V1finite}
\sup_{x\in D}\overbar{V}_1(x)<\infty \quad \Prob \mbox{-a.s.}
\end{equation}
for any bounded $D \subset \R^3$ while, by Lemma~3.3 in the same reference,
\begin{equation}\label{e:lbt1}
\lim_{R\rightarrow\infty}(\log R)^{-1}\sup_{|x|\le R}|\overbar{V}_1(x)|=0 \quad \Prob \mbox{-a.s.}
\end{equation}
Furthermore, since the integrand in the definition of $\overbar{V}_2$ is in $L^1(\R^3)$, we may separate the integration in terms of $\omega(\dd y)$ and $\dd y$ using \cite[Proposition~2.5]{CK12}, i.e.,
\begin{equation}\label{e:sepV2}
\overbar{V}_2(x)=\int_{\R^3}\frac{\alpha(a^{-1}|x-y|)}{|x-y|^2}\omega(\dd y)-\int_{\R^3}\frac{\alpha(a^{-1}|x-y|)}{|x-y|^2}\dd y.
\end{equation}
The second integral above is a finite constant independent of $x$. For the first integral, we get
\begin{align}
\int_{\R^3}\frac{\alpha(a^{-1}|x-y|)}{|x-y|^2}\omega(\dd y)=V^{(b)}(x)+\int_{\R^3}\frac{\alpha(a^{-1}|x-y|)}{|x-y|^2}\mathbbm{1}_{\{|x-y| \geq b \}}\omega(\dd y)
\end{align}
for any $b \in (0,a]$. Now note that, since $\alpha(\lambda)=0$ for $\lambda\geq 3$,
\begin{equation} \label{e:V2finite}
\sup_{x\in D}\int_{\R^3}\frac{\alpha(a^{-1}|x-y|)}{|x-y|^2}\mathbbm{1}_{\{|x-y|\geq b \}}\omega(\dd y)\le b^{-2} \sup_{x\in D}\omega(B_{3a}(x))<\infty 
\quad \Prob \mbox{-a.s.}
\end{equation}
Combining \eqref{e:V1finite} and \eqref{e:sepV2}--\eqref{e:V2finite} with $b =a$, 
we obtain \eqref{e:truncationfinite} with $\overbar{V}$ in place of $V^{(\frK)}$.
To obtain \eqref{e:errortruncation}, take $a > \limsup_{R\to\infty} a_R$, $b=a_R$, $D=B_R$
and apply additionally \eqref{e:lbt1} and Corollary~\ref{cor:pppfixed}.
The statement for $|\overbar{V}|$ is obtained with the inequality $||x| - |y|| \le |x-y|$, $x,y \in \R$.
\end{proof}

\subsection{The upper bounds}
\label{ss:proofsupperbounds}

We introduce next some notation and a key result that will be used in the following proofs of the upper bounds.
Fix $\alpha \in (\frac{1}{k+1}, \frac{1}{k})$ and recall \eqref{e:numberpointsC}.
Throughout the section, we will use the notation
\begin{equation}\label{e:defLambdaRUB}
\Lambda_R := \Lambda_{\cP\cap B_{R+2}}^{(\theta, R^{-\alpha}, 5 R^{-\alpha})}, \qquad R>0.
\end{equation}
Note that, for any $a \in (0,1]$ and $z \in B_{R+1}$,
$V^{(a)}(z) = V^{(a)}_{\cP \cap B_{R+2}}(z)$.

In the proofs below, we will work with certain radii sequences $R_n(t) \in [1, \infty)$,
$n \in \N$, $t>0$, which we keep arbitrary for now.
According to the choice of $R_n(t)$, we introduce
\begin{align}\label{e:defscalesUpbd}
a_n(t)&=R_{n}(t)^{-\alpha},& r_{n}(t)&=5a_n(t), & R_0(t)&= 8(k+1) r_1(t),
\end{align}
as well as the hitting times
\begin{equation}\label{e:defhattaun}
\hat{\tau}_n(x) = \hat{\tau}_n(t,x) :=\tau_{B^\cc_{R_{n}(t)}(x)}=\inf\{s\geq 0\colon W_s\notin B_{R_{n}(t)}(x)\}, \quad n\in\N_0, x \in \R^d.
\end{equation}
Fix $\frK \in \scrK$ and define the error terms
\begin{equation}\label{e:defsnt}
S_n(t):=\sup_{z\in B_{R_{n}(t)+1}}|V^{(\frK)}(z)-V^{(a_n(t))}(z)|, \qquad \overbar{S}_n(t):=\sup_{z\in B_{R_{n}(t)+1}}|\overbar{V}(z)-V^{(a_n(t))}(z)|.
\end{equation}
Recall \eqref{e:numberpointsC} and define, for $t >0$,
\begin{equation}\label{e:defzetacirc}
\zeta^\circ_t:= \inf\Big\{n\in\N\colon \ N^{(r_n(t))}_{\mathcal{P} \cap B_{R_n(t)+2}}\leq k+1
\ \mbox{and} \ R_{n-1}(t) \ge 8 r_n(t) (k+1) \Big\}
\end{equation}
and,  for $x \in \R^d \setminus \cP$,  
\begin{equation}\label{e:defzeta}
\zeta_t(x):= \zeta^\circ_t \vee \inf\Big\{n\in\N\colon x \notin B_{r_n(t)}(\cP) \Big\}.
\end{equation}
When $x=0$, we write $\hat{\tau}_n$, $\zeta_t$ instead of $\hat{\tau}_n(0)$, $\zeta_t(0)$.
The next lemma provides conditions on $R_n(t)$ guaranteeing the finiteness of $\zeta^\circ_t$, $\zeta_t(x)$.
\begin{Lem}\label{l:zetafinite}
Let $R_n(t) \ge 1$, $n\in \N$, $t>0$ satisfy
\begin{equation}\label{e:condRn(t)}
\forall t_2 > t_1 > 0 \colon\, \quad \lim_{n \to \infty} R_n(t_1) = \infty \quad \text{ and } \quad \liminf_{n\to\infty} \inf_{t,s \in [t_1, t_2]}\frac{R_n(t)}{R_n(s)} > 0.
\end{equation}
Then, $\Prob$-almost surely for all $x \in \R^d \setminus \cP$ and $t>0$, 
$1 \leq \zeta^\circ_t \leq \zeta_t(x) < \infty$ 
and there exist $0 \leq t^\circ_0 \leq  t_0(x) <\infty$ 
such that $\zeta^\circ_t = 1$ for all $t \geq t^\circ_0$ and 
$\zeta_t(x) = 1$ for all $t \ge t_0(x)$.
\end{Lem}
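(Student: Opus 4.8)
The plan is to verify, for a fixed realization of $\omega$, a fixed $x\in\R^d\setminus\cP$ and a fixed $t>0$, the three conditions in the definition \eqref{e:defzeta} of $\zeta_t(x)$ for some $n$, and then to argue that all three hold at $n=1$ once $t$ is large. Two of the three are essentially deterministic. Since $\cP$ is locally finite we have $\dist(x,\cP)>0$, and since $R_n(t)\to\infty$ by \eqref{e:condRn(t)} we have $r_n(t)=5R_n(t)^{-\alpha}\to0$ as $n\to\infty$; hence $x\notin B_{r_n(t)}(\cP)$ for all large $n$, and likewise $R_{n-1}(t)\ge1>8(k+1)r_n(t)$ for all large $n$. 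At $n=1$ the inequality $R_0(t)\ge 8(k+1)r_1(t)$ is in fact an equality by \eqref{e:defscalesUpbd}, while $x\notin B_{r_1(t)}(\cP)$ holds as soon as $r_1(t)<\dist(x,\cP)$, i.e.\ for all large $t$. So the whole matter comes down to the middle condition $N^{(r_n(t))}_{\cP\cap B_{R_n(t)+1}(x)}\le k+1$.

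For the middle condition I would first record, exactly as in the last display of the proof of Lemma~\ref{l:asppp} but with ``$k+1$'' replaced throughout by ``$k+2$'', the inclusion
\[
\big\{N^{(r)}_{\cP\cap B_{\rho}(x)}\ge k+2\big\}\subset\big\{\exists\,y\in B_\rho(x)\colon\,\omega(B_{2(k+1)r}(y))\ge k+2\big\},
\]
together with the trivial monotonicity $N^{(r)}_{\cY'}\le N^{(r)}_{\cY}$ for $\cY'\subset\cY$. Combining this with \eqref{e:PPPgeom2} applied at $\rho=R_n(t)+1$, $r=r_n(t)=5R_n(t)^{-\alpha}$ yields, for all large $n$,
\[
\Prob\big(N^{(r_n(t))}_{\cP\cap B_{R_n(t)+1}(x)}\ge k+2\big)\le c(d,k)\,R_n(t)^{\,d(1-\alpha(k+1))},
\]
and the exponent is \emph{negative} precisely because $\alpha>1/(k+1)$; since $R_n(t)\to\infty$, these probabilities tend to $0$. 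Now conditions~1 and 3 already hold for all large $n$, so it suffices that the middle condition hold at \emph{one} arbitrarily large $n$: writing $A_n$ for the event that it holds at step $n$, we have $\Prob(A_n)\to1$, hence $\Prob(\limsup_n A_n)\ge\limsup_n\Prob(A_n)=1$ by reverse Fatou, so $A_n$ occurs for infinitely many $n$ almost surely, giving $\zeta_t(x)<\infty$ for the fixed $(x,t)$. To upgrade this to a single almost-sure event valid for all $x$ and all $t$, I would: bound the failure of the middle condition for \emph{some} $x\in B_m$ at step $n$ by the origin-centred event $\{\sup_{|y|\le R_n(t)+1+m}\omega(B_{2(k+1)r_n(t)}(y))\ge k+2\}$, whose probability is still $\le c(d,k,m)R_n(t)^{d(1-\alpha(k+1))}$; use the ratio bound in \eqref{e:condRn(t)} to replace $R_n(t)$ and $r_n(t)$ by fixed multiples of $R_n(t_1)$ and $r_n(t_1)$ uniformly over $t\in[t_1,t_2]$ and over large $n$; and then take a countable union over $m\in\N$ and over the intervals $[1/j,j]$, $j\in\N$.

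For the ``eventually $\zeta_t(x)=1$'' statement only the middle condition at $n=1$ remains to be checked, and there the relevant limit is $t\to\infty$ rather than $n\to\infty$. By the same inclusion and monotonicity, and after reducing to $x$ in balls $B_m$ as above, it boils down to $\limsup_{t\to\infty}N^{(r_1(t))}_{\cP\cap B_{R_1(t)+1+m}}\le k+1$, which is Lemma~\ref{l:asppp} applied with ``$k$'' replaced by $k+1$, with $R(t)=R_1(t)+1+m$, $r(t)=r_1(t)$ and $g(t)=(R(t)r(t)^{k+1})^{-d}$ (note $R(t)r(t)^{k+1}\asymp R_1(t)^{1-\alpha(k+1)}\to0$); the hypotheses of that lemma — eventual monotonicity and $\sum_n g(2^n)^{-1}<\infty$ — hold for the radii sequences used in the applications of this lemma, which in particular have $R_1(t)\to\infty$ at a polynomial rate. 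I expect the main obstacle to be precisely the passage from the fixed-$(x,t)$ argument to the uniform one: one has to check that enlarging the ball to absorb $x\in B_m$ and comparing $t$ against an interval representative (where the ratio condition in \eqref{e:condRn(t)} is essential) cost only harmless constant factors, and one must keep straight that for $\zeta_t(x)<\infty$ the weak conclusion ``infinitely often'' — needing only $R_n(t)\to\infty$ — is enough, whereas the ``$=1$'' part needs a genuine Borel--Cantelli summation and hence the quantitative growth of $R_1$.
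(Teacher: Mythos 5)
Your proof is correct and rests on the same ingredients as the paper's (one-line) proof: the inclusion reducing $\{N^{(r)}\ge k+2\}$ to a clustering event, the Poisson estimate \eqref{e:PPPgeom2}, and the uniformization over $x$ in balls and $t$ in compact intervals via the ratio condition — which is exactly what the paper's citation of Lemma~\ref{l:asppp} packages. The only substantive difference is that for the finiteness of $\zeta_t(x)$ you extract merely ``infinitely many good $n$'' via reverse Fatou (which suffices, and is the right level of generality under the bare hypothesis \eqref{e:condRn(t)}, where $R_n(t)$ could grow too slowly in $n$ for a Borel--Cantelli summation over $n$), whereas the paper gets the middle condition for \emph{all} large $n$ by running the Borel--Cantelli argument of Lemma~\ref{l:asppp} over the continuous radius parameter instead of over $n$. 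You are also right to flag that the ``$\zeta_t(x)=1$ eventually'' part needs $t\mapsto R_1(t)$ to tend to infinity (at a rate making the relevant series summable), which \eqref{e:condRn(t)} does not literally provide; the paper's proof has the same implicit reliance on the concrete choices of $R_n(t)$ made in the applications.
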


\begin{proof}
If \eqref{e:condRn(t)} holds, then, for any $\epsilon>0$,
\[
\lim_{n\to \infty} \sup_{\epsilon \le t \le \epsilon^{-1}} r_n(t) = 0, \quad 
\lim_{n\to\infty}\inf_{\epsilon \le t \le \epsilon^{-1}} R_{n-1}(t) = \infty
\quad \text{and} \quad
\limsup_{n\to\infty} \sup_{\epsilon \le t \le \epsilon^{-1}} N^{(r_n(t))}_{\cP \cap B_{2R_n(t)}} \le k
\]
almost surely by \eqref{e:asppp} (with $R(t) = t$). Similar estimates hold when $n=1$, $t \to \infty$.
\end{proof}

We are now ready to state the key estimate of the section.
\begin{Lem}\label{l:keyUB}
There exist deterministic constants $\chi \in [1,\infty)$ and $c_1, c_2 \in (0,\infty)$
such that, for any $R_n(t)\ge1$ satisfying \eqref{e:condRn(t)},
the following holds $\Prob$-almost surely for all $t\geq 0$.
Let
\begin{equation}\label{e:condgamman}
\gamma_n(t) \geq \max \left\{ 2 \Lambda_{R_n(t)}, \chi R_n(t)^{2\alpha} \right\}, \qquad n \in \N.
\end{equation}
Then, for all $\frK \in \scrK$ and all $0\le A_1<A_2 \le \infty$, 
\begin{align}\label{e:keyUBL1}
& \int_{B_1}\E_x \left[ \ee^{\int_0^t \theta V^{(\frK)}(W_s) \dd s } \mathbbm{1}_{\{\tau_{B^\cc_{A_1}(x)} \le t < \tau_{B^\cc_{A_2}(x)}\}} \right] \dd x
 \le \int_{B_1} \E_x \left[ \ee^{\int_0^t \theta V^{(\frK)}(W_s) \dd s } \mathbbm{1}_{\{\tau_{B^\cc_{A_1}(x)} \le t < \hat{\tau}_{\zeta_t^\circ-1}(x) \}} \right]  \dd x \\
& + \sqrt{1 \vee |\cP \cap B_1|} \sum_{\substack{n \ge \zeta_t^\circ \colon \\ A_1 < R_n(t) < A_2}} 
c_1 \ee^{
t \theta S_n(t) + t \gamma_n(t) + \log^+(\sqrt{t} R_n(t)^{\alpha}) - c_2 R_{n-1}(t) \min \left\{t^{-1} R_{n-1}(t), \sqrt{\gamma_n(t)}  \right\}.
}.\nonumber
\end{align}
Moreover,
for all $x \in B_1 \setminus \cP$,
\begin{align}\label{e:keyUB}
& \E_x \left[ \ee^{\int_0^t \theta V^{(\frK)}(W_s) \dd s } \mathbbm{1}_{\{\tau_{B^\cc_{A_1}(x)} \le t < \tau_{B^\cc_{A_2}(x)}\}} \right]
 \le \E_x \left[ \ee^{\int_0^t \theta V^{(\frK)}(W_s) \dd s } \mathbbm{1}_{\{\tau_{B^\cc_{A_1}(x)} \le t < \hat{\tau}_{\zeta_t(x)-1}(x) \}} \right] \\
& +  \sum_{\substack{n \ge \zeta_t(x) \colon \\ A_1 < R_n(t) < A_2}} 
c_1 \ee^{
t \theta S_n(t) + t \gamma_n(t) + \log^+(\sqrt{t} R_n(t)^{\alpha}) - c_2 R_{n-1}(t) \min \left\{t^{-1} R_{n-1}(t), \sqrt{\gamma_n(t)} \right\}
}.\nonumber
\end{align}
The same bounds hold with $|V^{(\frK)}|$ in place of $V^{\frK}$ and, when $d=3$, with $|\overbar{V}|$, $\overbar{S}_n(t)$ in place of $V^{(\frK)}$, $S_n(t)$. 
\end{Lem}

\begin{proof}
Fix $x \in B_1$. 
Splitting according to whether $t \geq \hat{\tau}_{\zeta_t^\circ-1}(x)$ or not and, if so,
according to which $n \ge \zeta_t^\circ$ satisfies $\hat{\tau}_{n-1}(x) \le t < \hat{\tau}_{n}(x)$,
we may decompose
\begin{align}\label{e:prkeyUB1}
& \E_x\left[\ee^{\int_0^t \theta V^{(\frK)}(W_s)\dd s} \mathbbm{1}_{\{\tau_{B^\cc_{A_1}(x)} \le t < \tau_{B^\cc_{A_2}(x)} \}}\right] 
\le \E_x\left[\ee^{ \int_0^t \theta V^{(\frK)}(W_s)\dd s} \mathbbm{1}_{\{\tau_{B^\cc_{A_1}(x)} \le t < \hat{\tau}_{\zeta_t^\circ-1}(x) \}}\right] \nonumber\\
& + \sum_{\substack{n\ge \zeta_t^\circ \colon \\ A_1 < R_n(t) < A_2 }} \E_x\left[\exp\left( \int_0^t \theta V^{(\frK)}(W_s)\dd s\right)\mathbbm{1}_{\{\hat{\tau}_{n-1}(x)\le t< \hat{\tau}_{n}(x)\}}\right].
\end{align}
Set $\cY_n(t) := \cP \cap B_{R_n(t)+2}$ and note that,
if $t < \hat{\tau}_n(x)$, then $V^{(a_n(t))}(W_s) = V^{(a_n(t))}_{\cY_n(t)}(W_s)$ for all $s \in [0,t]$.
Recalling \eqref{e:defsnt}, we see that the integral over $B_1$ of the series in \eqref{e:prkeyUB1} is bounded by
\begin{equation}\label{e:prkeyUB2}
\sum_{\substack{n\ge \zeta_t^\circ \colon\, A_1 < R_n(t) < A_2 }}^{\infty}\ee^{\theta t S_n(t)}	
\int_{B_1} \E_x\left[\exp\left(\int_0^t \theta V_{\cY_n(t)}^{(a_n(t))}(W_s)\dd s\right)\mathbbm{1}\{\tau_{B^\cc_{R_{n-1}(t)}(x)} \le t\}\right] \dd x.
\end{equation}
We wish to apply the bound \eqref{e:upperboundint} to the terms of \eqref{e:prkeyUB2}, 
with parameters chosen as follows:
\begin{align*}
\cY&=\cY_n(t),& R&=R_{n-1}(t),
& a& =a_{n}(t), &r&=r_{n}(t), & \gamma&=\gamma_n(t).
\end{align*} 
It is straightforward to verify that
we may (deterministically) choose $\chi \in [1,\infty)$ large enough such that,
with this choice of parameters,
whenever $\gamma_n(t)$ satisfies \eqref{e:condgamman} and $n \ge \zeta_t^\circ$,
the function $L = L(\cY_n(t), \theta, a_n(t), r_n(t), \gamma_n(t))$ in \eqref{e:defLvarrho} is uniformly bounded by a deterministic constant, and
\begin{equation}\label{e:prkeyUB3}
c_* a_n(t) \sqrt{\gamma_n(t)} > 2 \log (2 L) \qquad \text{(in particular, $\varrho < 1/2$)}.
\end{equation}
We may thus apply \eqref{e:upperboundint} to the terms in \eqref{e:prkeyUB2}, 
obtaining
\begin{align*}
& \int_{B_1} \E_x\left[\ee^{ \int_0^t\theta V_{\cY_n(t)}^{(a_n(t))}(W_s)-\gamma_n(t)\dd s} \mathbbm{1}\{\tau_{B^\cc_{R_{n-1}(t)}(x)} \le t\}\right] \dd x \\
& 
\le c_1 \mathfrak{N}_{\cY_n(t)}^{(r_n(t))}(B_1) \left\{ \sqrt{t} R_n(t)^\alpha \ee^{-\frac{ c_2 R_{n-1}(t)^2}{t}} + \ee^{-c_2 \sqrt{\gamma_n(t)} R_{n-1}(t)} \right\}
\end{align*}
for some deterministic constants $c_1,c_2\in(0,\infty)$,
where we also used $\sup_{x > 0} x \ee^{-x^2/b}\le \sqrt{b/2}$ for any $b>0$.
Together with the bound \eqref{e:prkeyUB2} and $\mathfrak{N}_{\cY_n(t)}^{(r_n(t))}(B_1) \leq \sqrt{1\vee|\cP \cap B_1|}$, 
this shows \eqref{e:keyUBL1}.

To show \eqref{e:keyUB}, we split instead according to whether $\hat{\tau}_{\zeta_t(x)-1} \leq t$ or not
and, if so, according to which $n \geq \zeta_t(x)$ satisfies $\hat{\tau}_{n-1}(x) \leq t < \hat{\tau}_{n}(x)$.
Arguing analogously as before, we obtain
\begin{align*}\label{e:prkeyUB4}
& \E_x\left[\ee^{\int_0^t \theta V^{(\frK)}(W_s)\dd s} \mathbbm{1}_{\{\tau_{B^\cc_{A_1}(x)} \le t < \tau_{B^\cc_{A_2}(x)} \}}\right] 
\le \E_x\left[\ee^{ \int_0^t \theta V^{(\frK)}(W_s)\dd s} \mathbbm{1}_{\{\tau_{B^\cc_{A_1}(x)} \le t < \hat{\tau}_{\zeta_t(x)-1}(x) \}}\right] \nonumber\\
& + \sum_{\substack{n\ge \zeta_t(x) \colon \\ A_1 < R_n(t) < A_2 }} \ee^{\theta t S_n(t)}\E_x\left[\exp\left( \int_0^t \theta V^{(a_n(t))}_{\cY_n(t)}(W_s)\dd s\right)\mathbbm{1}\{\tau_{B^\cc_{R_{n-1}(t)}(x)} \le t\}\right].
\end{align*}
Note that, when $n \ge \zeta_t(x)$,
$x\notin B_{r_n(t)}(\cY_n(t))$ and $R_{n-1}(t) \geq 8 r_n(t) N^{(r_n(t))}_{\cY_n(t)}$.
Applying \eqref{e:upperbound} to the terms in \eqref{e:prkeyUB2}, 
\eqref{e:keyUB} follows analogously as for \eqref{e:keyUBL1}.
The proofs for $|V^{(\frK)}|$, $|\overbar{V}|$ are identical.
\end{proof}

\subsubsection{Proof of Theorems \ref{t:finitenessva} and \ref{t:finiteness}}
\label{sss:prooffiniteness}

\begin{proof}
We start with Theorem~\ref{t:finitenessva}. 
Fix $\frK \in \scrK$.
Note that $v_\theta^{(\frK)}(t,x)$ is non-decreasing in $t$,
and thus it will be sufficient to show that,
for each $y\in\R^d$ and each $t > 0$, 
$\Prob$-almost surely,
\[ \int_{B_1(y)} v^{(\frK)}_\theta(t,x) \dd x < \infty \qquad \text{ and } \qquad
v^{(\frK)}_\theta(t,x) < \infty \;\;\forall\, x \in B_1(y) \setminus \cP.
\]
By the homogeneity of $\omega$, it is enough to consider $y=0$.
To this end, we will apply Lemma~\ref{l:keyUB} with
\begin{equation}\label{e:defRngammanfiniteness}
R_n(t) = 1 \vee (2^{n-1}t)^{\frac{k+1}{k-1}}, \qquad \gamma_n(t) = \max\left\{ 2 \Lambda_{R_n(t)}, \chi R^{2/k}_n \right\}, \qquad A_1 = 0, A_2 = \infty.
\end{equation}
Note that $R_n(t)$ satisfies \eqref{e:condRn(t)} and, 
by \eqref{e:defscalesUpbd} and the choice of $\alpha$, 
\eqref{e:condgamman} is fulfilled.

Let us first control the first term in the right-hand side of \eqref{e:keyUBL1}.
Recall Lemma~\ref{l:zetafinite}, \eqref{e:defsnt} and write
\begin{align}\label{e:prooffiniteness1}
\int_{B_1} \E_x \left[\ee^{\int_0^t\theta |V^{(\frK)}|(W_s) \dd s} \mathbbm{1}_{\{t < \hat{\tau}_{\zeta_t^\circ - 1}(x) \}} \right] \dd x 
\leq \ee^{ \theta t S_{\zeta_t^\circ-1}(t)} 
\int_{B_1} \E_x \left[\ee^{\int_0^t\theta V_{\cY_t}^{(a_t)}(W_s) \dd s} \mathbbm{1}_{\{t < \hat{\tau}_{\zeta_t^\circ - 1}(x) \}} \right] \dd x,
\end{align}
where $\cY_t := \cP \cap B_{R_{\zeta_t^\circ -1}(t)+2}$, $a_t = a_{\zeta_t^\circ - 1}(t)$
and we used that, 
if $t< \hat{\tau}_{\zeta_t^\circ-1}(x)$,
then $V^{(a_t)}(W_s)=V^{(a_t)}_{\cY_t}(W_s)$ for $0\leq s\leq t$.
Since $\cP \in \scrY$ a.s., the multipolar Hardy inequality in \cite{BDE08}[Theorem~1] implies that
$\Prob \big( \forall\, R>0 \colon\, \lambda_{\max} (\R^d, \theta V_{\cP \cap B_R} ) < \infty\big) = 1$,
and thus \eqref{e:prooffiniteness1} is finite by \eqref{e:truncationfinite} and \eqref{e:L1timedep}.
For the first term in the right-hand side of \eqref{e:keyUB},
let $\eps_x := \tfrac12 \dist(x,\cP)$ and fix $\hat{a}_x \in (0,\eps_x)$ to write
\[
\begin{aligned}
& \E_x \left[\exp \left\{\int_0^t\theta |V^{(\frK)}|(W_s) \dd s \right\} \mathbbm{1}{\{t < \hat{\tau}_{\zeta_t(x) - 1}(x) \}} \right] \\
\leq \, & \exp \left\{ \theta t \sup_{z \in B_{R_{\zeta_t -1}(t)}(x)} \left|V^{(\frK)}(z) - V^{(\hat{a}_x)}(z) \right| \right\} 
\E_x \left[\exp\left\{ \int_0^t\theta V_{\cY_t(x)}^{(\hat{a}_x)}(W_s) \dd s \right\} \mathbbm{1}\{t < \hat{\tau}_{\zeta_t(x) - 1}(x) \} \right],
\end{aligned}
\]
where $\cY_t(x) := \cP \cap B_{R_{\zeta_t-1}(t)+\hat{a}_x}(x)$,
so the latter is again finite by \eqref{e:truncationfinite} and \eqref{e:FKt}.

Consider now the series in \eqref{e:keyUBL1} and \eqref{e:keyUB}.
The term for $n=1$ is bounded by
\begin{equation}\label{e:prooffiniteness3}
c_1 \exp\{\theta t S_1(t) + t \gamma_1(t) + \log^+(\sqrt{t} R_1(t)^\alpha)\}.
\end{equation}
For some constants $c_3, c_4>0$ and using $\gamma_n(t) > R_n(t)^{2/k}$, we bound the terms for $n\ge2$ by
\begin{align}\label{e:prooffiniteness4}
c_3 \exp \left\{\theta t S_n(t) + t \gamma_n(t) - c_4 (2^n t)^{\frac{k+1}{k-1}} \right\}
\end{align}
Note that, $\Prob$-almost surely and as $n\to\infty$, $S_n(t) = o(R_n(t)^{2\alpha} \log R_n(t))$ by \eqref{e:errortruncation}
and, for any $\beta > 1/k$, $\gamma_n(t) = o(R_n(t)^{2\beta})$ by Lemma~\ref{l:evp} (applied with $R(t) = t$).
Therefore, the sum over $n \geq 2$ of \eqref{e:prooffiniteness4} is finite.
This finishes the proof of Theorem~\ref{t:finitenessva}.
The proof of Theorem~\ref{t:finiteness} is completely analogous.
\end{proof}

Lemma~\ref{l:keyUB} and the estimates in the proof above also allow us to show the following.
\begin{Lem}\label{l:macrobox}
For  any $\frK \in \scrK$ and any $\gamma \in (0,\infty)$ such that $ \gamma (k-1) > 2/d$,
\begin{equation}\label{e:macrobox}
\lim_{t \to \infty} \E_0 \left[ \exp \left\{\int_0^t |V^{(\frK)}|(W_s) \dd s \right\}\mathbbm{1}\left\{\sup_{0 \le s \le t}|W_s| \geq (\log t)^\gamma t^{\tfrac{k}{k - 1}} \right\}\right] = 0 \quad \Prob \mbox{-a.s.}
\end{equation}
When $d=3$, the same holds with $\overbar{V}$ in place of $V^{(\frK)}$.
\end{Lem}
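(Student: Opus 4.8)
The plan is to apply Lemma~\ref{l:keyUB} with $A_1$ taken to be the macroscopic radius $\rho(t) := (\log t)^\gamma t^{k/(k-1)}$ and $A_2 = \infty$, so that the first term on the right-hand side of \eqref{e:keyUB} vanishes (the event $\{\tau_{B^\cc_{\rho(t)}} \le t < \hat{\tau}_{\zeta_t-1}\}$ is empty once $R_{\zeta_t-1}(t) < \rho(t)$, which holds for large $t$ by our choice of scales below). Thus only the series over $n$ with $R_n(t) > \rho(t)$ remains, and it suffices to show that this series tends to $0$ almost surely. Concretely, I would choose the radii $R_n(t) = \rho(t)\, 2^{(n-1)}$ for $n \ge 1$ (or any geometric progression of comparable growth starting slightly above $\rho(t)$), set $a_n(t) = R_n(t)^{-\alpha}$, $r_n(t) = 5 a_n(t)$ as in \eqref{e:defscalesUpbd}, and take $\gamma_n(t,x) = \max\{2\Lambda_{R_n(t)}, \chi R_n(t)^{2\alpha}\}$ as in \eqref{e:condgamman}. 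One first checks that \eqref{e:condRn(t)} holds for these radii (straightforward: $R_n(t_1) \to \infty$ in $n$ and the ratios $R_n(t)/R_n(s)$ are bounded below uniformly on compact $t$-intervals, and similarly the required monotonicity and $\zeta_t = 1$-type statements near $t \to \infty$ follow from Lemma~\ref{l:zetafinite}).

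The heart of the matter is then to bound the exponent in the $n$-th term of the series, namely
\[
\theta t\, S_n(t) + t\, \gamma_n(t) + \log^+\!\big(\sqrt{t}\, R_n(t)^\alpha\big) - c_2 R_{n-1}(t)\min\{t^{-1} R_{n-1}(t), \sqrt{\gamma_n(t)}\},
\]
and show it is $\le -c\, 4^n\, \rho(t)^2/t$ (say) for all large $t$, uniformly in $n$, so that summing the geometric series gives something like $\exp\{-c\, \rho(t)^2/t\} = \exp\{-c\,(\log t)^{2\gamma} t^{(k+1)/(k-1)}\} \to 0$. For this one uses: (i) $S_n(t) = o(R_n(t)^{2\alpha}\log R_n(t))$ uniformly for bounded $t$ by \eqref{e:errortruncation} (and its $\overbar V$-analogue for $d = 3$), so $t\theta S_n(t)$ is negligible against the $c_2$-term since $\alpha < 1/k$ and $R_{n-1}(t)^2 \gg R_n(t)^{2\alpha}\log R_n(t)$; (ii) $\gamma_n(t) = o(R_n(t)^{2\beta})$ a.s.\ for any $\beta > 1/k$ by Lemma~\ref{l:evp} (applied with the relevant $R(\cdot)$), so that $t\gamma_n(t) \ll R_{n-1}(t)\sqrt{\gamma_n(t)}$ provided $t\sqrt{\gamma_n(t)} \ll R_{n-1}(t)$, which holds since $\sqrt{\gamma_n(t)} \lesssim R_n(t)^{\beta}$ with $\beta$ close to $1/k$ and $R_{n-1}(t) \gtrsim \rho(t) = (\log t)^\gamma t^{k/(k-1)} \gg t\, R_n(t)^{1/k}$ once one checks $\rho(t)^{(k-1)/k} \gg t$, i.e.\ $(\log t)^{\gamma(k-1)/k} t \gg t$, which is where the hypothesis $\gamma(k-1) > 2/d$ — in fact already $\gamma > 0$ — enters to give the room; (iii) the $\log^+$ term is lower order. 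The min in the last term should be resolved in favour of the $t^{-1}R_{n-1}(t)$ branch for the relevant range, giving the Gaussian-tail factor $\exp\{-c_2 R_{n-1}(t)^2/t\}$ which dominates.

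The main obstacle I anticipate is bookkeeping the precise comparison between $t\gamma_n(t)$ and $R_{n-1}(t)\sqrt{\gamma_n(t)}$ (equivalently, ensuring $t\sqrt{\gamma_n(t)} = o(R_{n-1}(t))$) uniformly in $n$, since $\gamma_n(t)$ depends on $n$ through $\Lambda_{R_n(t)}$ whose almost-sure control via Lemma~\ref{l:evp} is of the form $\Lambda_{R_n(t)} = o(R_n(t)^{2\beta})$ only eventually; one must choose $\beta$ strictly between $1/k$ and $(k+1)/(2k)$ — possible since the relevant inequality becomes $t R_n(t)^{\beta} \ll \rho(t) \asymp t R_n(t)^{(k+1)/(2k)}$ after substituting the exponents — and invoke the slowly-varying/eventually-monotone hypotheses in Lemma~\ref{l:evp} with a suitably engineered auxiliary function $g$ (e.g.\ $g(t)$ a slowly varying power of $\log$) so that the almost-sure conclusion applies and the bound holds for all $n \le$ some slowly growing cutoff, with the tail $n \ge$ cutoff handled crudely by the decaying geometric factor $\varrho^{R_{n-1}/(4 r_n N)}$ already present in \eqref{e:upperbound} (which is absorbed into $c_1 \cdot$ the displayed exponential in Lemma~\ref{l:keyUB}, but can also be re-extracted if needed). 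Once these comparisons are in hand the conclusion is immediate, and the $d=3$ renormalized case is verbatim the same using the $\overbar V$-versions of \eqref{e:errortruncation} and Lemma~\ref{l:keyUB}.
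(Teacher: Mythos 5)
Your overall architecture is exactly the paper's: apply Lemma~\ref{l:keyUB} with $A_1=\rho(t)=(\log t)^\gamma t^{k/(k-1)}$, kill the first term since $R_0(t)<A_1$, and beat each summand's positive contributions ($t\theta S_n$, $t\gamma_n$) with the Gaussian cost $c_2R_{n-1}(t)^2/t$, controlling $S_n$ via Lemma~\ref{l:errortruncation} and $\gamma_n$ via Lemma~\ref{l:evp}. Your choice $R_n(t)=2^{n-1}\rho(t)$ instead of the paper's $R_n(t)=(2^{n-1}t)^{k/(k-1)}$ (with the sum then starting at $n_t\approx\frac{\gamma(k-1)}{k}\log_2\log t$) is an acceptable variant.

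However, the key quantitative step is wrong as written. The relation ``$\rho(t)\asymp t R_n(t)^{(k+1)/(2k)}$'' is false: since $R_n(t)\asymp t^{k/(k-1)}$ up to logs, $tR_n(t)^\beta\asymp R_n(t)$ forces $k(1-\beta)=k-1$, i.e.\ $\beta=1/k$ exactly — there is \emph{no} window $(1/k,(k+1)/(2k))$. Consequently, for any fixed $\beta>1/k$ the bound $\gamma_n(t)=o(R_n(t)^{2\beta})$ is too weak: one computes $t^2R_n(t)^{2\beta}/R_{n-1}(t)^2\asymp t^{2(k\beta-1)/(k-1)}\to\infty$, so $t\gamma_n(t)$ would dominate the Gaussian term by a positive power of $t$ and the series does not go to zero. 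The correct comparison must use Lemma~\ref{l:evp} at the exponent $2/k$ exactly, with a slowly varying correction $g(t)=(\log t)^\beta$: then $t\gamma_n(t)=o\bigl(g\cdot 4^{(n-1)/k}(\log t)^{2\gamma/k}t^{(k+1)/(k-1)}\bigr)$ versus $R_{n-1}(t)^2/t\asymp 4^{n}(\log t)^{2\gamma}t^{(k+1)/(k-1)}$, and the entire margin is the logarithmic factor $(\log t)^{2\gamma(k-1)/k}/g$. This is also where you mislocate the hypothesis: it is \emph{not} true that ``$\gamma>0$'' gives the room. The a.s.\ version of Lemma~\ref{l:evp} requires $\sum_n g(2^n)^{-dk/2}<\infty$, i.e.\ $\beta>2/(dk)$, while the domination requires $\beta$ bounded above by a constant multiple of $\gamma(k-1)/k$; the hypothesis $\gamma(k-1)>2/d$ is precisely what makes this window nonempty (the paper takes $2/d<k\beta<\gamma(k-1)$). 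Once you replace your step (ii) by this choice of $g$, the proof closes and coincides with the paper's.
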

\begin{proof}
Take $R_n(t)$, $\gamma_n(t)$ as in \eqref{e:defRngammanfiniteness}.
Using the bound \eqref{e:prooffiniteness4} for the $n$-th term of the series in \eqref{e:keyUB}, 
we bound the expectation in \eqref{e:macrobox} by
\begin{equation}\label{e:prmacrobox1}
c_3 \sum_{n=n_t+1}^{\infty} \exp\left\{ t \theta S_n(t)+ t \gamma_n(t)-c_4(2^nt)^{\frac{k+1}{k-1}}\right\}, 
\quad \text{ where } n_t := \left\lfloor \frac{ \gamma(k-1) \log_2 \log t}{k} \right\rfloor.
\end{equation}
Let $\beta \in (0,1)$ with $2/d < k \beta < \gamma(k-1)$.
Then $g(t) := (\log t)^\beta$ satisfies the conditions of Lemma~\ref{l:evp},
implying that $\gamma_n(t) = o( g(2^{n-1}t) R_n(t)^{2/k})$.
Using additionally the bound $S_n(t) = o( R_n(t)^{2 \alpha} \log R_n(t) )$ 
which holds almost surely by Lemma~\ref{l:errortruncation},
we may check that, when $t$ is large enough,
the exponents of the summands
in \eqref{e:prmacrobox1} are smaller than $- c_3 (2^n t)^{(k + 1)/(k -1)}$ for some constant $c_3 >0$,
from which \eqref{e:macrobox} follows.
The statement for $\overbar{V}$ is obtained analogously, considering $\overbar{S}_n(t)$.
\end{proof}

\subsubsection{Upper bound in Theorem~\ref{t:tightnessva}}
\label{sss:prooftightness}

\begin{proof}[Proof of \eqref{e:tightnessubva}]
Let $R_n(t)$, $\gamma_n(t)$ as in \eqref{e:defRngammanfiniteness}.
Recall \eqref{e:defzeta} and that, by Lemma~\ref{l:zetafinite},
$\zeta_t = 1$ a.s.\ for all large enough $t$.
Using Lemma~\ref{l:keyUB}, Lemma~\ref{l:macrobox}
and the estimates \eqref{e:prooffiniteness3}--\eqref{e:prooffiniteness4} for the terms of the series in \eqref{e:keyUB},
we see that it is enough to show that, as $t \to \infty$,
\begin{equation}\label{e:prooftightness1}
\log \E_0\left[\exp\left(\theta\int_0^tV^{(\frK)}(W_s)\dd s\right)\mathbbm{1}_{ \{t<\hat{\tau}_0\} }\right] = \mathcal{O}(t) \qquad \Prob \mbox{-a.s.,}
\end{equation}
\begin{align}\label{e:prooftightness2}
g(t)^{-1} t^{-\frac{2}{k-1}} \left\{\theta S_1(t) + \gamma_1(t) \right\} \to 0 \;\; \text{ in probability,}
\end{align}
and that, for any $\rho>0$,
\begin{align}\label{e:prooftightness3}
g(t)^{-1}t^{-\frac{k+1}{k-1}} \max_{2 \le n \le \lfloor \rho \log \log t  \rfloor} \left\{ t\theta S_n(t) +t \gamma_n(t) -c_4 (2^nt)^{\frac{k+1}{k-1}}\right\} \to 0 \;\; \text{ in probability.}
\end{align}
We start with \eqref{e:prooftightness1}.
When $t$ is large, $R_0(t) < 1$.
Let $\eps_0 := \tfrac12 \dist(0,\cP)$ and set $\hat{a}_0 := \tfrac12 (\eps_0 \wedge 1)$.
Note that, when $t < \tau_{B^\cc_1}$, $V^{(\hat{a}_0)}(W_s) = V^{(\hat{a}_0)}_{\cP_2}(W_s)$ for $0 \le s \le t$.
Moreover, $\lambda_{\max}(B_2, \theta V^{(\hat{a}_0)}_{\cP_2}) < \infty$ and $\dist(B_2^\cc, \cP_1)>0$.
Applying Lemma~\ref{l:errortruncation} and \eqref{e:FKt},
we find (random) constants $C_1, C_2 \in (0,\infty)$ such that, a.s.\ for all large enough $t$,
the expectation in \eqref{e:prooftightness1} is at most
\[
\ee^{\theta t \sup_{x \in B_2}\left|V^{(\frK)}(x) - V^{(\hat{a}_0)}(x)  \right|} 
\E_0 \left[ \exp\left\{ \int_0^t \theta V_{\cP_2}^{(\hat{a}_0)}(W_s) \dd s \right\} \mathbbm{1}_{\{ t<\tau_{B^\cc_2} \}}\right] \le C_1 \ee^{C_2 t}.
\]
Thus \eqref{e:prooftightness1} follows, and we move to \eqref{e:prooftightness2}--\eqref{e:prooftightness3}. 
Note first that, by \eqref{e:errortruncation},
\begin{equation}\label{e:convprob1}
\lim_{t \to \infty} \max_{1\le n\le\lfloor \rho \log \log t\rfloor} t^{-\frac{2}{k-1}} S_n(t) = 0 \quad \Prob \mbox{-a.s.,}
\end{equation}
and \eqref{e:prooftightness2} follows by Lemma~\ref{l:evp}.
To control the remaining term in \eqref{e:prooftightness3}, 
fix $\eps>0$ and estimate with a union bound
\begin{align}\label{e:proofseries}
\Prob \left(\max_{n\ge1} \frac{t \gamma_n(t)-c_4(2^{n}t)^{\frac{k+1}{k-1}}}{g(t) t^{\frac{k+1}{k-1}}}>\eps \right)
\le\sum_{n=1}^{\infty} \Prob \left(\gamma_n(t)>t^{\frac{2}{k-1}}\left(\eps g(t)+c_4(2^{n})^{\frac{k+1}{k-1}}\right)\right).
\end{align}
Now note that, since $g(t) \to \infty$, when $t$ is large enough, 
it is impossible to have $\gamma_n(t) = \chi R_n(t)^{2/k}$ if $\gamma_n(t)$ satisfies the inequality in \eqref{e:proofseries};
thus in this case $\gamma_n(t) = 2 \Lambda_{R_n(t)}$.
Applying \eqref{e:evbp}, we obtain deterministic constants $c_5,c_6 \in(0,\infty)$ such that \eqref{e:proofseries} is at most
\begin{align*}
& c_5 \sum_{n=1}^{\infty}R_n(t)^d\Big \{ t^{-\frac{dk}{k-1}} \Big(\eps g(t)+c_4(2^n)^{\frac{k+1}{k-1}}\Big)^{-\frac{dk}{2}}+ R_n(t)^{-\alpha d(k+1)}\Big\} \\
\leq \, & c_5 \sum_{n=1}^{\infty}\left(\frac{(2^n)^{\frac{2}{k-1}}}{\eps g(t)+c_4(2^n)^{1+\frac{2}{k-1}}}\right)^{\frac{dk}{2}}+c_6 \sum_{n=1}^{\infty} (2^n t)^{-\tfrac{d k}{k-1}(\alpha(k+1)-1)} \ \overset{t\rightarrow\infty}{\longrightarrow}0
\end{align*}
since $\alpha>(k+1)^{-1}$. Together with \eqref{e:convprob1}, this shows \eqref{e:prooftightness3}, completing the proof of \eqref{e:tightnessubva}.
\end{proof}

\subsubsection{Upper bound in Theorem~\ref{t:limsupva}}
\label{sss:proofUBlimsup}

Before we proceed to the proof,
we recall that, when $\ell$ is slowly varying,
$\ell(\lambda r) \sim \ell(r)$ as $r \to \infty$
uniformly over $\lambda$ in compact subsets (cf.\ \cite[Theorem~1.2.1]{BGT02}).
It is then straightforward to translate the integrability condition 
in \eqref{e:limsupva} into a summability condition, namely,
\begin{align}\label{e:integSV}
\int_{1}^{\infty}\frac{\dd t}{t \ell(t)} < \infty \qquad \text{ if and only if } \qquad \sum_{n=0}^\infty \ell(2^n)^{-1} < \infty.
\end{align}

\begin{proof}[Proof of the upper bound in \eqref{e:limsupva}]
Fix $t \mapsto \ell(t)$ slowly varying with 
$\int_{1}^{\infty}\frac{\dd r}{r\ell(r)}<\infty$,
and set
\begin{equation}\label{e:defRnUBlimpsup}
R_n(t) := (2^{n-1}t)^\frac{k}{k-1}\ell(2^{n-1}t)^{\frac{1}{d(k-1)}},
\quad \gamma_n(t) :=  \max\left(2 \Lambda_{R_n(t)}, R_n(t)^{\frac{2}{k}} \ell(2^{n-1} t)^{\frac{2}{dk}}\right).
\end{equation}
When $t$ is large, $R_n(t) \ge 1$, $\zeta_t =1$ and \eqref{e:condgamman} holds,
so we may apply Lemma~\ref{l:keyUB}.

Note that \eqref{e:prooftightness1} still holds as $R_0(t)$ is given by \eqref{e:defscalesUpbd},
and thus the first term in the right-hand side of \eqref{e:keyUB} is controlled.
For the term with $n=1$, note that, by Lemma~\ref{l:evp} (with $g(t) = \ell(t)^{\frac{2}{dk}}$), 
\eqref{e:integSV} and \eqref{e:errortruncation}, \eqref{e:prooftightness2} holds almost surely with $g(t) = \ell(t)^{\frac{2}{d(k-1)}}$.
It is thus enough to show that, $\Prob$-a.s.,
\begin{equation}\label{e:prUBlimsup1}
\limsup_{t\to\infty} \sum_{n\ge2} \exp \left( 
\theta t S_n(t) + t \gamma_n(t) + \log (t R_n(t)^\alpha) - c_2 R_{n-1}(t) \min \left\{ \frac{R_{n-1}(t)}{t}, \sqrt{\gamma_n(t)}\right\} 
\right) < \infty.
\end{equation}
To this end, use the slow variation of $\ell$ to find a constant $c>0$ such that, for $t$ large enough,
\begin{equation}\label{e:prUBlimpsup2}
R_{n-1}(t) \min \left\{ \frac{R_{n-1}(t)}{t}, \sqrt{\gamma_n(t)}\right\} \ge c 2^{n-1} t (2^{n-1} t)^{\frac{2}{k-1}} \ell(2^{n-1} t)^{\frac{2}{d(k-1)}}, \quad n\ge2.
\end{equation}
Applying Lemma~\ref{l:evp} (with $t$ substituted by $2^{n-1} t$), we obtain $c' \in (0,\infty)$ such that
\[
t \gamma_n(t) \le t c' R_n(t)^{\frac{2}{k}} \ell(2^{n-1} t)^{\frac{2}{dk}} = c' t (2^{n-1} t)^{\frac{2}{k-1}} \ell(2^{n-1}t)^{\frac{2}{d(k-1)}}, \quad n \ge 2.
\]
Noting that, by \eqref{e:errortruncation}, the remaining terms are of lower order,
we can choose $n_0 = n_0(c,c')$ sufficiently large so that, for any $n \geq n_0$,
the $n$-th term in the series in \eqref{e:prUBlimsup1} is bounded by the exponential of $-c_3 t (2^{n-1} t)^{\frac{2}{k-1}} \ell(2^{n-1}t)^{\frac{2}{d(k-1)}}$
for some constant $c_3>0$, showing \eqref{e:prUBlimsup1}.
This finishes the proof.
\end{proof}

\subsubsection{Upper bound in Theorem~\ref{t:liminfva}}
\label{sss:proofUBliminf}
%
\begin{proof}
Let $A:=c_2 \wedge 1$ with $c_2$ as in Lemma~\ref{l:keyUB}, and set
\[
R_n(t) := 2^{n-1} t^{\frac{k}{k-1}} (\log \log t)^{-\frac{1}{d(k-1)}}, \qquad \gamma_n(t) := \max \left\{ 2 \Lambda_{R_n(t)}, \frac{A^2}{4} t^{-2}R_{n-1}(t)^2\right\}.
\]
Applying Lemma~\ref{l:keyUB}, Lemma~\ref{l:macrobox}, Lemma~\ref{l:errortruncation} and \eqref{e:prooftightness1},
we see that we only need to find a constant $C^{\inf} \in (0,\infty)$ such that, for all $\rho>0$,
\begin{equation}\label{e:prUBliminf1}
\liminf_{t \to \infty} \max_{1 \le n \le \lfloor \rho \log \log t \rfloor} 
\frac{\gamma_{n}(t) - A \mathbbm{1}_{\{n \ge 2\}} t^{-1} R_{n-1}(t) \min\left\{ t^{-1}R_{n-1}(t), \sqrt{\gamma_n(t)} \right\}}{(\log \log t)^{-\frac{2}{d(k-1)}}t^{\frac{2}{k-1}}} \le C^{\inf}.
\end{equation}
Abbreviate $s_t:= t^{\frac{2}{k-1}} (\log \log t)^{-\frac{2}{d(k-1)}}$.
Note that $t^{-1}R_{n-1}(t) = 2^{n-1} \sqrt{s_t}$ and, since $A\le 1$,
\begin{equation}\label{e:prUBliminf2}
\min\left\{t^{-1}R_{n-1}(t), \sqrt{\gamma_n(t)} \right\} \geq \frac{A}{2} t^{-1}R_{n-1}(t) = \frac{A}{2} 2^{n-1} \sqrt{s_t}.
\end{equation}
Hence, as $t\to\infty$,
\begin{equation}\label{e:prUBliminf3}
\frac{A^2}{4} t^{-2} R_{n-1}(t)^2 - A \mathbbm{1}_{\{n \ge 2\}} t^{-1} R_{n-1}(t) \min\left\{ t^{-1}R_{n-1}(t), \sqrt{\gamma_n(t)} \right\}
\ll s_t.
\end{equation}
On the other hand, Lemma~\ref{l:liminfev} provides a constant $C \in (1,\infty)$
and a subsequence $t_j \to \infty$ as $j\to\infty$ such that, for all $j \in \N$ and all $1 \le n \le \lfloor \rho \log \log t_j \rfloor$,
\begin{equation}\label{e:prUBliminf4}
\Lambda_{R_n(t_j)} \le \frac{A^2}{4} \mathbbm{1}_{\{n \ge 2\}} t_j^{-2}R_{n-1}(t_j)^2 + C s_{t_j}.
\end{equation}
Now \eqref{e:prUBliminf2}--\eqref{e:prUBliminf4} and the definition of $\gamma_n(t)$ imply \eqref{e:prUBliminf1} with $C^{\inf} = 2C$.
\end{proof}

\subsection{The lower bounds}
\label{ss:proofslowerbounds}

In this section, we will prove the lower bounds in Theorems~\ref{t:tightnessva}, \ref{t:limsupva} and \ref{t:liminfva}
for the truncated potentials $V^{(a)} = V^{(\frK_a)}$ where $\frK_a(x) = |x|^{-2}\mathbbm{1}_{\{|x| \leq a\}}$.
The proof of the theorems will be finished in Section~\ref{ss:prooflimitfrac}
after the proof of Theorem~\ref{t:limitfrac}.
The following lemma will be used in all the proofs of this section:
\begin{Lem}\label{l:lemlb}
There exists $c \in (0,1]$ such that the following holds $\Prob$-almost surely.
Fix $a \in (0,\infty)$ and let $R(t)$, $r(t)$ satisfy 
$\ee^{-t} \ll r(t) \ll 1 \ll R(t)$ as $t \to \infty$, 
and $R(t) r(t) \leq \sqrt{c} t$ for all $t$ large enough.
Define
\begin{equation}\label{e:defAt}
\cA_t:=\left\{\exists x\in B_{R(t)}\colon \omega\left(B_{r(t)}(x)\right) \geq k+1\right\}.
\end{equation}
Then, for all large enough $t$, on $\cA_t$,
\begin{equation}\label{e:lemlb}
\log\E_0\left[\exp\left(\theta\int_0^tV^{(a)}(W_s)\dd s\right)\right] \geq \frac{c t}{r(t)^2} - 2 \sqrt{c}\frac{R(t)}{r(t)}-\mathcal{O}(t).
\end{equation}
\end{Lem}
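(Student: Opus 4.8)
The plan is to realize the lower‑bound heuristic of Section~\ref{ss:discussion}: restrict the Feynman--Kac functional to Brownian paths that in a short time $s_0$ travel from $0$ to a cluster of $k+1$ Poisson points near the sphere of radius $R(t)$, and then remain in a tiny ball around that cluster up to time $t$, where the inverse‑square potential forces exponential growth of order (diameter)$^{-2}$. Working on $\cA_t$, fix $x_t\in B_{R(t)}$ with $\omega(B_{r(t)}(x_t))=k+1$ and, perturbing the centre if necessary, take $x_t\notin\cP$; then $\cY_t:=\cP\cap B_{r(t)}(x_t)$ has $M:=\#\cY_t=k+1$ and $\cY_t\subset B_{2r(t)}(x_t)$. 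Since $k=k_\theta=\lfloor h_d/\theta\rfloor$ forces $\theta\in(\tfrac{h_d}{M},\tfrac{h_d}{M-1}]\subset(\tfrac{h_d}{M},h_d]$, Lemma~\ref{l:keyLB} applies with $\cY=\cY_t$ and radius $2r(t)$: with $K$ the constant from that lemma and $B_\star:=B_{2Kr(t)}(x_t)$, there are $c_1,c_2\in(0,\infty)$ depending only on $d,\theta$ such that
\[
\int_{B_\star}\E_z\!\left[\ee^{\int_0^{s}\theta V_{\cY_t}(W_u)\,\dd u}\mathbbm{1}\{\tau_{B_\star^\cc}>s\}\right]\dd z\ \ge\ c_1\,r(t)^d\,\ee^{c_2 s/r(t)^2}\qquad(s\ge 0).
\]

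Fix a split time $s_0=s_0(t)\in(0,t)$. By the Markov property at $s_0$, by $V^{(a)}\ge 0$ (so the energy accumulated on $[0,s_0]$ can be dropped), and by the pointwise bound $V^{(a)}\ge V^{(a)}_{\cY_t}=V_{\cY_t}$ on $B_\star$ — valid once $(2K+2)r(t)\le a$, hence for all large $t$ since $r(t)\to 0$ — we get
\[
\E_0\!\left[\ee^{\theta\int_0^t V^{(a)}(W_s)\,\dd s}\right]\ \ge\ \int_{B_\star}p_{s_0}(z)\,\E_z\!\left[\ee^{\int_0^{t-s_0}\theta V_{\cY_t}(W_u)\,\dd u}\mathbbm{1}\{\tau_{B_\star^\cc}>t-s_0\}\right]\dd z.
\]
Since $|z|\le R(t)+2Kr(t)$ for $z\in B_\star$, we have $p_{s_0}(z)\ge(2\pi s_0)^{-d/2}\ee^{-(R(t)+2Kr(t))^2/(2s_0)}$; inserting this and the previous integral estimate and taking logarithms yields
\[
\log\E_0\!\left[\ee^{\theta\int_0^t V^{(a)}}\right]\ \ge\ \frac{c_2(t-s_0)}{r(t)^2}-\frac{(R(t)+2Kr(t))^2}{2s_0}+d\log r(t)-\tfrac d2\log(2\pi s_0)+\log c_1.
\]

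Now choose $s_0:=\beta\,R(t)r(t)$ with $\beta:=(2c_2)^{-1/2}$. The hypothesis $R(t)r(t)\le\sqrt c\,t$ gives $s_0\le\beta\sqrt c\,t<t$ for large $t$, so the split is legitimate; it also yields $-\tfrac d2\log(2\pi s_0)=-\mathcal{O}(\log t)$, while $r(t)\gg\ee^{-t}$ gives $d\log r(t)=-\mathcal{O}(t)$, and $r(t)=o(R(t))$ gives $(R(t)+2Kr(t))^2/(2s_0)=R(t)/(2\beta r(t))+\mathcal{O}(1)$. Hence
\[
\log\E_0\!\left[\ee^{\theta\int_0^t V^{(a)}}\right]\ \ge\ c_2\,\frac{t}{r(t)^2}-\Bigl(c_2\beta+\tfrac1{2\beta}\Bigr)\frac{R(t)}{r(t)}-\mathcal{O}(t)\ =\ c_2\,\frac{t}{r(t)^2}-\sqrt{2c_2}\,\frac{R(t)}{r(t)}-\mathcal{O}(t),
\]
and since $\sqrt{2c_2}\le 2\sqrt{c_2}$ this is exactly \eqref{e:lemlb} with the constant $c_2$ in place of $c$. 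To obtain a constant in $(0,1]$, observe that if \eqref{e:lemlb} holds for some $c_0$ then it holds for every $c\in(0,c_0]$: under the stronger assumption $R(t)r(t)\le\sqrt c\,t$ one has $R(t)/r(t)\le\sqrt c\,t/r(t)^2$, whence $c_0 t/r^2-2\sqrt{c_0}R/r\ge c\,t/r^2-2\sqrt c\,R/r$; so taking $c:=c_2\wedge 1$ completes the proof. The two points needing care are the pointwise comparison $V^{(a)}\ge V_{\cY_t}$ on the \emph{shrinking} island $B_\star$ (which is where $r(t)\to 0$ and the choice $x_t\notin\cP$ are used), and the calibration of $s_0$ so that the travel cost $R(t)^2/(2s_0)$ and the eigenvalue deficit $c_2 s_0/r(t)^2$ are both of order $R(t)/r(t)$, with total constant $\sqrt{2c_2}\le 2\sqrt{c_2}$.
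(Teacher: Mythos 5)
Your proof is correct and follows essentially the same route as the paper: on $\cA_t$ pick the cluster $\cY_t$ of $k+1$ points, apply Lemma~\ref{l:keyLB} to the island around it, insert the Gaussian density via the Markov property at a split time, and optimize the split time as a constant multiple of $R(t)r(t)$ (the hypothesis $R(t)r(t)\le\sqrt{c}\,t$ ensuring admissibility). The only cosmetic differences are that you handle the truncation by exact locality of $V^{(a)}_{\cY_t}=V_{\cY_t}$ on the shrinking island, whereas the paper uses the uniform bound $V_{\cY_t}-V^{(a)}_{\cY_t}\le(k+1)a^{-2}$ absorbed into the $\mathcal{O}(t)$ term; both are valid.
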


\begin{proof}
On $\cA_t$, we pick $x_t\in B_{R(t)}$ such that $\omega\left(B_{r(t)}(x_t)\right) \geq k+1$.
Choose distinct $y_1, \ldots, y_{k+1} \in \cP \cap B_{r(t)}(x_t)$
(for example, according to lexicographical order) and set $\cY_t:=\{y_1, \ldots, y_k\}$.
Clearly $V^{(a)}\geq V^{(a)}_{\cY_t}$, and $V_{\cY_t}-V^{(a)}_{\cY_t}\le \#\cY_t a^{-2} =(k+1)a^{-2} =: c_0$.
Thus
\begin{align}\label{e:prooflb1}
\log\E_0\left[\exp\left(\theta\int_0^tV^{(a)}(W_s)\dd s\right)\right] \geq -\theta c_0 t + \log\E_0\left[\exp\left(\theta\int_0^t V_{\cY_t}(W_s)\dd s\right) \right].
\end{align}
Let now $K, c_1, c_2$ as in Lemma~\ref{l:keyLB}, and set $c:=c_2 \wedge 1$. Write, for $0\le t_0 \le t$,
\begin{align}\label{e:prooflb2}
\E_0\left[\ee^{\int_0^t \theta V_{\cY_{t}}(W_s)\dd s}\right]
\geq \E_0\left[\ee^{\int_{t_0}^t \theta V_{\cY_t}(W_s)\dd s}\mathbbm{1}_{\{W_s \in B_{Kr(t)}(x_t) \, \forall s \in [t_0,t]\}}\right].
\end{align}
Denote by $p(0,y,t) = (2 \pi t)^{-d/2} \ee^{-|y|^2/(2t)}$ the probability density of Brownian motion at time $t$ started at $0$.
Applying the Markov property, we see that \eqref{e:prooflb2} equals
\begin{align}\label{e:prooflb3}
& \int_{B_{Kr(t)}(x_t)}p(0,y,t_0)\E_y\left[\ee^{\theta \int_0^{t-t_0}V_{\cY_{t}}(W_s)\dd s}\mathbbm{1}_{\{\tau_{B^\cc_{Kr(t)}(x_t)} > t-t_0\}}\right]\dd y\nonumber\\
\geq \, & (2\pi t)^{-\frac{d}{2}}\ee^{-\frac{R(t)^2}{t_0}} \int_{B_{Kr(t)}(x_t)}\E_y\left[\ee^{\int_0^{t-t_0} \theta V_{\cY_{t}}(W_s)\dd s}\mathbbm{1}_{\{\tau_{B^\cc_{Kr(t)}(x_t)} > t-t_0\}}\right]\dd y,
\end{align}
where we used $|y|^2 \leq (|x_t|+K r(t))^2 \leq 2 R(t)^2$ for large $t$.
The integral above can be identified with the integral in \eqref{e:keyLB} with $a=r(t),x=x_t$, 
implying that \eqref{e:prooflb3} is at least
\begin{equation}
c_1 (2\pi t)^{-\frac{d}{2}} r(t)^d \exp \left\{-\frac{R(t)^2}{t_0}+c_2 (t-t_0) r(t)^{-2} \right\}. 
\end{equation}
Maximizing the exponent over $t_0\in(0,t)$ we obtain
$t_0= R(t)r(t) / \sqrt{c_2} \le t$,
which yields
\begin{align}\label{e:prooflb4}
\log\E_0\left[\ee^{\theta \int_0^tV_{\cY_{t}}(W_s)\dd s}\right]
\geq -2 \sqrt{c_2}\frac{R(t)}{r(t)}+\frac{c_2 t}{r(t)^2} + \log\left(c_1(2\pi t)^{-\frac{d}2} r(t)^{d}\right).
\end{align}
Now \eqref{e:lemlb} follows from \eqref{e:prooflb1}, \eqref{e:prooflb4} and our assumptions on $R(t)$, $r(t)$.
\end{proof}
With Lemma~\ref{l:lemlb} at hand, we are ready to complete the proofs of Theorems~\ref{t:tightnessva}, \ref{t:limsupva} and \ref{t:liminfva}
in the special case of the truncated kernels $\frK = \frK_a$.

\begin{Lem}\label{l:LBtightnessVa}
For any $a \in (0,\infty)$, 
\eqref{e:tightnesslbva} holds with $\frK(x) = \frK_a(x) = |x|^{-2} \mathbbm{1}_{\{|x| \leq a\}}$.
\end{Lem}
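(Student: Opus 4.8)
The plan is to reduce the ``in probability for every $g$'' statement to a single one‑sided tail bound, and then produce that bound by a judicious $\eps$‑dependent application of Lemma~\ref{l:lemlb}. Writing $X_t:=t^{-\frac{k+1}{k-1}}\log u^{(a)}_\theta(t,0)$ (which is the quantity in \eqref{e:tightnesslbva} with $\frK=\frK_a$, since $V^{(a)}=V^{(\frK_a)}$), I would first note that it suffices to show
\[
\lim_{\eps\downarrow0}\ \limsup_{t\to\infty}\ \Prob\!\left(\log u^{(a)}_\theta(t,0)<\eps\,t^{\frac{k+1}{k-1}}\right)=0 .
\]
Indeed, given $g$ with $g(t)\to\infty$ and any $M>0$, for each fixed $\eps$ one has $M/g(t)<\eps$ for all large $t$, whence $\{g(t)X_t<M\}\subseteq\{X_t<\eps\}$ eventually; taking $\limsup_t$ and then $\eps\downarrow0$ forces $\Prob(g(t)X_t<M)\to0$, and since $M$ is arbitrary this is exactly $g(t)X_t\to\infty$ in probability.

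For the displayed limit, let $c\in(0,1]$ be the constant of Lemma~\ref{l:lemlb}. Given $\eps\in(0,1)$ I would apply that lemma with
\[
r(t):=c_1\,t^{-\frac1{k-1}},\qquad R(t):=M\,t^{\frac k{k-1}},\qquad c_1:=\sqrt{c/(8\eps)},\quad M:=\tfrac{\sqrt c}{4c_1}.
\]
One checks directly that $\ee^{-t}\ll r(t)\ll1\ll R(t)$, that $r(t)\to0$, and that $R(t)r(t)=Mc_1\,t=\tfrac{\sqrt c}{4}t\le\sqrt c\,t$, so the hypotheses hold. On the event $\cA_t$ of \eqref{e:defAt}, Lemma~\ref{l:lemlb} gives, $\Prob$‑a.s.\ for all large $t$,
\[
\log u^{(a)}_\theta(t,0)\ \ge\ \frac{c\,t}{r(t)^2}-2\sqrt c\,\frac{R(t)}{r(t)}-\mathcal O(t)\ =\ \Big(\tfrac{c}{c_1^2}-\tfrac{2\sqrt c\,M}{c_1}\Big)t^{\frac{k+1}{k-1}}-\mathcal O(t)\ =\ 4\eps\,t^{\frac{k+1}{k-1}}-\mathcal O(t),
\]
using $c/c_1^2=8\eps$ and $2\sqrt c\,M/c_1=c/(2c_1^2)=4\eps$; since $(k+1)/(k-1)>1$ the error is $o(t^{(k+1)/(k-1)})$, so on $\cA_t$ one has $\log u^{(a)}_\theta(t,0)\ge\eps\,t^{(k+1)/(k-1)}$ for all large $t$. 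Consequently $\limsup_t\Prob(\log u^{(a)}_\theta(t,0)<\eps\,t^{(k+1)/(k-1)})\le\limsup_t\Prob(\cA_t^{\cc})$.

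To control $\Prob(\cA_t^{\cc})$ I would invoke Lemma~\ref{l:problbppD} with $D=B_{R(t)}$ and radius $r(t)$: since $R(t)^d r(t)^{kd}=(Mc_1^k)^d$ is $t$‑independent and $\ee^{-|B_{r(t)}|}\ge\tfrac12$ for large $t$, this yields $\Prob(\cA_t^{\cc})\le\exp\{-\kappa\,(Mc_1^k)^d\}$ for all large $t$, with $\kappa:=|B_1|/(2^{d+1}(k+1)!)>0$. Because $Mc_1^k=\tfrac{\sqrt c}{4}c_1^{k-1}=\tfrac{\sqrt c}{4}(c/(8\eps))^{(k-1)/2}\to\infty$ as $\eps\downarrow0$ (this uses $k=k_\theta\ge2$), the displayed limit follows, completing the proof.

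The main obstacle is precisely the interplay of three incompatible‑looking demands on the scales: the hypothesis $R(t)r(t)\lesssim t$ of Lemma~\ref{l:lemlb}, the requirement $R(t)/r(t)\ll t/r(t)^2$ so that the bound in \eqref{e:lemlb} is positive, and $R(t)^d r(t)^{kd}\to\infty$ so that $\cA_t$ is typical. The first two already pin down $r(t)\asymp t^{-1/(k-1)}$ and $R(t)\asymp t^{k/(k-1)}$ up to constants, at which point $R(t)^d r(t)^{kd}$ is merely a constant, so $\cA_t$ cannot be made to have probability tending to $1$ at a fixed threshold $\eps$. The resolution I rely on — and the reason the statement is phrased as ``in probability for every $g$'' rather than with a fixed constant — is to let the prefactors $c_1,M$ in $r(t),R(t)$ depend on $\eps$ so that the exponential rate $(Mc_1^k)^d$ controlling $\Prob(\cA_t^{\cc})$ diverges as $\eps\downarrow0$, which supplies exactly the quantitative input needed.
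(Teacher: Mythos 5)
Your proof is correct, and it rests on the same two ingredients as the paper's: Lemma~\ref{l:lemlb} applied at the scales $r(t)\asymp t^{-1/(k-1)}$, $R(t)\asymp t^{k/(k-1)}$, together with a lower bound on $\Prob(\cA_t)$. The only genuine difference is how the arbitrary function $g$ is handled. The paper threads $g$ into the scales themselves, taking $r(t)\propto\sqrt{g(t)}\,t^{-1/(k-1)}$ and $R(t)\propto g(t)^{-1/2}t^{k/(k-1)}$, so that $R(t)r(t)^k\to\infty$ and hence $\Prob(\cA_t)\to1$ by Corollary~\ref{c:lbppp}, while the reward from Lemma~\ref{l:lemlb} becomes $\asymp A\,g(t)^{-1}t^{(k+1)/(k-1)}$ with $A$ arbitrary. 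You instead first reduce \eqref{e:tightnesslbva} to the $g$-free tail estimate $\lim_{\eps\downarrow0}\limsup_t\Prob(\log u^{(a)}_\theta(t,0)<\eps t^{(k+1)/(k-1)})=0$, keep the scales independent of $g$ but tuned to $\eps$, and use the quantitative bound of Lemma~\ref{l:problbppD} to make $\Prob(\cA_t^{\cc})$ small as $\eps\downarrow0$ (exploiting $k\ge2$ so that $Mc_1^k\to\infty$). Both arguments are sound; yours isolates more explicitly the quantitative tail bound that underlies the ``for every $g$'' formulation, at the cost of the preliminary reduction step, while the paper's choice of scales lets it quote the soft statement $\Prob(\cA_t)\to1$ directly.
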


\begin{proof}
We may assume that $g(t)=t^{o(1)}$ as $t\to\infty$.
Take $c$ as in Lemma~\ref{l:lemlb}, let $A>0$ and
\begin{equation*}\label{e:prLBtightness1}
R(t)=  \tfrac12 \sqrt{ \tfrac12 A g(t)^{-1}}t^{\frac{k}{k-1}}, \quad r(t)= \sqrt{\tfrac12 c A^{-1} g(t)}t^{-\frac{1}{k-1}}.
\end{equation*} 
Lemma~\ref{l:lemlb} implies that, on the event $\cA_t$ defined in \eqref{e:defAt},
\begin{equation*}\label{e:prLBtightness2}
g(t)t^{-\frac{k+1}{k-1}}\log\E_0\left[\exp\left(\theta\int_0^tV^{(a)}(W_s)\dd s\right)\right] \geq A - o(1).
\end{equation*}
Since $\lim_{t\rightarrow\infty}\Prob(\cA_t)=1$ by Corollary~\ref{c:lbppp} and $A$ is arbitrary,
we conclude \eqref{e:tightnesslbva}.
\end{proof}

\begin{Lem}\label{l:LBlimsupVa}
For any $a \in (0,\infty)$, 
the lower bound in \eqref{e:limsupva} holds with $\frK(x) = \frK_a(x) = |x|^{-2} \mathbbm{1}_{\{|x| \leq a\}}$.
\end{Lem}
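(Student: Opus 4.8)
The plan is to exploit Lemma~\ref{l:lemlb}, which converts the event that some ball $B_{r(t)}(x)$ with $|x| \le R(t)$ carries $k+1$ Poisson points into a lower bound $\log \E_0[\cdots] \ge c t/r(t)^2 - 2\sqrt{c}\,R(t)/r(t) - \mathcal{O}(t)$, together with the almost-sure ``infinitely often'' statement of Lemma~\ref{l:pplimsup}, which holds precisely when $\sum_n g(2^n)^{-1} = \infty$. By the equivalence \eqref{e:integSV}, the hypothesis $\int_1^\infty \frac{\dd r}{r\ell(r)} = \infty$ translates into $\sum_n \ell(2^n)^{-1} = \infty$. The strategy is to choose, for a parameter $A > 0$ to be sent to infinity at the end,
\[
R(t) := A^{-1}\, t^{\frac{k}{k-1}}\,\ell(t)^{\frac{1}{d(k-1)}}, \qquad r(t) := A\, t^{-\frac{1}{k-1}}\,\ell(t)^{-\frac{1}{d(k-1)}},
\]
so that $R(t) r(t)^k = t$ (matching the normalization \eqref{e:condaspppRr} with $g(t) = \ell(t)$, up to the constant $A$), while $R(t) r(t) = A^{-1}\cdot A \cdot t = o(t^{1+\varepsilon})$ — one checks $R(t) r(t) = t^{(k-1)/(k-1)} = $ wait, more carefully $R(t)r(t) = t^{\frac{k}{k-1} - \frac{1}{k-1}} = t$, which is $\le \sqrt{c}\, t$ only after rescaling; so one should instead put a small constant $\eta$ (depending on $A$, or rather on $c$) into $r(t)$, i.e. $r(t) := \eta A\, t^{-1/(k-1)}\ell(t)^{-1/(d(k-1))}$ and $R(t) := \eta^{-k} A^{-1} t^{k/(k-1)}\ell(t)^{1/(d(k-1))}$, keeping $R(t) r(t)^k = t$, and choosing $\eta$ small enough that $R(t) r(t) = \eta^{1-k} t \le \sqrt{c}\, t$ fails — so one actually wants $\eta$ chosen so that $\eta^{1-k} \le \sqrt{c}$, which is fine since $1-k < 0$ means large $\eta$; a cleaner route is to keep $R(t) r(t)^k$ only $\asymp t$ rather than $=t$, absorbing constants, and verify $\ee^{-t} \ll r(t) \ll 1 \ll R(t)$ and $R(t) r(t) \le \sqrt{c}\,t$ directly.

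With these scales, Lemma~\ref{l:pplimsup} (applicable since $\sum_n \ell(2^n)^{-1} = \infty$ and $R(t) r(t)^k \sim (\text{const})\cdot g(t)^{-1/d}$ with $g = \ell$, up to adjusting the power of $\ell$ to match \eqref{e:condaspppRr}) gives that $\Prob$-almost surely $\cA_t$ in \eqref{e:defAt} holds for a sequence $t_j \to \infty$. On each such $t_j$, Lemma~\ref{l:lemlb} yields
\[
\log \E_0\!\left[\ee^{\theta \int_0^{t_j} V^{(a)}(W_s)\dd s}\right]
\ge \frac{c\, t_j}{r(t_j)^2} - 2\sqrt{c}\,\frac{R(t_j)}{r(t_j)} - \mathcal{O}(t_j)
\ge c'\, A^{-2}\, t_j^{\frac{k+1}{k-1}}\,\ell(t_j)^{\frac{2}{d(k-1)}}
\]
for some $c' > 0$, provided $A$ is large enough that the middle term $R/r \asymp A^{-k-1} t^{(k+1)/(k-1)}\ell^{2/(d(k-1))}$ and the $\mathcal{O}(t)$ term are dominated by $c\,t/r^2 \asymp A^{-2} t^{(k+1)/(k-1)}\ell^{2/(d(k-1))}$ — here one uses $\ell \ge 1$ and $(k+1)/(k-1) > 1$, and that for fixed $A$ the $A^{-k-1}$-term is genuinely smaller. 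Dividing by $t_j^{(k+1)/(k-1)}\ell(t_j)^{2/(d(k-1))}$ and taking $\limsup_{j}$ gives the $\limsup$ in \eqref{e:limsupva} is $\ge c' A^{-2} > 0$; since the left-hand side of \eqref{e:limsupva} is a fixed $[0,\infty]$-valued random variable not depending on $A$, and we may repeat the argument for each $A$ — actually the cleanest conclusion: the argument shows the $\limsup$ is $\ge c' A^{-2}$ only for $A$ above a threshold, so we do not get $\infty$ directly this way; instead one should choose $A$ fixed (e.g. the smallest admissible) and note the $\limsup$ is then a strictly positive random variable, and finally upgrade to $+\infty$ using a $0$--$1$ law (the event $\{\limsup \ge \lambda\}$ for $\lambda>0$ is a tail event of $\omega$, hence has probability $0$ or $1$, and we have just shown it is positive for $\lambda = c'A^{-2}$, so it is $1$; but to reach $\infty$ one instead runs the whole scheme with $r(t)$ multiplied by $\delta \downarrow 0$, which multiplies the bound by $\delta^{-2}$, showing $\limsup \ge c'\delta^{-2}A^{-2}$ for every $\delta$, hence $= \infty$).

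\textbf{Main obstacle.} The delicate point is the bookkeeping of constants: one must choose the scales $R(t), r(t)$ so that simultaneously (i) $R(t) r(t)^k$ has exactly the form required by \eqref{e:condaspppRr}/Lemma~\ref{l:pplimsup} with $g = \ell$ (so the divergence $\sum \ell(2^n)^{-1} = \infty$ is exactly what is needed, using \eqref{e:integSV}), (ii) $R(t) r(t) \le \sqrt{c}\, t$ so Lemma~\ref{l:lemlb} applies, and (iii) in \eqref{e:lemlb} the dominant term $c\,t/r(t)^2$ beats both $R(t)/r(t)$ and the $\mathcal{O}(t)$ correction, yielding growth of order $t^{(k+1)/(k-1)}\ell(t)^{2/(d(k-1))}$. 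One must also invoke the monotonicity/regular-variation hypotheses needed by Lemma~\ref{l:pplimsup} (eventual monotonicity of $R,r$, obtained from slow variation of $\ell$ via \cite[Theorem~1.5.3]{Bingham} as in the proof of Lemma~\ref{l:evp}), and finally argue that the positive $\limsup$ can be boosted to $+\infty$ by letting an auxiliary prefactor tend to its extreme value. None of these steps is deep, but getting all the exponents to line up is the part requiring care.
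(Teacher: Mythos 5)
Your overall strategy coincides with the paper's: apply Lemma~\ref{l:lemlb} on the event $\cA_t$ of \eqref{e:defAt}, produce a sequence $t_j\to\infty$ on which $\cA_{t_j}$ occurs via Lemma~\ref{l:pplimsup} and the equivalence \eqref{e:integSV}, and then push an auxiliary parameter to its extreme. The ingredients are the right ones; the problem is the parametrization of the scales, which you yourself flag as the delicate point and which, as written, does not close.

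Concretely: with $R(t)=A^{-1}t^{k/(k-1)}\ell(t)^{1/(d(k-1))}$ and $r(t)=A\,t^{-1/(k-1)}\ell(t)^{-1/(d(k-1))}$ one has $ct/r(t)^2\asymp A^{-2}\,t^{(k+1)/(k-1)}\ell(t)^{2/(d(k-1))}$ and \emph{also} $R(t)/r(t)\asymp A^{-2}\,t^{(k+1)/(k-1)}\ell(t)^{2/(d(k-1))}$ (not $A^{-k-1}$ as you claim), so the right-hand side of \eqref{e:lemlb} has leading coefficient $(c-2\sqrt{c})A^{-2}<0$ since $c\le1$; and in any case the bound shrinks as $A\to\infty$, as you notice. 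Your repair --- multiply $r(t)$ by $\delta\downarrow0$ --- is valid only if $R(t)$ is left untouched: then $R r^k$ is still a constant multiple of $\ell^{-1/d}$ (so Lemma~\ref{l:pplimsup} still applies), $Rr=\delta t\le\sqrt{c}\,t$, and $ct/r^2\propto\delta^{-2}$ dominates $2\sqrt{c}\,R/r\propto\delta^{-1}$. But under the rescaling you set up earlier in the same paragraph (where $r\mapsto\eta r$ forces $R\mapsto\eta^{-k}R$ to preserve $Rr^k$), one gets $R/r\propto\delta^{-(k+1)}$, which swamps $t/r^2\propto\delta^{-2}$ because $k\ge2$, and \eqref{e:lemlb} then gives a negative bound for small $\delta$. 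The paper's choice resolves this cleanly: $R(t)\propto\sqrt{A}$ and $r(t)\propto\sqrt{c/A}$, so that $R r$ is independent of $A$ (hence $Rr\le\sqrt{c}\,t$ for every $A$), $Rr^k\propto A^{(1-k)/2}\ell^{-1/d}$ remains admissible for Lemma~\ref{l:pplimsup}, and $ct/r^2$ and $2\sqrt{c}\,R/r$ both grow linearly in $A$ with coefficients $4$ and $1$, yielding a lower bound of order $3A\,t^{(k+1)/(k-1)}\ell(t)^{2/(d(k-1))}$ and hence $\limsup=\infty$ upon $A\uparrow\infty$. Two smaller points: Lemma~\ref{l:pplimsup} only gives $\sup_x\omega(B_{r(t_j)}(x))\ge k+1$ along a sequence, whereas $\cA_t$ demands a ball with \emph{exactly} $k+1$ points, so you must also invoke Lemma~\ref{l:asppp} (with $k+1$ in place of $k$) to cap the count from above, as the paper does; and the $0$--$1$-law detour is unnecessary once the scales are chosen correctly.
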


\begin{proof}
Let $\ell(t) \ge 1$ be slowly varying with $\int_{1}^{\infty}\frac{\dd r}{r\ell(r)}=\infty$.
Fix $A>0$ and set $R(t) := \sqrt{\tfrac{A}{8}} t^{\frac{k}{k-1}} \ell(t)^{\frac{1}{d(k-1)}}$, 
$r(t):= \sqrt{\tfrac12 c A^{-1}} t^{-\frac{1}{k-1}} \ell(t)^{-\frac{1}{d(k-1)}}$
with $c$ as in Lemma~\ref{l:lemlb}.
On $\cA_t$,
\[
\ell(t)^{-\frac{2}{d(k-1)}} t^{-\frac{k+1}{k-1}}\log\E_0\left[\exp\left(\theta\int_0^tV^{(a)}(W_s)\dd s\right)\right] \geq A - o(1).
\]
Now Lemma~\ref{l:pplimsup} and \eqref{e:integSV} provide
a sequence $t_j \to \infty$ as $j \to \infty$ such that $\cA_{t_j}$ occurs,
and we conclude by taking $A\uparrow\infty$.
\end{proof}

%
\begin{Lem}\label{l:LBliminfVa}
For any $a \in (0,\infty)$, 
the lower bound in \eqref{e:liminfva} holds with $\frK(x) = \frK_a(x) = |x|^{-2} \mathbbm{1}_{\{|x| \leq a\}}$.
\end{Lem}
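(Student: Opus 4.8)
The plan is to run the same scheme as in the proofs of Lemmas~\ref{l:LBtightnessVa} and~\ref{l:LBlimsupVa}, but now to choose the spatial scales $R(t),r(t)$ so that the favourable event $\cA_t$ of \eqref{e:defAt} occurs $\Prob$-almost surely \emph{for all large $t$}, not merely along a subsequence. Let $c\in(0,1]$ be the constant from Lemma~\ref{l:lemlb} and recall $k=k_\theta=\lfloor h_d/\theta\rfloor\ge2$. I would fix $\delta:=\sqrt{c}/4$, then choose $C_2$ large (how large is specified below), set $C_1:=\delta/C_2$, and take
\[
R(t):=C_1\, t^{\frac{k}{k-1}}(\log\log t)^{-\frac{1}{d(k-1)}},\qquad r(t):=C_2\, t^{-\frac{1}{k-1}}(\log\log t)^{\frac{1}{d(k-1)}}.
\]
One checks at once that $\ee^{-t}\ll r(t)\ll1\ll R(t)$ and that $R(t)r(t)=\delta t\le\sqrt{c}\,t$ for all $t$, so the hypotheses of Lemma~\ref{l:lemlb} are met; moreover $R(t)r(t)^k=C_1C_2^k(\log\log t)^{1/d}$, whence $(R(t)r(t)^k)^d/\log\log t=(\delta C_2^{k-1})^d$ is a constant. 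Since $k-1\ge1$, this constant exceeds $2^d(k+1)!/|B_1|$ once $C_2$ is large enough; fixing such a $C_2$, Lemma~\ref{l:PPPliminf1} gives $\liminf_{t\to\infty}\sup_{x\in B_{R(t)}}\omega(B_{r(t)}(x))\ge k+1$ $\Prob$-a.s. Applying Lemma~\ref{l:asppp} with $k+1$ in place of $k$ (its summability hypothesis holds because $R(t)r(t)^{k+1}=C_1C_2^{k+1}t^{-\frac{1}{k-1}}(\log\log t)^{\frac{k}{d(k-1)}}$ decays like $t^{-1/(k-1)}$ up to a slowly varying factor, and $R(t),r(t)$ are eventually monotone) yields the matching bound $\le k+1$, so in fact $\sup_{x\in B_{R(t)}}\omega(B_{r(t)}(x))=k+1$ eventually, i.e.\ $\cA_t$ occurs $\Prob$-a.s.\ for all large $t$.

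On $\cA_t$, Lemma~\ref{l:lemlb} then gives, for all large $t$,
\[
\log\E_0\left[\exp\left(\theta\int_0^t V^{(a)}(W_s)\dd s\right)\right]\ \ge\ \frac{c\,t}{r(t)^2}-2\sqrt{c}\,\frac{R(t)}{r(t)}-\mathcal{O}(t).
\]
Writing $s_t:=t^{\frac{k+1}{k-1}}(\log\log t)^{-\frac{2}{d(k-1)}}$, a direct substitution shows $c\,t/r(t)^2=(c/C_2^2)s_t$ and $R(t)/r(t)=(C_1/C_2)s_t=(\delta/C_2^2)s_t$, while $\mathcal{O}(t)=o(s_t)$ since $\frac{k+1}{k-1}>1$. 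Hence
\[
\liminf_{t\to\infty} s_t^{-1}\log u^{(a)}_\theta(t,0)\ \ge\ \frac{c-2\sqrt{c}\,\delta}{C_2^2}=\frac{c}{2C_2^2}=:C_{\inf}>0,
\]
which is exactly the lower bound in \eqref{e:liminfva} with $\frK=\frK_a$. I would also note that $C_{\inf}$ depends only on $d$ and $\theta$ (through $c$ and $k$), not on $a$, in agreement with the statement of Theorem~\ref{t:liminfva}.

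The point where care is needed — rather than any genuinely new idea — is the joint calibration of the constants. The event $\cA_t$ forces $\delta C_2^{k-1}=C_1C_2^k$ to be \emph{large}, whereas both the admissibility constraint $R(t)r(t)\le\sqrt{c}\,t$ of Lemma~\ref{l:lemlb} and the positivity of the leading coefficient $c/C_2^2-2\sqrt{c}\,\delta/C_2^2$ force $C_1C_2=\delta$ to be \emph{small}. These two requirements are compatible precisely because $k\ge2$ (which holds since $\theta\le h_d/2$): one keeps $\delta$ fixed and small and lets $C_2\to\infty$, so that $\delta C_2^{k-1}\to\infty$ while $\delta$ stays bounded. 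Everything else is a routine substitution into Lemma~\ref{l:lemlb} together with the Poissonian estimates of Section~\ref{s:SDPC}.
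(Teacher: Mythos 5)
Your proposal is correct and follows essentially the same route as the paper: the same scales $R(t)\sim \mu\, t^{k/(k-1)}(\log\log t)^{-1/(d(k-1))}$, $r(t)\sim\nu\, t^{-1/(k-1)}(\log\log t)^{1/(d(k-1))}$ fed into Lemma~\ref{l:lemlb}, with Lemmas~\ref{l:PPPliminf1} and~\ref{l:asppp} guaranteeing that $\cA_t$ holds for all large $t$. The only (immaterial) difference is that the paper optimizes over the admissible pairs $(\mu,\nu)$ in \eqref{e:condmunu} to extract an explicit value of $C_{\inf}$, whereas you fix one admissible choice, which suffices for the statement.
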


\begin{proof}
Let $c$ as in Lemma~\ref{l:lemlb} and pick $\mu, \nu>0$ satisfying
\begin{equation}\label{e:condmunu}
\mu \nu < \sqrt{c} \qquad \text{ and } \qquad (\mu \nu^k)^d > \frac{2^d (k+1)!}{|B_1|}.
\end{equation}
Set $R(t) := \mu t^{\frac{k}{k-1}} (\log \log t)^{-\frac{1}{d(k-1)}}$ and $r(t) := \nu t^{-\frac{1}{k-1}} (\log \log t)^{\frac{1}{d(k-1)}}$.
By Lemma~\ref{l:lemlb},  on $\cA_t$,
\begin{align}\nonumber
(\log \log t)^{\frac{2}{d(k-1)}} t^{\frac{k+1}{k-1}} \log\E_0\left[\ee^{\theta \int_0^t V^{(a)}(W_s)\dd s}\right]
& \geq \frac{c}{\nu^2} - 2 \sqrt{c} \frac{\mu}{\nu}-o(1).
\end{align}
On the other hand, by Lemma~\ref{l:PPPliminf1},
$\cA_t$ occurs for all large enough $t$, 
and thus we may take $C_{\inf}$ as the maximum of $c\nu^{-2} - 2 \sqrt{c} \mu/\nu$ over $\mu, \nu>0$ satisfying \eqref{e:condmunu},
which turns out to be
\[
C_{\inf}:= \frac{c^{\frac{k}{k-1}} (k-1)}{(k+1)^{\frac{k+1}{k-1}}} \left(\frac{|B_1|}{2^d (k+1)!}\right)^{\frac{2}{d(k-1)}}.
\qedhere
\]
\end{proof}

\subsection{Proof of Theorems~\ref{t:limitfrac}, \ref{t:tightnessva}, \ref{t:limsupva}, \ref{t:liminfva}, and \ref{t:tightness}}
\label{ss:prooflimitfrac}
%
\begin{proof}[Proof of Theorem~\ref{t:limitfrac}]
Let $I^{(\frK)}_t:=\exp \int_0^t \theta V^{(\frK)}(W_s)\dd s$ and  
$I^{(a)}_t:=\exp \int_0^t \theta V^{(a)}(W_s)\dd s$.
Let $R(t) := (\log t)^\gamma t^{k/(k+1)}$ with $\gamma$ as in Lemma~\ref{l:macrobox},
and put $S_t := \sup_{x \in B_{R(t)} \setminus \cP} |V^{(\frK)}(x) - V^{(a)}(x)|$.
Then
\begin{align*}
\log \E_0 \left[ I^{(a)}_t \mathbbm{1}\{\tau_{B_{R(t)}^\cc} \ge t \}\right] - \theta t S_t \le \log \E_0 \left[ I^{(\frK)}_t \mathbbm{1}\{\tau_{B_{R(t)}^\cc} \ge t \}\right] \le \log \E_0 \left[ I^{(a)}_t \mathbbm{1}\{\tau_{B_{R(t)}^\cc} \ge t \}\right] + \theta t S_t.
\end{align*}
Now, by Lemma~\ref{l:LBliminfVa} and \eqref{e:errortruncation} with $a_R \equiv a$,
$t S_t / \log \E_0\left[ I^{(a)}_t \right]$ tends a.s.\ to $0$.
To obtain \eqref{e:limitfrac}, note that, by Lemma~\ref{l:macrobox}, $\E_0 \left[I^{(a)}_t \mathbbm{1}\{\tau_{B^{\cc}_{R(t)}} \ge t\} \right] \sim \E_0 \left[I^{(a)}_t \right]$, and the same can be concluded for $I^{(\frK)}_t$ by taking into account the first inequality above.
The proofs for $|V^{(\frK)}|$, $\overbar{V}$ or $|\overbar{V}|$ are identical.
\end{proof}

As anticipated, this allows us to finally give the:

\begin{proof}[Proof of Theorems~\ref{t:tightnessva}, \ref{t:limsupva} and \ref{t:liminfva}]
For $\frK = \frK_a$, the results follow from the upper bounds in Section~\ref{ss:proofsupperbounds} 
together with Lemmas~\ref{l:LBtightnessVa}--\ref{l:LBliminfVa}.
The other cases then follow from Theorem~\ref{t:limitfrac}.
\end{proof}

\begin{proof}[Proof of Theorem~\ref{t:tightness}]
Follows from Theorems~\ref{t:limitfrac}, \ref{t:finiteness}, \ref{t:finitenessva}, \ref{t:tightnessva}, \ref{t:limsupva} and \ref{t:liminfva}.
\end{proof}

\subsection{Proof of Theorems~\ref{t:solva} and~\ref{t:solvbar}}
\label{ss:proofssol}
%
\begin{proof}
We follow \cite[Proposition~1.6]{CK12}.
We start with Theorem~\ref{t:solva}. Fix $\frK \in \scrK$. 
Let $m \in \N$ and $F_m:=|\theta V^{(\frK)}|\wedge m \in L^{\infty}(\R^d)$.
By Proposition~\ref{p:FKrep}, 
$w_m(t,x):=\E_x\left[\ee^{\int_0^t F_m(W_s) \dd s} \1{\{ \tau_{B_m^\cc} > t \}}\right]$ 
satisfies
\begin{equation}\label{e:proofmild1}
w_m(t,x)=1+\int_0^t\int_{\R^d}p_{t-s}(x-y)F_m(y) w_m(s,y)\dd y\dd s, \quad (t,x)\in(0,\infty)\times B_m \setminus \cP
\end{equation}
with $p_t(x)$ as in \eqref{e:Gaussiandensity}.
Letting $m\uparrow\infty$ and applying the monotone convergence theorem, we see that \eqref{e:proofmild1} 
still holds true with $F_m$ and $v_m$ replaced by $|\theta V^{(\frK)}|$ and  $v^{(\frK)}_\theta(t,x)= \E_x \left[\exp \int_0^t \theta|V^{\frK}|(W_s) \dd s \right]$, 
both sides being finite almost surely by Theorem~\ref{t:finitenessva}.
In particular, 
\begin{equation}\label{proofmild2}
\int_0^t\int_{\R^d}p_{t-s}(x-y)|V^{(\frK)}|(y) |v^{(\frK, u_0)}_\theta(s,y)| \dd y\dd s <\infty, \quad (t,x)\in(0,\infty)\times \R^d\setminus \cP.
\end{equation}
Noting that $t \mapsto \ee^{\pm\int_0^t \theta V^{(\frK)}(W_s) \dd s }$ are absolutely continuous,  the fundamental theorem of calculus gives
\[
\exp\left(\int_0^t \theta V^{(\frK)}(W_s) \dd s\right)=1+\int_0^t \theta V^{(\frK)}(W_s)\exp\left(\int_s^t \theta V^{(\frK)}(W_s) \dd u\right) \dd s,
\]
which we use to write, for all $(t,x)\in(0,\infty)\times \R^d\setminus \cP$ and all $u_0 \in L^\infty(\R^d)$,
\begin{align*}
u^{(\frK, u_0)}_\theta(t,x) 
& = \int p_t(x-y) u_0(y) \dd y +\int_0^t\E_x\left[\theta V^{(\frK)}(W_s) \ee^{\int_s^t \theta V^{(\frK)}(W_u)\dd u }\right] \dd s \\
& =\int p_t(x-y) u_0(y) \dd y+\int_0^t\E_x\left[\theta V^{(\frK)}(W_s)u^{(\frK)}_\theta(t-s,y)\right] \dd s \\
& =\int p_t(x-y) u_0(y) \dd y+\int_0^t\int_{\R^d}p_{t-s}(x-y)\theta V^{(\frK)}(y) u^{(\frK)}_\theta(s,y) \, \dd y \, \dd s,
\end{align*}
where we used Fubini's theorem (which is justified by \eqref{proofmild2}), 
 the Markov property of $W$, and time reversal.
The same argument works for $|V^{(\frK)}|$, and thus we complete the proof of Theorem~\ref{t:solva}.
Theorem~\ref{t:solvbar} is proved analogously.
\end{proof}

\vspace{10pt}
\noindent
{\bf Acknowledgements.}
The authors are thankful to Achim Klenke for suggesting the problem, 
and to Wolfgang K\"onig for fruitful discussions.
The research of RdS was supported by the DFG projects KO 2205/13, KO 2205/11
and by the DFG Research Unit FOR2402.
We thank WIAS Berlin and JGU Mainz 
for hospitality and financial support during several research visits.


\end{document}